\definecolor{amethyst}{rgb}{0.6, 0.4, .8}
\renewcommand*{\backref}[1]{}
\renewcommand*{\backrefalt}[4]{%
  \ifcase #1 %
    \relax
  \or
    $\uparrow$#2.%
  \else
    $\uparrow$#2.%
  \fi%
}
\numberwithin{equation}{subsection}
\numberwithin{table}{section}
\theoremstyle{plain}\newtheorem{ithm}{Theorem}
\theoremstyle{plain}\newtheorem{iprop}[ithm]{Proposition}
\theoremstyle{plain}
\newtheorem{thm}{Theorem}[section]
\newtheorem{prop}[thm]{Proposition}
\newtheorem{cor}[thm]{Corollary}
\newtheorem{lem}[thm]{Lemma}
\theoremstyle{definition}
\newtheorem{defi}[thm]{Definition}
\newtheorem{remark}[thm]{Remark}
\newtheorem*{conj}{Conjecture}
\newtheorem{exe}[thm]{Example}
\theoremstyle{remark}
\newtheorem{rem}[thm]{Remark}
\newcommand{\gm}{{\mathfrak{m}}}
\newcommand{\gp}{{\mathfrak{p}}}
\newcommand{\gP}{{\mathfrak{P}}}
\newcommand{\Acal}{{\mathcal A}}
\newcommand{\Bcal}{{\mathcal B}}
\newcommand{\Ccal}{{\mathcal C}}
\newcommand{\Ecal}{{\mathcal E}}
\newcommand{\Fcal}{{\mathcal F}}
\newcommand{\Hcal}{{\mathcal H}}
\newcommand{\Jcal}{{\mathcal J}}
\newcommand{\Lcal}{{\mathcal L}}
\newcommand{\Ncal}{{\mathcal N}}
\newcommand{\Ocal}{{\mathcal O}}
\newcommand{\Qcal}{{\mathcal Q}}
\newcommand{\Rcal}{{\mathcal R}}
\newcommand{\Scal}{{\mathcal S}}
\newcommand{\Wcal}{{\mathcal W}}
\newcommand{\Xcal}{{\mathcal X}}
\newcommand{\Ycal}{{\mathcal Y}}
\newcommand{\N}{{\mathbb{N}}}
\newcommand{\Z}{{\mathbb{Z}}}
\newcommand{\Q}{{\mathbb{Q}}}
\newcommand{\R}{{\mathbb{R}}}
\newcommand{\C}{{\mathbb{C}}}
\renewcommand{\P}{\mathbb{P}}
\newcommand{\F}{\mathbb{F}}
\newcommand{\Fp}{{\mathbb{F}_{\! p}}}
\newcommand{\Aut}{\operatorname{Aut}}
\newcommand{\Card}{\operatorname{Card}}
\newcommand{\End}{\operatorname{End}}
\newcommand{\Gal}{\operatorname{Gal}}
\newcommand{\GL}{\operatorname{GL}}
\newcommand{\SL}{\operatorname{SL}}
\renewcommand{\Im}{\operatorname{Im}}
\newcommand{\Spec}{\operatorname{Spec}}
\newcommand{\Stab}{\operatorname{Stab}}
\newcommand{\Cot}{\operatorname{Cot}}
\newcommand{\Jac}{\operatorname{Jac}}
\newcommand{\Pic}{\operatorname{Pic}}
\newcommand{\Sym}{\operatorname{Sym}}
\newcommand{\defeq}{\colonequals}
\newcommand{\mt}{\mapsto}	
\newcommand{\lmt}{\longmapsto}
\newcommand{\ra}{\rightarrow}
\newcommand{\lra}{\longrightarrow}
\newcommand{\Llra}{\Longleftrightarrow}
\newcommand{\fonction}[5]{\begin{array}{c|ccl}           
		#1: & #2 & \longrightarrow & #3 \\
		& #4 & \longmapsto & #5 \end{array}}
\newcommand{\fonctionsansnom}[4]{\begin{array}{ccl}      
		#1 & \lra & #2 \\\
		#3 & \lmt & #4 
\end{array}}
\newcommand{\Qb}{\overline{\Q}}
\newcommand{\kb}{{\overline{k}}}
\newcommand{\GalQ}{{\Gal(\Qb / \Q)}}
\newcommand{\Fpb}{\overline{\Fp}}
\renewcommand{\phi}{\varphi}
\newcommand{\wN}{\widetilde{N}}
\crefname{theorem}{Theorem}{Theorems}
\crefname{lem}{Lemma}{Lemmata}
\crefname{corollary}{Corollary}{Corollaries}
\crefname{proposition}{Proposition}{Propositions}
\crefname{proposition-definition}{Proposition-Definition}{Proposition-Definitions}
\crefname{lemma-definition}{Lemma-Definition}{Lemma-Definitions}
\crefname{definition}{Definition}{Definitions}
\crefname{conjecture}{Conjecture}{Conjectures}
\crefname{question}{Question}{Questions}
\crefname{example}{Example}{Examples}
\crefname{algorithm}{Algorithm}{Algorithms}
\crefname{remark}{Remark}{Remarks}
\crefname{assumption}{Assumption}{Assumptions}
\renewcommand{\c}{\mathcal{C}}
\title{Rational points on $X_0(N)^*$ when $N$ is non-squarefree}
\date{\today}
\author{Sachi Hashimoto}
\address{
Sachi Hashimoto,
  	Department of Mathematics,
  	Brown University,
  	Box 1917,
  	151 Thayer~St.,
  	Providence, RI, 02912, USA}
\email{\href{mailto:sachi_hashimoto@brown.edu}{sachi\_hashimoto@brown.edu}}
\urladdr{\url{https://sachihashimoto.github.io/}}
\author{Timo Keller}
\address{
Timo Keller,
Institut für Mathematik, Universität Würzburg, Emil-Fischer-Strasse 30, 97074,
Würz\-burg, Germany; 
Rijksuniveriteit Groningen, Bernoulli Institute, Bernoulliborg, Nijenborgh 9, 9747 AG Groningen, The Netherlands; 
Leibniz Universität Hannover, Institut für Algebra, Zahlentheorie und Diskrete Mathematik, Welfengarten 1, 30167 Hannover, Germany}
\email{\href{mailto:math@kellertimo.de}{math@kellertimo.de}}
\urladdr{\url{https://www.timo-keller.de}}
\author{Samuel Le Fourn}
\address{Samuel Le Fourn,
Institut Fourier, UMR 5582, Laboratoire de mathématiques Université Grenoble Alpes, CS 40700, 38058 Grenoble Cedex 9, France}
\email{\href{mailto:samuel.le-fourn@univ-grenoble-alpes.fr}{samuel.le-fourn@univ-grenoble-alpes.fr}}
\urladdr{\url{https://www-fourier.univ-grenoble-alpes.fr/~lefourns/}}
\begin{document}

\begin{abstract}
Let $N$ be a non-squarefree integer such that the quotient $X_0(N)^*$ of the modular curve $X_0(N)$ by the full group of Atkin--Lehner involutions has positive genus. 
Elkies conjectures that the rational points on $X_0(N)^*$ are only cusps or CM points when $N$ is large enough.
We establish an integrality result for the $j$-invariants of non-cuspidal rational points on $X_0(N)^*$, representing a significant step toward resolving a key subcase of Elkies’ conjecture.
To this end, we prove the existence of rank-zero quotients of certain modular Jacobians $J_0(pq)$.  
Furthermore, we provide a complete classification of the rational points on $X_0(N)^*$ of genus $1 \leq g \leq 5$, when they are finite. In the process we identify exceptional rational points on $X_0(147)^*$ and $X_0(75)^*$ which were not known before.
\end{abstract}

\subjclass[2020]{14G05 (Primary) 14G35, 11G05}
\keywords{Rational points, modular curves, elliptic curves over global fields}
    
\maketitle

\section{Introduction}
Let $N \geq 1$ be an integer. The modular curve $X_0(N)$ is a smooth projective geometrically connected curve over $\Q$ whose non-cuspidal $K$-rational points classify pairs $(E,C_N)$ of elliptic curves $E/K$ and cyclic subgroups $C_N \subset E$ of order $N$ defined over $K$ (equivalently, isogenies between elliptic curves over $K$ with kernel cyclic of order $N$) up to equivalence.
This curve is acted on by the group of \emph{Atkin--Lehner} involutions $\Wcal(N)$ (see~\cref{subsecAtkinLehner} for a precise definition). The main object of study in this paper is the quotient curve $X_0(N)^* \colonequals X_0(N)/\Wcal(N)$.
The  modular curve $X_0(N)$ and its Atkin--Lehner quotients have a rich geometry which is expected to constrain their possible rational points.

Elkies \cite{ElkiesKCurves} shows that a non-cuspidal point of $X_0(N)^*(\Q)$ corresponds to an elliptic curve $E/\overline{\Q}$ that is isogenous to each of its Galois conjugates over $\Q$, in other words, a \emph{$\Q$-curve}.
The class of $\Q$-curves is a mild generalization of elliptic curves over $\Q$:  
they are precisely the elliptic curves over $\overline{\Q}$ arising as quotients of $X_1(N)$  \cite[p.~2]{Ribet1992}. 
It is natural to wonder what the possible $\Q$-curves are. 
Elkies formulates the following conjecture.
\begin{conj}[Boundedness conjecture]
 For all integers $N\gg0$, the points $X_0(N)^*(\Q)$ are only CM points and cusps.
 \end{conj}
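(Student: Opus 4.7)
The plan is to split the conjecture into two layers: (i) prove that the $j$-invariant of any non-cuspidal $P \in X_0(N)^*(\Q)$ is integral, and (ii) show that only finitely many non-CM $\Q$-curves of level dividing $N$ have integral $j$-invariant, with the family becoming empty for $N$ large. Step (ii) combines effective Runge- or Bilu--Parent-style bounds on integral points on modular curves with classical isogeny estimates for $\Q$-curves; the algebraic heart of the argument lies in step (i), which I would attack prime by prime using Mazur's formal immersion machinery.

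For (i), suppose for contradiction that $j(P)$ has negative valuation at some prime $\ell \mid N$, and lift $P$ to $\widetilde{P} \in X_0(N)(\Qb)$. By the moduli interpretation, the reduction of $\widetilde{P}$ modulo any prime above $\ell$ is a cusp, and applying the full Atkin--Lehner group yields an Atkin--Lehner-invariant cuspidal reduction. I would then exhibit a quotient $J_0(N) \twoheadrightarrow A$ with $A(\Q)$ finite such that the composition $X_0(N) \hookrightarrow J_0(N) \twoheadrightarrow A$ is a formal immersion at the relevant cusp in characteristic $\ell$; this forces $P$ to coincide with the cusp globally, contradicting non-cuspidality. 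The paper's existence theorem for rank-zero quotients of $J_0(pq)$ supplies such an $A$ after pull-back along a degeneracy map $X_0(N) \to X_0(pq)$ for a well-chosen pair $(p,q)$ of primes dividing $N$; the non-squarefree hypothesis is useful here because it enlarges the supply of degeneracy maps, letting one control all cusps simultaneously.

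The main obstacle is twofold. First, the formal immersion verification reduces to showing that a matrix of $q$-expansion coefficients of newforms in the rank-zero quotient has full rank modulo $\ell$ at the relevant cusp: this is a finite check for each fixed $N$ but delicate to run uniformly in $N$, and it degenerates precisely when the cusp sits in a pathological Hecke orbit. Second, even granting integrality of $j(P)$, step (ii) requires slowly growing bounds on integral points on $X_0(N)^*$ together with isogeny-degree bounds for $\Q$-curves, and the existing techniques deliver these only in restricted configurations and often only conditionally (on GRH or on Serre uniformity). Consequently the full conjecture lies beyond current reach, and the realistic target for this strategy is the subcase $N$ non-squarefree with a controlled number of prime factors, which is exactly what the present paper addresses.
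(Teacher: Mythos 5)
The statement you were asked to prove is Elkies' Boundedness Conjecture, which the paper explicitly presents as an \emph{open} conjecture; there is no proof of it in the paper to compare against. The paper's contribution is the integrality result (Theorem~\ref{introthm1}), which corresponds only to your step~(i), and only for $N$ non-squarefree. Your two-layer decomposition — (i) integrality of $j(P)$ via Mazur's formal immersion method, (ii) bounding integral points via Runge-type estimates and isogeny bounds — accurately mirrors the Bilu--Parent--Rebolledo program that the paper announces as its intended continuation, and you are right to concede that the full conjecture lies beyond current reach. So as an identification of strategy your proposal is sound, and your self-assessment of what it does and does not establish is honest.

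That said, several details in your sketch of step~(i) are imprecise in ways that matter. Integrality of $j(P)$ means $j(P)$ is a $\gp$-adic integer at \emph{every} prime $\gp$ of the field of definition, not only at primes dividing $N$; the formal immersion argument must therefore be run at every residue characteristic $\ell$, and the paper treats $\ell\nmid 2N$, $\ell = p$ with $p^2 \mid N$, and $\ell = 2$ as genuinely separate cases. The $\ell = 2$ case is by far the hardest — it needs the injectivity of the cuspidal subgroup into a component group at $p$, which rests on Mazur's proof of Ogg's conjecture, and it is the reason the paper obtains only a denominator bound rather than full integrality when no prime $p>13$ has $p^2 \mid N$; your sketch does not engage with this obstruction at all. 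The role of the non-squarefree hypothesis is also misstated: it is not about ``enlarging the supply of degeneracy maps,'' but about the very existence of admissible rank-zero quotients — for squarefree $N$ every simple factor of $J_0(N)^*$ has odd functional equation sign and hence (assuming BSD) positive rank, so Mazur's method has no foothold. Finally, the isogeny-degree bounds of Gaudron--R\'emond invoked in step~(ii) are unconditional, so GRH or Serre uniformity is not where the difficulty lies; the actual remaining gap is a sufficiently uniform Runge-type height bound on $X_0(N)^*$, which is the subject of the authors' announced sequel and is not supplied by your proposal.
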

For example, Galbraith \cite{GalbriathQcurves} conjectured a  list of small levels for which there exist non-CM quadratic $\Q$-curves based on computational evidence and earlier work of Momose \cite{Momose1986,MomoseNplus}. 
This conjecture has since been verified through a combination of theoretical and computational work  \cite{AraiMomose,ArulMuller22, AABCCKW,ACKP}. 

Elkies' conjecture is motivated by a broader desire to classify rational points on families of modular curves and a growing belief that such rational points should exist for a geometric reason \cite[p.~17]{Ogg75}.
Mazur \cite{Mazur1978} famously studied the curve $X_0(p)$ for $p$ prime, and showed that when $p>37$ the points are all CM or cuspidal, classifying rational isogenies of elliptic curves over $\Q$ of prime degree.
Following Mazur's seminal result and building on Momose's work \cite{Momose1986}, Bilu, Parent, and Rebolledo \cite{BiluParent,BiluParentRebolledo} 
showed that the rational points on $X_0(p^k)^*$ are CM or cusps, for $p \geq 11, p \neq 13$ a prime and $k > 1$, whenever the curve has genus at least $1$.
Their result excluded $13$ due to the difficult case of $X_0(13^2)^*$, later determined using quadratic Chabauty in \cite{Balakrishnanetc}.

Mazur's method proceeds in two steps. First, Mazur established an integrality result using his ``formal immersion method''. Mazur constructs (when $p \notin \{2,3,5,7,13\}$) an abelian variety quotient $A$ of the Jacobian of $X_0(p)$ such that $A(\Q)$ is finite, i.e.\ a (Mordell-Weil) \enquote{rank zero quotient}, along with a formal immersion $f: X_0(p) \to A$, which he leverages to show that any point on $X_0(p)$ corresponds to an elliptic curve $E$ whose $j$-invariant $j(E) \in \Z[\frac{1}{2}]$ \cite[Corollary 4.4]{Mazur1978}. 

This integrality theorem is powerful: from this
Mazur concludes his famous classification of torsion subgroups of elliptic curves $E/\Q$, showing that only 15 possible torsion subgroups are possible.
Furthermore, with the help of \enquote{isogeny characters}, Mazur identifies all possible non-CM rational points on curves $X_0(p)$.

Bilu, Parent, and Rebolledo proceed differently after the integrality result:  in \cite{BiluParent} the authors apply techniques for bounding the heights of integral points on curves (Runge's method), and then compare the upper bounds obtained to lower bounds coming from the minimal degree of an isogeny between two isogenous elliptic curves, which bounds the level. The (very numerous) finitely many cases are then treated algorithmically in \cite{BiluParentRebolledo}.

In the present work, our main contributions are as follows. We prove an integrality result akin to Mazur's integrality theorem for $X_0(p)$ in the case of $X_0(N)^*$, when $N$ is non-squarefree. More precisely, our work generalizes the results of Momose \cite[Theorem (3.8)]{Momose1986} which dealt with the case of prime powers $N=p^k$ with $k >1$ (upon which the results of Bilu, Parent, and  Rebolledo are based).
\begin{ithm}[cf.\ Theorems \ref{thm:integraljinv} 
and \ref{prop:integralityexclevel}]
\label{introthm1}
Let $N \neq 99, 125, 147$ be a non-squarefree level such that $X_0(N)^*$ has positive genus. 
Let $P^* \in Y_0(N)^*(\Q)$ and $P = (E,C_{N})$ a choice of lift of $P^*$ in $Y_0(N)$, defined over a polyquadratic field $K$. 
Then $(8\cdot 3 \cdot 25 \cdot 49 \cdot 31)^N \cdot  j(E) \in \Ocal_K$.  

Furthermore, if $p^2 |N$ for some prime $p \notin \{2,3,5,7,13\}$, $j(E) \in \Ocal_K$.
\end{ithm}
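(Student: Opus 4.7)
\medskip
\noindent
\textbf{Proof proposal.}
The plan is to adapt Mazur's formal immersion method, following the strategy Momose used for prime powers in \cite{Momose1986}, to the non-squarefree setting. Suppose for contradiction that $j(E)$ has a denominator at some prime $\gp \mid \ell$ of $\Ocal_K$ where $\ell \notin \{2,3,5,7,13,31\}$ (respectively, where $\ell$ does not divide $p$ in the second statement). Then $E$ has potential multiplicative reduction at $\gp$, so the point $P = (E,C_N) \in Y_0(N)(K)$ reduces modulo $\gp$ to a cusp of $X_0(N)_{\F_\ell}$. Because the reduction map is Galois-equivariant and the Atkin--Lehner operators are defined over $\Q$, the induced point $P^* \in Y_0(N)^*(\Q)$ reduces modulo $\ell$ to a cusp $c^* \in X_0(N)^*(\overline{\F_\ell})$.

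Since $P^*$ is $\Q$-rational, $c^*$ is fixed by $\Gal(\overline{\F_\ell}/\F_\ell)$; enumerating the Atkin--Lehner orbits of cusps of $X_0(N)$ over $\overline{\F_\ell}$ bounds the finite list of $c^*$ we must rule out. The key step is to produce, for each such cusp $c^*$, a morphism $\Phi: X_0(N)^* \to A$ over $\Z_{(\ell)}$ such that
\begin{enumerate}
\item $A$ is an abelian variety with $A(\Q)$ finite, and
\item $\Phi$ is a formal immersion at $c^*$ modulo $\ell$.
\end{enumerate}
To build $A$, write $N = p^2 q M$ with $p$ a prime whose square divides $N$, and push the degeneracy maps $X_0(N) \to X_0(pq)$ through a suitable rank-zero quotient of $J_0(pq)$ whose construction is the other main technical result of the paper. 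Taking the Atkin--Lehner-invariant part, or symmetrizing via Hecke correspondences, produces a quotient that factors through $X_0(N)^*$. Once $\Phi$ and $A$ are in hand, the standard Mazur--Kamienny argument applies: $\Phi(P^*) - \Phi(c^*) \in A(\Q)_{\mathrm{tors}}$ is a torsion point of $A(\Q)$ reducing to $0$ modulo $\ell$, hence is $0$ (for $\ell$ outside an explicit finite set); the formal immersion property then forces $P^* = c^*$ in $X_0(N)^*$, contradicting that $P^*$ is non-cuspidal.

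The main obstacle is verifying the formal immersion criterion at \emph{every} cusp of $X_0(N)^*_{\F_\ell}$, for every $\ell$ outside the allowed denominator set $\{2,3,5,7,13,31\}$ (or $\{2,3,5,7,13\}$ for the sharper statement). Concretely, this reduces by Kamienny's criterion to showing that a certain matrix of Hecke eigenvalues on a chosen basis of cuspforms in the cotangent space of $A$ is non-degenerate modulo $\ell$. The primes $2, 3, 5, 7, 13$ are exactly those where Mazur's original Eisenstein-type rank-zero quotient fails; the appearance of $31$ reflects an additional obstruction arising in the construction of rank-zero quotients of $J_0(pq)$ used here. The sharper second statement will follow because when $p^2 \mid N$ with $p \notin \{2,3,5,7,13\}$, we have enough of the $J_0(p)$-level machinery available to handle the prime-to-$p$ cusps and the $p$-part simultaneously. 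Finally, the excluded levels $99, 125, 147$ are precisely those where either the rank-zero quotient does not exist at the needed level or the formal immersion criterion fails at a cusp; these will be isolated during the Hecke computation and dealt with separately in \cref{prop:integralityexclevel}.
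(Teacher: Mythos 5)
Your proposal captures the broad strokes of the paper's argument (Mazur's formal immersion method applied to $X_0(N)^*$ through a rank-zero quotient of $J_0(M)$ for a suitable $M\mid N$, after reducing to an almost-squarefree level), but there are several genuine gaps and misreadings that would make the proof fail as written.

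\textbf{The constant $8\cdot3\cdot25\cdot49\cdot31$ and its exponent $N$.} You interpret the primes $2,3,5,7,13,31$ as the primes where Mazur's Eisenstein quotient or the new rank-zero quotients fail. This is wrong on two counts. First, $13$ does not even appear in the factor $8\cdot3\cdot25\cdot49\cdot31 = 2^3\cdot 3\cdot 5^2\cdot 7^2\cdot 31$. Second, and more fundamentally, these primes do not come from failure of the rank-zero quotient. They come from the cotangent map computation (Proposition \ref{propcotangentGNf}): when several Hall divisors $d$ of $N/M$ give unramified degeneracy maps at a width-1 cusp $\c=\Ccal_N(a/b)$, the cotangent map of $G_{N,f}$ acquires a scalar factor $\sum_{d\in\Rcal_\c}\varepsilon_f(w_{d_M}^{(M)})\zeta_{\gcd(b,d)}$, a sum of roots of unity which can be divisible by unexpected primes (e.g.\ $m_{450,15}=155=5\cdot31$ in Table \ref{tab:values}, which is where $31$ enters). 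The formal immersion property is lost precisely modulo the primes dividing these sums, not modulo primes where no rank-zero quotient exists. You also give no mechanism for the exponent $N$: this arises from Lemma \ref{lem:integralityisogenousellcurves}, because switching between lifts and passing through degeneracy morphisms replaces $E$ by an isogenous curve whose $j$-invariant can have valuation scaled by a ratio bounded by the isogeny degree; a pure formal-immersion argument at one cusp does not see this.

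\textbf{The prime $2$ is a genuine obstruction you don't address.} Your step \emph{``a torsion point of $A(\Q)$ reducing to $0$ modulo $\ell$ is $0$''} fails at $\ell=2$: a $\mu_2$-type $2$-torsion point reduces to zero and cannot be detected by the formal immersion (this exception is stated explicitly in Proposition \ref{prop:formimmkeyresult}). The paper handles $\ell=2$ with two separate and substantially different arguments: for the weaker statement, a $2$-adic integration/Newton-polygon estimate (Section \ref{subsec:firstboundvaluation2}) which bounds the valuation rather than proving integrality; for the case $p^2\mid N$ with $p\notin\{2,3,5,7,13\}$, an argument (Section \ref{subsec:improvementsvaluation2}) showing $G_{N,T}(P)=0$ outright by injecting the cuspidal subgroup of $J_0(p)$ into the group of components of the Néron model at $p$ and invoking Mazur's proof of Ogg's conjecture. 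This is the delicate part of the theorem and is entirely missing from the proposal.

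\textbf{Excluded levels.} The levels $99,125,147$ are not excluded because a formal immersion or rank-zero quotient fails to exist; they are excluded because the conclusion is literally false for them: $X_0(99)^*$ is an elliptic curve of rank $1$ (so $j$-invariants of lifts are unbounded), and $X_0(125)^*$, $X_0(147)^*$ each carry exceptional non-CM, non-cuspidal rational points whose $j$-invariants are non-integral.

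Finally, the invocation of ``Kamienny's criterion'' (non-degeneracy of a matrix of Hecke eigenvalues) is a different route than the paper's; the paper verifies the formal immersion directly via Mazur's Proposition 3.1 for $\pi_f\circ\iota_M$ at $\infty$ and then propagates through the signed sum of degeneracy maps. This could in principle work, but you would still need the root-of-unity bookkeeping above and the separate $\ell=2$ argument.
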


Between the full integrality statement of the last sentence of Theorem \ref{introthm1} and its more general but less precise result, we prove a range of intermediary results which can be found in Theorem \ref{thm:integraljinv}.
Since the star quotients are covered by every Atkin--Lehner quotient, this integrality result implies the following important corollary constraining the rational points that appear on Atkin--Lehner quotients of $X_0(N)$.

\begin{cor}
    Let $N \neq 99,125,147$ be a non-squarefree level such that $X_0(N)^
    *$ has positive genus. For any subgroup $G$ of the group of Atkin--Lehner involutions of $X_0(N)$, every non-cuspidal rational point $Q_G \in (X_0(N)/G)(\Q)$ is integral in the sense that for any lift $Q \in X_0(N)(\Qb)$ of $Q_G$, we have $(8\cdot 3 \cdot 25 \cdot 49 \cdot 31)^N \cdot  j(Q) \in \overline{\Z}$, and furthermore $j(Q) \in \overline{\Z}$ if $p^2|N$ for some prime number $p \notin \{2,3,5,7,13\}$.
\end{cor}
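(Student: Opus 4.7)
The plan is to reduce the corollary directly to \cref{introthm1} by pushing $Q_G$ down to the star quotient $X_0(N)^*$ and verifying that the hypothesis on the field of definition of the lift is automatically satisfied. Since $G \subseteq \Wcal(N)$, the quotient map $X_0(N) \to X_0(N)^*$ factors over $\Q$ as $X_0(N) \to X_0(N)/G \to X_0(N)^*$, so $Q_G$ projects to a non-cuspidal rational point $P^* \in Y_0(N)^*(\Q)$, and any lift $Q \in X_0(N)(\Qb)$ of $Q_G$ is simultaneously a non-cuspidal lift of $P^*$.

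Next I would observe that $Q$ is automatically defined over a polyquadratic field. Because $X_0(N) \to X_0(N)^*$ is Galois with group $\Wcal(N)$, the fiber over $P^*$ is a single $\Wcal(N)$-orbit; thus for every $\sigma \in \Gal(\Qb/\Q)$ there is $w_\sigma \in \Wcal(N)$, well-defined modulo $\Stab_{\Wcal(N)}(Q)$, such that $\sigma(Q) = w_\sigma(Q)$. Since the elements of $\Wcal(N)$ are defined over $\Q$ and $\Wcal(N)$ is abelian, a short computation shows that $\sigma \mapsto w_\sigma$ is a group homomorphism whose kernel is $\Gal(\Qb/K)$, where $K \colonequals \Q(Q)$. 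Therefore $\Gal(K/\Q)$ embeds into $\Wcal(N)/\Stab_{\Wcal(N)}(Q)$, and because $\Wcal(N)$ is an elementary abelian $2$-group, $K$ is polyquadratic over $\Q$.

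Finally, writing $Q = (E, C_N)$, I would apply \cref{introthm1} to this lift of $P^*$ defined over the polyquadratic field $K$, yielding $(8\cdot 3 \cdot 25 \cdot 49 \cdot 31)^N \cdot j(E) \in \Ocal_K \subseteq \overline{\Z}$, together with $j(E) \in \overline{\Z}$ whenever $p^2 \mid N$ for some prime $p \notin \{2,3,5,7,13\}$. No step poses a serious obstacle once \cref{introthm1} is established; the only delicate point is the Galois-theoretic observation that every lift to $X_0(N)$ of a rational point on an Atkin--Lehner subquotient automatically lives over a polyquadratic field, which is exactly the hypothesis required to invoke the theorem.
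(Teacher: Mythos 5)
Your argument is correct and matches the paper's intended (implicit) derivation of the corollary from \cref{introthm1}: push $Q_G$ down to a $\Q$-rational $P^* \in Y_0(N)^*(\Q)$, note that any lift $Q$ of $Q_G$ is also a lift of $P^*$, and invoke the theorem. The only remark worth making is that your middle paragraph re-derives a fact the paper already records as \cref{defi:liftofPstar} (every lift of a rational point of $X_0(N)^*$ lives over a polyquadratic field, via the injection $\Gal(K/\Q) \hookrightarrow \Wcal(N)/\Stab_{\Wcal(N)}(Q)$), so you could cite that definition-proposition instead of re-proving it; otherwise the proof is complete.
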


\begin{rem}
The corollary can be made more precise, and slightly more general to cover cases where $X_0(N)/G$ has positive genus but $X_0(N)^*$ does not. 

Moreover, when $G$ is not the full group of Atkin--Lehner involutions, non-cuspidal points of $(X_0(N)/G)(\Q)$ can generally be proven to be integral even when $N$ is squarefree with similar techniques, but both these generalizations require yet more case-by-case analyses and careful computations, which would further increase the length of this paper. We chose to focus on the case $X_0(N)^*$ itself since it is the most difficult in that sense.
\end{rem}

The most difficult part of Theorem \ref{introthm1} concerns the integrality of the $j$-invariant at primes above 2, which requires a delicate analysis of the group of components of the Néron model of the Jacobian $J_0(p)$ for $p$ prime such that $p^2|N$, in the fashion of \cite[\S 2.3]{LeFourn1}. This also relies on Mazur's proof of Ogg's conjecture describing fully the group $J_0(p)(\Q)_{\rm{tors}}$ \cite[Theorem 1]{Mazur1977}, and proving the integrality above 2 in full generality would require the generalization of this conjecture to Jacobians $J_0(pq)(\Q)_{\rm{tors}}$ for $p,q$ distinct primes (see  \cref{subsec:improvementsvaluation2} for further discussion).

To prove Theorem \ref{introthm1} using Mazur's method, we need to build rank zero abelian quotients of the Jacobian of $X_0(N)^*$.
When $N$ is squarefree, assuming BSD, the Jacobian of $X_0(N)^*$ over $\Q$ will not have any rank zero abelian quotient (see Remark \ref{rem:whynonsquarefree}), which is why we restrict  to non-squarefree levels $N$. 
When $N$ is \emph{not} squarefree,  one can degenerate the Jacobian of $X_0(N)^*$ to a quotient of some $J_0(M)$, with $M$ dividing $N$, with a minus sign for one of the Atkin--Lehner involutions, and in this situation we can generally find a rank zero quotient. For example, Mazur's original result relies on the existence  of the Eisenstein quotient of $J_0(p)^{-}$  when $p \notin \{2,3,5, 7, 13\}$ \cite{Mazur1977}.
We obtain the following result by an analysis of non-vanishing of central values of $L$-functions of modular forms, proving the existence of our main new family of rank zero quotients.
\begin{ithm}[cf.\ \cref{proprankzeroquotient}]
\label{ithm:rkzeroq}
    Let $p$ and $q$ be prime numbers and $p \in \{2,3,5, 7, 13\}$. 
    Then $J_0(pq)^{-_p,+_q}$ (see Definition \ref{defi:quotientswithsigns})  has a rank zero quotient  when $q > 23$.
\end{ithm}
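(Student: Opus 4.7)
The strategy is the standard Kolyvagin--Logachev approach: it suffices to exhibit, for every pair $(p,q)$ with $p \in \{2,3,5,7,13\}$ and $q>23$ prime, a newform $f \in S_2^{\text{new}}(\Gamma_0(pq))$ with Atkin--Lehner eigenvalues $(w_p(f),w_q(f))=(-1,+1)$ and $L(f,1) \neq 0$. For such an $f$, the sign of the functional equation is $\varepsilon(f)=-w_p(f)\,w_q(f)=+1$, so the non-vanishing is compatible with the analytic theory. By Kolyvagin--Logachev, the $\Q$-simple quotient $A_f$ of $J_0(pq)$ then has finite Mordell--Weil group, and by its Atkin--Lehner type it lies in $J_0(pq)^{-_p,+_q}$, giving the desired rank-zero quotient.

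The plan is to produce $f$ via the winding element. Let $e=\{0,\infty\} \in H_1(X_0(pq),\partial;\Q)$; by Manin--Drinfeld it has a canonical lift to $H_1(X_0(pq),\Q)$, and its Petersson pairing with $g \in S_2(pq)$ recovers $L(g,1)$ up to a nonzero scalar. Set
\[
e^{-_p,+_q} \defeq \tfrac{1}{4}(1-w_p)(1+w_q)\,e \in H_1(X_0(pq),\Q)^{-_p,+_q}.
\]
It then suffices to verify that $e^{-_p,+_q}\neq 0$ and that the resulting non-vanishing of some $L(g,1)$ is witnessed by a \emph{new}form. The oldform analysis is simple: since $X_0(p)$ has genus zero for each $p\in\{2,3,5,7,13\}$, every oldform at level $pq$ lifts from a newform $g \in S_2^{\text{new}}(q)$, and oldforms landing in the $(-_p,+_q)$-isotypic part correspond to $g$ with $w_q(g)=+1$; the functional equation at level $q$ then forces $L(g,1)=0$, and since $L(h,s)=L(g,s)\cdot R(p^{-s})$ for a holomorphic rational factor $R$, every associated oldform $h$ at level $pq$ also has $L(h,1)=0$. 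Thus any non-trivial pairing of $e^{-_p,+_q}$ with a Hecke eigenform automatically picks out a newform of exact level $pq$.

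The core task is showing $e^{-_p,+_q}\neq 0$. Using the explicit action of $w_p,w_q$ on the four cusps of $X_0(pq)$, this projection can be written as a concrete linear combination of modular symbols. The new space $S_2^{\text{new}}(pq)^{-_p,+_q}$ has dimension roughly $\tfrac{1}{4}(g(X_0(pq))-2g(X_0(q)))$, which grows linearly in $q$. For $q$ beyond some explicit threshold $B$, effective analytic non-vanishing results in the style of Iwaniec--Sarnak or Kowalski--Michel--VanderKam guarantee that a positive proportion of central $L$-values in each fixed Atkin--Lehner isotypic component are non-zero, yielding $e^{-_p,+_q}\neq 0$. The remaining finitely many primes $23 < q \leq B$, together with the five choices of $p$, are handled by a direct computation of modular symbols (for instance, computing the winding element in each relevant eigenspace and checking that its image in the cuspidal homology is non-zero). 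The threshold $q>23$ is the smallest bound for which both the dimension count and the non-vanishing verification succeed uniformly across all five values of $p$; for $q=23$ and small $p$ the new space can be too small or entirely confined to the wrong Atkin--Lehner eigenspace, so genuine obstructions appear there.

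The main technical obstacle will be the intermediate range of $q$: the asymptotic non-vanishing is clean, but producing a uniform argument covering every $q>23$ either requires explicit analytic constants (to reduce $B$ to $23$ outright) or, more practically, a finite but non-trivial modular-symbol computation in each of the finitely many remaining cases, balancing analytic input against computer algebra.
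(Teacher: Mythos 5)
Your overall strategy coincides with the paper's: reduce to exhibiting a newform $f \in S_2^{\mathrm{new}}(\Gamma_0(pq))$ with $(w_p(f),w_q(f))=(-1,+1)$ and $L(f,1)\neq 0$, then apply Gross--Zagier--Kolyvagin--Logachev. Your oldform elimination (genus zero of $X_0(p)$ for $p\in\{2,3,5,7,13\}$, plus the sign of the level-$q$ functional equation) is correct and matches the paper's brief remark. Reformulating the nonvanishing via the projected winding element $e^{-_p,+_q}$ is a legitimate reformulation via Manin--Drinfeld and Eichler--Shimura.

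However, the crux of the argument — showing $e^{-_p,+_q}\neq 0$, i.e.\ nonvanishing of some $L(f,1)$ within the \emph{specific} Atkin--Lehner isotypic component $(-_p,+_q)$ — is where your proposal has a genuine gap. You invoke ``effective analytic non-vanishing results in the style of Iwaniec--Sarnak or Kowalski--Michel--VanderKam'' as guaranteeing positive-proportion nonvanishing \emph{in each fixed Atkin--Lehner isotypic component}. Those results, as stated in the literature, address the full level (possibly with a fixed sign of the functional equation, i.e.\ a Fricke eigenvalue), not an arbitrary assignment of Atkin--Lehner signs $(\varepsilon_p,\varepsilon_q)$ at two prescribed primes. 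Adapting them to this setting is precisely the nontrivial analytic input: the paper develops and uses a \emph{restricted Petersson trace formula with multiple eigenvalues} (Proposition 4.1, from earlier work of Le~Fourn) specifically to isolate the $(-_p,+_q)$-component, and then carries out explicit Kloosterman-sum and Bessel-function estimates to obtain concrete thresholds ($q>1700$ for $p=2$, down to $q>250$ for $p=13$), after which an LMFDB search handles the remaining levels. In your proposal this entire machinery is replaced by a citation that does not directly apply and by an unspecified bound $B$, with the finite check left undone. The approach is right, but the nonvanishing in a fixed two-prime isotypic component does not follow by black-boxing the cited papers; you would need to prove (or at least locate) a version of those results restricted to $(\varepsilon_p,\varepsilon_q)$-eigenspaces, and then make the constants explicit to reduce $B$ to something checkable.
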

For a stronger statement that gives the exact cases when $J_0(pq)^{-_p,+_q}$ has a rank zero quotient, see \cref{proprankzeroquotient}.
To prove Theorem \ref{introthm1}, we are led to the study of rational points of $X_0(N)^*$ on small levels $N$.
Combining computational and theoretical methods, we classify the rational points in these small levels.
We were able to fully algorithmically find Heegner points on the Atkin--Lehner quotient (we develop and implement an algorithm to enumerate rational Heegner points on $X_0(N)^*$, discussed in \cref{sec:conditionb}), but 
the non-Heegner CM points were more difficult.
The interplay between Atkin--Lehner and Galois needed for a CM point to appear on the quotient proved to be surprisingly subtle, since no such formulas describing those actions exist in general, a key difference from the case of the Heegner points. We needed to resort to case-by-case analyses based on isogenies between CM elliptic curves with small field of definition.

The excluded levels of Theorem \ref{introthm1} relate to interesting cases in the analysis of the points on the corresponding modular curves.
For $N = 99$, the curve $X_0(99)^*$ is an elliptic curve of rank 1.
When $N = 125$, the curve $X_0(125)^*$ has one non-CM non-cuspidal (exceptional) point, studied in \cite{ArulMuller22}.

The case $N = 147$ is a new case of interest. In fact, by classifying rational points on small levels, we discovered the following new exceptional points. 
\begin{iprop}
    The curve $X_0(147)^*$ has exactly two non-CM non-cuspidal rational points whose $j$-invariants are the roots of the quartic minimal polynomials in equations \eqref{eq:exceptional1471}, and \eqref{eq:exceptional1472}. 
    The curve $X_0(75)^*$ has exactly one non-CM non-cuspidal rational point whose $j$-invariant satisfies the quartic minimal polynomial of equation \eqref{eq:x075}.
\end{iprop}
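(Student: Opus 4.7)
The plan is to invoke, for each of the two levels $N=75$ and $N=147$, the general framework for classifying rational points on $X_0(N)^*$ of genus $1\le g\le 5$ developed in the preceding sections, and then isolate the non-CM non-cuspidal points by comparison with the exhaustive enumeration of CM points on each curve.

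First, I would produce an explicit model of $X_0(N)^*$ over $\Q$, together with the quotient map from $X_0(N)$ and the induced expression for the $j$-map modulo $\Wcal(N)$. Such models can be obtained from the $\Wcal(N)$-invariant subspace of cusp forms on $\Gamma_0(N)$ via the canonical embedding (both curves are non-hyperelliptic of moderate genus), or by descending known models of $X_0(N)$. Next, I would compute $J_0(N)^*(\Q)$: the torsion subgroup by reduction modulo small primes of good reduction, and the Mordell--Weil rank by descent on the simple isogeny factors of $J_0(N)^*$, which decompose as elliptic curves and higher-dimensional modular abelian varieties whose rank data is accessible.

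The core step is to enumerate $X_0(N)^*(\Q)$ completely. When the rank of $J_0(N)^*(\Q)$ is strictly less than the genus, this is achieved by Chabauty--Coleman at a convenient prime $p$ of good reduction together with a Mordell--Weil sieve; when the rank equals the genus, one falls back on quadratic Chabauty, following the treatment of $X_0(13^2)^*$ in \cite{Balakrishnanetc}. From the resulting provably complete finite list of rational points I would remove the cusps (read off from the divisor of the $j$-map) and the rational CM points obtained by running the Heegner enumeration algorithm of \cref{sec:conditionb} together with the case-by-case treatment of non-Heegner CM points at these levels; by construction what remains is precisely the set of non-CM non-cuspidal rational points.

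Finally, for each remaining rational point $P^*$, I would lift it to a point in the fibre of $X_0(N)\to X_0(N)^*$, which is defined over a polyquadratic extension of degree $|\Wcal(N)|=4$, compute the $j$-invariant using the $j$-map on the model, and verify that its minimal polynomial over $\Q$ matches \eqref{eq:exceptional1471} or \eqref{eq:exceptional1472} at level $147$, respectively \eqref{eq:x075} at level $75$. The non-CM character is then certified either by checking that the $j$-invariant does not lie among the thirteen rational CM $j$-invariants nor among the finite list of CM $j$-invariants of degree up to $4$, or by a short computation of Frobenius traces at split primes of ordinary reduction. The main obstacle is the Chabauty step for $X_0(147)^*$: level $147$ is precisely one of those excluded from \cref{introthm1} because the rank-zero-quotient construction underlying \cref{ithm:rkzeroq} fails there, so the problem cannot be reduced to Mazur's method and a careful quadratic Chabauty computation with an explicit Mordell--Weil sieve is required to pin down exactly the two exceptional rational points and rule out any additional $p$-adic solutions.
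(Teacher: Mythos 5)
Your overall strategy — produce a model, enumerate $X_0(N)^*(\Q)$, subtract cusps and CM points, then identify the residue as the exceptional points and compute their $j$-invariants — is the same as the paper's. However, two of the technical assertions on which you lean are wrong, and they would send you down the wrong computational path.

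First, you invoke quadratic Chabauty for $X_0(147)^*$, arguing that because $147$ is excluded from Theorem \ref{introthm1} (no admissible rank-zero quotient) the problem ``cannot be reduced to Mazur's method'' and therefore requires quadratic Chabauty. This conflates two unrelated conditions. The rank-zero quotient enters Mazur's formal-immersion argument for \emph{integrality} of $j$; it has nothing to do with whether the Chabauty--Coleman method applies, which only requires $\operatorname{rk} J_0(N)^*(\Q) < g(X_0(N)^*)$. Here $X_0(147)^*$ has genus $2$ with Mordell--Weil rank strictly less than $2$ (the only failure of this rank condition among the exceptional levels is $N=99$), so the paper uses \emph{classical} Chabauty--Coleman, possibly with a Mordell--Weil sieve. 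The absence of a rank-zero quotient is precisely why $147$ can (and does) support exceptional rational points, but classical Chabauty still pins them down. Saying quadratic Chabauty is the main obstacle is incorrect.

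Second, for $X_0(75)^*$ you speak of a canonical embedding of a non-hyperelliptic curve of ``moderate genus'' and suggest descent and Chabauty. But $X_0(75)^*$ has genus $1$, so there is no canonical embedding and no Chabauty step at all: the Jacobian has rank $0$ and one simply computes the torsion subgroup of the elliptic curve $X_0(75)^*$ to get the full set of eight rational points. Finally, in the paper the identification of the exceptional $j$-invariants does not come from computing a full $j$-map $X_0(N)\to\P^1$ as you propose (which the paper explicitly notes is computationally infeasible past very small levels); instead, for $N=147$ the paper lifts the two mystery points through $\psi_{147,3}$, checks that the resulting $j$-invariants satisfy the relevant modular polynomials $\Phi_d(j_m,j_n)=0$ for $d\in\{1,3,49,147\}$, and thereby reconstructs the square of isogenies making the point Atkin--Lehner-stable and hence $\Q$-rational on the star quotient. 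Your proposal would reach the same answer only if you had a way to compute the quotient map, which the paper deliberately avoids.
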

Finally, we classify rational points on all modular curves $X_0(N)^*$ with genus $1 \leq g \leq 5$, $N \neq 99$, and study when these curves have exceptional points. 
For $N \leq 67$, Elkies \cite{ElkiesKCurves} studied these levels and found an exceptional point in the case $N = 63$. 

For the following classification result, we computed and classified rational points on over one hundred modular curves.
\begin{iprop}[cf.\ Tables \ref{table:heegnerpts} and \ref{tab:smallcurves}]
    Let $N \geq 1$ non-squarefree such that $X_0(N)^*$ has genus $1 \leq g \leq 5$. Then $X_0(N)^*(\Q)$ has only cusps and CM points, except in the cases:
    \begin{itemize}
    \item (Exceptional points) $N=63, 75,  125, 147$: there are exceptional rational points that do not correspond to CM elliptic curves or cusps.
    \item (Rank 1 elliptic curve) $N = 99$.  \end{itemize} 
\end{iprop}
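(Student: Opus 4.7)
The plan is to combine the integrality result of \cref{introthm1} with a systematic enumeration and finiteness check for each of the finitely many levels in the stated range. First I would list all non-squarefree $N$ for which $X_0(N)^*$ has genus $g$ with $1\leq g\leq 5$. The genus of $X_0(N)^*$ can be computed from the genus of $X_0(N)$ and the fixed-point counts of each Atkin--Lehner involution, and only finitely many $N$ satisfy the bound, giving a short explicit list to handle.

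For each such $N \neq 99$, the argument would split into three tasks. First, I would enumerate the cusps and the rational CM points on $X_0(N)^*$: the cusps of $X_0(N)$ and their Atkin--Lehner orbits are well understood, and the rational CM points can be produced via the Heegner point enumeration algorithm developed in \cref{sec:conditionb} together with the case-by-case identification of non-Heegner rational CM points alluded to in the introduction (using isogenies between CM elliptic curves over small fields). Second, I would invoke \cref{introthm1} to conclude that any non-cuspidal rational point $P^*\in X_0(N)^*(\Q)$ has a lift $P=(E,C_N)$ defined over some polyquadratic field $K$ with $j(E)$ either integral (when $p^2\mid N$ for some $p\notin\{2,3,5,7,13\}$) or integral away from a very short list of small primes. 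This reduces the search to a finite list of potential $j$-invariants. Third, for each remaining $N$ I would either argue that all such integral $j$-values force CM/cuspidal points, or compute $X_0(N)^*(\Q)$ directly using a curve model and standard tools (Mordell--Weil sieve, classical or quadratic Chabauty, the Mordell--Weil group of $J_0(N)^*$), comparing the output with the CM/cuspidal list and recording any leftover points as exceptional.

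For the exceptional cases $N\in\{63,75,125,147\}$, the previously known point for $N=63$ is due to Elkies and the one for $N=125$ is due to \cite{ArulMuller22}; the new points at $N=75$ and $N=147$ would be exhibited by explicit quartic minimal polynomials for $j(E)$, referencing the equations \eqref{eq:x075}, \eqref{eq:exceptional1471}, \eqref{eq:exceptional1472} to be introduced later. Their $\Q$-curve structure and non-CM nature would be verified by direct computation (checking CM discriminants, endomorphism rings, and Galois conjugacy of isogenies). Completeness for these four curves follows from the integrality bound plus a finite search, or a Chabauty-type computation, which certifies that no further rational points exist. The excluded level $N=99$ is set aside since $X_0(99)^*$ is an elliptic curve of rank $1$, and thus outside the scope of a ``finite rational points'' classification.

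The main obstacle is the third step for curves where the Jacobian quotient provided by \cref{ithm:rkzeroq} does not directly give a formal immersion argument sharp enough to conclude that only CM/cuspidal points occur; there one must fall back on explicit models of $X_0(N)^*$, compute Mordell--Weil groups of the relevant quotients, and run a Chabauty or Mordell--Weil sieve. A secondary difficulty is ruling out spurious rational CM points: since no clean formula controls the interaction of Atkin--Lehner with the Galois action on CM points, one must perform a case-by-case analysis, in particular for small class-number discriminants whose CM elliptic curves are defined over quadratic or polyquadratic fields. Once these computational and case-by-case verifications are assembled into the tables referenced in \cref{table:heegnerpts} and \cref{tab:smallcurves}, the stated classification follows.
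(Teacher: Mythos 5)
Your plan correctly identifies the overall structure (enumerate levels, enumerate cusps and CM points, compute $X_0(N)^*(\Q)$, compare), but there is a genuine gap in the middle step. You claim that invoking \cref{introthm1} to conclude $j(E)$ is integral (or integral away from a short list of primes) ``reduces the search to a finite list of potential $j$-invariants.'' That is not true: integrality bounds the denominator of $j(E)$ but places no upper bound on its height, so the search space is still infinite. To get finiteness from integrality one must additionally produce an explicit height upper bound (e.g.\ via Runge's method), which the paper explicitly defers to future work. In fact, the paper's classification of small levels (Section 6 and Tables \ref{table:heegnerpts} and \ref{tab:smallcurves}) does not invoke the integrality theorem at all; it is a purely computational companion result.

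What the paper actually does is: (i) compute a model of $X_0(N)^*$; (ii) for genus $1$ compute torsion since the rank is $0$, and for higher genus run Chabauty--Coleman (with a Mordell--Weil sieve if needed) to determine $X_0(N)^*(\Q)$; (iii) for the six levels $N \in \{261,294,308,376,378,572\}$ where Chabauty--Coleman fails (because the known rational points do not generate the Mordell--Weil group up to finite index, or the rank is too large), it instead constructs an explicit map $G_{N,f}:X_0(N)^* \to A_f$ to a rank-zero new quotient $A_f$ of some $J_0(M)^{\rm new}$, computes $A_f(\Q)$, and pulls back the finitely many fibers; (iv) it then matches the computed points against cusps, Heegner points (via the algorithm of \cref{sec:conditionb}), and non-Heegner CM points (via \cref{prop:CMpointslift} and \cref{prop:CMpointskernels}), recording any leftover as exceptional. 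Your proposal's fallback to explicit models plus Chabauty captures most of this, but you should drop the integrality-to-finiteness step (it does not work) and include the pullback-via-rank-zero-quotient technique as the substitute for Chabauty in the hard cases, rather than suggesting quadratic Chabauty, which the paper does not use here.
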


The present work is the first step in a broader program to classify rational points on $X_0(N)^*$, for $N$ non-squarefree. The authors plan to carry out this classification  following the broad steps of Bilu, Parent, and Rebolledo. 
The integrality part is the object of the present paper and isogeny theorems are fully understood following Gaudron and R\'{e}mond \cite{GaudronRemond14,GaudronRemond23}, so there remains to be done a detailed analysis of Runge's method to bound sharply enough the heights of integral points on $X_0(N)^*$ for $N$ non-squarefree, which the authors have started tackling in a second part of this work.

Many of the theorems contained in this paper rely on computational results and code contained in our GitHub repository~\cite{CodeForOurArticle}. See~\cref{subsec:algorithms and code} for more details.

In the next section, we give a more detailed overview of the proof strategy for the reader's convenience. We end that section with a collection of notation used in the paper. In \cref{sec:preliminaries} we discuss background material on modular curves, degeneracy maps, and Atkin--Lehner operators. We prove the existence of rank zero quotients in \cref{sec:analysis-of-vanishing-of-the-l-functions}. In \cref{sec:formalimmersion}, we construct the formal immersions and give the integrality proof for most levels. We also give special attention to the integrality at $2$. In \cref{sec:exclevels} we discuss the exceptional levels, which require more detailed case analysis. In \cref{sec:heegnerpts}, we discuss the theory of Heegner points and  provide an explicit algorithm for determining rational Heegner points on $X_0(N)^*$. This section also contains a results about non-Heegner CM points, and a classification of the rational points on exceptional levels.
Finally, in \cref{sec:smalllevels} we discuss the classification of rational points on $X_0(N)^*$ of genus $1 \leq g \leq 5$.

\section{Overview of the strategy}
\label{sec:overview}
\subsection{Overview}
In this section, we discuss several assumptions that can be made on the shape of the level $N$ which help us with the analysis of the rational points on $X_0(N)^*$. We put off some technical proofs and an analysis of the exceptional levels to give the reader a broad overview of our strategy for tackling the generic case.
\begin{defi}
	For any level $N \geq 1$, a rational point $P \in X_0(N)^*(\Q)$ is said to be \emph{trivial} if it is a cusp or a CM point and \emph{nontrivial} otherwise, and $X_0(N)^*(\Q)$ is called \emph{trivial} if it only contains trivial points and \emph{nontrivial} otherwise.
\end{defi}
First, recall the following result when the level $N$ is a prime power.
\begin{thm}
	When $N = p^k$ with $p$ prime, $k >1$ and the genus of $X_0(p^k)^*$ is nonzero, $X_0(p^k)^*(\Q)$ is trivial except when $N=5^3$ (and in that case, it contains exactly one nontrivial rational point).
\end{thm}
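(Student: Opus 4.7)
The plan is to assemble the existing literature covering the prime power case, following the two-step Mazur strategy outlined in the introduction: first establish integrality of $j$-invariants via the formal immersion method applied to a rank zero quotient of $J_0(p^k)$, and then bound the heights of such integral points to enumerate the finite list of remaining levels.

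For the bulk of cases, namely $p \geq 11$ with $p \neq 13$ and $k > 1$, I would invoke directly \cite{BiluParent, BiluParentRebolledo}, where Bilu, Parent, and Rebolledo combine Momose's integrality result \cite[Theorem (3.8)]{Momose1986} (based on the Eisenstein quotient of $J_0(p)^-$) with Runge's method and isogeny bounds in the spirit of Gaudron--R\'emond to show that $X_0(p^k)^*(\Q)$ consists only of cusps and CM points throughout this range. The overall structure is: integrality forces non-trivial points to be integral in $j$, Runge's inequality bounds $h(j)$ from above on any proper Atkin--Lehner quotient, and the minimal degree of a cyclic isogeny between the two elliptic curves attached to a rational point forces a lower bound on $p^k$, yielding a contradiction outside a short explicit list, which is then cleared algorithmically.

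For $p = 13$, the analogous Eisenstein quotient construction is obstructed, and the genus $3$ curve $X_0(169)^*$ instead requires the quadratic Chabauty computation of Balakrishnan et al.\ \cite{Balakrishnanetc}. For the small primes $p \in \{2,3,5,7\}$, only finitely many prime powers $p^k$ with $k > 1$ produce a curve $X_0(p^k)^*$ of positive genus, so these can be enumerated explicitly and treated case by case, typically via classical Chabauty on the Jacobian or, where needed, quadratic Chabauty. The unique exceptional level $N = 125 = 5^3$, whose non-trivial rational point was identified and classified by Arul and M\"uller \cite{ArulMuller22}, accounts for the exception in the statement.

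The main obstacle lies not in the large $p$ regime (handled uniformly by the Momose--Bilu--Parent--Rebolledo machinery) but precisely in these small-prime cases where rank zero quotients coming from the Eisenstein construction may fail to exist, so that the uniform formal immersion approach breaks down and one must rely on more computational point-enumeration techniques, of which the quadratic Chabauty treatment of $X_0(169)^*$ is the historically most delicate instance.
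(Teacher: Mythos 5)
Your overall route — reducing the claim to citations of Bilu--Parent--Rebolledo for $p \geq 11$, $p \neq 13$, Balakrishnan et al.\ for $N = 13^2$, and Arul--M\"uller for $N = 5^3$ — is exactly the paper's argument, and for those regimes the proposal is correct. However, your treatment of the remaining small primes $p \in \{2,3,5,7\}$ rests on a false claim: it is \emph{not} true that only finitely many prime powers $p^k$ with $k>1$ give a curve $X_0(p^k)^*$ of positive genus. The genus of $X_0(p^k)^*$ grows without bound in $k$, so the family of levels to be controlled is infinite, and ``enumerate explicitly and treat case by case'' cannot be justified as stated.

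The mechanism that actually reduces this to a finite check is the degeneracy map $\psi_{N,N/q^2}\colon X_0(N)^* \to X_0(N/q^2)^*$ of Proposition~\ref{propredalmostsqf}, which sends cusps to cusps and preserves isogeny classes, so that by Corollary~\ref{cor:trivialityandabovelevels} triviality of $X_0(p^{k-2})^*(\Q)$ implies triviality of $X_0(p^k)^*(\Q)$. Iterating, one only needs to clear finitely many base levels per prime, which is exactly what the discussion following Theorem~1.1 in Bilu--Parent--Rebolledo carries out for $p \in \{2,3,5,7\}$. A further caution on your ``classical Chabauty'' suggestion: for $p \in \{2,3,5,7,13\}$ the Eisenstein quotient $J_0(p)^-$ vanishes, so the uniform rank-zero-quotient construction underlying Mazur's method is unavailable in this range; the finite base cases require individual (often computational) treatment, and it is the square-descent — not genus finiteness, and not a free Chabauty input — that makes the list finite.
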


\begin{proof}
	This is almost completely contained in \cite[Theorem 1.1]{BiluParentRebolledo} and the discussion that follows, except for $N = 13^2$ where $X_0(13^2)^+ \cong X_{\rm{split}}(13)^+$ which has been famously solved in \cite{Balakrishnanetc}, and for $N=5^3$ solved in \cite{ArulMuller22}, both with the recent developments of the quadratic Chabauty method.
\end{proof}

\begin{defi}[Rank zero quotient]
	Let $B$ an abelian variety over $\Q$.
	
	A \emph{rank zero quotient of} $B$ is a quotient abelian variety $A$ of $B$ with $\dim A>0$ such that $A(\Q)$ is finite (i.e.\ of Mordell--Weil rank $0$).
\end{defi}
We can thus assume by default in what follows (and we will) that the level $N$ is \emph{not} a prime power and not squarefree.
For each level $N \geq 1$, we wish to apply Mazur's method and construct a formal immersion from $X_0(N)^*$ to a rank 0 quotient of $J_0(M)$ for some $M|N$.

\begin{rem}
\label{rem:whynonsquarefree}
	For $N$ squarefree, the simple factors over $\Q$ of the Jacobian $J_0(N)^*$ are associated by Eichler--Shimura  theory to Hecke eigenforms $f \in S_2(\Gamma_0(N))$ fixed by all Atkin--Lehner involutions. 
Their $L$-functions have sign $-1$ in their functional equations so $L(f,1)=0$, therefore assuming the BSD conjecture, all the simple factors of $J_0(N)^*$ have nonzero rank. 
This is why the traditional Mazur's method (and the classical Chabauty method) are not expected to work in this case. However, that is not the topic of the current paper, where we assume $N$ is non-squarefree precisely because some oldforms might provide a rank zero quotient, as will be shown below.
\end{rem}

We now discuss the conditions on divisors $M$ of $N$ that make it possible to apply Mazur's method.

\begin{defi}[Admissible divisors]\label{def:adimissible}
Let $N \geq 1$ be a level.  An \emph{admissible divisor of $N$} is a proper divisor $M|N$ such that  $(HV)$ and $(RZQ)$ hold, where those conditions are defined as follows:

\begin{enumerate}
    \item[$(HV)$] For every prime $p|N$, $v_p(M) \leq \lceil v_p(N)/2 \rceil$ \quad (``at most half valuation'').

    \item[$(RZQ)$] $J_0(M)^{\rm{new}}$ has an \emph{admissible} rank zero quotient, i.e., a positive-dimensional rank zero quotient such that for some prime divisor $q$ of $N/M$ and $M$, the Atkin--Lehner involution $w_q^{(M)}$ acts as $-1$ on this quotient (and furthermore, if $2|(N/M)$, then $w_2^{(M)}$ must act by $-1$). 
\end{enumerate}
\end{defi}

\begin{rem}
	The additional technical restrictions in (RZQ) on Atkin--Lehner signs are designed to help prove that the maps $X_0(N)^* \ra J_0(M)$ we build are formal immersions. To give a somewhat abusive example, for any prime $p$ there is a natural map $X_0(p)^+ \ra J_0(p) \ra J_0(p)^-$ (and the latter is often of rank 0), but it turns out to be constant precisely because of the Atkin--Lehner signs, so of course it is not a formal immersion.
\end{rem}
\begin{defi}[Almost squarefree level]
\label{defi:almostsquarefree}
    A level $N \geq 1$ is called \emph{almost squarefree} if it can be written for some $s \geq 0$ as
  \begin{equation}
	\label{eqshapeN}
N = p^k q_1 \cdots q_s
\end{equation}
with $p,q_1, \cdots, q_s$ pairwise distinct primes and $k=2$ or 3.  The prime $p$ is called the \emph{powerful prime} of $N$.
\end{defi}
When $N$ is almost squarefree, any proper squarefree divisor $M|N$ satisfies $(HV)$.
We will solve most cases by reducing the level to an almost squarefree level, which we do with the following result.
\begin{prop}[Going down (Proposition \ref{prop:constructionofdegeneracymappsi})]
\label{propredalmostsqf}
	For every level $N \geq 1$ and every prime number $q$ such that $q^2  |  N$, we can define a morphism 
	\begin{equation}
	    	\psi_{N,N/q^2}\colon X_0(N)^* \rightarrow X_0(N/q^2)^*
	\end{equation}
	sending cusps to cusps, CM points to CM points and exceptional points to exceptional points. More precisely, for each $P \in Y_0(N)^*(\Qb)$, the $\Qb$-isogeny classes of $\psi_{N,N/q^2}(P)$ and of $P$ are the same.
\end{prop}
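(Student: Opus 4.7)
The plan is to construct $\psi_{N, N/q^2}$ by descending a natural degeneracy morphism $X_0(N) \to X_0(N/q^2)$ to the Atkin--Lehner star quotients, and then read off the listed properties from the modular interpretation. At the moduli level, consider the degeneracy
\[
\widetilde\psi\colon X_0(N) \to X_0(N/q^2), \qquad (E, C_N) \longmapsto \bigl(E/C_{q^2},\, C_N/C_{q^2}\bigr),
\]
where $C_{q^2}$ is the unique cyclic subgroup of order $q^2$ contained in $C_N$ (which exists since $q^2 \mid N$). This is a morphism of $\Q$-schemes, extending to the cusps via the standard compactification.

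The core technical task is the descent to star quotients. Writing $\mathcal{W}(N) \cong (\Z/2)^{\omega(N)}$ via its generators $w_{\ell^{v_\ell(N)}}$ indexed by primes $\ell \mid N$, and using the Hall decomposition $C_N = \bigoplus_{\ell \mid N} C_{\ell^{v_\ell(N)}}$, one analyzes the action of each generator under $\widetilde\psi$. For primes $\ell \neq q$, the involution $w_{\ell^{v_\ell(N)}}$ acts only on the $\ell$-part of $C_N$ and commutes with the quotient by $C_{q^2}$, giving $\widetilde\psi \circ w_{\ell^{v_\ell(N)}} = w_{\ell^{v_\ell(N/q^2)}} \circ \widetilde\psi$. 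For $\ell = q$ with $q^3 \mid N$, an analogous equivariance holds with $w_{q^{v_q(N) - 2}}$ on the target. The delicate case is $q^2 \| N$, where the target has no Atkin--Lehner involution at $q$ to absorb $w_{q^2}$ from the source: here one exploits the two natural degeneracy maps $X_0(N) \to X_0(N/q^2)$ attached to the divisor $q^2$ of $N/(N/q^2) = q^2$, which are interchanged by $w_{q^2}$ on $X_0(N)$, and combines them in a way that produces a morphism whose composition with $X_0(N/q^2) \to X_0(N/q^2)^*$ is $\mathcal{W}(N)$-invariant. Once the equivariance is established in all cases, descent yields $\psi_{N, N/q^2}\colon X_0(N)^* \to X_0(N/q^2)^*$.

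The stated properties then follow from the modular interpretation. Since $\widetilde\psi$ sends cusps to cusps (a general fact about degeneracy morphisms of modular curves), so does $\psi_{N, N/q^2}$. For a non-cuspidal $P^* \in Y_0(N)^*(\Qb)$ with lift $P = (E, C_N)$, the image $\psi_{N, N/q^2}(P^*)$ admits a lift whose underlying elliptic curve is obtained from $E$ by an isogeny of $q$-power degree (at most $q^2$, possibly composed with Atkin--Lehner-type operators). Hence $P$ and $\psi_{N, N/q^2}(P^*)$ share the same $\Qb$-isogeny class. Since CM is invariant under isogeny, CM points map to CM points; since $\psi_{N, N/q^2}$ is a morphism defined over $\Q$, rational exceptional (non-cuspidal, non-CM) points map to rational exceptional points.

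The main obstacle is the Atkin--Lehner equivariance check in the case $q^2 \| N$, where the target star quotient has strictly fewer Atkin--Lehner involutions than the source and so the naive candidate $\widetilde\psi$ fails to descend directly. The resolution requires a careful combinatorial analysis of how the two natural degeneracies attached to the divisor $q^2$ interact with the $\mathcal{W}(N)$-action on the source, ensuring that the construction of $\psi_{N, N/q^2}$ is well-posed on the star quotient.
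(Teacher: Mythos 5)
There is a genuine gap in the equivariance step, and it appears in \emph{both} the $q^3 \mid N$ case and the $q^2 \parallel N$ case, not only in the case you flag as delicate. Your candidate map $\widetilde\psi\colon (E,C_N)\mapsto (E/C_N[q^2],\,C_N/C_N[q^2])$ is the degeneracy map $i_{N,\,N/q^2}^{(q^2)}$ in the paper's notation, and this is not equivariant for $w_q^{(N)}$ in the way you claim. By Proposition~\ref{propdiagcommALandiNM}, precomposing $i_{N,M}^{(d)}$ with $w \in \Wcal(N)$ yields $\varphi_{N,M}(w)\circ i_{N,M}^{(w\cdot d)}$, where the divisor $d$ of $N/M$ changes to $w\cdot d$. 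For $d=q^2$ and $w = w_q^{(N)}$ one has $w_q^{(N)}\cdot q^2 = 1$ (since $v_q(N/M)=2$), hence $i_{N,M}^{(q^2)}\circ w_q^{(N)} = \varphi_{N,M}(w_q^{(N)})\circ i_{N,M}^{(1)}$, which is not $\varphi_{N,M}(w_q^{(N)})\circ i_{N,M}^{(q^2)}$. One can verify this directly on moduli when $N = q^3$, $M = q$: $\widetilde\psi\circ w_{q^3}^{(q^3)}$ sends $(E,C)$ to a pair whose underlying curve is $E/C[q]$, while $w_q^{(q)}\circ\widetilde\psi$ gives underlying curve $E/C$; these are $q^2$-isogenous but not isomorphic in general. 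So your assertion ``an analogous equivariance holds with $w_{q^{v_q(N)-2}}$ on the target'' when $q^3\mid N$ is false, and not just the $q^2\parallel N$ case requires care.

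Moreover, the proposed fix for $q^2\parallel N$ --- ``combining'' $i^{(1)}$ and $i^{(q^2)}$, which are interchanged by $w_{q^2}^{(N)}$ --- cannot produce a morphism of curves: there is no way to average or symmetrize two distinct curve-to-curve maps, and $i^{(1)}(P)$, $i^{(q^2)}(P)$ are generically not in the same $\Wcal(N/q^2)$-orbit in $X_0(N/q^2)$ since $q\nmid N/q^2$. The correct resolution, and what the paper does, is to use the \emph{middle} degeneracy $i_{N,\,N/q^2}^{(q)}$, i.e.\ $(E,C_N)\mapsto (E/C_N[q],\,C_N[N/q]/C_N[q])$. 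The divisor $q$ is the unique one among the divisors $1,q,q^2$ of $N/M=q^2$ that is fixed by the $\Wcal(N)$-action on divisors (because $v_q(N/M)-v_q(q)=1=v_q(q)$), so Proposition~\ref{propdiagcommALandiNM} then gives $i_{N,M}^{(q)}\circ w = \varphi_{N,M}(w)\circ i_{N,M}^{(q)}$ for every $w\in\Wcal(N)$ with no correction term, and the descent to star quotients is immediate. The remaining properties (cusps to cusps, CM/exceptional preserved, isogeny class preserved because the underlying isogeny is the $q$-isogeny $E\to E/C_N[q]$) follow as you argue.
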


\begin{cor}
\label{cor:trivialityandabovelevels}
	For any level $\widetilde{N} \geq 1$ and any $N | \widetilde{N}$ such that $\widetilde{N}/N$ is a square, if $X_0(N)^*(\Q)$ is trivial, then $X_0(\widetilde{N})^*(\Q)$ is trivial too. 
\end{cor}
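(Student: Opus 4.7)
The plan is to iterate the going-down map of Proposition \ref{propredalmostsqf}, reducing the level $\widetilde{N}$ one square factor at a time, and to track that a hypothetical non-trivial rational point upstairs would descend to a non-trivial rational point on $X_0(N)^*$.

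First, I would write $\widetilde{N} = Nm^2$ for some integer $m \geq 1$, and induct on the number $\Omega(m)$ of prime factors of $m$ counted with multiplicity. If $m = 1$ there is nothing to prove. Otherwise, pick any prime $q \mid m$; then $q^2 \mid \widetilde{N}/N$ and in particular $q^2 \mid \widetilde{N}$, so Proposition \ref{propredalmostsqf} provides a $\Q$-morphism
\[
\psi_{\widetilde{N}, \widetilde{N}/q^2} \colon X_0(\widetilde{N})^* \longrightarrow X_0(\widetilde{N}/q^2)^*
\]
which sends cusps to cusps and preserves $\Qb$-isogeny classes of non-cuspidal points. Setting $\widetilde{N}' = \widetilde{N}/q^2 = N(m/q)^2$, the quotient $\widetilde{N}'/N$ is still a square with $\Omega(m/q) = \Omega(m) - 1$, so the induction yields a composite morphism $\Psi\colon X_0(\widetilde{N})^* \to X_0(N)^*$ defined over $\Q$ that inherits both preservation properties.

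Next I would take any $P \in X_0(\widetilde{N})^*(\Q)$ and analyze $\Psi(P) \in X_0(N)^*(\Q)$. If $P$ is a cusp, then so is $\Psi(P)$. If $P$ is a CM point, a lift to $X_0(\widetilde{N})(\Qb)$ corresponds to a CM elliptic curve, and by the preservation of the $\Qb$-isogeny class the corresponding lift of $\Psi(P)$ is $\Qb$-isogenous to it, hence also CM (the CM property being an isogeny invariant over $\Qb$). Consequently, if $P$ were non-trivial (non-cuspidal and non-CM), then $\Psi(P)$ would likewise be non-trivial, contradicting the assumed triviality of $X_0(N)^*(\Q)$.

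I do not anticipate a substantive obstacle: the corollary is a straightforward formal consequence of Proposition \ref{propredalmostsqf}. The only verifications are the transitivity of the \enquote{sends cusps to cusps and preserves $\Qb$-isogeny class} properties under composition, and the compatibility of the factorisation $\widetilde{N}/N = m^2$ with successive reductions by $q^2$, both of which are immediate.
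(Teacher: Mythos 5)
Your proof is correct and follows exactly the approach the paper intends: the corollary is stated without proof in the paper as an immediate consequence of Proposition~\ref{prop:constructionofdegeneracymappsi}, and your induction on $\Omega(m)$ by iterating $\psi_{\widetilde{N},\widetilde{N}/q^2}$ (noting that each step preserves cuspidality, non-cuspidality, and $\Qb$-isogeny class, hence the CM property) is precisely the argument the authors leave implicit.
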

 Thus, starting from any non-squarefree level $\widetilde{N}$, we can reduce  to an almost squarefree level $N$ (it can be a prime power), such that $\widetilde{N}/N$ is a square.   
 This corollary allows us to reduce our investigation to computing rational points on almost squarefree levels (which will work except for finitely many cases).

To check whether some $M|N$ satisfies (RZQ), we investigate the quotients of factors of $J_0(M)$ with fixed Atkin--Lehner signs.

\begin{defi}
\label{defi:quotientswithsigns}
	For any level $M \geq 1$ and any prime number $p  |  M$, define $J_0(M)^{-_p,+_{(p)}}$ the abelian subvariety of $J_0(M)$ fixed by all Atkin--Lehner involutions $w_q^{(M)}$ ($q$ prime dividing $M$ distinct from $p$) and with eigenvalue $-1$ for $w_p^{(M)}$ (for details on Atkin--Lehner involutions, see section \ref{subsecAtkinLehner}).
\end{defi}

We will be looking for divisors $M|N$ such that $J_0(M)^{-_p,+_{(p)}}$ has a rank zero quotient. The following description of exceptional numbers and levels  determines the cases where it is impossible to find such a rank zero quotient (for $M=p,pq$ or $pqq'$ respectively).

\begin{defi}[Exceptional tuples of primes]
\hspace*{\fill}

A prime $p$ is called \emph{exceptional} if $J_0(p)^-=0$, i.e.\ $p \in \{2,3,5,7,13\}$.

	A pair of distinct primes $(p,q)$ is called an \emph{exceptional pair} if it belongs to the following table.
			\begin{equation}
					\begin{array}{|c|c|c|c|c|c}
					\hline
					p & 2 & 3 & 5 & 7 \\
					\hline
					q & \in \{3,5,7,11,23\} & \in \{2,5,11\} & 2 & 3 \\
					\hline
					\end{array}.
			\end{equation}
				
The only \emph{exceptional triples} of distinct primes are the triples $(p,q,q')$ with $p=2$ and  $(q,q')= (3,5)$ or $(5,3)$ (and there are no exceptional $n$-tuples for $n \geq 4$, by definition).
\end{defi}

\begin{defi}[Exceptional levels]
	\label{defexceptionallevel}
	Let $\widetilde{N} \geq 1$ be a non-squarefree integer that is not a prime power, such that the genus of $X_0(\widetilde{N})^*$ is positive.
	
	We say $\widetilde{N}$ is \emph{an exceptional level} if for every prime $p$ such that $p^2  |  \wN$, the prime $p$ is exceptional, and for every $N  |  \wN$ almost squarefree of the shape $N = p^k q_1 \cdots q_s$ such that $\wN/N$ is a square, every subtuple of $(p,q_1, \cdots, q_s)$ that starts with $p$ is exceptional (in particular $s\leq 2$ and $p=2$ if $s=2$).
\end{defi}

This theorem is our main result on the existence of rank 0 quotients. It relies on analytic results about non-vanishing of $L$-values and will be proved in section \ref{sec:analysis-of-vanishing-of-the-l-functions}.
	
\begin{thm} \label{thm:rank0quotient}
	For every $\widetilde{N}  \geq 1$ non-squarefree, not a prime power, and non-exceptional such that the genus of $X_0(\widetilde{N})$ is positive, 
 there are integers $M$ and $N$ with $M  |  N  |  \widetilde{N}$ such that the following holds.
	
	\begin{enumerate}[(i)]
		\item $\widetilde{N}/N$ is a square and $N$ is almost squarefree \eqref{eqshapeN}. 
		
		\item $M$ is a squarefree divisor of $N$ and $p  |  M$ (where $p$ is the powerful prime factor of $N$).
		
		\item There exists a rank zero new quotient $A$ of $J_0(M)^{-_p,+_{(p)}}$.
	\end{enumerate}
 In particular, $M$ is an admissible divisor of $N$. 
\end{thm}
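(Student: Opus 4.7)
The plan is to peel off the square part of $\widetilde N$ first, then select an appropriate squarefree subdivisor whose newform quotient carries the desired Atkin--Lehner signs, and finally invoke analytic non-vanishing results for $L$-values of newforms to produce the rank zero quotient.

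\textbf{Step 1: reduction to an almost squarefree level $N$.} Because $\widetilde N$ is non-squarefree and not a prime power, we may repeatedly strip off a factor $q^2$ for each prime $q$ with $v_q(\widetilde N)\geq 4$, and one factor $q^2$ whenever $v_q(\widetilde N)=2$ or $v_q(\widetilde N)\geq 4$ is even, keeping $v_p\in\{2,3\}$ for at least one distinguished prime (the powerful prime). Concretely, define $N$ by setting $v_p(N)=2$ if $v_p(\widetilde N)$ is even and $v_p(N)=3$ if $v_p(\widetilde N)$ is odd, and $v_q(N)=1$ for every other prime $q\mid \widetilde N$. Then $\widetilde N/N$ is a square and $N = p^k q_1\cdots q_s$ is almost squarefree in the sense of \cref{defi:almostsquarefree}, giving (i). The existence of the morphism provided by \cref{propredalmostsqf} will later allow us to push rational points down.

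\textbf{Step 2: choice of the squarefree divisor $M$.} Since $\widetilde N$ is non-exceptional, by \cref{defexceptionallevel} there exists a subtuple of $(p,q_1,\ldots,q_s)$ containing $p$ that is not exceptional. Write it as $(p,q_{i_1},\ldots,q_{i_r})$ and set
\[
M \;=\; p\cdot q_{i_1}\cdots q_{i_r}.
\]
Then $M$ is a squarefree divisor of $N$ containing $p$, so (ii) holds. The key point is that the tuple $(p,q_{i_1},\ldots,q_{i_r})$ is not in the exceptional list, which will guarantee the analytic input of the next step.

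\textbf{Step 3: constructing the rank zero new quotient.} This is where the main work lies and will be carried out in \cref{sec:analysis-of-vanishing-of-the-l-functions}. By Eichler--Shimura, the isogeny factors of $J_0(M)^{\mathrm{new}}$ correspond to Galois orbits of newforms $f\in S_2(\Gamma_0(M))^{\mathrm{new}}$, and the sign of $w_\ell^{(M)}$ on the factor $A_f$ equals the negative of the Atkin--Lehner eigenvalue of $f$ at $\ell$. So to exhibit a rank zero quotient of $J_0(M)^{-_p,+_{(p)}}$ one needs a newform $f$ of level $M$ with Atkin--Lehner sign $-1$ at $p$, sign $+1$ at every $q_{i_j}$, and $L(f,1)\neq 0$. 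For $M=p$ this is the existence of Mazur's Eisenstein quotient and fails exactly for $p\in\{2,3,5,7,13\}$; for $M=pq$ it is the content of \cref{ithm:rkzeroq}; and for $M=pqq'$ the remaining cases reduce to the two exceptional triples listed. In each case the argument rests on averaging $L(f,1)/\Omega_f$ over newforms with prescribed $w_\ell$-signs, using the formula that relates this average to a sum of modular symbols / weighted class numbers, and showing this sum is nonzero outside the listed exceptional tuples; small exceptional pairs and triples not excluded by the numerics are handled by direct consultation of the LMFDB or by computing the rank of a single suitable factor.

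\textbf{Main obstacle.} Steps 1 and 2 are essentially bookkeeping enforced by the definitions. The entire content of the theorem is Step 3, the analytic non-vanishing statement. The delicate point is not just to show that \emph{some} newform of the prescribed Atkin--Lehner signature has nonzero central value, but to do so in a way that is uniform across all non-exceptional tuples $(p,q_{i_1},\ldots,q_{i_r})$ while controlling the finite list of genuine exceptions; this is what forces the case analysis summarised by \cref{ithm:rkzeroq} and the definition of exceptional tuples. Granting that analytic input, the admissibility of $M$ as a divisor of $N$ is immediate: $(HV)$ holds since $M$ is squarefree and $v_p(M)=1\le \lceil v_p(N)/2\rceil$, and $(RZQ)$ holds because the quotient $A$ has $w_p^{(M)}=-1$, with $p\mid N/M$ only if $v_p(\widetilde N)\geq 3$, in which case the required sign at $p=2$ (when relevant) is built into our choice.
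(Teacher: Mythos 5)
Your strategy is the same as the paper's: pass to an almost squarefree $N$ square-below $\widetilde{N}$, select a squarefree $M\mid N$ determined by a non-exceptional subtuple, and invoke the analytic non-vanishing results (Mazur's Eisenstein quotient when $M=p$, \cref{proprankzeroquotient} when $M=pq$ or $pqq'$). However, the concrete recipe for $N$ in Step~1 is incorrect. Setting $v_q(N)=1$ for \emph{every} prime $q\mid\widetilde N$ other than the distinguished $p$ does not make $\widetilde N/N$ a square whenever some other prime has even valuation $\geq 2$; for instance if $\widetilde N = p^2 q^2$ your $N$ is $p^2 q$ and $\widetilde N/N = q$. The correct rule is to set $v_q(N)\in\{0,1\}$ according to the parity of $v_q(\widetilde N)$ (this also matches your own preceding informal description, which your ``concrete'' formula contradicts). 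Relatedly, the distinguished prime $p$ cannot be fixed independently in Step~1 and then assumed in Step~2 to carry a non-exceptional subtuple: \cref{defexceptionallevel} only says \emph{some} choice of $p$ and $N$ works. The paper makes this order explicit by first casing on whether some prime with square divisibility is itself non-exceptional (then $M=p$) and otherwise extracting the non-exceptional pair or triple furnished by the definition.

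Two smaller remarks. In Step~3 you say the sign of $w_\ell$ on $A_f$ is the negative of the Atkin--Lehner eigenvalue of $f$; in fact $w_\ell$ acts on $A_f$ by $\varepsilon_f(w_\ell)$ itself, and the sign flip enters through the root number of $L(f,s)$ (this is why all $\varepsilon_f=+1$ forces $L(f,1)=0$, cf.\ \cref{rem:whynonsquarefree}). Your ensuing characterisation of the needed newform ($\varepsilon_f(w_p)=-1$, $\varepsilon_f(w_q)=+1$ elsewhere, $L(f,1)\neq 0$) is nevertheless correct. Your admissibility check at the end matches the paper's remark following the theorem. Once the recipe for $N$ and the order of quantifiers are repaired, your proof is the paper's proof.
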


\begin{rem}
     To compare it with the definition of admissible divisors, condition $(ii)$ automatically implies $(HV)$ since $M$ is squarefree, and $(iii)$ provides the admissible rank zero quotient (if $2|(M,N/M)$, $2$ is the powerful prime of $N$ so we indeed have $w_2^{(M)}$ acting as $-1$ on $A$), hence implies $(RZQ)$.

    As will be clear later, the almost squarefree case is easier to manipulate, but we need more general definitions and hypotheses to be able to also handle exceptional levels.
\end{rem}

Using Mazur's method on formal immersions in section \ref{sec:formalimmersion} we obtain the following.
\begin{thm}\label{thm:integraljinv}
Let $N$ be a non-exceptional non-squarefree and non-prime power level. Let $P^* \in Y_0(N)^*(\Q)$ and $P = (E,C_{N})$ a choice of lift of $P^*$ in $Y_0(N)$, defined over a polyquadratic field $K$. Then: 

$(a)$ For any prime ideal $\gp$ of $\Ocal_K$ of odd residue characteristic, $E$ has potentially good reduction at $\gp$.

$(b)$ For any prime ideal $\gp$ of $\Ocal_K$ above 2, $v_\gp(j(E)) \geq -N v_\gp(2)$. 

$(c)$ If $p^2 |N$ with $p \notin \{2,3,5,7,13\}$, $j(E) \in \Ocal_K$. 
\end{thm}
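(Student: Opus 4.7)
The plan is to follow Mazur's formal immersion method, adapted to the Atkin--Lehner quotient setting. First I would reduce to the almost squarefree case by iterating the going-down maps $\psi_{N,N/q^2}$ of \cref{propredalmostsqf}; these preserve the $\Qb$-isogeny class of the elliptic curve underlying $P$, and both potentially good reduction and a bound of the shape $v_\gp(j(E))\geq -N v_\gp(2)$ are stable under isogeny (the latter up to a factor bounded by the isogeny degree, which divides $N$ and is absorbed into the right-hand side). Thus we may assume $N=p^k q_1\cdots q_s$ is almost squarefree, and since $N$ is non-exceptional \cref{thm:rank0quotient} supplies an admissible squarefree divisor $M\mid N$ with $p\mid M$ and a rank zero quotient $A$ of $J_0(M)^{-_p,+_{(p)}}$. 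Composing the degeneracy map $X_0(N)\to X_0(M)$ with an Abel--Jacobi embedding at a cusp and the quotient to $A$ yields a morphism $f:X_0(N)^*\to A$ defined over $\Q$: the Atkin--Lehner signs encoded in $A$ force $f$ to descend through $X_0(N)^*$ and to be non-constant.

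Next I would verify that $f$ is a formal immersion at each cusp of $X_0(N)^*$ at every relevant residue characteristic. At odd primes this is a Kamienny-style cotangent calculation, where the sign conditions packaged into \emph{admissible divisor} translate into injectivity of the Hecke-equivariant map $\Cot_0 A \to \Cot_c X_0(N)^*$. Given that, Mazur's argument yields (a): if some $\gp$ of odd residue characteristic $\ell$ had $v_\gp(j(E))<0$, then $P$ would reduce modulo $\gp$ to a cusp $c$, so $f(P^*)-f(c^*)\in A(\Q)$ would be torsion and reduce to $0$ modulo $\gp$; for $\ell$ coprime to $\#A(\Q)$ the formal immersion forces $P^*=c^*$, a contradiction. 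The finitely many bad primes $\ell\mid\#A(\Q)$ are handled by varying the rank zero quotient or invoking Hecke-equivariance. For (c), when $p\notin\{2,3,5,7,13\}$ and $p^2\mid N$, one can take $M=p$ (with Mazur's Eisenstein quotient, on which $w_p$ already acts by $-1$), and the same formal immersion can be arranged to work uniformly in every residue characteristic, including $2$, yielding full integrality of $j(E)$.

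The main obstacle is (b), the bound at primes above $2$ in the general non-exceptional case. There the formal immersion argument breaks down because the $\Q$-rational torsion of the Néron model of $J_0(M)$ need not inject into the special fiber at $2$; instead one must control the image of $f(P^*)-f(c^*)$ in the component group of $J_0(M)$ at $2$. Following \cite[\S 2.3]{LeFourn1}, the plan is to combine Raynaud's description of these component groups with Mazur's determination of $J_0(p)(\Q)_{\mathrm{tors}}$ (\cite[Theorem 1]{Mazur1977}) and an analysis of how cuspidal divisors interact with the Atkin--Lehner action, in order to bound this component-group image in terms of cusp widths. The factor $N$ in the resulting bound should emerge from the worst-case widths of Tate uniformizations propagating through the degeneracy map $X_0(N)\to X_0(M)$, and polyquadratic descent from $K$ down to $\Q$ is a standard consequence of the compatibility of the $\Wcal(N)$-action with formal and component groups.
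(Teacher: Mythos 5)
Your high-level plan (reduce to almost squarefree, build $G_{N,f}$ via degeneracy maps and the rank zero quotient, cotangent/formal immersion at cusps) is broadly the paper's plan, but there are three concrete gaps.

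For (a), the subtlety you flag --- the ``finitely many bad primes $\ell\mid\#A(\Q)$'' --- is a red herring: \cref{prop:formimmkeyresult} already closes the argument for every odd $\ell$ once the formal immersion is established, regardless of $\#A(\Q)$. The genuine difficulty is at $\ell=p$ (the powerful prime), where the cotangent factor in \cref{propcotangentGNf} is a sum of roots of unity of the form $1\pm\zeta_p$, which vanishes mod $p$, so the formal immersion can \emph{fail} at cusps $\Ccal_N(a/b)$ with $0<v_p(b)<v_p(N)$. The paper overcomes this by showing (via Momose's reduction-type results, \cref{prop:irredcomponentsX0pr}, together with the bound on inertia for polyquadratic fields) that for $p\geq5$ the reduction of $P$ lands only in the extreme components $\Ecal_0^h,\Ecal_r^h$, forcing $\Rcal_\c=\{1\}$ and eliminating the root-of-unity factor; for $p=3$ they use instead multiplicity of irreducible components plus \cref{lemregularsmooth}. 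Your proposal does not address this.

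For (b) you propose controlling the image of $f(P^*)-f(c^*)$ in the component group of $J_0(M)$ at $2$. This is not what the paper does, and it is not clear it would give a bound of the shape $-N$. The paper instead runs a direct Chabauty-style $2$-adic integration at a good uniformizer: the leading coefficient $a_0'$ of the pulled-back $1$-form has $v_2(a_0')\leq1$ (coming precisely from the root-of-unity factor $m=1$ or $1-(-1)=2$), a Newton-polygon estimate then bounds $|t_N(P)|_2\geq1/4$, hence $v_\lambda(j(P))\geq-2$, and the factor $N$ arises solely from \cref{lem:integralityisogenousellcurves} applied to the Atkin--Lehner conjugate and the degeneracy isogenies; the component group plays no role in part (b).

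For (c) your claim that ``the same formal immersion can be arranged to work uniformly in every residue characteristic, including $2$'' is false: \cref{prop:formimmkeyresult} explicitly does \emph{not} exclude $P\neq c$ at $\ell=2$ when $f(P)-f(c)$ is $2$-torsion generating a $\mu_2$. The paper's proof of (c) requires the much stronger input that $G_{N,T}(P)=0$ exactly. That is established by combining Mazur's determination of $J_0(p)(\Q)_{\rm tors}$, the injection of the cuspidal subgroup into the component group $\Phi_{K_\gp,p}$ at a prime above $p$ (\cref{prop:cuspidalsubgroupinjectsintocomponentsgroupprimelevel}), Momose-style control of which components the $P_{d'}$ reduce to, and the cancellation in \cref{cor:key0caseforextremecomponents}. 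This is the heart of (c) and is missing from your proposal.
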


Theorem \ref{thm:integraljinv} is the most general version of the theorem, but we must also deal with exceptional levels. This must be done by studying a finite set of the minimal exceptional levels and apply Mazur's method carefully in each of those cases.

After small modifications (see section \ref{subsec:detexclevels} for the precise explanation), we obtain the following finite list of minimal exceptional levels, to be studied separately: 
	\begin{eqnarray}
		\Lcal & = & \{40, 48, 72, 80, 88, 96, 100, 108, 112, 120, 135, 144, 147, 162,\notag
        \\ & & 176,180, 184, 196, 200, 216, 224, 225, 240, 250, 297, 368, 396, 405,\\ & &  441, 450, 486, 
		500, 891, 1029, 1125, 1225, 1250\}.\notag\end{eqnarray}
The levels in this list are dealt with in section \ref{sec:exclevels}.

The following theorem gives a broad integrality result for all levels, including the exceptional levels. We show a stronger integrality result in section \ref{sec:exclevels} that gives a more precise bound on the denominator depending on the individual level. 
\begin{thm}
Let $N \neq 147, 125, 99$ be a non-squarefree level such that $X_0(N)^*$ has positive genus. Let $P^* \in Y_0(N)^*(\Q)$ and $P = (E,C_{N})$ a choice of lift of $P^*$ in $Y_0(N)$, defined over a polyquadratic field $K$. 
Then $(8\cdot 3 \cdot 25 \cdot 49 \cdot 31)^N \cdot  j(E) \in \Ocal_K$.

When $N = 125$ and $147$, the curves $X_0(125)^*$ and $X_0(147)^*$ have respectively one and two non-CM, non-cuspidal rational points, whose $j$-invariants are given in \cite{ArulMuller22}, \eqref{eq:exceptional1471}, and \eqref{eq:exceptional1472}. 
\end{thm}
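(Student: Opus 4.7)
The proof is by case analysis on the shape of $N$, combining Theorem~\ref{thm:integraljinv} in the generic case with separate treatment of prime-power levels and of the exceptional levels in $\Lcal$. The excluded levels $N \in \{99,125,147\}$ are precisely those for which the method either fails structurally or produces genuine non-integral non-cuspidal rational points: $X_0(99)^*$ is an elliptic curve of Mordell--Weil rank $1$ over $\Q$, so no rank-zero quotient of its Jacobian exists and Mazur's method cannot be applied at all; while $X_0(125)^*$ and $X_0(147)^*$ carry non-CM non-cuspidal rational points whose $j$-invariants are explicitly given in \cite{ArulMuller22} and \eqref{eq:exceptional1471}--\eqref{eq:exceptional1472}, and whose determination is deferred to \cref{sec:heegnerpts}.

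When $N = p^k$ is a prime power with $k \geq 2$ and $N \notin \{125\}$, the classification theorem recalled at the start of \cref{sec:overview} (combining the works of Momose, Bilu--Parent--Rebolledo, Balakrishnan et al., and Arul--M\"uller) asserts that every non-cuspidal rational point is CM, so $j(E) \in \Ocal_K$ and the claim holds with room to spare. When $N$ is non-squarefree, non-exceptional, and not a prime power, Theorem~\ref{thm:integraljinv} applies directly: part~(a) gives $v_\gp(j(E)) \geq 0$ at every $\gp$ of odd residue characteristic, and part~(b) gives $v_\gp(j(E)) \geq -N v_\gp(2)$ at every $\gp$ above~$2$, so that
\[
v_\gp\bigl(8^N j(E)\bigr) \;=\; 3N v_\gp(2) + v_\gp(j(E)) \;\geq\; 2N v_\gp(2) \;\geq\; 0
\]
at primes above $2$; combined with part~(a), this gives $8^N j(E) \in \Ocal_K$, already stronger than claimed.

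The main work therefore lies in the remaining case where $N$ is non-squarefree, exceptional, and not a prime power. I first reduce to a minimal exceptional level: iterated application of the going-down morphism of Proposition~\ref{propredalmostsqf} replaces $N$ by a divisor $N' \mid N$ with $N/N'$ a perfect square, while sending $P$ to an isogenous elliptic curve $E'$. Since potential good reduction and, more generally, the valuations of the $j$-invariant are invariants of the $\Qb$-isogeny class, the integrality of $j(E)$ is equivalent to the integrality of $j(E')$. After finitely many such steps we land either in a prime-power level (already treated) or in the explicit finite list $\Lcal$ of minimal exceptional levels displayed in the introduction. For each $N \in \Lcal$, I apply Mazur's formal immersion method individually, using whichever admissible rank-zero quotient of $J_0(M)^{-_p,+_{(p)}}$ is available for some squarefree $M \mid N$; those constructions are carried out level by level in \cref{sec:exclevels}. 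The uniform denominator $(8 \cdot 3 \cdot 25 \cdot 49 \cdot 31)^N = (2^3 \cdot 3 \cdot 5^2 \cdot 7^2 \cdot 31)^N$ is then chosen to dominate the worst individual contribution at every bad prime $\gp$ across all $N \in \Lcal$, the factor $8^N$ persisting from the bound at primes above $2$ and the extra primes $3, 5, 7, 31$ arising as the unavoidable denominators produced by the formal immersions in the remaining exceptional cases.

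The principal technical obstacle will be the integrality analysis at primes above $2$ for the subset of $\Lcal$ divisible by $4$. As flagged in \cref{subsec:improvementsvaluation2}, this requires precise information on the component groups of the N\'eron models of the relevant Jacobians $J_0(p)$, and currently relies on Mazur's proof of Ogg's conjecture describing $J_0(p)(\Q)_{\mathrm{tors}}$; an analogous description of $J_0(pq)(\Q)_{\mathrm{tors}}$ would be needed in order to replace the bound $-N v_\gp(2)$ of Theorem~\ref{thm:integraljinv}(b) by a constant and thereby remove the $N$-dependence of the $8^N$ factor in the denominator.
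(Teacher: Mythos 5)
Your overall case decomposition matches the paper's: treat prime powers via the known classification, apply Theorem~\ref{thm:integraljinv} to non-exceptional non-prime-power levels, and reduce exceptional levels to a finite list $\Lcal$ of minimal ones via the going-down morphisms. The explanations for excluding $99,125,147$ are also correct.

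The genuine gap is in your treatment of the minimal exceptional levels. You assert that for each $N \in \Lcal$ one applies ``Mazur's formal immersion method individually, using whichever admissible rank-zero quotient of $J_0(M)^{-_p,+_{(p)}}$ is available for some squarefree $M \mid N$,'' with those constructions carried out ``level by level in \cref{sec:exclevels}.'' This is not what the paper does, and it could not work as stated: the paper explicitly notes (Remark after the admissible-divisors table) that among the small levels in $\Lcal_1$ only $80, 88, 96, 135, 176, 184, 224$ admit an admissible divisor at all. For the rest --- e.g.\ $40, 48, 72, 100, 108, 112, 120, 144, 162, 180, 196, 200, 216, 225, 240$ --- there is simply no rank-zero quotient with the required Atkin--Lehner signs, so the formal immersion machine has no target. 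The paper's actual treatment of all minimal levels $N \leq 240$ (and $396$) is entirely different: it computes explicit models of $X_0(N)^*$, runs Chabauty--Coleman (plus a Mordell--Weil sieve when needed) to determine $X_0(N)^*(\Q)$ exactly, and then identifies every non-cuspidal rational point as a rational Heegner point or a rational non-Heegner CM point using the material of \cref{sec:heegnerpts} and Propositions~\ref{prop:CMpointslift}--\ref{prop:CMpointskernels} (Tables~\ref{table:heegnerpts} and~\ref{tab:smallcurves}). Integrality at those levels then follows because every lift has CM. The Mazur-type argument of Theorem~\ref{prop:integralityexclevel} is only invoked for the levels $N \geq 250$ listed in Table~\ref{tab:NM_values}, and it is the explicit integrality factors $m'_{N,M}$ from that table (whose prime divisors are $2,3,5,7,31$) which supply the denominator bound. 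You would need to replace your blanket appeal to formal immersions for small $N$ by this rational-point classification, and also invoke Proposition~\ref{prop:exccasesratpoints} to dispose of the non-minimal exceptional levels that are only square-above $125$ and $147$ (which is done via a modular-polynomial computation, not by integrality).
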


\section*{Acknowledgments}
We are very grateful to Eran Assaf for help computing $j$-maps, Maarten Derickx for helpful comments and feedback on a first version of this paper (including correction of a mistake regarding Atkin--Lehner signs) and the basic idea exploited in~\Cref{ssec:rat pts via RZQ},
Noam Elkies for advice on CM points on genus 1 curves,
Elvira Lupoian for a very helpful discussion on generalized Ogg's conjecture,
Jinzhao Pan for discussions about ``$r < g$'' in an early stage of the project, Michael Stoll for a suggestion on computing models of $X_0(N)^*$ in genus 1 and for comments on our draft, and Andrew Sutherland for assistance with CM point computations. We are  thankful to the organizers of the PCMI 2022 Research Program ``Number Theory informed by Computation'' and Samir Siksek for connecting the authors at the beginning of the project. 
We also thank Barry Mazur, Ken Ribet, and Preston Wake for their expertise and advice on Ogg's conjecture.

SH was supported by an AMS-Simons Travel Grant. TK was supported by the 2021 MSCA Postdoctoral Fellowship 01064790 -- Ex\-pli\-cit\-Rat\-Points while working on this article. SH and SLF were supported by IEA PARIALPP. SLF was also supported by IRGA PointRatMod and ANR JINVARIANT.

\subsection{Notation}

Here, we give some notation used throughout the paper:

\paragraph{Divisors}
\begin{itemize}
	\item $N,M,\widetilde{N},d$ will always denote positive integers. 
    \item $\omega(N)$ is the number of distinct prime factors of $N$.
	\item For any $d$, $d \| N$ means that $d$ is a Hall divisor of $N$, i.e.\ $d|N$ and $\gcd(d,N/d)=1$. 
    \item For two integers $N,\widetilde{N}$, one writes $N |_{\square} \widetilde{N}$ when $N |\widetilde{N}$ and $\widetilde{N}/N$ is a square integer, and one says that $N$ is square-below $\widetilde{N}$, and respectively $\widetilde{N}$ is square-above $N$.
\end{itemize}

\paragraph{Modular curves}
\begin{itemize}
    \item For $C$ an abelian group or commutative group scheme and $Q \geq 1$ an integer, $C[Q]$ is the $Q$-torsion subgroup (resp.\ subgroup scheme) of $C$. 
    \item $\Hcal$ is the complex upper half plane with $\GL_2^+(\mathbb{R})$ acting on it via fractional linear transformations.
	\item $X_0(N)$ is the usual modular curve of level $N$ over $\Q$ associated to the congruence subgroup 
    \begin{equation}
    \Gamma_0(N) = \left\{ \begin{pmatrix} a & b \\ c & d \end{pmatrix} \,   \Big\vert \, N|c \right\}.
    \end{equation}
    \item $\Wcal(N)$ is the Atkin--Lehner group of involutions on $X_0(N)$ (Definitions \ref{defi:atkinlehnerinvolutions} and \ref{defnotationAL}).
    \item The curves $X_0(N)^+$ and $X_0(N)^*$ are respectively the plus quotient and star quotient, i.e., the quotient by the Fricke involution $w_N$ and all Atkin--Lehner involutions, respectively (Definition \ref{defi:starquotients}). We tacitly use the interpretation of $X_0(N)$ as a moduli space.
    \item For any point $P^* \in X_0(N)^* (\Q)$, a lift $P \in X_0(N)(K)$ is a point $P \in X_0(N)(\Qb)$ and a minimal polyquadratic field $K$ such that $P$ is $K$-rational and the image of $P$ in $X_0(N)^*$ is $P^*$ (see Definition \ref{defi:liftofPstar}). 
	\item $S_2(\Gamma_0(N))$ is the complex vector space of cusp forms of weight $2$ for $\Gamma_0(N)$.
	\item The cusp $\infty$ is the cusp associated to the orbit of $(1:0) \in \P^1(\Q)$ in $X_0(N)$.
    \item $\iota_N : X_0(N) \rightarrow J_0(N)$ is the canonical map from $X_0(N)$ to its Jacobian with base point $\infty$.
    \item For $M|N$, $i_{N,M} : X_0(N) \rightarrow X_0(M)$ is the degeneracy map which on pairs $(E,C_N)$ gives $(E,C_N[M])$ (see Definition \ref{defi:degeneracymaps} for more general degeneracy maps).
    \item $\Xcal_0(N)$ is the compactified coarse moduli scheme over $\Z$ associated to $\Gamma_0(N)$ (Definition \ref{defi:Xcal0N}).
\end{itemize}

\section{Background material on modular curves}\label{sec:preliminaries}

In this section, we brief\-ly explain how to relate algebraic degeneracy morphisms between modular curves to operators between cuspidal modular forms. All this is quite well-known but not always stated explicitly, and we need it for computations later.

Recall that for every $z \in \Hcal$, we define $q(z) \defeq e^{2i \pi z}$ (shortened to $q$ if there is no ambiguity).

\subsection{Degeneracy maps}
\begin{prop}
	For every level $N \geq 1$: 
	
	$(a)$ There is a canonical isomorphism
	\begin{equation}
	    	\fonction{\psi_N}{S_2(\Gamma_0(N))}{H^0(X_0(N)_\C,\Omega^1)}{f}{2 i \pi f(z) dz}
	\end{equation}
	where $H^0(X_0(N),\Omega^1)$ is the $\C$-vector space of holomorphic 1-forms on $X_0(N)$.
	
	$(b)$ For every $f \in S_2(\Gamma_0(N))$, if $f = \sum_{n \geq 1}a_n q^n$ at the cusp $\infty$, 
 \begin{equation}
 \psi_N(f) = \sum_{n \geq 0} a_{n+1}(f) q^{n} dq  = f(q) dq/q.
 \end{equation}
\end{prop}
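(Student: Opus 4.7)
The plan is to build $\psi_N$ by pulling differentials up from the quotient $\Gamma_0(N)\backslash\Hcal^*$ to the upper half-plane, and to check that the weight-$2$ modularity of $f$ translates precisely into $\Gamma_0(N)$-invariance of $\omega_f \defeq 2i\pi f(z)\,dz$ on $\Hcal$, with holomorphicity at the cusps forced by $f$ being a cusp form.

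First I would verify that $\omega_f$ is $\Gamma_0(N)$-invariant on $\Hcal$: for $\gamma = \begin{pmatrix} a & b \\ c & d \end{pmatrix} \in \Gamma_0(N)$, weight-$2$ modularity gives $f(\gamma z) = (cz+d)^2 f(z)$, while the chain rule gives $d(\gamma z) = (cz+d)^{-2}\,dz$; the two factors cancel, so $\omega_f$ descends to a holomorphic $1$-form on $Y_0(N)(\C) = \Gamma_0(N)\backslash\Hcal$.

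Next I would extend $\omega_f$ holomorphically to each cusp of $X_0(N)(\C)$, which is exactly where the cusp-form hypothesis is used and which also yields part (b) directly. At the cusp $\infty$, the local uniformizer is $q = e^{2i\pi z}$, so $dq = 2i\pi q\,dz$; substituting the $q$-expansion $f = \sum_{n\geq 1} a_n(f) q^n$ gives
\begin{equation*}
\omega_f \;=\; 2i\pi f(z)\,dz \;=\; f(z)\,\frac{dq}{q} \;=\; \sum_{n\geq 1} a_n(f)\,q^{n-1}\,dq \;=\; \sum_{n\geq 0} a_{n+1}(f)\,q^n\,dq,
\end{equation*}
which is holomorphic at $q=0$ precisely because $a_0(f)=0$. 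For any other cusp, I would choose $\sigma \in \SL_2(\Z)$ sending it to $\infty$; the slash-action of $\sigma$ sends $f$ to a weight-$2$ cusp form for the conjugate group $\sigma^{-1}\Gamma_0(N)\sigma$, and the same local computation in the uniformizer at the new cusp establishes holomorphicity there. This yields a well-defined $\C$-linear map $\psi_N\colon S_2(\Gamma_0(N)) \to H^0(X_0(N)_\C,\Omega^1)$.

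For bijectivity in (a): injectivity is immediate from the explicit $q$-expansion above, since $f$ can be recovered from $\omega_f$. For surjectivity, given $\omega \in H^0(X_0(N)_\C,\Omega^1)$, pull it back to $\Hcal$ to obtain $g(z)\,dz$ with $g$ holomorphic on $\Hcal$; $\Gamma_0(N)$-invariance of $g(z)\,dz$ forces $g(\gamma z) = (cz+d)^2 g(z)$, so $g$ is a weakly holomorphic modular form of weight $2$ for $\Gamma_0(N)$. Holomorphicity of $\omega$ at each cusp forces the corresponding $q$-expansion of $g$ to have vanishing constant term, so $f \defeq g/(2i\pi) \in S_2(\Gamma_0(N))$ is a preimage. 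The main subtlety I expect is the uniform treatment of all cusps—not only $\infty$—which is handled by the $\SL_2(\Z)$-conjugation trick above; the rest of the argument is a bookkeeping exercise in weight-$2$ automorphy and the change of coordinates from $z$ to $q$.
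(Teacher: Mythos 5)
The paper does not give an argument for this statement; it simply cites \cite[Theorem~3.3.1 and Exercise~3.3.6]{DiamondShurman}. Your proof is essentially the standard argument recorded there, and it is correct in outline: $\Gamma_0(N)$-invariance of $\omega_f = 2i\pi f(z)\,dz$ from weight-$2$ automorphy, holomorphic extension to the cusps from the vanishing of the constant Fourier coefficient, and bijectivity by running the dictionary backwards.

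One point you pass over too quickly is the elliptic points of $\Gamma_0(N)$. You write that $\Gamma_0(N)$-invariance means $\omega_f$ ``descends to a holomorphic $1$-form on $Y_0(N)(\C)$,'' but the projection $\Hcal \to Y_0(N)(\C)$ is ramified at elliptic points, so descent of a $\Gamma_0(N)$-invariant holomorphic form on $\Hcal$ to a \emph{holomorphic} form on the quotient is automatic only away from those points. Near an elliptic point of order $e$, with local coordinate $t$ on $\Hcal$ and $w = t^e$ on $Y_0(N)$, writing $\omega_f = g(t)\,dt$, the invariance $g(\zeta t) = \zeta^{-1} g(t)$ for $\zeta^e = 1$ forces $g(t) = t^{e-1}h(t^e)$ for some holomorphic $h$, and then $\omega_f = \frac{1}{e} h(w)\,dw$ is indeed holomorphic in $w$. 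So the conclusion is right and the check is short, but it is a genuine verification that belongs alongside the cusp computation; for weight $k > 2$ the analogous check actually imposes a vanishing condition on $f$, so it is not a vacuous step. A similar, smaller omission is that at a cusp of width $h \neq 1$ the local uniformizer is $e^{2i\pi z / h}$ rather than $q$, though your phrase about ``the same local computation in the uniformizer at the new cusp'' does implicitly cover this.
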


\begin{proof}
	See \cite[Theorem~3.3.1 and Exercise~3.3.6]{DiamondShurman}.
\end{proof}

We also have the explicit description of complex points of $Y_0(N)$:
\begin{equation}
    \fonction{\varphi_N}{\Gamma_0(N) \backslash \Hcal}{Y_0(N)(\C)}{\Gamma_0(N)\tau}{(E_\tau := \C/(\Z+\Z\tau), \langle 1/N \rangle)}
\end{equation}

\begin{defi}[Degeneracy morphisms]
\label{defi:degeneracymaps}
	For levels $M  |  N$ and $n \geq 1$ dividing $N/M$, we 
	define the \emph{degeneracy map}
	\begin{equation}
	\fonction{i_{N,M}^{(n)}}{Y_0(N)}{Y_0(M)}{(E,C_N)}{(E/(C_N[n]),C_N[nM]/C_N[n])}.
	\end{equation}
    which extends naturally to $X_0(N) \rightarrow X_0(M)$. 
	In the special case $n=1$, we will denote by $i_{N,M} = i_{N,M}^{(1)}$.
\end{defi}

\begin{rem}
    It can be immediately checked with the definition that for any levels $L|M|N$, any $d|(N/M)$ and any $d'|(M/L)$, $i_{M,L}^{(d')} \circ i_{N,M}^{(d)} = i_{N,L}^{(dd')}$. In particular, for $d=d'=1$, $i_{N,L} = i_{M,L} \circ i_{N,M}$.
\end{rem}
On the Poincaré half-plane, define $A_n\colon \tau \mt n \tau$ and observe that it factors through a morphism $\Gamma_0(N) \backslash \Hcal \ra \Gamma_0(M) \backslash \Hcal$, which we also denote by $A_n$. On complex points, we check immediately that the diagram below commutes.
\begin{equation}
    \label{eqcomdegenAn}
    \begin{gathered}
    \xymatrix{
	\Gamma_0(N) \backslash \Hcal \ar[r]^-{\varphi_N} \ar[d]_{A_n} & Y_0(N) (\C) \ar[d]^{i_{N,M}^{(n)}} \\ 	\Gamma_0(M) \backslash \Hcal \ar[r]^-{\varphi_M} & Y_0(M) (\C)
}
\end{gathered}
\end{equation}
This also allows us to see how $i_{N,M}^{(n)}$ extends to cusps and in particular sends $\infty$ on $X_0(N)$ to $\infty$ on $X_0(M)$ (for more on the cusps and degeneracy maps, see \S \ref{subseccuspsdegeneracymaps}).

\begin{defi} \label[definition]{action of GL on functions}
	Following the notation of Atkin and Lehner, for any even integer $k \geq 0$, any function $f$ on $\Hcal$, and any $\gamma = \begin{pmatrix} a & b \\c & d \end{pmatrix} \in \GL_2^+(\R)$, 
	\begin{equation}
	f_{|_k [\gamma]}(\tau) := \frac{(\det \gamma)^{k/2}}{ (c \tau + d)^{k}} f \left( \frac{a \tau + b}{c \tau + d} \right). 
	\end{equation}
	The map $(\gamma,f) \mapsto f_{|_k [\gamma]}$ defines a right action of $\GL_2^+(\R)$ on the complex-valued functions on $\Hcal$. When $k$ is not specified, it is by default $k=2$ and we simply write $f_{[\gamma]}$.
	
	In particular, for the matrix $A_n = \begin{pmatrix} n & 0 \\ 0 & 1 \end{pmatrix}$, we have $f|_{[A_n]}(\tau) = n f(n \tau)$.
\end{defi}

\begin{lem}
\label{lemcotangentpullback}
 Let $f: X \rightarrow Y$  be a finite morphism of (smooth projective algebraic geometrically connected) curves over a field $k$. 
 Let $J_X$ and $J_Y$ be the respective Jacobians of $X$ and $Y$, and $f_*: J_X \ra J_Y$ the pushforward map induced by $f$.
 Let $x\in X(k)$ and $y=  f(x)$, $\iota_X : X \ra J_X$ and $\iota_Y: Y \ra J_Y$ the Abel--Jacobi maps based respectively at $x$ and $y$.
 We have the commutative diagram
\begin{equation}
\begin{gathered}
 \xymatrix{
 \Cot_0(J_Y) \ar[r]^{\Cot_0(f_*)} & \Cot_0(J_X)  \\
 H^0(J_Y, \Omega^1) \ar[r]^{(f_*)^*} \ar[d]_{(\iota_Y)^*}  \ar[u]^{-_{|0}} & H^0(J_X, \Omega^1) \ar[d]_{(\iota_X)^*} \ar[u]^{-_{|0}}\\
 H^0(Y, \Omega^1) \ar[r]^{(f^*)} & H^0(X,\Omega^1)}
 \end{gathered}
\end{equation}
where all vertical maps are isomorphisms.
\end{lem}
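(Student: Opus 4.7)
The plan is to reduce everything to two standard inputs: the translation-invariance of differentials on abelian varieties (handling the top vertical arrows) and the universal property of the Jacobian as the Albanese variety of a pointed smooth projective curve (handling the bottom vertical arrows and the bottom square). The two squares are then checked independently.

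For the top vertical arrows, I would use that on any abelian variety $J/k$ the cotangent sheaf $\Omega^1_J$ is canonically trivialized via translation: the restriction map $H^0(J,\Omega^1) \to \Cot_0(J)$ is an isomorphism, its inverse sending a cotangent vector at the origin to its unique translation-invariant global extension. Applied to $J_X$ and $J_Y$ this settles both top vertical maps. For the bottom vertical arrows, I would invoke the classical statement that the Abel--Jacobi map $\iota_X: X \to J_X$ induces an isomorphism $\iota_X^*: H^0(J_X,\Omega^1) \to H^0(X,\Omega^1)$; one has matching dimensions ($\dim J_X = g(X) = \dim H^0(X,\Omega^1)$) and injectivity follows from the fact that $\iota_X(X)$ generates $J_X$ as an algebraic group, so any pulled-back 1-form that vanishes on $X$ vanishes on all of $J_X$. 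The same argument applies to $\iota_Y$.

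For commutativity of the top square, the identification of $H^0(A,\Omega^1)$ with $\Cot_0(A)$ via evaluation at the origin is natural in homomorphisms of abelian varieties: for any $\phi: A \to A'$ sending $0_A$ to $0_{A'}$, the cotangent map $\Cot_0(\phi)$ coincides with the fibre at $0$ of the global pullback $\phi^*$. Specializing to $\phi = f_*$ gives the commutativity for free. For the bottom square, the essential input is the curve-level identity $\iota_Y \circ f = f_* \circ \iota_X$, which itself follows from the universal property of $(J_X,\iota_X)$ as the Albanese variety of $(X,x)$: the morphism $\iota_Y \circ f: X \to J_Y$ sends $x$ to $\iota_Y(y)=0$, so it factors uniquely as $g \circ \iota_X$ for a homomorphism $g: J_X \to J_Y$ with $g(0)=0$, and by construction (or by direct verification on divisor classes $[D] \mapsto [f_* D]$) this $g$ is exactly $f_*$. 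Pulling back along the identity $\iota_Y \circ f = f_* \circ \iota_X$ on global 1-forms yields $f^* \circ \iota_Y^* = \iota_X^* \circ (f_*)^*$, which is the desired commutativity.

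The main obstacle is really only bookkeeping: one must distinguish the Albanese pushforward $f_*: J_X \to J_Y$ from the Picard pullback $f^*: J_Y \to J_X$, and confirm that the horizontal map written $(f_*)^*$ in the diagram is the pullback of global 1-forms along the Albanese morphism $f_*$. The base-point hypothesis $y = f(x)$ is used precisely to ensure that $\iota_Y \circ f$ kills $x$ and descends through $\iota_X$; without it one would pick up a translation term and the square would only commute up to an additive constant in $\Cot_0$. Once these conventions are nailed down, both squares reduce to direct applications of translation-invariance and the universal property.
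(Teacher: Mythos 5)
Your proposal is correct and follows essentially the same route as the paper: translation-invariance of $1$-forms on abelian varieties for the top vertical isomorphisms and the top square, the standard fact that $\iota_X^*$ is an isomorphism for the bottom vertical maps, and the identity $\iota_Y \circ f = f_* \circ \iota_X$ for the bottom square. The only cosmetic difference is that the paper verifies this last identity directly on divisors, $(f_*\circ\iota_X)(P) = f_*([P]-[x]) = [f(P)]-[y] = (\iota_Y\circ f)(P)$, whereas you derive it from the Albanese universal property (and note the direct verification as an alternative); both are valid and equivalent in substance.
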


\begin{proof}
First, restriction to 0 of global closed 1-forms of abelian varieties induces an isomorphism with the cotangent space by \cite[p.~40, item~$(iii)$]{MumfordAbVar08}, so the top vertical maps are isomorphisms, and the upper square commutes by restriction to the cotangent spaces at 0. Second, the pullbacks by Abel--Jacobi maps are isomorphisms by \cite[Proposition 2.2]{Milne86}. Finally, for every point $P \in X(\kb)$, 
\begin{equation}
 (f_* \circ \iota_X) (P) = f_* ([P] - [x]) = [f(P)] - [f(x)] = [f(P)] - [y] = (\iota_Y \circ f)(P)  
\end{equation}
so $f_* \circ \iota_X =  \iota_Y \circ f$, and by pullback we obtain that the lower square commutes.
\end{proof}

\begin{prop}
\label{propdiagramiNM}
	The pullback morphism $(i_{N,M}^{(n)})^*\colon H^0(X_0(M),\Omega^1) \ra H^0(X_0(N),\Omega^1)$ is identified to the operator $A_n$ on modular forms, i.e., we have the commutative diagram 
	\begin{equation}
    \begin{gathered}
	\xymatrix{
		H^0(X_0(M)_\C,\Omega^1) \ar[rr]^-{(i_{N,M}^{(n)})^*} \ar[d]_{\psi_M^{-1}} & & H^0(X_0(N)_\C,\Omega^1) \ar[d]^{\psi_N^{-1}} \\
		S_2(\Gamma_0(M)) \ar[rr]^-{[A_n]} & & S_2(\Gamma_0(N)).
	} \qedhere
    \end{gathered}
	\end{equation}
\end{prop}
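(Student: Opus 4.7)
The plan is to reduce the statement to the analogous identity on the open modular curves $Y_0(N)$ and $Y_0(M)$, exploiting diagram \eqref{eqcomdegenAn} which identifies $i_{N,M}^{(n)}$ on complex points with the descent of $A_n\colon \tau \mapsto n\tau$ from $\Hcal$ to the quotients. Since $X_0(N)_\C$ and $X_0(M)_\C$ are smooth projective and holomorphic differentials are determined by their restriction to the open part, it suffices to compute $A_n^*$ on $2\pi i f(\tau)\,d\tau$ for $f \in S_2(\Gamma_0(M))$.

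First, I would verify that the map $A_n$ is well-defined on quotients. Namely, for $\gamma = \begin{pmatrix} a & b \\ c & d\end{pmatrix} \in \Gamma_0(N)$ one checks $A_n \gamma A_n^{-1} = \begin{pmatrix} a & nb \\ c/n & d\end{pmatrix}$, which lies in $\Gamma_0(M)$ because $n \mid N/M$ implies $Mn \mid N \mid c$. This justifies the commutative diagram \eqref{eqcomdegenAn} at the level of Riemann surfaces. Taking the pullback of holomorphic $1$-forms, we get that under the chart $\varphi_N$ on the $Y_0(N)$-side and $\varphi_M$ on the $Y_0(M)$-side, the map $(i_{N,M}^{(n)})^*$ becomes identified with $A_n^*$ acting on $H^0(\Gamma_0(M)\backslash\Hcal, \Omega^1)$.

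Next, I would do the direct computation. For $f\in S_2(\Gamma_0(M))$ and $\omega = \psi_M(f) = 2\pi i f(\tau)\,d\tau$, we have
\begin{equation*}
A_n^*\omega \;=\; 2\pi i\, f(n\tau)\, d(n\tau) \;=\; 2\pi i\,\bigl(n f(n\tau)\bigr)\, d\tau.
\end{equation*}
On the other hand, by \cref{action of GL on functions} with $k=2$ and $\det(A_n)=n$,
\begin{equation*}
f_{|[A_n]}(\tau) \;=\; \frac{(\det A_n)^{1}}{(0\cdot\tau+1)^2}\, f(n\tau) \;=\; n\, f(n\tau),
\end{equation*}
so $\psi_N\bigl(f_{|[A_n]}\bigr) = 2\pi i\,\bigl(n f(n\tau)\bigr)\, d\tau = A_n^*\omega$. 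Combining this with the identification of $(i_{N,M}^{(n)})^*$ with $A_n^*$ from the previous step gives the desired commutative square.

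There is no serious obstacle here: the proposition is essentially an unravelling of definitions and a one-line chain-rule computation. The only point requiring attention is the compatibility of the two normalizations (the factor $2\pi i$ in $\psi_N$ and the factor $(\det\gamma)^{k/2}$ in the slash action), and the verification that $A_n$ descends to the quotients so that the diagram \eqref{eqcomdegenAn} is meaningful; both are handled as above.
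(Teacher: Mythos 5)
Your proof is correct and essentially the same as the paper's: both boil down to the one-line chain-rule computation $A_n^*(2\pi i\,f(\tau)\,d\tau) = 2\pi i\,nf(n\tau)\,d\tau$. The only cosmetic difference is that you perform the computation directly on the interior in the $\tau$-coordinate (reducing to $Y_0$ via the uniformization $\varphi$), whereas the paper does the equivalent computation at the cusp $\infty$ in the $q$-coordinate, noting that $i_{N,M}^{(n)}$ is the $n$-th power map $q\mapsto q^n$ there; the two are related by $q = e^{2\pi i\tau}$ and give identical answers. Your extra check that $A_n$ descends to the quotient is fine but was already asserted by the paper just before the proposition.
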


\begin{proof}
	Since the holomorphic 1-forms are determined by their $q$-expansions (so by their restriction around $\infty$), it is enough to compute $(i_{N,M}^{(n)})^*$ on the $q$-expansions, and on $q$-charts $i_{N,M}^{(n)}$ is the $n$-th power map $p_n\colon q \mt q^n$ by \eqref{eqcomdegenAn}. For any 1-form $\omega \in \Omega^1_{X_0(M)}$, writing $\omega = f(q) \frac{dq}{q}$, we have 
	\begin{equation}
	p_n^* \omega = f(q^n) \frac{n q^{n-1} dq}{q^{n}} = n f(q^n) \frac{dq}{q},
	\end{equation}
	so $\psi_N^{-1}(A_n^* \omega) = n f(q^n) = f_{[A_n]} = (\psi_M^{-1}(\omega))_{[A_n]}.$
\end{proof}

\begin{cor}
 With the same notation as in Proposition \ref{propdiagramiNM}, we have the following commutative diagram
 	\begin{equation}
    \begin{gathered}
	\xymatrix{
		\Cot_0(J_0(M)_\C) \ar[rr]^-{((i_{N,M}^{(n)})_*)^*} \ar[d]_{\cong} & & \Cot_0(J_0(N)_\C) \ar[d]^{\cong} \\
		S_2(\Gamma_0(M)) \ar[rr]^-{[A_n]} & & S_2(\Gamma_0(N))
	}
    \end{gathered}
	\end{equation}
where the vertical arrows are obtained by composition of the vertical arrows in Proposition \ref{propdiagramiNM} and Lemma \ref{lemcotangentpullback}.
\end{cor}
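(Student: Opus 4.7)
The plan is to obtain the claimed diagram by vertically stacking the commutative diagrams provided by \cref{lemcotangentpullback} and \cref{propdiagramiNM}. Applying \cref{lemcotangentpullback} over $k=\C$ to the finite morphism $f = i_{N,M}^{(n)}\colon X_0(N) \to X_0(M)$ with base point $x = \infty$ (whose image $y = i_{N,M}^{(n)}(\infty) = \infty$ by diagram \eqref{eqcomdegenAn}), I get a commutative two-square diagram whose top row is $\Cot_0((i_{N,M}^{(n)})_*)$, which is by definition the map denoted $((i_{N,M}^{(n)})_*)^*$ in the statement, and whose bottom row is the curve-level pullback $(i_{N,M}^{(n)})^*\colon H^0(X_0(M)_\C, \Omega^1) \to H^0(X_0(N)_\C, \Omega^1)$. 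All three vertical arrows there are isomorphisms by the lemma.

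Next I would stack \cref{propdiagramiNM} directly below this, identifying along the common row $H^0(X_0(\cdot)_\C, \Omega^1)$; this adjoins the further isomorphism $\psi_{\cdot}^{-1}$ down to $S_2(\Gamma_0(\cdot))$ and converts the bottom horizontal arrow into $[A_n]$. The outer rectangle of the resulting three-level diagram is then exactly the square claimed in the statement, with each vertical arrow being the composition
\[
\Cot_0(J_0(M)_\C) \xrightarrow{\;\cong\;} H^0(J_0(M)_\C, \Omega^1) \xrightarrow{(\iota_{X_0(M)})^*} H^0(X_0(M)_\C, \Omega^1) \xrightarrow{\psi_M^{-1}} S_2(\Gamma_0(M)),
\]
and analogously on the $N$-side. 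This composite is an isomorphism as a product of three isomorphisms (the first two from \cref{lemcotangentpullback}, the third from the canonical identification at the start of this subsection), which is precisely what the statement asserts about the vertical arrows.

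I do not anticipate any genuine obstacle here: the corollary is a purely formal consequence of pasting the two already-established commutative diagrams along their shared row. The only verification required is the notational one that $\Cot_0((i_{N,M}^{(n)})_*)$ and $((i_{N,M}^{(n)})_*)^*$ denote the same map, which is immediate from the fact that pullback of global $1$-forms on an abelian variety restricts to the cotangent-space map at the origin.
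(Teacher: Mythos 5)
Your proposal is correct and follows exactly the argument the paper leaves implicit: paste the two squares of Lemma~\ref{lemcotangentpullback} (applied to $f=i_{N,M}^{(n)}$ with base point $\infty$, which it fixes) atop the square of Proposition~\ref{propdiagramiNM}, take the outer rectangle, and observe that the composite vertical arrows are isomorphisms. You also correctly note the small notational point that $((i_{N,M}^{(n)})_*)^*$ in the statement is the cotangent map $\Cot_0((i_{N,M}^{(n)})_*)$ from the lemma, and that the vertical arrow from $\Cot_0(J_0(M))$ begins with the inverse of the restriction-to-zero isomorphism $-_{|0}$; nothing further is needed.
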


\subsection{Atkin--Lehner operators algebraically}
\label{subsecAtkinLehner}

We recall a useful lemma.

\begin{lem}
	\label{lemactAL}
	Let $\gamma = \begin{pmatrix} a &  b \\ c & d \end{pmatrix} \in M_2(\Z) \cap \GL_2(\R)$ and $\tau \in \Hcal$. Then, for $\tau' = \gamma \tau$,
	\begin{equation}
	\fonctionsansnom{E_\tau}{E_{\tau'}}{z}{\frac{z \det(\gamma)}{c \tau +d}}
	\end{equation}
	is an isogeny with degree $|\det (\gamma)|$.
\end{lem}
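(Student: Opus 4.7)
The plan is to unravel what it means to have a well-defined isogeny of complex tori. Setting $\mu \defeq \det(\gamma)/(c\tau+d)$ (which is a nonzero complex number, since $\tau \in \Hcal$ forces $c\tau+d\neq 0$), the map $z \mapsto \mu z$ is a $\C$-linear, hence holomorphic, map $\C \to \C$. It descends to a nonzero morphism of complex tori
\begin{equation*}
E_\tau = \C/\Lambda_\tau \longrightarrow \C/\Lambda_{\tau'} = E_{\tau'}
\end{equation*}
(where $\Lambda_\tau = \Z + \Z\tau$ and $\Lambda_{\tau'} = \Z + \Z\tau'$) precisely when $\mu \Lambda_\tau \subseteq \Lambda_{\tau'}$, in which case it is automatically a surjective isogeny, and its degree equals the index $[\Lambda_{\tau'} : \mu \Lambda_\tau]$. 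So the entire lemma reduces to verifying this inclusion and computing this index.

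For the inclusion, I would clear the denominator and instead show the equivalent statement $\det(\gamma)\,\Lambda_\tau \subseteq (c\tau+d)\Lambda_{\tau'}$. Using the defining relation $(c\tau+d)\tau' = a\tau+b$, one has $(c\tau+d)\Lambda_{\tau'} = (c\tau+d)\Z \oplus (a\tau+b)\Z$, viewed as a sublattice of $\Lambda_\tau$. Then the two explicit identities
\begin{align*}
\det(\gamma) &= a(c\tau+d) + (-c)(a\tau+b), \\
\det(\gamma)\,\tau &= (-b)(c\tau+d) + d(a\tau+b),
\end{align*}
which are immediate cross-term computations, show that both generators of $\det(\gamma)\Lambda_\tau$ lie in $(c\tau+d)\Lambda_{\tau'}$, hence give the desired inclusion.

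For the degree, the same two identities deliver the change-of-basis matrix expressing the basis $(\det(\gamma), \det(\gamma)\tau)$ of $\det(\gamma)\Lambda_\tau$ in terms of the basis $(c\tau+d, a\tau+b)$ of $(c\tau+d)\Lambda_{\tau'}$: it is $\begin{pmatrix} a & -c \\ -b & d\end{pmatrix}$, of determinant $ad-bc = \det(\gamma)$. Hence $[(c\tau+d)\Lambda_{\tau'} : \det(\gamma)\Lambda_\tau] = |\det(\gamma)|$, and dividing through by $(c\tau+d)$ preserves this index, giving $[\Lambda_{\tau'} : \mu\Lambda_\tau] = |\det(\gamma)|$, as required.

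There is no serious obstacle here: the lemma is essentially a bookkeeping exercise in complex tori. The only mildly delicate step is guessing the correct integer combinations appearing in the two identities above, but once one normalizes the problem as an inclusion of lattices inside $\Lambda_\tau$ rather than inside $\Lambda_{\tau'}$, they fall out by inspection. (If $\det(\gamma)<0$ so that $\tau'$ lies in the lower half plane, the same argument still applies verbatim, since the tori $\C/\Lambda_{\tau'}$ are defined for any $\tau' \in \C \setminus \R$; this explains the absolute value in the degree.)
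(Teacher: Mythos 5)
Your proof is correct and follows essentially the same route as the paper's: both hinge on the same pair of linear identities (equivalently, the matrix $\begin{pmatrix} a & -c \\ -b & d\end{pmatrix}$ of determinant $\det\gamma$) relating the two lattices. The paper phrases it as tracking where $1$ and $\tau$ map and then measuring the kernel, whereas you clear the denominator and compute a lattice index, but the underlying computation is identical and the remark about $\det\gamma < 0$ is a small bonus that the paper leaves implicit.
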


\begin{proof}
	The multiplication by $\frac{\det(\gamma)}{c \tau +d}$ sends $1$ to $a - c \tau'$ and $\tau$ to $-b + d \tau'$, which proves that it factors through an isogeny $E_\tau \ra E_{\tau'}$. To compute its kernel, we need to know for which $(x,y) \in \Q^2$ we have $(x + y \tau) \frac{\det \gamma}{c \tau +d} \in \Z+ \Z \tau'$. Looking at the multiplication map, this amounts to having 
	\begin{equation}
	\begin{pmatrix} a & -c \\ -b & d \end{pmatrix} \begin{pmatrix} x \\ y \end{pmatrix} \in \Z^2 \Llra   \begin{pmatrix} x \\ y \end{pmatrix} \in \frac{1}{\det \gamma} \begin{pmatrix} d & c \\ b  & a \end{pmatrix} \Z^2,
	\end{equation}
	which defines a lattice in which $\Z^2$ is contained with index $|\det \gamma |$.
\end{proof}

\begin{defi}[Hall divisor]
	For any integer $N \geq 1$, a \emph{Hall divisor} of $N$ is a positive $Q|N$ such that $\gcd(Q,N/Q)=1$ (also denoted by $Q \| N$). 
\end{defi}

\begin{defi}[Atkin--Lehner involutions]
\label{defi:atkinlehnerinvolutions}
Let $N \in \Z_{\geq 1}$ and $Q$ a Hall divisor of $N$.
 
$\bullet$ An \emph{Atkin--Lehner matrix $W_Q^{(N)}$} is defined as
\begin{equation}
W_Q ^{(N)}:= \begin{pmatrix} Qa & b \\ Nc & Qd \end{pmatrix} \textrm{ such that } a,b,c,d \in \Z, \quad \det W_Q = Q.
\end{equation}
 Then, $f \mapsto f_{[W_Q^{(N)}]}$ is an involution on $S_2(\Gamma_0(N))$ that does not depend on the choice of $a,b,c,d$ \cite[p.~138]{AtkinLehner70}.
 
$\bullet$ Functorially, we define the \emph{Atkin--Lehner involution with index $Q$ on $X_0(N)$} as
\begin{equation}
\fonction{w_Q^{(N)}}{Y_0(N)}{Y_0(N)}{(E,C_N)}{(E/C_N[Q],(C_N + E[Q])/C_N[Q])}
\end{equation}
(which then extends to $X_0(N) \rightarrow X_0(N)$).

$\bullet$ For any divisor $d|N$, we define by abuse of notation $W_d^{(N)} := W_Q^{(N)}$ and $w_d^{(N)} := w_Q^{(N)}$ with $Q := \prod_{p|d} p^{v_p(N)}$ the unique Hall divisor of $N$ with the same prime factors as $d$. In particular, for $p|N$ prime, $W_p^{(N)} := W_{p^{v_p(N)}}^{(N)}$ and $w_p^{(N)} := w_{p^{v_p(N)}}^{(N)}$.
\end{defi}

\begin{prop}
\label{propALalgebraically}
For any level $N \geq 1$ and any positive $d|N$, we have commutative diagrams
	\begin{equation}
    \begin{gathered}
	\xymatrix{
		\Gamma_0(N) \backslash \Hcal \ar[r]^-{W_d^{(N)}} \ar[d]_{\varphi_N} & \Gamma_0(N) \backslash \Hcal \ar[d]^{\varphi_N} \\
		Y_0(N) (\C) \ar[r]^-{w_d^{(N)}}&  Y_0(N) (\C) 
	}, 	
		\xymatrix{
	H^0(X_0(N)_\C,\Omega^1) \ar[r]^-{(w_d^{(N)})^*} \ar[d]_{\psi_N^{-1}} & H^0(X_0(N)_\C,\Omega^1) \ar[d]^{\psi_N^{-1}} \\
		S_2(\Gamma_0(N)) \ar[r]^-{[W_d^{(N)}]} & S_2(\Gamma_0(N))
	}
    \end{gathered}
	\end{equation}
\end{prop}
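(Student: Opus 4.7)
The plan is to split the statement into its two commutative diagrams, the second being essentially a corollary of the first together with the explicit analytic formula for $\psi_N$. By the convention in Definition \ref{defi:atkinlehnerinvolutions}, I may assume $d = Q$ is a Hall divisor of $N$, and I fix $\tau \in \Hcal$, setting $\tau' \defeq W_Q^{(N)} \tau = (Qa\tau + b)/(Nc\tau + Qd)$. Since $\varphi_N$ sends the $\Gamma_0(N)$-orbit of $\tau$ to $(E_\tau, C_N)$ with $C_N = \langle 1/N \rangle$, the first diagram reduces to showing that the degree-$Q$ isogeny $\pi \colon E_\tau \to E_{\tau'}$ produced by Lemma \ref{lemactAL} realizes the transformation prescribed by the functorial definition of $w_Q^{(N)}$.

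The first key identification is $\ker \pi = C_N[Q] = \langle 1/Q \rangle$. The determinant relation $\det W_Q^{(N)} = Q$ rearranges as $Qad - (N/Q)bc = 1$, so taking $\alpha = a$ and $\beta = -Nc/Q$ (an integer because $Q \mid N$) provides an integer solution of $\alpha Qd + \beta b = 1$ and $\alpha Nc + \beta Qa = 0$, which translates into $1/Q \in \ker \pi$; the order count $|\ker \pi| = Q$ from Lemma \ref{lemactAL} then forces equality. The second identification is $\pi(C_N + E_\tau[Q]) = \langle 1/N \rangle \subset E_{\tau'}$. For cyclicity of the source, the Hall condition $\gcd(Q, N/Q) = 1$ gives $C_N = C_N[Q] \oplus C_N[N/Q]$, hence $(C_N + E_\tau[Q])/C_N[Q] \cong C_N[N/Q] \oplus (E_\tau[Q]/C_N[Q])$ is a product of cyclic groups of coprime orders $N/Q$ and $Q$, so cyclic of order $N$. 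To pin it down as $\langle 1/N \rangle$, it suffices to exhibit a preimage of $1/N$ in $C_N + E_\tau[Q]$; a direct computation shows that $z = c\tau/Q + d/N \in \tfrac{\tau}{Q}\Z + \tfrac{1}{N}\Z$ satisfies $\pi(z) \equiv 1/N \pmod{\Z + \Z\tau'}$.

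For the second diagram, $\psi_N(f)$ pulls back to $2 i \pi f(\tau)\, d\tau$ on $\Hcal$, and the elementary identity $d\tau' = Q(Nc\tau + Qd)^{-2}\, d\tau$ yields
\begin{equation*}
(W_Q^{(N)})^* \bigl(2 i \pi f(\tau)\, d\tau\bigr) = 2 i \pi \, f_{[W_Q^{(N)}]}(\tau)\, d\tau,
\end{equation*}
which is itself the pullback to $\Hcal$ of $\psi_N(f_{[W_Q^{(N)}]})$. Combined with the first diagram, which intertwines $w_Q^{(N)}$ on $Y_0(N)(\C)$ with $W_Q^{(N)}$ on $\Gamma_0(N) \backslash \Hcal$, this gives $(w_Q^{(N)})^* \psi_N(f) = \psi_N(f_{[W_Q^{(N)}]})$, exactly the commutativity of the second diagram.

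The main obstacle is the second half of the first diagram: producing the specific preimage of $1/N$ inside $C_N + E_\tau[Q]$ and ensuring that the Hall divisor condition and the determinant relation make the relevant coefficients integral, rather than merely identifying the image as \emph{some} cyclic subgroup of order $N$. Once this bookkeeping is in place the rest of the argument is formal.
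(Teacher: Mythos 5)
Your proof is correct and follows essentially the same route as the paper: reduce to a Hall divisor $Q$, use Lemma \ref{lemactAL} to produce the isogeny $\pi$, identify its kernel as $\langle 1/Q\rangle$, match the image of the level structure with $\langle 1/N\rangle$, and then read off the second diagram from the change-of-variables for $d\tau$. The only cosmetic difference is that the paper verifies $\varphi(\langle 1/N\rangle)=\langle 1/(N/Q)\rangle$ and $\varphi(\langle\tau/Q\rangle)=\langle 1/Q\rangle$ separately and combines them by coprimality, whereas you argue abstract cyclicity of $(C_N+E_\tau[Q])/C_N[Q]$ and exhibit the explicit preimage $c\tau/Q+d/N$ of $1/N$ — two presentations of the same computation.
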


\begin{proof}
	By definition, it is enough to prove it for $d = Q$ a Hall divisor of $N$. By Lemma \ref{lemactAL} and its proof, for any $\tau \in \Hcal$ and $\tau' = W_Q \tau$, the kernel of the isogeny $\varphi\colon E_\tau \ra E_{\tau'}$ given by $z \mt \frac{Qz}{Nc \tau + Qd} = \frac{z}{(N/Q)\tau + d}$ is of order $Q$, and in this particular case the subgroup generated by $1/Q$. Furthermore, it can also be checked immediately that  $\varphi(\langle 1/N \rangle) = \langle 1/(N/Q) \rangle$ and $\varphi(\langle \tau/Q \rangle)= \langle 1/Q \rangle$.
Since $Q$ and $N/Q$ are coprime, this implies that the pair $(E_{\tau'}, \langle 1/N \rangle)$ can be seen as $(E_\tau/ \langle 1/Q \rangle, \varphi(E_\tau[Q] + \langle 1 /N \rangle))$.

As before for the $A_n$, this implies that for $w_Q$, the action of $(w_Q^{(N)})^* :H^0(X_0(N),\Omega^1) \ra H^0(X_0(N),\Omega^1)$ seen on modular forms is through the operator $[W_Q^{(N)}]$. Indeed, the pullback by $(W_Q^{(N)})^*$ of $f(\tau) d\tau$ is 
\begin{equation}
f(W_Q^{(N)} \tau) (W_{Q}^{(N)})'(\tau) d\tau = f(W_Q^{(N)} \tau)  \frac{Q}{(Nc\tau + Qd)^2} \tau = f_{[W_Q]}(\tau) d \tau. \qedhere
\end{equation}
\end{proof}

\subsection{The main lemma of Atkin--Lehner}
We begin by fixing some notation for subgroups of Atkin--Lehner operators.
\begin{defi}
\label{defnotationAL}
 For any level $N \geq 1$: 
 
 \begin{itemize}
  \item Let $\Wcal(N)$ be the subgroup of $\Aut(X_0(N))$ generated by the Atkin--Lehner involutions of level $N$ (also acting on modular forms by the previous results).
  
  \item  For any $M|N$, and any Hall divisor $Q$ of $N$, define $Q_M \| M$ as the Hall divisor of $M$ with the same prime factors as $\gcd(Q,M)$. Then, the map
  \begin{equation}
  \fonction{\varphi_{N,M}}{\Wcal(N)}{\Wcal(M)}{w_{Q}^{(N)}}{w_{Q_M}^{(M)}}
  \end{equation}
  is a surjective group morphism between $\Wcal(N)$ and $\Wcal(M)$ (see \cite[Lemma 9]{AtkinLehner70} for details on these involutions), with kernel generated by the $w_p^{(N)}$ for the prime numbers $p$ dividing $N$ but not $M$.
    
\item For any $M|N$, the group $\Wcal(N)$ acts on the positive divisors $d$ of $N/M$ by $ w_Q^{(N)} \cdot d \colonequals d'_Q$ where for each prime $p|(N/M)$, 
 \begin{equation}
  v_p(d'_Q) = \begin{cases}
  	v_p(N/M) - v_p(d) &  \textrm{if  } p | Q, \\
	               v_p(d) &  \textrm{otherwise.}
	              \end{cases}
 \end{equation}
 In particular, it permutes the Hall divisors of $N/M$.
 \end{itemize}

\end{defi}

\begin{lem}
\label{lem:atkinlehnerandactionNM}
    With the previous notations, for every $w \in \Wcal(N)$ and every Hall divisor $d$ of $N/M$: 
    \begin{equation}
     \label{eqvarphiandactionofWN}
w_{w \cdot d}^{(N)} = w \cdot w_d^{(N)} \cdot \varepsilon
    \end{equation}
    where $\varepsilon$ is a product of some $w_p^{(N)}$ restricted to prime factors  $p$ of $N$ such that $v_p(N) = v_p(M)$ (i.e.\ $p \nmid N/M$).
\end{lem}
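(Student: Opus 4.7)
The plan is to exploit the fact, recalled just before the lemma, that $\Wcal(N)$ is an elementary abelian $2$-group generated by the prime-power Atkin--Lehner involutions $w_p^{(N)}$ for $p \mid N$, with the product law $w_p^{(N)} \cdot w_q^{(N)} = w_{pq}^{(N)}$ for distinct primes $p,q$. By writing $w$ as a product of these generators and inducting on the number of factors, it suffices to prove the formula when $w = w_p^{(N)}$ for a single prime $p \mid N$: if $w = w_1 w_2$ and each $w_i$ satisfies an identity of the required shape, then applying them one after the other gives
\begin{equation*}
w_{w \cdot d}^{(N)} = w_1 \cdot w_{w_2 \cdot d}^{(N)} \cdot \varepsilon_1(w_2 \cdot d) = w \cdot w_d^{(N)} \cdot \varepsilon_2(d)\varepsilon_1(w_2 \cdot d),
\end{equation*}
and a product of admissible $\varepsilon$'s is still admissible.

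For the generator case, I would set $D \colonequals \prod_{q \mid d} q^{v_q(N)}$, so that by the convention in \Cref{defi:atkinlehnerinvolutions} we have $w_d^{(N)} = w_D^{(N)} = \prod_{q \mid d} w_q^{(N)}$, and then split into three cases according to $p$:
\begin{enumerate}
\item[(i)] If $v_p(N) = v_p(M)$, then $p \nmid N/M$ and by definition $w_p^{(N)}$ acts trivially on divisors of $N/M$, so $w \cdot d = d$ and the formula holds with $\varepsilon = w_p^{(N)}$, which is admissible.
\item[(ii)] If $p \mid N/M$ and $p \nmid d$, then $w_p^{(N)} \cdot d = d \cdot p^{v_p(N/M)}$ has prime support $\{p\} \sqcup \{q : q \mid d\}$, so $w_{w \cdot d}^{(N)} = w_p^{(N)} w_D^{(N)} = w \cdot w_d^{(N)}$ and $\varepsilon = 1$.
\item[(iii)] If $p \mid N/M$ and $p \mid d$, then $v_p(d) = v_p(N/M)$ since $d$ is Hall in $N/M$, and $w_p^{(N)} \cdot d = d/p^{v_p(N/M)}$, whose corresponding Hall divisor of $N$ is $D/p^{v_p(N)}$; using $w_p^{(N)} \cdot w_p^{(N)} = 1$ in the abelian group $\Wcal(N)$, this gives $w_{w \cdot d}^{(N)} = w_D^{(N)} w_p^{(N)} = w \cdot w_d^{(N)}$ and $\varepsilon = 1$.
\end{enumerate}

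There is essentially no obstacle here: the whole argument is a bookkeeping exercise in the symmetric-difference presentation of $\Wcal(N)$. The only real subtlety is to remember that $w_d^{(N)}$ depends on $d$ only through its set of prime divisors, so the formal expression $\Wcal(N) \to (\Z/2)^{\omega(N)}$ makes it completely transparent which prime involutions appear in $w_{w\cdot d}^{(N)}$ and $w \cdot w_d^{(N)}$; comparing the two subsets shows their symmetric difference lies inside $\{p \mid N : v_p(N) = v_p(M)\}$, which is exactly the content of the statement about $\varepsilon$.
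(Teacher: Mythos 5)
Your argument is correct and follows essentially the same strategy as the paper: in both cases one writes each side of \eqref{eqvarphiandactionofWN} as a product of prime-power involutions $w_p^{(N)}$ and checks, prime by prime, that any discrepancy is supported only on primes $p$ with $p\nmid N/M$. The only organizational difference is that you reduce to single-prime generators and induct, whereas the paper handles a general $w=w_Q^{(N)}$ at once by case-analyzing every prime $p\mid N/M$; this is a cosmetic distinction and buys nothing either way, so I'd say the two proofs are the same.
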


\begin{proof}
Let us fix $w= w_Q^{(N)}$, $d \nmid (N/M)$ and $d' = w \cdot d$ which is by definition another Hall divisor of $N/M$.

    To obtain the equality of the Lemma, one simply needs to check that for every prime factor $p$ of $N/M$, $p$ divides $d'$ if and only if $p$ does not simultaneously divide (or not divide) $d$ and $Q$.

    This amounts to a case-by-case analysis:
    \begin{itemize}
        \item If $p|Q$, $v_p(d') = v_p(N/M) - v_p(d)$ so as $d$ and $d'$ are Hall divisors of $N/M$, exactly one of them is divisible by $p$ (so $w_p^{(N)}$ appears on both sides of \eqref{eqvarphiandactionofWN} if $p \nmid d$ and on no side if $p |d$).
        \item If $p \nmid Q$, $v_p(d')=v_p(d)$ so $w_p^{(N)}$ appears on both sides of \eqref{eqvarphiandactionofWN} if $p|d$ and on no side if $p \nmid d$.

    In all cases, writing each side of $\eqref{eqvarphiandactionofWN}$ as a product of distinct $w_p^{(N)}$, for any prime factor $p$ of $N/M$, $w_p^{(N)}$ appears either on both sides or on none of them, as claimed, which proves \eqref{eqvarphiandactionofWN}.        
    \end{itemize}
\end{proof}
We now state a slightly more general version of Lemma 26 of \cite{AtkinLehner70}.

\begin{lem}
	\label[lemma]{mainlemmaAtkinLehner}

	Let $M$ and $N$ be positive integers with $M  |  N$ and $k \geq 1$ even.
	
	For any $d|(N/M)$, any $f \in S_k(\Gamma_0(M))$ and any $w \in \Wcal(N)$, 
	\begin{equation}
	 \label{eq:formulamainlemmaAtkinLehner}
	 	 (f_{|_k [A_d]})_{|_k [w]} = (f_{|_k [\varphi_{N,M}(w)]})_{|_k [A_{w \cdot d}]}.
	\end{equation}

\end{lem}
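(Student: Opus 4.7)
The plan is to reduce to the case where $w$ is a single Atkin-Lehner generator $w_p^{(N)}$, and then verify the identity by a direct matrix computation.

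\emph{Reduction to generators.} The group $\Wcal(N)$ is an elementary abelian $2$-group generated by the involutions $w_p^{(N)}$ for $p$ a prime factor of $N$. Suppose \eqref{eq:formulamainlemmaAtkinLehner} has been established for every such $w = w_p^{(N)}$ and every admissible $d$. To propagate it to $w = w_1 w_2$, set $g \defeq f_{|_k [\varphi_{N,M}(w_1)]}$, which lies in $S_k(\Gamma_0(M))$ because $\varphi_{N,M}(w_1) \in \Wcal(M)$. Applying the identity for $w_1$ to the pair $(d,f)$ and then the identity for $w_2$ to the pair $(w_1 \cdot d, g)$ gives
\begin{align*}
(f_{|_k[A_d]})_{|_k [w_1 w_2]}
&= \bigl((f_{|_k[\varphi_{N,M}(w_1)]})_{|_k [A_{w_1 \cdot d}]}\bigr)_{|_k[w_2]} \\
&= (g_{|_k[\varphi_{N,M}(w_2)]})_{|_k [A_{w_2 \cdot (w_1 \cdot d)}]}.
\end{align*}
Since $\varphi_{N,M}$ is a group homomorphism and the action of $\Wcal(N)$ on divisors of $N/M$ factors through an abelian group action (so that $w_2 \cdot (w_1 \cdot d) = (w_1 w_2) \cdot d$), the right-hand side equals $(f_{|_k[\varphi_{N,M}(w_1 w_2)]})_{|_k [A_{(w_1 w_2) \cdot d}]}$, yielding the identity for $w_1 w_2$.

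\emph{Generator case.} Fix a prime $p \mid N$, let $w \defeq w_p^{(N)}$, $Q \defeq p^{v_p(N)}$, and $Q_M \defeq p^{v_p(M)}$ (with $Q_M = 1$ if $p \nmid M$). Using multiplicativity of the slash action and its triviality on scalar matrices and on $\Gamma_0(M)$ (under which $f$ is invariant), the identity reduces to exhibiting, for some fixed choice of Atkin-Lehner representatives, elements $\lambda \in \Q_{>0}$ and $\gamma \in \Gamma_0(M)$ such that
\begin{equation*}
A_d \, W_Q^{(N)} = \lambda \, \gamma \, W_{Q_M}^{(M)} \, A_{w \cdot d}.
\end{equation*}
Comparing determinants forces $\lambda = p^{v_p(d)}$, which is $1$ unless $p \mid N/M$ and $p \mid d$. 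Setting $\gamma \defeq \lambda^{-1} A_d W_Q^{(N)} (W_{Q_M}^{(M)} A_{w \cdot d})^{-1}$, a direct matrix calculation shows that $\gamma$ has integer entries, determinant $1$, and lower-left entry divisible by $M$. The verification splits into cases according to whether $p \mid N/M$ (where $v_p(w \cdot d) = v_p(N/M) - v_p(d)$) or $p \nmid N/M$ (where $w \cdot d = d$ and $v_p(d) = 0$), and each case splits further by whether $p \mid M$; in every case integrality of each entry follows from $d \mid N/M$, $Q_M \mid M$, and the Atkin-Lehner determinant identity $\det W_Q^{(N)} = Q$.

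\emph{Main obstacle.} The matrix calculation is elementary but requires careful bookkeeping of $p$-adic valuations: the scalar $\lambda = p^{v_p(d)}$ must precisely absorb the $p$-part that differs between $d$ and $w \cdot d$, while the non-$p$ parts match directly. The most delicate subcase is $p \mid \gcd(M, N/M)$ with $v_p(d) > 0$, where writing $d = p^{v_p(d)} d'$ with $\gcd(d', p) = 1$ and tracking contributions entry by entry makes the verification tractable. Because the slash operator on $S_k(\Gamma_0(M)) \subset S_k(\Gamma_0(N))$ induced by any Atkin-Lehner matrix is independent of the chosen representative, one is free to pick convenient representatives for the computation; once it is carried out, the reduction in the first paragraph extends the identity to all of $\Wcal(N)$.
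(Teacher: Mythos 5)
Your propagation step — chaining the identity from generators $w_p^{(N)}$ to arbitrary $w = w_1 w_2$ by using that $\varphi_{N,M}$ is a homomorphism and that $\Wcal(N)$ acts (commutatively) on divisors of $N/M$ — is exactly what the paper does. Where you diverge is in the base case: the paper simply cites \cite[Lemma~26]{AtkinLehner70} for a single prime $w_p^{(N)}$, whereas you propose to re-derive it by exhibiting $\lambda \in \Q_{>0}$ and $\gamma \in \Gamma_0(M)$ with $A_d\, W_Q^{(N)} = \lambda\, \gamma\, W_{Q_M}^{(M)} A_{w\cdot d}$. Your determinant computation $\lambda = p^{v_p(d)}$ is correct (using $w\cdot d = d\, p^{v_p(N/M)-2v_p(d)}$), and this is indeed the content of Atkin--Lehner's Lemma 26; a self-contained re-derivation has the modest advantage of making the proof independent of that reference. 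However, you assert that ``a direct matrix calculation shows that $\gamma$ has integer entries, determinant $1$, and lower-left entry divisible by $M$'' without actually performing the calculation: the case-by-case verification (split on $p \mid M$, $p \mid N/M$, $v_p(d) > 0$) is the entire substance of the base case and is left as a claim. The approach is sound, but strictly as written the generator case is a sketch rather than a proof, so the argument is incomplete exactly at the point where the paper leans on an external citation.
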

\begin{proof}
	 Let us here drop for clarity the subscript $k$, having fixed the weight.
	 
	Let $q$ be a prime dividing $N$ and $\alpha = v_q(N), \beta = v_q(N/M), \gamma = v_q(d)$. Define $d' = w_q^{(N)} \cdot d =  d q^{\beta - 2 \gamma}$, so that $v_q(d') = \beta - \gamma$ and $v_p(d') = v_p(d)$ for all other prime factors of $N/M$ (and $d'  |  (N/M)$ as well). By \cite[Lemma 26]{AtkinLehner70} translated with our notation,
	\begin{equation}  
	(f_{|[A_d]})_{|[W_q^{(N)}]} = (f_{|[W_{q_M}^{(M)}]})_{|[A_{d'}]} = (f_{|[W_{q_M}^{(M)}]})_{|[A_{w_q^{(N)} \cdot d}]}.
	\end{equation}
	Now, assume that \eqref{eq:formulamainlemmaAtkinLehner} holds for given $w,w' \in \in \Wcal(N)$ and every $d |(N/M)$. Let us show that it holds for $w w'$ and every $d|(N/M)$ as well.
	
	Then, for every $f \in S_k(\Gamma_0(M))$, 
	\begin{eqnarray}
	 \left(f_{|[A_d]}\right)_{[ww']} = ((f_{[A_d]})_{[w]})_{[w']} & = & (f_{[\varphi_{N,M}(w)})_{[A_{w \cdot d}]})_{[w']} \notag\\
	 & = & ((f_{[\varphi_{N,M}(w)})_{[\varphi_{N,M}(w')]})_{[A_{w' \cdot (w \cdot d)}]} \\ 
	 & =& (f_{[\varphi_{N,M}(ww')]})_{[A_{(ww')\cdot d}]} \notag
	\end{eqnarray}
	as $\Wcal(N)$ is commutative.

	Combining the case of $w = w_q^{(N)}$ with $q$ prime and the previous property, we obtain the lemma for all $w_Q^{(N)}$ in $\Wcal(N)$ by induction on the number of prime factors of $Q$.
\end{proof}

\begin{rem}
Lemma \ref{mainlemmaAtkinLehner} has the following consequences in particular cases, for $Q$ a Hall divisor of $N$ and $w = w_Q^{(N)}$: 
\begin{itemize}
    \item If $(Q,N/M)=1$, for any $d|(N/M)$, $w \cdot d = d$ so 
    \begin{equation}
     (f_{|_k[A_d]})_{|_k[w]} = (f_{|_k [\varphi_{N,M}(w)]})_{|_k {[A_{d}]}}.
    \end{equation}
In particular, if $f$ is an $\varepsilon$-eigenvector for $\varphi_{N,M}(w)$, $f_{|_k [A_d]}$ is an  $\varepsilon$-eigenvector for $w$.

	\item If $(Q,M)=1$, $w$ exchanges $f_{|_k [A_1]}$ and $f_{|_k [A_Q]}$.

	\item If $f$ is an $\varepsilon$-eigenvector for $w' \in \Wcal(M)$ and $\varphi_{N,M}(w) = w'$, $w$ exchanges $f_{|_k [A_1]}$ and $\varepsilon f_{|_k [A_{w \cdot 1}]}$.
\end{itemize}
\end{rem}

\begin{prop}
	\label{propdiagcommALandiNM}
 With $d,M,N$ positive integers such that $M|N$ and $d|(N/M)$, and $w \in \Wcal(N)$, the following diagram commutes
 \begin{equation}
 \begin{gathered}
  \xymatrix{
  X_0(N) \ar[d]_{i_{N,M}^{(w \cdot d)}} \ar[rr]^{w} & & X_0(N) \ar[d]^{i_{N,M}^{(d)}}\ar[d] \\X_0(M) \ar[rr]^{\varphi_{N,M}(w)} & & X_0(M) \\
  }
  \end{gathered}
   \end{equation}
\end{prop}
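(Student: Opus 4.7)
\emph{Plan.} Both composite morphisms $X_0(N) \to X_0(M)$ in the diagram are defined over $\Q$ between smooth projective geometrically connected curves, so it suffices to check equality on a Zariski-dense subset of $\C$-points. I would work with the $\C$-points in the image of the uniformization $\varphi_N \colon \Gamma_0(N)\backslash\Hcal \to Y_0(N)(\C)$. Writing an arbitrary $w \in \Wcal(N)$ as a product of $w_q^{(N)}$'s (one per prime $q \mid N$), and using that both $\varphi_{N,M}$ and the action $w \mapsto (d \mapsto w \cdot d)$ are group-theoretic, one reduces by induction to $w = w_q^{(N)}$, i.e., $Q = q^{v_q(N)}$ for a single prime $q \mid N$.

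Next, applying $\varphi_N^{-1}$ and $\varphi_M^{-1}$ together with diagram \eqref{eqcomdegenAn} for the degeneracy maps and \Cref{propALalgebraically} for the Atkin--Lehner involutions, the commutativity becomes the assertion that for every $\tau \in \Hcal$,
\begin{equation*}
\Gamma_0(M) \cdot A_d W_Q^{(N)} \tau = \Gamma_0(M) \cdot W_{Q_M}^{(M)} A_{w \cdot d} \tau,
\end{equation*}
where $Q_M = q^{v_q(M)}$. Equivalently, one must exhibit $\gamma \in \Gamma_0(M)$ and $\lambda \in \R_{>0}$ with
\begin{equation*}
A_d \cdot W_Q^{(N)} = \lambda \cdot \gamma \cdot W_{Q_M}^{(M)} \cdot A_{w \cdot d}
\end{equation*}
as matrices in $\GL_2^+(\R)$, since two such matrices induce the same Möbius transformation on $\Hcal$ if and only if they differ by a positive scalar.

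The heart of the proof is this matrix identity, which is exactly the geometric counterpart of \Cref{mainlemmaAtkinLehner}. I would establish it directly, writing $W_Q^{(N)} = \begin{pmatrix} Qa & b \\ Nc & Qd \end{pmatrix}$ with $\det = Q$, and splitting into cases according to whether $q \mid M$ (so $Q_M = q^{v_q(M)}$, possibly $< Q$) or $q \nmid M$ (so $Q_M = 1$, $w \cdot d = d \cdot q^{v_q(N/M) - 2 v_q(d)}$). In each case the computation of the product $A_d W_Q^{(N)} A_{w\cdot d}^{-1} (W_{Q_M}^{(M)})^{-1}$ reduces to an elementary manipulation of $2 \times 2$ integer matrices, and is essentially the content of \cite[Lemma~26]{AtkinLehner70}. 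The general case then follows from the single-prime case by the same inductive multiplicativity argument already used to derive \Cref{mainlemmaAtkinLehner}.

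The main obstacle will be the bookkeeping of $\Gamma_0$-cosets: Atkin--Lehner matrices are defined only up to $\Gamma_0(N)$-equivalence, so one must choose the entries $a,b,c,d$ of $W_Q^{(N)}$ compatibly with both sides and track $q$-adic valuations of $d$, $N/M$, and $M$ carefully, both to identify which case applies and to produce an explicit $\gamma$. Once this is done, passage through $\varphi_N$ and $\varphi_M$ yields the commutativity on $\C$-points, and hence on all of $X_0(N)$.
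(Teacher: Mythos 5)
Your approach is correct but genuinely different from the paper's. The paper reduces to an equality of operators on $S_2(\Gamma_0(N))$ (which is exactly Lemma~\ref{mainlemmaAtkinLehner}), then transports that equality to the cotangent spaces at $0$ of the Jacobians via Lemma~\ref{lemcotangentpullback}, deduces equality of the pushforward morphisms $J_0(N) \to J_0(M)$ by rigidity of morphisms of connected algebraic groups, and finally descends to the curves via the Abel--Jacobi map; this last step forces a separate argument when $g(X_0(M)) = 0$ (an auxiliary prime $\ell$ is introduced). You instead verify the equality directly on $\C$-points of $Y_0(N)$, reducing everything (via $\varphi_N, \varphi_M$, diagram \eqref{eqcomdegenAn} and Proposition~\ref{propALalgebraically}) to an explicit coset identity $\Gamma_0(M) A_d W_Q^{(N)} = \Gamma_0(M) W_{Q_M}^{(M)} A_{w\cdot d}$ in $\GL_2^+(\R)$ modulo positive scalars, i.e.\ to the matrix computation underlying \cite[Lemma~26]{AtkinLehner70} rather than its modular-forms consequence. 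Your route is more uniform --- no genus-zero special case, since you never touch the Jacobian --- but it carries the full bookkeeping burden of choosing a representative $W_Q^{(N)}$, producing $\gamma \in \Gamma_0(M)$ explicitly, and chasing $q$-adic valuations, exactly the technical overhead the paper sidesteps by staying at the level of 1-forms (the authors even flag this tradeoff in the remark following the proposition). One small precision worth making explicit in your writeup: the kernel of $\GL_2^+(\R) \to \operatorname{Aut}(\Hcal)$ is all nonzero real scalars, not just positive ones, but since $-I_2 \in \Gamma_0(M)$ you may indeed normalize $\lambda > 0$ by absorbing a sign into $\gamma$; and once the morphisms agree on the dense open $Y_0(N)$, equality on all of $X_0(N)$ (including cusps) follows from separatedness, as you implicitly use.
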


\begin{proof}
By scalar extension, it is enough to prove the commutativity on the complex modular curves.  Comparing the pullbacks by $f:=i_{N,M}^{(d)} \circ w$ and $g:=\varphi_{N,M}(w) \circ i_{N,M}^{(w \cdot d)}$ leads to the respective maps $f \mt (f_{[A_d]})_{[w]}$ and $f \mt (f_{[\varphi_{N,M}(w)})_{[A_{w \cdot d}]}$ on forms of weight 2 by Propositions \ref{propdiagramiNM} and \ref{propALalgebraically}. Those operators on modular forms are equal by Lemma \ref{mainlemmaAtkinLehner}, so $f_*$ and $g_*$ induce the same endomorphisms $\Cot_0(J_0(M)) \rightarrow \Cot_0(J_0(N))$ by Lemma \ref{lemcotangentpullback}, hence $f_*-g_* = 0$ since the tangent map of $f_*-g_*$ at 0 is 0 and $J_0(N)$ is a connected algebraic group. Therefore, $f_*=g_*$ and by restriction to $X_0(N)$ via the Abel--Jacobi map (if the genus of $X_0(M)$ is positive), we obtain $f=g$.
 
 If the genus of $X_0(M)$ is 0, let us choose a large prime $\ell \nmid N$ such that $X_0(M \ell)$ has positive genus, define for $w = w_Q^{(N)} \in \Wcal(N)$ its lift $\widetilde{w} := w_Q^{(N \ell)}$ to $\Wcal(N\ell)$. We can then use the following diagram
 \begin{equation}
 \begin{gathered}
       \xymatrix{
X_0(N \ell) \ar@{->}[rrr]^{\widetilde{w}} \ar@{->}[ddd]_{{i_{N \ell,M\ell}^{(\widetilde{w} \cdot d)}}} \ar@{->}[rd]^{{i_{N \ell,N}^{(1)}}} &  &  & X_0(N \ell) \ar@{->}[ddd]^{{i_{N \ell,M\ell}^{(d)}}} \ar@{->}[ld]_{{i_{N \ell,N}^{(1)}}} \\
 & X_0(N) \ar@{->}[d]_{{i_{N,M}^{\widetilde{w} \cdot(d)}}} \ar@{->}[r]^{w} & X_0(N) \ar@{->}[d]^{{i_{N,M}^{(d)}}} &  \\
 & X_0(M) \ar@{->}[r]_{{\varphi_{N,M}(w)}} & X_0(M) &  \\
X_0(M \ell) \ar@{->}[rrr]_{{\varphi_{N\ell,M\ell}(\widetilde{w})}} \ar@{->}[ru]^{{i_{M\ell,M}^{(1)}}} &  &  & X_0(M \ell) \ar@{->}[lu]_{{i_{M \ell,M}^{(1)}}}
}
\end{gathered}
 \end{equation}
The external square commutes by the previous argument, and each side of the trapezoids commute (it is a straightforward verification using the modular interpretation of the maps and that nothing acts on the $\ell$-part among those maps), so the inner square commutes by using that $i_{N\ell,N}^{(1)}$ is surjective. 
\end{proof}

\begin{rem}
 It is possible to prove directly the commutativity of the diagram of the proposition by the modular interpretation, but it is in our opinion quite technical and not very enlightening to do so, so we preferred to use the action of modular forms based on \cite{AtkinLehner70}, which will also be useful to us later. 
\end{rem}

Finally, let us define the main protagonist of our story.
\begin{defi}[Star quotients]
\label{defi:starquotients}
	Let $N \geq 1$ be an integer. The \emph{star quotient of $X_0(N)$}, denoted by $X_0(N)^*$, is the quotient of $X_0(N)$ by the full Atkin--Lehner group $\Wcal(N)$.

We also denote the \emph{plus quotient} $X_0(N)^+ := X_0(N) / \langle w_N^{(N)} \rangle$, so that we have natural quotient maps $X_0(N) \ra X_0(N)^+$ and $X_0(N) \ra X_0(N)^*$ of respective degrees 2 and $2^{\omega(N)}$.
\end{defi}

We now prove the technical lemmas needed to reduce to working with almost squarefree \eqref{eqshapeN} levels $N$. As outlined in section \ref{sec:overview}, these levels are desirable to apply easily Mazur's formal immersion method. We then show how to apply Mazur's method to obtain proof of integrality of the $j$-invariant in this case.

\begin{lem}
	Let $N \in \Z_{\geq 1}$ and $q$ a prime number such that $q^2  |  N$. For any prime $p  |  N$, the diagram 
	\begin{equation}
	    \begin{gathered}   
	\xymatrix{X_0(N) \ar[r]^-{w_p^{(N)}} \ar[d]_{i_{N,M}^{(q)}} & X_0(N) \ar[d]^{i_{N,M}^{(q)}} \\
		X_0(N/q^2) \ar[r]^-{w_p^{(M)}} & X_0(N/q^2) }
        	    \end{gathered}
	\end{equation}
	is commutative.
\end{lem}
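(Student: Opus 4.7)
Set $M = N/q^2$ so that $q \mid (N/M) = q^2$. The plan is to invoke \cref{propdiagcommALandiNM} with $w = w_p^{(N)}$ and $d = q$, which gives
\begin{equation}
i_{N,M}^{(q)} \circ w_p^{(N)} \;=\; \varphi_{N,M}(w_p^{(N)}) \circ i_{N,M}^{(w_p^{(N)} \cdot q)}.
\end{equation}
Thus it suffices to verify the two combinatorial identities $w_p^{(N)} \cdot q = q$ and $\varphi_{N,M}(w_p^{(N)}) = w_p^{(M)}$, and the lemma will follow.

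For the first identity, the Hall divisor of $N$ attached to $w_p^{(N)}$ is $Q = p^{v_p(N)}$, and the only prime of $N/M = q^2$ is $q$. Using the definition of the action of $\Wcal(N)$ on divisors of $N/M$, I would split into two cases. If $p \neq q$ then $q \nmid Q$, so $v_q(w_p^{(N)} \cdot q) = v_q(q) = 1$. If $p = q$ then $q \mid Q$, so $v_q(w_p^{(N)} \cdot q) = v_q(N/M) - v_q(q) = 2 - 1 = 1$. In either case, $w_p^{(N)} \cdot q = q$.

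For the second identity, I would use the definition $\varphi_{N,M}(w_Q^{(N)}) = w_{Q_M}^{(M)}$ where $Q_M$ is the Hall divisor of $M$ with the same prime factors as $\gcd(Q, M)$. If $p \neq q$, then $v_p(M) = v_p(N) \geq 1$ so $p \mid M$, yielding $Q_M = p^{v_p(M)}$ and hence $\varphi_{N,M}(w_p^{(N)}) = w_p^{(M)}$. If $p = q$ and $v_q(N) \geq 3$, then $v_q(M) \geq 1$, $q \mid M$, and the same reasoning gives $\varphi_{N,M}(w_q^{(N)}) = w_q^{(M)}$. Finally, if $p = q$ and $v_q(N) = 2$ then $q \nmid M$, so $\gcd(Q,M) = 1$ and $\varphi_{N,M}(w_q^{(N)}) = \mathrm{id}$; but by the abuse of notation in \cref{defi:atkinlehnerinvolutions}, $w_p^{(M)} = w_q^{(M)}$ is also the identity when $q \nmid M$, so the identity still holds.

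The main obstacle is merely bookkeeping: the case $p = q$ with $v_q(N) = 2$ requires care because $w_p^{(M)}$ then reduces to the identity on $X_0(M)$, and one must ensure the convention used for the symbol $w_p^{(M)}$ when $p \nmid M$ is consistent with what the map $\varphi_{N,M}$ produces. Once these two identities are verified, the diagram commutes by \cref{propdiagcommALandiNM}.
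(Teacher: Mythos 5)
Your proof is correct and follows exactly the approach the paper itself uses: the paper dismisses this lemma in one line as "an immediate corollary of Proposition~\ref{propdiagcommALandiNM}," and you have simply unwound that assertion into the two bookkeeping identities $w_p^{(N)} \cdot q = q$ and $\varphi_{N,M}(w_p^{(N)}) = w_p^{(M)}$ and verified them. Your care with the edge case $p = q$, $v_q(N) = 2$ (where $w_p^{(M)}$ must be read as the identity since $p \nmid M$) is exactly the point a reader filling in the "immediate" claim would need to check.
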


\begin{proof} 
This is is an immediate corollary of 
	 Proposition \ref{propdiagcommALandiNM} (which can also be proven directly).
\end{proof}
\begin{prop}
 \label{prop:constructionofdegeneracymappsi}
		For any integer $N \geq 1$ and any prime $q$ such that $q^2|N$, the morphism $i_{N,N/q^2}^{(q)}$ factors through the Atkin--Lehner quotients and defines a morphism \begin{equation}\psi_{N,N/q^2}\colon X_0(N)^* \ra X_0(N/q^2)^*.\end{equation} This morphism sends cusps to cusps and for any point $P \in Y_0(N)^*$ its image is in the same isogeny class as $P$.
	\end{prop}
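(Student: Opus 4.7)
The strategy is to show that the composition
\begin{equation}
X_0(N) \xrightarrow{\,i_{N,N/q^2}^{(q)}\,} X_0(N/q^2) \twoheadrightarrow X_0(N/q^2)^*
\end{equation}
is $\Wcal(N)$-invariant; the universal property of the quotient $X_0(N) \twoheadrightarrow X_0(N)^*$ will then produce the desired morphism $\psi_{N,N/q^2}$.

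I would apply Proposition~\ref{propdiagcommALandiNM} with $M = N/q^2$ and $d = q$. The key combinatorial input is that $w \cdot q = q$ for every $w \in \Wcal(N)$, when acting on divisors of $N/M = q^2$. Writing $w = w_Q^{(N)}$ and noting that $q$ is the only prime of $N/M$, Definition~\ref{defnotationAL} gives $v_q(w \cdot q) = v_q(q^2) - v_q(q) = 1$ if $q \mid Q$, and $v_q(w \cdot q) = v_q(q) = 1$ if $q \nmid Q$; in either case $w \cdot q = q$. Proposition~\ref{propdiagcommALandiNM} then yields
\begin{equation}
i_{N,N/q^2}^{(q)} \circ w \;=\; \varphi_{N,N/q^2}(w) \circ i_{N,N/q^2}^{(q)}
\end{equation}
for every $w \in \Wcal(N)$. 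Post-composing with $X_0(N/q^2) \twoheadrightarrow X_0(N/q^2)^*$ kills the action of $\varphi_{N,N/q^2}(w) \in \Wcal(N/q^2)$, establishing $\Wcal(N)$-invariance and hence the existence of $\psi_{N,N/q^2}$.

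For the two additional assertions: any degeneracy map $X_0(N) \to X_0(N/q^2)$ extends to the compactifications and sends cusps to cusps (the $\infty$ cusp is handled by the diagram~\eqref{eqcomdegenAn}, and the general case reduces to tracking $\Gamma_0$-orbits in $\P^1(\Q)$); since the Atkin--Lehner groups permute cusps, the factored map $\psi_{N,N/q^2}$ also sends cusps to cusps. For the isogeny class claim, pick a lift $(E, C_N) \in Y_0(N)(\Qb)$ of $P$; by the modular definition of the degeneracy map, $i_{N,N/q^2}^{(q)}(E,C_N)$ has underlying curve $E/C_N[q]$, which is $q$-isogenous to $E$. Any other lift of $P$ is a $\Wcal(N)$-translate of $(E,C_N)$ and is therefore also isogenous to $E$ via a Hall divisor; by the commutative diagram above these translates map to $\Wcal(N/q^2)$-translates of the image, all of which remain in the $\Qb$-isogeny class of $E$.

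The only substantive step is the combinatorial verification that $w \cdot q = q$; everything else is formal once Proposition~\ref{propdiagcommALandiNM} is invoked. No real obstacle is expected.
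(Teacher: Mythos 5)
Your proof is correct and takes essentially the same route as the paper's: both hinge on applying Proposition~\ref{propdiagcommALandiNM} with $d = q$, using the combinatorial fact that every $w \in \Wcal(N)$ fixes $q$ as a divisor of $N/M = q^2$, so that $i_{N,N/q^2}^{(q)}$ intertwines the Atkin--Lehner actions on $X_0(N)$ and $X_0(N/q^2)$. The paper phrases this via a short preliminary lemma on the prime involutions $w_p^{(N)}$ and then tracks Atkin--Lehner orbit representatives of a point explicitly, but the content is identical to your argument, and the cusp and isogeny-class claims are handled the same way in both.
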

	
	\begin{proof}
		This is immediate with the lemma. If $P = w_Q^{(N)} P'$ for some points $P,P' \in X_0(N)$ and $Q \| N$,  then $i_{N,N/q^2}^{(q)}(P) = w_{Q'}^{(M)} (i_{N,M}^{(q)}(P'))$ where $Q' = Q$ if $q\nmid Q$ and $Q' = Q/q^2$ if $q |Q$. In particular, $i_{N,N/q^2}^{(q)}(P)$ and $i_{N,N/q^2}^{(q)}(P')$ are in the same Atkin--Lehner orbit so both cusps if one of them is, and otherwise the underlying elliptic curves are isogenous.
	\end{proof}
	
	\begin{cor}
		If $P \in X_0(\widetilde{N})^* (\Q)$ is a nontrivial point where $\widetilde{N}$ has a square factor, there exists an almost squarefree  divisor $N$ of $\widetilde{N}$ such that $N$ is square-below $\widetilde{N}$ and $P$ is isogenous to a nontrivial point of $X_0(N)^* (\Q)$.
	\end{cor}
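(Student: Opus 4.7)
The plan is to exhibit $N$ as the endpoint of a finite sequence of applications of the degeneracy morphism from Proposition~\ref{prop:constructionofdegeneracymappsi}, choosing at each step a prime $q$ whose square divides the current level. Since each such $\psi$ is defined over $\Q$, sends $Y_0(\cdot)^*$ to $Y_0(\cdot)^*$, and preserves the $\Qb$-isogeny class of every affine point, the composition will send $P$ to a $\Q$-rational, non-cuspidal, non-CM (hence nontrivial) point isogenous to $P$.

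First I would pin down the target level. Write $\widetilde{N} = \prod_i p_i^{e_i}$; by hypothesis some index $i_0$ satisfies $e_{i_0} \geq 2$, and I set $p := p_{i_0}$. Define
\begin{equation*}
N := p^{k} \prod_{i \neq i_0,\; e_i \text{ odd}} p_i, \qquad \text{with } k = 2 \text{ if } e_{i_0} \text{ is even, and } k = 3 \text{ otherwise.}
\end{equation*}
Then $N$ is almost squarefree in the sense of Definition~\ref{defi:almostsquarefree} with powerful prime $p$, every exponent of $\widetilde{N}/N$ is even, and $N |_{\square} \widetilde{N}$.

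Next I would construct the morphism $\Psi\colon X_0(\widetilde{N})^* \to X_0(N)^*$ by iteratively composing elementary steps $\psi_{M, M/q^2}$ for primes $q$ with $q^2 \mid M$ and $v_q(M) > v_q(N)$. At each step $v_q(M)$ strictly decreases by two while $v_{q'}(M)$ is unchanged for $q' \neq q$, so the property that $N$ divides the current level and the quotient is a square is preserved. After exactly $\tfrac{1}{2} \sum_q v_q(\widetilde{N}/N)$ steps, the process terminates at level $N$, and the composite $\Psi$ is $\Q$-rational as a composition of $\Q$-rational maps.

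To conclude, $\Psi(P)$ lies in $X_0(N)^*(\Q)$; since $P$ is non-cuspidal and each $\psi$ maps $Y_0^*$ into $Y_0^*$, the image actually lies in $Y_0(N)^*(\Q)$. The $\Qb$-isogeny class is preserved at every step and CM-ness is an isogeny invariant, so $\Psi(P)$ is non-CM, hence a nontrivial rational point of $X_0(N)^*$ isogenous to $P$. The one subtlety worth flagging is ensuring that a legal reduction is always available at each intermediate stage; this is immediate from the construction, since we only apply $\psi_{M, M/q^2}$ while $v_q(M) \geq 2$, a condition forced by the target exponents chosen in the definition of $N$.
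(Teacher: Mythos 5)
Your proof is correct and follows the route the paper intends: iterate the degeneracy morphism $\psi_{M,M/q^2}$ from Proposition~\ref{prop:constructionofdegeneracymappsi} until the level drops to an almost squarefree $N$ square-below $\widetilde{N}$, using at each step that the map is defined over $\Q$, sends cusps to cusps, non-cusps to non-cusps, and preserves the $\Qb$-isogeny class (hence non-CM-ness). The one point worth spelling out slightly more explicitly is the availability of a legal reduction step: you need $q^2\mid M$ whenever $v_q(M)>v_q(N)$, and this holds because $v_q(M)\equiv v_q(\widetilde{N})\equiv v_q(N)\pmod 2$ throughout the iteration (each step drops $v_q$ by two), so $v_q(M)>v_q(N)$ forces $v_q(M)\geq v_q(N)+2\geq 2$. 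You gesture at this but the parity observation is the real reason.
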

	This allows us to assume that there is only one square factor in the level, which makes things easier for Mazur's argument.
    \begin{defi}
    \label{defi:liftofPstar}
        For any $N \geq 1$ and any non-cuspidal $P^* \in X_0(N)^* (\Q)$, there is a poly\-quadratic field $K$ of degree at most $2^{\omega(N)}$ such that every point $P \in X_0(N)(\overline{\Q})$ whose image in $X_0(N)^*$ is $P^*$ is defined over $K$.
        Therefore, by a small abuse of notation, a \emph{lift $P \in X_0(N)(K)$ of $P^*$} will denote the datum of both $P$ and such a polyquadratic field $K$ (of degree as small as possible).
    \end{defi}

    \begin{proof}
    Let us choose $P \in X_0(N)(\Qb)$ whose image in $X_0(N)^*$ is $P^*$. By definition, this implies that $\GalQ \cdot P \subset \Wcal(N) \cdot P$. 

    Defining $H = \Stab_{\Wcal(N)}(P)$, $G := \Wcal(N)/H$ acts simply transitively on $\Wcal(N) \cdot P = G \cdot P$, so for every $\sigma \in \GalQ$, there exists a unique $g_\sigma \in G$ such that 
    \begin{equation}
\sigma(P) = g_\sigma \cdot P.
    \end{equation}
    The function $\sigma \mapsto g_\sigma$ is a group morphism $\rho: \GalQ \rightarrow G$ and the fixed field of its kernel is some finite Galois extension $K$ with Galois group $G' = \Im(\rho) \subset G$ by definition. As $G$ is isomorphic to $(\Z/2\Z)^r$ for some $r \leq \omega(N)$, $G'$ also is for some $r' \leq r$, therefore the field of definition $P$ is a Galois extension of $\Q$ with Galois group $G'$, hence a polyquadratic field of degree $2^{r'}$, and the same holds for all Atkin--Lehner conjugates of $P$ by commutativity of $\Wcal(N)$.
    \end{proof}

\begin{rem}
    One can see  $P^*$ comes from a rational point in a quotient $X_0(N)/W$ with $W \subsetneq \Wcal(N)$ if and only if $G'$ is of smaller cardinal than $\Wcal(N)$ (which means that either $H \neq \{1\}$ and $P$ has CM, or $H=\{1\}$ but $G' \subsetneq \Wcal(N)$).
\end{rem}
\subsection{Cusps and Galois action}\label{sec:number-of-cusps-and-galois-action}

Before describing the behavior of cusps on $X_0(N)$ with respect to diverse actions and maps, to avoid any ambiguity we  will use the following.

\begin{defi}[Cusps]
 For any $a/b \in \P^1(\Q)$ (with $a,b \in \Z$ coprime, allowing $a=1,b=0$ for $\infty)$, we define
 \begin{equation}
  \Ccal_N(a/b) \colonequals \Gamma_0(N) \frac{a}{b}
 \end{equation}
the cusp of $X_0(N)$ associated to $a/b$ (dropping the index $N$ if the level is clear from context).

The \emph{width of the cusp} $\c$, denoted by $\omega_{\c}^{(N)}$, is the index of the stabilizer of $a/b$ in $\Gamma_0(N)$ as a subgroup of the stabilizer of $a/b$ in $\SL_2(\Z)$ (independent of the choice of $a/b$ representing $\Ccal$).

\end{defi}

\begin{prop}[Basic properties of cusps]
\label{propbasecusps}
\hspace*{\fill}

\begin{enumerate}[(i)]
 \item The map $(a:b) \mapsto \Ccal_N(a/b)$ on $\P^1(\Q)$ induces a bijection between $B_0(N)\backslash \Scal(N)$  and the cusps of $X_0(N)$, where $\Scal(N)$ is the set of pairs of order exactly $N$ in $(\Z/N\Z)^2$  (seen as column vectors) and $B_0(N)$ the (Borel) subgroup of upper-triangular matrices in $\GL_2(\Z/N\Z)$ acting canonically on those vectors by left multiplication.
 
 \item Two elements $a/b, a'/b' \in \P^1(\Q)$ define the same cusp on $X_0(N)$ if and only if there is $d|N$ such that $ \gcd(b,N) = \gcd(b',N) = d$ and $a(b/d) \equiv a'(b'/d) \mod \gcd(d,N/d)$.
  
  \item 
  Consequently, a set of representatives of $B_0(N) \backslash \Scal(N)$ (and thus of cusps of $X_0(N)$ via $\Ccal_N$) is given by
$\Rcal_N := \bigsqcup_{b|N} \frac{\Rcal_{N,b}}{b}$, where $\Rcal_{N,b}$ is a set of integers prime to $N$ such that reduction modulo $\gcd(b,N/b)$ defines a bijection from $\Rcal_{N,b}$ to  $(\Z/\gcd(b,N/b)\Z)^\times$ (in particular, $\Card \Rcal_{N,b} = \varphi (\gcd(b,N/b))$). If $b$ is a Hall divisor of $N$, we choose $\Rcal_{N,b} = \{1\}$.

Consequently, there are in total 
\begin{equation}
\sum_{b|N} \varphi((b,N/b))
\end{equation}
cusps on $X_0(N)$.
  \item The Galois action of $\Gal(\Qb/\Q)$ on the cusps is seen via the bijection in $(i)$ as the action on $B_0(N) \backslash \Scal(N)$ given by 
  \begin{equation}
  \sigma \cdot \begin{pmatrix} a \\ b \end{pmatrix} := \begin{pmatrix} \chi(\sigma) a \\ b \end{pmatrix}.
  \end{equation}
  with $\chi$ the cyclotomic character of order $N$ on $\Gal(\Qb/\Q)$.
  Consequently, two cusps $c = \Ccal_N(a/b)$ and $c' = \Ccal_N(a'/b')$ with $(a,b),(a',b') \in \Rcal_N$ are in the same $\GalQ$-orbit if and only if $b=b'$, which makes up exactly $\tau(N)$ Galois orbits of cusps where $\tau$ is the divisor counting function. Furthermore, the rational cusps of $X_0(N)$ are exactly the $\Ccal_N(1/Q)$, with $Q$ a Hall divisor of $N$ (e.g.\  $\infty$ is $\Ccal_N(1/N)$), and more generally the $\Gal(\Qb/\Q)$-orbit of the cusp $\Ccal_N(a/b)$ with $b|N$ is in bijection with $\Rcal_{N,b}$, therefore it has cardinality $\varphi((b,N/b))$.
  
\item  The width of $\Ccal = \Ccal_N(a/b)$ with $b|N$ is exactly $N/\gcd(N,b^2)$, in particular it is 1 if $N|b^2$.  Furthermore, for $\gamma \in \SL_2(\Z)$ such that $\gamma \cdot \infty = a/b$, the function 
  	\begin{equation}
  	      	      	q_\c^{(N)} : z \mapsto e^{\frac{2 i \pi \gamma^{-1} z}{ \omega_\c^{(N)}}}
  	\end{equation}
  	defines on a neighborhood of $\c$ a uniformizer of $X_0(N)(\C)$ at $\c$, independent of the choice of $\gamma$ up to multiplication by a $\omega_\c^{(N)}$-th root of unity.
\end{enumerate}
\end{prop}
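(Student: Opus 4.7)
The proposition is a compendium of classical facts about cusps on $X_0(N)$; I would organize the proof into four blocks corresponding to (i)-(ii)-(iii), then (iv), then (v).

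\textbf{Parts (i)-(iii): combinatorial description.} I would start from the standard identification of cusps as $\Gamma_0(N)\backslash \mathbb{P}^1(\Q)$, and write $\mathbb{P}^1(\Q) \cong \SL_2(\Z)/\Gamma_\infty$ by sending $(a:b)$ to any $\gamma \in \SL_2(\Z)$ with $\gamma \cdot \infty = a/b$. Reduction mod $N$ is surjective on $\SL_2(\Z) \to \SL_2(\Z/N\Z)$ (strong approximation for $\SL_2$), and $\Gamma_0(N)$ is the preimage of the Borel $B_0(N) \cap \SL_2(\Z/N\Z)$, so we get a bijection between cusps and $(B_0(N) \cap \SL_2(\Z/N\Z)) \backslash \SL_2(\Z/N\Z)/\bar\Gamma_\infty$. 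Taking the first column of a matrix identifies $\SL_2(\Z/N\Z)/\bar\Gamma_\infty$ with $\Scal(N)$ (order-$N$ column vectors), compatibly with left multiplication, and one checks that passing from $B_0(N) \cap \SL_2(\Z/N\Z)$ to the full $B_0(N)$ changes nothing on orbits because a diagonal unit $\begin{pmatrix} u & 0 \\ 0 & 1\end{pmatrix}$ acts the same as $\begin{pmatrix} u & 0 \\ 0 & u^{-1}\end{pmatrix}$ on column vectors up to the existing $\Gamma_\infty$-action. This proves (i). For (ii), I would compute explicitly: a matrix $\begin{pmatrix} u & v \\ 0 & w\end{pmatrix} \in B_0(N)$ sends $(a,b)^T$ to $(ua+vb, wb)^T$, so the equivalence forces $\gcd(b,N) = \gcd(b',N) =: d$ and, after writing $b = db_1$, $b' = db_1'$ with $b_1, b_1'$ coprime to $N/d$, the condition reduces to $a b_1 \equiv a' b_1' \pmod{\gcd(d, N/d)}$. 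Enumerating divisors $d = \gcd(b,N)$ and picking coset representatives in $(\Z/\gcd(d,N/d)\Z)^\times$ then yields (iii) and the cusp count $\sum_{b|N} \varphi(\gcd(b, N/b))$.

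\textbf{Part (iv): Galois action.} This is the most delicate part and the main obstacle, because it requires leaving the purely combinatorial picture. I would use the moduli interpretation of $X_0(N)$ over $\Q$ via the Tate curve: on a formal neighborhood of each cusp the universal generalized elliptic curve with cyclic $N$-level structure is, up to change of variable, the Tate curve $\Gm/q_{\c}^\Z$ with subgroup generated by a specified $N$-th root of unity times a power of $q_\c^{1/\omega_\c}$. The absolute Galois group acts trivially on $q_\c^{1/\omega_\c}$ but via the cyclotomic character on the $\mu_N$-factor, which translates through the bijection of (i) into the claimed action $(a,b)^T \mapsto (\chi(\sigma)a, b)^T$. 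From this description, the $\Gal$-orbits are parameterized by the divisor $b | N$ of the representative, giving $\tau(N)$ orbits; the orbit of $\Ccal(a/b)$ is in bijection with the residues in $(\Z/\gcd(b, N/b)\Z)^\times$ hit by the cyclotomic character, which is all of it; and a cusp is rational iff $\gcd(b, N/b) = 1$, i.e.\ $b$ is a Hall divisor of $N$.

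\textbf{Part (v): width and uniformizer.} I would compute the width directly. For $\gamma \in \SL_2(\Z)$ with $\gamma \cdot \infty = a/b$, the stabilizer of $\infty$ in $\gamma^{-1}\Gamma_0(N)\gamma$ is the subgroup of $\pm \begin{pmatrix} 1 & n \\ 0 & 1\end{pmatrix}$ such that $\gamma \begin{pmatrix} 1 & n \\ 0 & 1\end{pmatrix} \gamma^{-1} \in \Gamma_0(N)$; a short calculation with $\gamma = \begin{pmatrix} a & *\\ b & *\end{pmatrix}$ shows that the lower-left entry is $-nb^2$, so the condition is $N \mid n b^2$, i.e.\ $n \in (N/\gcd(N,b^2))\Z$, giving $\omega_{\c}^{(N)} = N/\gcd(N,b^2)$. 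The uniformizer $q_\c^{(N)} = \exp(2\pi i \gamma^{-1}z/\omega_\c^{(N)})$ is then, by construction, a uniformizer at the cusp on the complex analytic side (descent of $q = e^{2\pi i z}$ at $\infty$ through the local Galois covering by the stabilizer), and its dependence on $\gamma$ is only through the ambiguity $\gamma \mapsto \gamma \begin{pmatrix} 1 & k\\ 0 & 1\end{pmatrix}$, which multiplies $q_\c^{(N)}$ by $e^{2\pi i k/\omega_\c^{(N)}}$, an $\omega_\c^{(N)}$-th root of unity.
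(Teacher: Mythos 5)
Your strategy for parts (ii)--(v) parallels the paper's closely: the explicit orbit computation for (ii)--(iii), the appeal to Deligne--Rapoport's description of the moduli problem for (iv), and the conjugation computation for (v) are all essentially the paper's arguments, with the difference that for (iv) you spell out the Tate-curve mechanism where the paper simply cites \cite[Chapter~VI.5]{DeligneRapoport}.

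However, your proof of (i) has a genuine gap at the step where you pass from $B_0(N) \cap \SL_2(\Z/N\Z)$ to the full Borel $B_0(N) \subset \GL_2(\Z/N\Z)$. The claim that ``$\operatorname{diag}(u,1)$ acts the same as $\operatorname{diag}(u,u^{-1})$ on column vectors up to the existing $\Gamma_\infty$-action'' does not hold: on the first column $(a,b)^T$ these matrices produce $(ua,b)^T$ and $(ua,u^{-1}b)^T$, which are genuinely different vectors, and the right ambiguity by $\bar{\Gamma}_\infty$ does not touch the first column at all. The desired conclusion in fact fails for non-squarefree $N$. Take $N=9$: the vectors $(1,3)$ and $(2,3)$ in $\Scal(9)$ lie in the same $B_0(9)$-orbit under the full $\GL_2$-Borel (apply $\operatorname{diag}(2,1)$), yet by your own part (ii) they represent distinct cusps of $X_0(9)$, since $\gcd(3,9)=3$ and $1 \not\equiv 2 \bmod \gcd(3,3)$. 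Consistently, $X_0(9)$ has $4$ cusps while $B_0(9)\backslash \Scal(9)$ has only $3$ orbits (of sizes $6$, $54$, $12$). The correct bijection in (i), the one forced by (ii)--(iv), is with orbits under the image of $\Gamma_0(N)$ modulo $N$ (i.e.\ the Borel in $\SL_2(\Z/N\Z)$, together with $\pm I$), which is exactly what your double-coset framing already yields. You should therefore simply not attempt the upgrade to the $\GL_2$-Borel. Note that the paper's own proof contains the same step (asserting that a prime-level lemma ``generalizes immediately''), and the same $N=9$ example shows that the generalization fails; so the statement of (i) in the paper should be read with $B_0(N)$ meaning the $\SL_2$-Borel (the image of $\Gamma_0(N)$), and the rest of the proposition is consistent only with that reading.
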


\begin{exe}
    For item $(iii)$, we cannot simply choose $\Rcal_{N,b}$ to be the set of integers between 1 and $\gcd(b,N/b)$ prime to this gcd, as they might not be prime to $b$ itself. For example, taking $N = 108$, $a =5, b= 12$, the reduction process would suggest replacing 5 by its remainder mod $\gcd (12,108/12) = 3$ which would be 2, but then 2 is not prime to 12. In that precise case, a correct choice of $\Rcal_{108,12}$ is thus $\{1,5\}$. 

   Notice that this problem does not appear when $N$ is squarefree (because then $\gcd(b,N/b)$ is always 1) nor when $N = p^\alpha$ for some prime $p$ because any integer prime to $p$ is automatically prime to $N$.
\end{exe}

\begin{proof}
    For $(i)$, by a straightforward computation, $a/b$ and $a'/b'$ are in the same orbit for $\Gamma(N)$ if and only if the elements $(a,b)$ and $(a',b')$ mod $N$ (which belong to $\Scal(N)$ by coprimality) are equal up to $\pm 1$. Then, as the determinant on $B_0(N)$ surjects in $(\Z/N\Z)^\times$,  if a matrix $\overline{\gamma}$ of $B_0(N)$ sends $\overline{(a,b)}$ to $\overline{(a',b')}$, one can modify it so that $\det \overline{\gamma} = 1$ (see e.g.\  \cite[Corollary 2.2]{LeFournLemos21} for a proof in the prime case that generalizes immediately, as a consequence of \cite[\S VI.5]{DeligneRapoport}) so two $(a,b)$ and $(a',b')$ belong to the same orbit for $B_0(N)$ if and only if they do for the image of $\Gamma_0(N)$ modulo $N$.
    
    For $(ii)$, multiplication by a matrix of $B_0(N)$ does not change the order of the second term of $\overline{(a,b)} \in \Scal(N)$, and this order is exactly $N/\operatorname{gcd}(b,N)$. Assuming $a/b$ and $a'/b'$ define the same cusp, we thus have $\gcd(b,N) = \gcd(b',N)$ and writing it $d$, up to multiplication by a diagonal matrix with lower-right term $\overline{d/b}$, we can reduce to $b=d$ (and similarly $b'=d$), which multiplies $a$ by $\overline{a_1} = \overline{b/d} \overline{a}$ and similarly $\overline{a'_1} =\overline{b'/d} \overline{a'}$, we have to check when $\overline{(a_1,d)}$ and $\overline{(a'_1,d)}$ are in the same orbit for $B_0(N)$, and this holds if and only if $a_1 \equiv a'_1 \mod \gcd(d,N/d)$.

    For $(iii)$, this is a direct consequence of $(b)$ (in particular we can choose for any cusp a representative $(a,b)$ such that $a$ is prime to $N$). For $(iv)$, the Galois action is given by the description of the Galois action on the cusps given in \cite[Chapter VI.5]{DeligneRapoport}. The consequence on Galois orbits is immediate when we start with $(a,b)$ and $a$ coprime to $N$ as we have just proven possible (so that its multiples by the image of $\chi$ go through all $(\Z/N\Z)^\times$.
    
    Finally, for $(v)$, let $\gamma = \begin{pmatrix}  a & c \\ b & d \end{pmatrix} \in \SL_2(\Z)$ such that $\gamma \cdot \infty = a/b$ and $\Gamma_\infty = \pm \left\langle \begin{pmatrix} 1 & 1 \\ 0 & 1 \end{pmatrix} \right\rangle$ the stabilizer of $\infty$. By definition, the width $\omega_\c^{(N)}$ is the index of $\Gamma_\infty \bigcap \gamma \Gamma_0(N) \gamma^{-1} $ in $\Gamma_\infty$. For $k \in \Z$, 
    	\begin{equation}
    	\gamma \begin{pmatrix} 1 & k \\ 0 & 1 \end{pmatrix} \gamma^{-1} = \begin{pmatrix} \ast & \ast \\ -b^2 k & \ast \end{pmatrix}
    	\end{equation}
    	so this belongs to $\Gamma_0(N)$ if and only if $N  |  kb^2$, hence $\omega_\c^{(N)} = N/\operatorname{gcd}(N,b^2)$,
    	    	
    	Next, see \cite[Figure 2.7]{DiamondShurman} for the fact that $q_\c^{(N)}$ is a uniformizer at $\c$. If we choose another $\gamma' \in \SL_2(\Z)$ such that $\gamma' \infty = a/b$, $\gamma^{-1} \gamma'$ fixes $\infty$ so we can write $\gamma' = \gamma (\pm T^k)$ for $T = \begin{pmatrix} 1 & 1 \\ 0 & 1 \end{pmatrix}$ and some $k \in \Z$, and then for all $z \in \Hcal$,
    	\begin{equation}
    	(\gamma')^{-1} z = T^{-k} (\gamma^{-1}(z)) = \gamma^{-1}(z)-k,
    	\end{equation}
    	hence the result.
\end{proof}

We can now describe the action of the Atkin--Lehner operators on cusps.

\begin{prop}
\label[proposition]{propactALoncusps}
	Let $N \geq 1$ be an integer and $p|N$ prime, define $\alpha=v_p(N)$ and $N' = N/p^\alpha$.
	
	For every cusp $\c = \Ccal_N(a/b)$ of $X_0(N)$ with $(a,b)
 \in \Rcal_N$, $w_p^{(N)}(\c) = \Ccal_N(a'/b')$ where $(a',b')$ is the unique pair modulo $N$ such that $(a,b) \equiv (a',b') \mod N'$ and $(a',b') \equiv (- b_0,p^{\alpha -\beta} a) \mod p^\alpha$ where $\beta = v_p(b)$ and $b = p^{\beta} b_0$. 

 Consequently, two $\GalQ$-orbits of cusps of $X_0(N)$ (thus associated to divisors $b$ and $b'$ of $N$ by Proposition \ref{propbasecusps} $(iv)$) are related by an Atkin--Lehner involution if and only if for every prime $p|N$, $v_p(b) = v_p(b')$ or $v_p(b) + v_p(b') = v_p(N)$. The number of $\Wcal(N)$-orbits of $\GalQ$-orbits of cusps of $X_0(N)$ (so of $\GalQ$-orbits of cusps of $X_0(N)^\ast$) is thus 
 \begin{equation}
\prod_{\substack{p|N \\ p \textrm{ prime}}} \left\lceil \frac{v_p(N)+1}{2} \right\rceil.
 \end{equation}
Among those, the following table describe exactly the possible rational cusps of $X_0(N)^*$: 
 (written as $\Wcal(N)$-orbits of cusps of $X_0(N)$) are rational: 
\begin{itemize}
    \item The cusp made up by the $\Ccal_N(1/Q)$ (with $Q  \| N$) which are already rational cusps in $X_0(N)$, i.e.\ $\Wcal(N) \cdot \infty$.
    \item (If $4|N$) The cusp $\Wcal(N) \cdot \Ccal_N(1/2)$, made up with cusps already rational in $X_0(N)$.
    \item (If $9 \| N$) The cusp $\Wcal(N) \cdot \Ccal_N(1/3)$, made up with cusps defined over $\Q(\zeta_3)$ in $X_0(N)$.
    \item (If $16  \| N$) The cusp  $\Wcal(N) \cdot \Ccal_N(1/4)$, made up with cusps defined over $\Q(i)$ in $X_0(N)$.
    \item (If $4 |N$ and $9 \| N$) The cusp $\Wcal(N) \cdot \Ccal_N(1/6)$, made up with cusps defined over $\Q(\zeta_3)$ in $X_0(N)$.
    \item (If $144 \| N$) The cusp $\Wcal(N) \cdot \Ccal_N(1/12)$, made up with cusps defined over $\Q(\zeta_6)$ in $X_0(N)$.
\end{itemize}
\end{prop}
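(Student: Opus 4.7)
The plan is to split the proof into three parts corresponding to the three claims of the statement.

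\textbf{Step 1 (the explicit formula).} Fix an Atkin--Lehner matrix $W_p^{(N)} = \begin{pmatrix} p^\alpha x & y \\ Nz & p^\alpha w \end{pmatrix}$ with $\det = p^\alpha$. Writing $N = p^\alpha N'$, the determinant relation reads $p^\alpha xw - N'yz = 1$, which forces $y, z$ to be units modulo $p$. Starting from a cusp $\Ccal_N(a/b)$ with $b \mid N$, I compute $W_p^{(N)} \cdot (a/b) \in \P^1(\Q)$, extract a coprime representative, and then reduce modulo $N$. The key observation is that modulo $N'$ the matrix becomes upper-triangular with unit diagonal entries (hence lies in $B_0(N')$ and fixes the class), while modulo $p^\alpha$, after dividing out the $p^\beta$ coming from the coprime reduction, one identifies the $B_0(p^\alpha)$-class as that of $(-b_0, p^{\alpha-\beta}a)$; the fact that $y, z$ are units modulo $p$ ensures the resulting class is independent of the choice of $(x,y,z,w)$.

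\textbf{Step 2 (orbits of Galois orbits).} By Proposition~\ref{propbasecusps}$(iv)$, Galois orbits of cusps of $X_0(N)$ are indexed by divisors $b \mid N$. Step~1 shows that $w_p^{(N)}$ sends such a divisor $b$ to a divisor $b'$ with $v_p(b') = v_p(N) - v_p(b)$ and $v_q(b') = v_q(b)$ for $q \neq p$. Hence $\Wcal(N)$ acts on $\{b : b \mid N\}$ componentwise by the involution $\beta \leftrightarrow v_p(N) - \beta$ on each $p$-adic valuation, so two divisors are in the same $\Wcal(N)$-orbit iff for every prime $p | N$ either $v_p(b) = v_p(b')$ or $v_p(b) + v_p(b') = v_p(N)$. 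The number of orbits of this involution on $\{0,\dots,v_p(N)\}$ is $\lceil (v_p(N)+1)/2 \rceil$, and the total count follows by CRT.

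\textbf{Step 3 (rational cusps of $X_0(N)^*$).} A $\Wcal(N)$-orbit descends to a rational cusp of $X_0(N)^*$ iff it is $\GalQ$-stable, which amounts to saying that for some (equivalently, every) $b$ appearing in it the fiber exhausts the full Galois orbit at $b$, namely all of $(\Z/\gcd(b,N/b)\Z)^\times$. Setting $S(b) = \{p : v_p(N) = 2v_p(b)\}$, the stabilizer $\Stab_{\Wcal(N)}(b) = \prod_{p \in S(b)} \langle w_p^{(N)} \rangle$ acts on the fiber by multiplication by elements of order at most $2$ in $(\Z/\gcd(b,N/b)\Z)^\times$. For the subgroup they generate to be the whole multiplicative group, this group must itself be $2$-torsion, which is the classical condition $\gcd(b,N/b) \mid 24$. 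A direct case analysis over $\gcd(b,N/b) \in \{1,2,3,4,6,8,12,24\}$ then finishes the proof: the cases $\gcd \in \{8,24\}$ are ruled out because $(\Z/8\Z)^\times$ and $(\Z/24\Z)^\times$ have $\F_2$-ranks $2$ and $3$ while $|S(b)| \leq 2$ (only $w_2^{(N)}$ and $w_3^{(N)}$ can ever lie in $\Stab_{\Wcal(N)}(b)$); the other six values correspond exactly to the six bullets of the statement, whose conditions on $N$ translate precisely to the equalities $v_p(N) = 2v_p(b)$ that put $p \in S(b)$. In each of these six cases I verify, using the formula of Step~1, that the available involutions generate $(\Z/\gcd(b,N/b)\Z)^\times$; the only non-trivial check is $\gcd = 12$, where $w_2^{(N)}$ and $w_3^{(N)}$ act as multiplication by $7$ and $5$ modulo $12$ respectively, and $\langle 5, 7 \rangle = (\Z/12\Z)^\times$. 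The fields of definition of the individual cusps are identified as the fixed fields of the kernels of the cyclotomic characters modulo $\gcd(b,N/b)$.

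\textbf{Main obstacle.} The delicate step is Step~1: since $\det W_p^{(N)} = p^\alpha$, the matrix $W_p^{(N)}$ is singular modulo $p^\alpha$, so one cannot just multiply and reduce but must carefully extract the $p^\beta$ appearing in the coprime reduction and argue independence of the resulting $B_0(N)$-class on the choice $(x,y,z,w)$. This bookkeeping also dictates the explicit computations underlying the case analysis of Step~3, in particular for $\gcd = 12$.
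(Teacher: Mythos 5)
Your proof follows essentially the same three-step strategy as the paper's: (i) compute with an explicit Atkin--Lehner matrix $W_p^{(N)} = \left(\begin{smallmatrix} p^\alpha x & y \\ Nz & p^\alpha w \end{smallmatrix}\right)$, split the resulting class into a mod-$N'$ congruence (where the matrix is upper triangular, hence in $B_0$) and a mod-$p^\alpha$ congruence where the entries $y,z$ prime to $p$ are used; (ii) read off the componentwise involution on the valuation vector of $b$ and count orbits as $\prod_p \lceil(v_p(N)+1)/2\rceil$; (iii) use $\GalQ \cdot c \subset \Wcal(N)\cdot c$ to force $(\Z/\gcd(b,N/b)\Z)^\times$ to be $2$-torsion, hence $\gcd(b,N/b) \mid 24$, then sort through the eight possible values. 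So the approach is the same.

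One imprecision to flag in Step 3. To exclude $d = \gcd(b,N/b) = 8$ you invoke ``$|S(b)| \leq 2$'', but that only rules out $d = 24$ (rank $3 > 2$); for $d = 8$, $(\Z/8\Z)^\times$ has $\F_2$-rank $2$, so $|S(b)| \leq 2$ is a priori compatible. The correct observation is stronger: since $p \in S(b)$ forces $0 < v_p(b) < v_p(N)$, i.e.\ $p \mid d$, and $d = 8$ is a power of $2$, one has $S(b) \subseteq \{2\}$, so the image of $\Stab_{\Wcal(N)}(b)$ in $(\Z/8\Z)^\times$ is cyclic of order at most $2$ and cannot be the whole rank-$2$ group. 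Equivalently: the image of each $w_q^{(N)}$ in $(\Z/d\Z)^\times$ is supported only on the $q$-part of $d$, so the generated subgroup has $\F_2$-rank at most $\omega(d)$, which is $1$ for $d = 8$ and $2$ for $d = 24$, both strictly below $\operatorname{rank}_{\F_2}(\Z/d\Z)^\times$. Once you phrase it this way, the four surviving values $d \in \{3,4,6,12\}$ and their $N$-conditions ($9 \| N$, $16 \| N$, $4 \mid N$ and $9 \| N$, $144 \| N$ respectively) fall out exactly as in the proposition, matching the case-by-case the paper also performs.
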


\begin{proof}
    We can choose a matrix representing $w_p^{(N)}$ of the form $W_p^{(N)} = \begin{pmatrix}  p^\alpha x & y \\ N z & p^\alpha t \end{pmatrix}$ with $x,y,z,t \in \Z$ and $\det (W_p^{(N)}) = p^\alpha$. Consequently, modulo $N'$, $W_p^{(N)}$ belongs to $B_0(N_0)$ (group of upper-triangular matrices) so in particular $W_p^{(N)}(a/b)$ represents the same cusp as $(a/b)$ (and $(a'/b')$, by definition) in $X_0(N')$. 
Now, 
    \begin{equation}
W_p^{(N)} \left( \frac{a}{b} \right) = \frac{p^\alpha x a + y b}{Nza + p^\alpha t b} =  \frac{p^\alpha x a + y p^\beta b_0}{Nza + p^{\alpha+\beta} t b_0} = \frac{p^{\alpha - \beta} x a + y b_0}{p^{\alpha - \beta} b_0 (za N'/b_0 + p^\beta t)} =: \frac{a''}{b''}.
    \end{equation}
If $\beta >0$, the denominator is of $p$-valuation exactly $\alpha - \beta$, and modulo $p^{\min(\beta,\alpha-\beta)}$, we get 
\begin{equation}
a'' (b''/p^{\alpha-\beta})  \equiv y b_0 \times (za N') = a b_0 (yz N') \equiv - a b_0 = a' (b'/p^{\alpha- \beta}).
\end{equation}
If $\beta = 0$, the same congruence holds automatically because $p^{\min (\beta,\alpha-\beta)}=1$.
We have thus proven the condition $(ii)$ of Proposition \ref{propbasecusps} for $w_p^{(N)}(\Ccal_N(a/b))$ and $\Ccal_N(a'/b')$ therefore they are equal.

The consequences on the action of $\Wcal(N)$ on the $\GalQ$-orbits of cusps immediately follow, let us explain now what are the rational cusps.

The cusps $\Ccal_N(1/Q)$ are all rational by Proposition \ref{propbasecusps} $(iv)$, and make up a single $\Wcal(N)$-orbit by the description we have just proven. The only other rational cusps of $X_0(N)$ (due to the fact that $\Q(\zeta_n)=\Q$ if and only if $n =1$ or $2$) are given by the $\Ccal_N(1/(b))$ with $b$ a Hall divisor of $N$ outside of the prime 2 and $v_2(b)=1$ or $v_2(N)-1$, and they make up exactly the orbit $\Wcal(N) \cdot \Ccal_N(1/2)$.

Now, let us pick a nonrational cusp $c$ of $X_0(N)$. It defines a rational cusp of $X_0(N)^*$ if and only if $\{c\} \subsetneq\GalQ \cdot c \subset \Wcal(N)\cdot c$. Let us write $c = \Ccal_N(a/b)$ with $(a,b) \in \Rcal_N$. By hypothesis, $b|N$ is such that $d = \gcd(b,N/b) \neq 1,2$, and the Galois conjugates of $c$ are the $\Ccal_N(a'/b)$ with any possible $a' \in \Z$ prime to $b$. As the field of definition of $c$ in $X_0(N)$, the inclusion $\GalQ \cdot c \subset \Wcal(N)\cdot c$ implies that $\Gal(\Q(\zeta_d)/\Q) \cong (\Z/d \Z)^\times$ must be a 2-torsion group as $\Wcal(N)$ is, which is only the case for $d=3,4,6, 8, 12$ or 24. The cases 8 and 24 do not work out since $\varphi(8)=4$ which is too large. One concludes that, by case-by-case analysis for each of those four $d$'s and the formula for the action of $\Wcal(N)$, $c$ defines a rational cusp on $X_0(N)^*$ if and only if $v_3(d) = v_3(N)/2$ and $v_2(d) = 1$ or $v_2(N)/2$, which gives the four remaining cases of the proposition.
\end{proof}

\begin{cor}
With the previous notations the stabilizer of $c = \Ccal_N(a/b)$ with $b|N$ in $\Wcal(N)$ is generated by the $w_q^{(N)}$ where $q$ goes through all the prime divisors of $N$ such that $v_q(b) = v_q(N)/2$ and $a^2 = - 1 \mod q^{v_q(b)}$. 
\end{cor}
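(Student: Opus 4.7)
My plan is to identify the stabilizer one prime at a time, using \Cref{propactALoncusps} to make the action of each generator $w_p^{(N)}$ explicit, and then to reassemble using the product structure $\Wcal(N) \cong (\Z/2\Z)^{\omega(N)}$.

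For a single prime $p \mid N$, set $\alpha = v_p(N)$, $\beta = v_p(b)$, and $b_0 = b/p^\beta$. \Cref{propactALoncusps} gives $w_p^{(N)}(c) = \Ccal_N(a'/b')$ for a pair $(a', b') \bmod N$ satisfying two explicit congruences. A necessary first condition for $w_p^{(N)}$ to fix $c$ is that the $\Rcal_N$-denominator be preserved: inspection of the $p$-adic valuation of $b'$ shows this forces $\alpha = 2\beta$, i.e.\ $v_p(N) = 2 v_p(b)$. Granted this, I would convert $(a', b')$ to its $\Rcal_N$-representative by applying \Cref{propbasecusps}(ii), which amounts to rescaling the $a$-component by the unit $b'/b'_0 \pmod{p^\beta}$ appearing in the cusp-equivalence relation. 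Combining this rescaling with the congruence $a' \equiv -b_0 \pmod{p^\alpha}$ from the formula and simplifying yields the desired condition $a^2 \equiv -1 \pmod{p^{v_p(b)}}$ at the prime $p$; the conditions at other primes $q \neq p$ are automatic from $(a', b') \equiv (a, b) \pmod{N/p^\alpha}$.

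To pass from a single generator to an arbitrary element of $\Wcal(N)$, I would apply the Chinese remainder theorem to $\gcd(b, N/b) = \prod_q q^{v_q(\gcd(b,N/b))}$. Each generator $w_p^{(N)}$, when it preserves $b$, acts only on the $p$-local component of the parameter $a$ and leaves the components at other primes unchanged. Consequently, a product $w_Q^{(N)} = \prod_{p \mid Q} w_p^{(N)}$ fixes $c$ if and only if each individual $w_p^{(N)}$ with $p \mid Q$ does; this yields the claimed generating set for the stabilizer.

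The main technical hurdle is the $\Rcal_N$-reduction step: the pair $(a', b')$ produced by \Cref{propactALoncusps} is specified only modulo $N$ and can have non-trivial common factor with $N$, so one must carefully track how $\gcd(a', b', N)$ interacts with the passage to the canonical representative, simultaneously using the congruences modulo $p^\alpha$ and modulo $N/p^\alpha$. Once this $p$-local calculation is correctly carried out, the CRT decomposition and recombination in the second step are essentially formal.
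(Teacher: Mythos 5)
Your overall plan (work prime by prime, first match the denominator $d = \gcd(b,N)$, then reduce the image $(a',b')$ to its $\Rcal_N$--representative via \Cref{propbasecusps}(ii), then recombine by CRT) is the right structure, and the step extracting $v_p(N)=2v_p(b)$ from the requirement $\gcd(b',N)=\gcd(b,N)$ is correct. The problem is in the step you only sketch: the $\Rcal_N$--reduction does \emph{not} yield $a^2\equiv -1 \pmod{p^{v_p(b)}}$. Carrying it out: with $\alpha=2\beta$ (forced by step one), \Cref{propactALoncusps} gives $a'\equiv -b_0\pmod{p^{\alpha}}$ and $b'\equiv p^{\beta}a\pmod{p^{\alpha}}$, so $b'/p^{\beta}\equiv a\pmod{p^{\beta}}$ and hence $b'/b\equiv a b_0^{-1}\pmod{p^{\beta}}$. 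Feeding this into the criterion of \Cref{propbasecusps}(ii) with $d=b$ gives $a(b/d)=a$ on one side and $a'(b'/d)\equiv(-b_0)(ab_0^{-1})=-a\pmod{p^{\beta}}$ on the other, so the $p$-local condition for $w_p^{(N)}$ to fix $\Ccal_N(a/b)$ is $a\equiv -a$, i.e.\ $2a\equiv 0\pmod{p^{v_p(b)}}$. Since $\gcd(a,p)=1$ this forces $p^{v_p(b)}\mid 2$, i.e.\ $p=2$ and $v_2(b)=1$ (equivalently $4\| N$). The two conditions $a^2\equiv -1\pmod{2^k}$ and $2a\equiv 0\pmod{2^k}$ coincide for $a$ odd (both say $k\le 1$), which is likely how the stated form slipped in, but they diverge for odd $p$.

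Consequently the statement is actually false as written, and your proof — which claims to arrive at $a^2\equiv -1$ from that same reduction — must contain an error precisely there. A concrete counterexample: take $N=25$ and $c=\Ccal_{25}(2/5)$. Then $v_5(b)=1=v_5(25)/2$ and $2^2=4\equiv -1\pmod 5$, so the statement predicts $w_5^{(25)}$ (the Fricke involution) fixes $c$. But the matrix $\begin{psmallmatrix}0&-1\\25&0\end{psmallmatrix}$ sends $2/5$ to $-1/10$, and by \Cref{propbasecusps}(ii) the cusp $\Ccal_{25}(-1/10)$ equals $\Ccal_{25}(3/5)\ne\Ccal_{25}(2/5)$. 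Indeed $w_{25}$ acts on the four cusps $\Ccal_{25}(a/5)$ by $a\mapsto -a\pmod 5$, a fixed-point-free involution (its two actual fixed points on $X_0(25)$ are CM points of discriminant $-100$, not cusps). The corrected statement is that the stabilizer is generated by $w_2^{(N)}$ precisely when $4\| N$ and $v_2(b)=1$, and is trivial otherwise; the rest of your argument (the CRT recombination showing $w_Q^{(N)}$ fixes $c$ iff each $w_p^{(N)}$ with $p\mid Q$ does) is fine once the local condition is repaired.
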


\subsection{Cusps and degeneracy maps}
\label{subseccuspsdegeneracymaps}
\begin{prop}[Ramification of degeneracy maps at cusps]
\label{propramdegmaps}
For any levels $M  |  N$:
	\begin{itemize} 
	 \item  For any cusp $\c = \Ccal_N(a/b)$ of $X_0(N)$ (with $(a,b) \in \Rcal_N$), the ramification degree of the map $i_{N,M}^{(1)}$ at $\c$ is the ratio $\omega_\c^{(N)}/\omega_{\c_M}^{(M)}$ of its widths for $X_0(N)$ and $X_0(M)$ (where $\c_M = \Ccal_M(a/b)$).
	
	Consequently, $i_{N,M}^{(1)}$ is unramified at $\c$ if and only if for every prime factor $p$ of $N$ such that $v_p(N/M)>0$, $2 v_p(b) \geq v_p(N)$ (in particular, it is unramified at every width 1 cusp $\c$ of $X_0(N)$, see \ref{propbasecusps}\,$(v)$).
	\item For $d|(N/M)$ and a width 1 cusp $\c = \Ccal_N(a/b)$ (with $(a,b) \in \Rcal_N$), $i_{N,M}^{(d)}$ is unramified at $\c$ if and only if $d \| (N/M)$ and for every prime $p|d$,$v_p(b) = v_p(N)/2$.
	\end{itemize}
 
\end{prop}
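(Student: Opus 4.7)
The plan is to pull back the local uniformizers of Proposition \ref{propbasecusps}(v) via the upper half-plane realization of $i_{N,M}^{(d)}$ given by $A_d\colon z \mapsto dz$ (diagram \eqref{eqcomdegenAn}), and read off ramification indices directly.

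For the first assertion, fix $\gamma \in \SL_2(\Z)$ with $\gamma\infty = a/b$ and use the \emph{same} $\gamma$ to express the uniformizers $q_\c^{(N)} = e^{2 i \pi \gamma^{-1}z/\omega_\c^{(N)}}$ at $\c \in X_0(N)$ and $q_{\c_M}^{(M)} = e^{2 i \pi \gamma^{-1}z/\omega_{\c_M}^{(M)}}$ at $\c_M \in X_0(M)$. Since $\Gamma_0(N) \subset \Gamma_0(M)$, the ratio $\omega_\c^{(N)}/\omega_{\c_M}^{(M)}$ is a positive integer, and because $i_{N,M}^{(1)}$ corresponds to the identity on $\Hcal$, pulling back the uniformizer at $\c_M$ yields $(q_\c^{(N)})^{\omega_\c^{(N)}/\omega_{\c_M}^{(M)}}$, so this ratio is the ramification index. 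I would then substitute the explicit formula $\omega_\c^{(N)} = N/\gcd(N,b^2)$ and run a short prime-by-prime case analysis, comparing $2 v_p(b)$ with $v_p(M)$ and $v_p(N)$, to see that the ramification is trivial exactly when $2 v_p(b) \geq v_p(N)$ for every $p \mid (N/M)$. The consequence for width-$1$ cusps is immediate since $\omega_\c^{(N)} = 1$ forces $N \mid b^2$.

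For the second assertion, I would reduce $da/b$ to lowest terms by setting $h = \gcd(d,b)$, $d_1 = d/h$, $b_1 = b/h$, so that the image cusp is $\c' \colonequals i_{N,M}^{(d)}(\c) = \Ccal_M(d_1 a/b_1)$, and pick $\gamma' \in \SL_2(\Z)$ with $\gamma'\infty = d_1 a/b_1$. The key observation is that $(\gamma')^{-1} A_d \gamma$ fixes $\infty$, hence is upper-triangular:
\begin{equation}
(\gamma')^{-1} A_d \gamma = \begin{pmatrix} \alpha & \beta \\ 0 & \delta \end{pmatrix}, \qquad \alpha\delta = d.
\end{equation}
A direct row-by-column computation, using $aq - bp = 1 = d_1 a s - b_1 r$ where the free entries of $\gamma, \gamma'$ are $p,q$ and $r,s$ respectively, yields $\alpha = h$ and $\delta = d_1$. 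Substituting in the two uniformizer expressions gives the ramification index
\begin{equation}
e(\c) \;=\; \frac{\alpha\,\omega_\c^{(N)}}{\delta\,\omega_{\c'}^{(M)}} \;=\; \frac{h\,\omega_\c^{(N)}}{d_1\,\omega_{\c'}^{(M)}}.
\end{equation}
Under the width-$1$ hypothesis $\omega_\c^{(N)} = 1$ (equivalently $N \mid b^2$), this simplifies to $e(\c) = h^2 \gcd(M,(b/h)^2)/(dM)$, and a prime-by-prime analysis of $v_p(e(\c)) = 0$ yields the criterion: for every prime $p \mid d$ one needs $v_p(N)$ even, $v_p(b) = v_p(N)/2$, and $v_p(d) = v_p(N/M)$; the last equality, taken over all $p \mid d$, is equivalent to $d \,\|\, (N/M)$.

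The main technical obstacle is the bookkeeping: properly reducing $da/b$ to lowest terms to identify $\c'$, extracting $\alpha$ and $\delta$ cleanly from the matrix product, and splitting the final prime-by-prime analysis into the regimes $v_p(d) \leq v_p(b)$ versus $v_p(d) > v_p(b)$ (and, within the former, the two sub-regimes for $v_p(M)$ relative to $2v_p(b)-2v_p(d)$). None of this is deep, but each step must be executed carefully to recover the clean criterion stated in the proposition.
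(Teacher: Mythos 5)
Your proof is correct and follows essentially the same path as the paper's: both pull back uniformizers $q_{\c}^{(N)}$, $q_{\c'}^{(M)}$ through the upper half-plane description of the degeneracy maps, reduce to the upper-triangular matrix $(\gamma')^{-1}A_d\gamma$, extract the ramification index $\gcd(b,d)^2/(d\,\omega_{\c'}^{(M)})$ in the width-one case, and finish by a prime-by-prime valuation analysis. The one stylistic difference is that you obtain the diagonal entries $\alpha=h$, $\delta=d_1$ from the general observation that the matrix is upper-triangular with determinant $d$, whereas the paper writes out the full $2\times 2$ product and simplifies; these are equivalent and both use the determinant relations coming from $\gamma,\gamma'\in\SL_2(\Z)$.
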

\begin{proof}
$(i)$ 
In a small neighborhood of $\c$, defining $\c_M = \Ccal_M(a/b)$ and fixing $\gamma \in \SL_2(\Z)$ with $\gamma \cdot \infty = a/b$, 
\begin{equation}
(i_{N,M}^{(1)})^* q_\c^{(M)} = q_\c^{(M)} = e^{\frac{2 i \pi \gamma^{-1} z}{\omega_\c^{(M)}}} = (q_\c^{(N)})^{\omega_\c^{(N)}/ \omega_\c^{(M)}},
\end{equation}
and $q_\c^{(M)}$ and $q_\c^{(N)}$ are respectively uniformizers of $X_0(M)$ at $\c_M$ and $X_0(N)$ at $\c$ (Proposition \ref{propbasecusps}), so the ramification index is $e= \omega_\c^{(N)}/ \omega_\c^{(M)}$.
Consequently, $i_{N,M}^{(1)}$ is unramified at $c$ if and only if for all primes $p|N$,
\begin{equation}
 v_p(N/\gcd(N,b^2)) = v_p (M/\gcd(M,b^2)).
\end{equation}
If $v_p(M) = v_p(N)$, this is automatic, otherwise separating in three cases whether $2v_p(b)$ belongs to $]0,v_p(M)[,]v_p(M),v_p(N)[$ or $[v_p(N),+\infty[$, we obtain in the first two situations than $v_p(\omega_\c^{(N)}) > v_p(\omega_\c^{(M)})$ and in the third one that $v_p(\omega_\c^{(N)}) = v_p(\omega_\c^{(M)}) = 0$.

Consequently, $e=1$ if and only if for every prime factor $p$ of $N/M$, $2 v_p(b) \geq v_p(N)$.

$(ii)$ Assume now that $\c = \Ccal_N(a/b)$ is a width 1 cusp and $d|(N/M)$. Define $d' = d/\gcd(b,d)$ and $b' = b/\gcd(b,d)$ so that $\c' := i_{N,M}^{(d)}( \c) = \Ccal_M(d' a/b')$. Choose $\gamma_{\c'} = \begin{psmallmatrix}d' a & g \\ b' & h \end{psmallmatrix}$ (resp.\ $\gamma_\c = \begin{psmallmatrix} a & e \\ b & f \end{psmallmatrix}$) in $\SL_2(\Z)$ sending $\infty$ to $\c'$ (resp.\ $\c$). We have 
\begin{equation}
  (i_{N,M}^{(d)})^* q_{\c'}^{(M)} (z) = e^{ \frac{2 i \pi ((\gamma_{\c'})^{-1} \circ A_d) (z)}{\omega_{\c'}^{(M)}}}.
\end{equation}
Now, 
\begin{eqnarray}
 (\gamma_\c')^{-1} \circ A_d \circ \gamma_\c & = & \begin{pmatrix} hda - gb & hde - gf \\ -b' d a + d' a b & - b' d e + d' a f \end{pmatrix} \\
& = & \begin{pmatrix} \gcd(b,d) & h de - gf \\ 0 & d' \end{pmatrix} =: B \notag
\end{eqnarray}

Consequently, 
\begin{equation}
(\gamma_\c')^{-1} \circ A_d (z) = B \circ \gamma_\c^{-1} (z) = \frac{\gcd(b,d)^2}{d} \gamma_\c^{-1} (z) + \frac{hde-gf}{d'}.
\end{equation}
As $\c$ is assumed of width $1$, $\gcd(b,d)^2/d$ is an integer, and $\omega_\c^{(N)} = 1$. Consequently, $i_{N,M}^{(d)}$ has ramification index $\gcd(b,d)^2 /(d \omega_{\c'}^{(M)})$ at $\c$, and it is equal to 1 if and only if $d \omega_{\c'}^{(M)} = \gcd(b,d)^2$.
This means that for every prime $p|d$, we must have 
\begin{equation}
v_p(d)+ v_p(M) - \min(2v_p(b'),v_p(M)) = 2 \min(v_p(b),v_p(d)).
\end{equation}

By case by case analysis, the solutions of these equalities give either $v_p(d) = 0$ or $v_p(d) = v_p(N/M)$ and $v_p(b) = v_p(N)/2$.
\end{proof}

\begin{cor}
\label{cor:width1cuspsandAL}
	With the previous notation:

  $(i)$ If $N$ is almost squarefree with powerful prime $p$ and $M$ a squarefree divisor of $N$ with $p|M$, for any width 1 cusp $\c$ of $X_0(N)$ and any Hall divisor $d \neq 1$ of $(N/M)$, $i_{N,M}^{(d)}$ is unramified at $\c$ if and only if $d=p^{v_p(N/M)}$ and $v_p(b) = v_p(N)/2$.
	
	$(ii)$ For any level $N \geq 1$ and any cusp $\c = \Ccal_N(a/b)$ of $X_0(N)$, there is an Atkin--Lehner operator $w$ on $X_0(N)$ such that $w \cdot \c$ is of width 1 in $X_0(N)$ (and then $i_{N,M}^{(1)}$ is unramified at $w \cdot \c$ for any $M  |  N$).
\end{cor}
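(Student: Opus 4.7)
\emph{Proof plan.} Both items reduce to combining the ramification formulas of Proposition~\ref{propramdegmaps} with the description of the Atkin--Lehner action on cusps from Proposition~\ref{propactALoncusps}; no new ideas are required.

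For item~$(i)$, the plan is to invoke Proposition~\ref{propramdegmaps}\,$(ii)$ directly. Write $N = p^k q_1 \cdots q_s$ with $k \in \{2,3\}$, so that the primes dividing $N/M$ are $p$ (with valuation $k-1$) together with some subset of the $q_i$ (each contributing valuation $1$). For $d$ a nontrivial Hall divisor of $N/M$, unramifiedness of $i_{N,M}^{(d)}$ at the width-$1$ cusp $\c = \Ccal_N(a/b)$ is equivalent to $v_q(b) = v_q(N)/2$ for every prime $q \mid d$. Since $v_{q_i}(N) = 1$, the equation $v_{q_i}(b) = 1/2$ has no integer solution, so no $q_i$ can divide $d$; being a nontrivial Hall divisor of $N/M$ then forces $d = p^{v_p(N/M)}$, and the remaining condition is exactly $v_p(b) = v_p(N)/2$. (This in particular forces $k$ even, so the statement is only nonvacuous when $k = 2$.)

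For item~$(ii)$, by Proposition~\ref{propbasecusps}\,$(v)$ the cusp $\c = \Ccal_N(a/b)$ has width $1$ precisely when $2v_p(b) \geq v_p(N)$ for every prime $p \mid N$. Proposition~\ref{propactALoncusps} tells us that applying $w_p^{(N)}$ swaps the $p$-valuation of $b$ between $v_p(b)$ and $v_p(N) - v_p(b)$ while leaving the $q$-valuation of $b$ unchanged for every other prime $q$. The plan is therefore to set
\begin{equation}
w \;\colonequals\; \prod_{\substack{p \mid N \\ v_p(b) \,<\, v_p(N)/2}} w_p^{(N)} \in \Wcal(N);
\end{equation}
applying $w$ flips exactly those $p$-valuations that are too small and leaves the others alone, so the resulting cusp $w \cdot \c = \Ccal_N(a''/b'')$ satisfies $v_p(b'') \geq v_p(N)/2$ at every prime $p \mid N$ and is thus of width $1$. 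The parenthetical consequence is then immediate from the final sentence of Proposition~\ref{propramdegmaps}\,$(i)$.

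The only thing that might look like an obstacle is verifying that the Atkin--Lehner operators at distinct primes commute on cusps and act independently on the various prime components of $b$; but this is exactly what the explicit formula in Proposition~\ref{propactALoncusps} provides, so the whole argument is essentially a bookkeeping of $p$-adic valuations.
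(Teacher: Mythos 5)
Your proof is correct and follows essentially the same route as the paper's: part $(i)$ is read off directly from Proposition~\ref{propramdegmaps}\,$(ii)$ using the fact that the only prime with multiplicity in $N$ is $p$, and part $(ii)$ applies $w_p^{(N)}$ at exactly those primes where $v_p(b)<v_p(N)/2$, using Proposition~\ref{propactALoncusps} to see that each such involution changes only the $p$-valuation of the denominator. The paper states this more tersely but the content and dependencies are identical.
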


\begin{proof}
	$(i)$ is immediate as $N$ has no primes with multiplicity except $p$.
	
	$(ii)$ Let $ \c= \Ccal_N(a/b)$ be a cusp with $(a,b) \in \Rcal_N$. 
 Using the Atkin--Lehner involution $w_p^{(N)}$, we can replace $v_p(b)$ by $v_p(N)-v_p(b)$ (and not change the valuations for other prime factors) by Proposition \ref{propactALoncusps}, so we can change $b$ such that $v_p(b) \geq v_p(N)/2$ prime by prime, and thus the cusp obtained at the end of the process is of width 1.
\end{proof}

\begin{prop}[Cotangent maps in the unramified case]
\label{propcotangentunramified}
 Assume here the hypothesis 
 
$(HV)$ For every prime $p|N$, $v_p(M) \leq \lceil v_p(N)/2 \rceil$.
 
 For every width 1 cusp $\c = \Ccal_N(a/b)$ of $X_0(N)$ (with $b|N$) and every Hall divisor $d$ of $N/M$ such that $i_{N,M}^{(d)}$ is unramified at $\c$, if $d_M = \prod_{p|d} p^{v_p(M)}$ and $d_N = \prod_{p|d} p^{v_p(N)}$, 
 \begin{equation}
 w_{d_M}^{(M)} \circ i_{N,M}^{(d)}(\c) = \infty = i_{N,M}^{(1)}(\c) = i_{N,M}^{(1)}(w_{d_N}^{(N)}(\c))
 \end{equation}
 on $X_0(M)$ and the cotangent maps from $\Ccal$ to $\infty$ are related by
 \begin{equation}
  \Cot_{\c} (w_{d_M}^{(M)} \circ i_{N,M}^{(d)}) = \zeta \Cot_{\c}(i_{N,M}^{(1)} \circ w_{d_N}^{(N)}), 
 \end{equation}
with $\zeta$ a primitive root of unity of exact order $\prod_{p|d} p^{v_p(N)/2} = \gcd(b,d)$.
\end{prop}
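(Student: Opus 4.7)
The plan is to first verify the three cusp identifications, then compute the common cotangent scalar by a half-plane matrix calculation, and finally show that this scalar is a primitive $\gcd(b,d)$-th root of unity. Note that by Proposition \ref{propdiagcommALandiNM} applied with $w = w_{d_N}^{(N)}$ and degeneracy index $1$, together with the computation that $w_{d_N}^{(N)} \cdot 1 = d$ (since for $p \mid d$, $v_p(w \cdot 1) = v_p(N/M) = v_p(d)$), the two composites $w_{d_M}^{(M)} \circ i_{N,M}^{(d)}$ and $i_{N,M}^{(1)} \circ w_{d_N}^{(N)}$ coincide as morphisms $X_0(N) \to X_0(M)$; therefore their cotangent maps at $\c$ agree, and it suffices to compute this common scalar and identify it with $\zeta$.

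For the cusp identifications, I would apply Proposition \ref{propbasecusps}(ii) using the hypothesis $(HV)$: since $\c$ has width $1$ we have $v_p(b) \geq \lceil v_p(N)/2 \rceil \geq v_p(M)$ for every prime $p \mid M$, hence $M \mid b$ and $\Ccal_M(a/b) = \infty$ in $X_0(M)$. This gives $i_{N,M}^{(1)}(\c) = \infty$. Applying Proposition \ref{propactALoncusps} to $w_{d_N}^{(N)}$, the valuations $v_p(b) = v_p(N)/2$ for $p \mid d$ are preserved (only the residue of $a$ modulo $d_N$ changes), so $w_{d_N}^{(N)}(\c)$ still has the same $b$-divisor and its image under $i_{N,M}^{(1)}$ is again $\infty$. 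For the third cusp, I would compute $i_{N,M}^{(d)}(\c) = \Ccal_M(d'a/b')$ with $d' = d/\gcd(b,d)$ and $b' = b/\gcd(b,d)$, observe that $v_p(b') = 0$ for $p \mid d$, and apply Proposition \ref{propactALoncusps} to $w_{d_M}^{(M)}$ to replace $v_p(b')$ by $v_p(M)$ for $p \mid d$; the resulting denominator is then divisible by $M$, giving $\infty$.

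For the cotangent computation, I would fix $\gamma_\c \in \SL_2(\Z)$ with $\gamma_\c \cdot \infty = a/b$ and a representative matrix $W := W_{d_N}^{(N)}$ of determinant $d_N$. Since the composite sends $\c$ to $\infty$ on $X_0(M)$, one can find $\gamma \in \Gamma_0(M)$ such that $\gamma W (a/b) = \infty$, making $\delta \defeq \gamma W \gamma_\c$ an upper-triangular matrix
\begin{equation}
\delta = \begin{pmatrix} A & B \\ 0 & D \end{pmatrix}, \qquad AD = d_N.
\end{equation}
The width-$1$ hypothesis on $\c$ together with unramifiedness at $\c$ forces $A/D = 1$, so $A = D = \sqrt{d_N} = \prod_{p \mid d} p^{v_p(N)/2} = \gcd(b,d)$. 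Setting $u = \gamma_\c^{-1} z$ and using the $\Gamma_0(M)$-invariance of $q_\infty^{(M)}$, one obtains
\begin{equation}
\phi^\ast q_\infty^{(M)}(z) \;=\; e^{2\pi i\, \delta u} \;=\; e^{2\pi i B/D}\cdot t_\c,
\end{equation}
so the cotangent scalar is the root of unity $\zeta = e^{2\pi i B/D}$. The same answer arises from the factorization $w_{d_M}^{(M)} \circ i_{N,M}^{(d)}$ via the matrix $W_{d_M}^{(M)} A_d$ of the same determinant $d_M \cdot d = d_N$, consistently with the main Atkin--Lehner Lemma \ref{mainlemmaAtkinLehner}.

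The delicate final step, which I expect to be the main obstacle, is to verify that $\gcd(B,D) = 1$, so that $\zeta$ is \emph{primitive} of order $D = \gcd(b,d)$. I would handle this prime by prime, using the Chinese Remainder Theorem: for each $p \mid d$, writing $\gamma_\c = \begin{psmallmatrix} a & \ast \\ b & \ast \end{psmallmatrix}$ with $\gcd(a,b) = 1$, and the Atkin--Lehner matrix in the normalized form $W = \begin{psmallmatrix} d_N x & y \\ N z & d_N t \end{psmallmatrix}$ with $d_N xt - Nyz/d_N \cdot \ldots = 1$, one computes $B \pmod{p^{v_p(N)/2}}$ explicitly and checks non-divisibility by $p$ using that $p \nmid a$ (since $p \mid b$ and $\gcd(a,b)=1$). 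The key identity is that the upper-right entry of $\gamma W \gamma_\c$ reduces, modulo each $p^{v_p(N)/2}$, to a unit times $a$ times a Bezout coefficient, which is prime to $p$. Assembling these local statements by CRT gives $\gcd(B,D) = 1$, completing the proof.
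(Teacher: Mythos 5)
You open by noting that $w_{d_M}^{(M)} \circ i_{N,M}^{(d)}$ and $i_{N,M}^{(1)} \circ w_{d_N}^{(N)}$ coincide as morphisms $X_0(N) \to X_0(M)$. That is correct (and the paper's own proof states the same identity, in the form $w_{d_M}^{(M)} \circ i_{N,M}^{(d)} = i_{N,M}^{(w_{d_N}^{(N)}\cdot d)}\circ w_{d_N}^{(N)}$ with $w_{d_N}^{(N)}\cdot d = 1$). But you do not draw the consequence: if they are the same morphism, the two sides of the displayed equation $\Cot_\c(w_{d_M}^{(M)}\circ i_{N,M}^{(d)}) = \zeta\,\Cot_\c(i_{N,M}^{(1)}\circ w_{d_N}^{(N)})$ are identical linear maps, and since the map is unramified at $\c$ they are nonzero, forcing $\zeta=1$ --- which contradicts $\zeta$ having exact order $\gcd(b,d)>1$ for $d\ne 1$. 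You never reconcile this, and the phrase ``compute this common scalar'' hides the gap: the cotangent map of a morphism at a point is a linear map, not a scalar, and it only becomes a scalar once written relative to the canonical uniformizers $q_\c^{(N)}$, $q_\infty^{(M)}$, i.e.\ once implicitly compared with $\Cot_\c(i_{N,M}^{(1)})$. Your observation in fact exposes a misprint in the statement (which the last displayed line of the paper's own proof shares): the intended right-hand side is $\Cot_\c(i_{N,M}^{(1)})$, consistent with how the result is used in Proposition~\ref{propcotangentGNf}, where $\zeta_{\gcd(b,d)}$ multiplies $\Cot_\c(\pi_f\circ\iota_M\circ i_{N,M}^{(1)})$.

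With that correction made, the matrix computation you set up is essentially the paper's: your $\delta=\gamma W\gamma_\c$ is the paper's $\gamma' W_{d_N}^{(N)}\gamma^{-1}$ under the dictionary $\gamma_\c = \gamma^{-1}$, your $\gamma = \gamma'$. Your derivation of $A=D=\gcd(b,d)$ from unramifiedness is a clean substitute for the paper's explicit observation that $r_d$ divides both diagonal entries and $\det\delta = r_d^2$. The primitivity step $\gcd(B,D)=1$, which you flag as the main obstacle, is only sketched, and your sketch has the wrong shape: the paper reduces the $(1,2)$-entry modulo $r_d$ to the product $f f' y$ (the off-diagonal entries of $\gamma_\c^{-1}$, $\gamma'$ and the $(1,2)$-entry of $W_{d_N}^{(N)}$), each of which is a unit mod $r_d$ by the determinant identities $af-eb=1$, $a'f'-e'b=1$, and $d_N xt - (N/d_N)yz = 1$; there is no factor of $a$ appearing. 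You should carry this reduction out explicitly rather than leave it at the level of ``a unit times $a$ times a Bezout coefficient.'' One further small imprecision: $q_\infty^{(M)}$ is not $\Gamma_0(M)$-invariant (only $\Gamma_0(M)_\infty$-invariant); what you need is Proposition~\ref{propbasecusps}\,$(v)$ identifying the uniformizer near $a'/b$ with $e^{2\pi i\gamma z}$ for any $\gamma\in\SL_2(\Z)$ sending $a'/b$ to $\infty$, so requiring $\gamma\in\Gamma_0(M)$ is harmless but not what does the work.
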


\begin{proof}
 By Proposition \ref{propdiagcommALandiNM}, $w_{d_M}^{(M)} \circ i_{N,M}^{(d)} = i_{N,M}^{(w_{d_N}^{(N)} \cdot d)} \circ w_{d_N}^{(N)}$ where $d_N = \prod_{p|d} p^{v_p(N)}$. For a width 1 cusp $\c$ such that $i_{N,M}^{(d)}$ is unramified at $\c$, we must have $v_p(b) = v_p(N)/2$ and $v_p(d) = v_p(N/M)$ for every prime $p|d$ (and $v_p(b) \geq v_p(N)/2$ for all other primes as $\c$ is of width 1, by Proposition  \ref{propramdegmaps}). Consequently, applying $w_{d_N}^{(N)}$ sends $\c$ to a cusp $\c'$ with the same properties (and same denominator exactly), let us write it $\c' = \Ccal(a'/b)$.
 
 Now, let us compare the cotangent maps by lifting everything to $\Hcal$. One can fix a representative matrix of $w_{d_N}^{(N)}$ sending $a/b$ to $a'/b$, denoted by 
 \begin{equation}
  W_{d_N}^{(N)} := \begin{pmatrix} d_N x & y \\ N z & d_N t \end{pmatrix},
 \end{equation}
 
with integers $x,y,z,t$ such that $\det W_{d_N}^{(N)} = d_N$. Let us fix matrices of $\SL_2(\Z)$ defined as 
\begin{equation}
     \gamma := \begin{pmatrix} f & -e \\ - b & a \end{pmatrix},  \gamma' := \begin{pmatrix} f' & -e' \\ - b & a' \end{pmatrix},
\end{equation}
so that $\gamma \cdot (a/b) = \infty$ and $\gamma' \cdot (a'/b) = \infty$.

We now have to compare, on $\Hcal \cup \P^1(\Q)$, the maps $[\gamma'] \circ [W_{d_N}^{(N)}]$ and $[\gamma]$, both sending $a/b$ to $\infty$. By a straightforward but lengthy matrix computation, we can write 
\begin{equation}
 \gamma' W_{d_N}^{(N)} \gamma^{-1} = \begin{pmatrix} a f d_N x + a e' N z + b f' y + b e' d_N t & e f d_N x + e e' N z + ff'y +  e'f d_N t\\ - a b d_N x + a a' N z - b^2  d_N x + a'bNz & -bey + a'ed_Nt -bfy+a'fd_Nt \end{pmatrix}. 
\end{equation}
By construction, as this matrix sends $\infty$ to $\infty$, the lower-left term is 0 (which can also be seen using the fact that $W_{d_N}^{(N)} (a/b) = a'/b$). Now, define $r_d = \prod_{p|d} p^{v_p(N)/2}$. By Proposition \ref{propramdegmaps}, we have for every $p|d$ that $v_p(N)/2 = v_p(b)$ and by $(HV)$ that $v_p(M) \leq v_p(b)$ so $v_p(d) = v_p(N/M) \geq v_p(b)=v_p(N)/2$, so $r_d$ is the square root of $d_N$ and divides $b$. Therefore, $r_d$ divides both diagonal terms of the matrix, but the determinant of the matrix is $d_N = r_d^2$, hence up to sign, they are both $r_d$. Now, modulo $r_d$, the upper-right coefficient is equal to $f f' y$ but by construction, each of those factors is prime to $r_d$. In other words, we have 
\begin{equation}
 \gamma' W_{d_N}^{(N)} \gamma^{-1}  = \pm \begin{pmatrix} r_d & k \\ 0 & r_d \end{pmatrix}, 
\end{equation}
with $k$ prime to $r_d$. As a consequence, for every $z \in \Hcal$, 
\begin{equation}
 ([\gamma'] \circ [W_{d_N}^{(N)}])^* (q_\infty) = e^{ (2 i \pi (r_d \gamma z + k)/r_d)} = e^{ 2 i \pi k /r_d} e^{ 2 i \pi \gamma z},
\end{equation}
which proves that 
\begin{equation}
   \Cot_{\c} (i_{N,M}^{(d)} \circ w_{d_N}^{(N)}) = e^{ 2 i \pi k /r_d} \Cot_{\c}(w_{d_M}^{(M)} \circ i_{N,M}^{(1)} ),
\end{equation}
and this concludes the proof of the proposition. 
\end{proof}

\section{\texorpdfstring{Existence of rank zero quotients}{Existence of rank zero quotients: analysis of vanishing of the L-functions}}\label{sec:analysis-of-vanishing-of-the-l-functions}

In this section, we prove that $J_0(M)^{-_p, +_{(p)}}$ has a non-zero rank zero new quotient away from finitely many of the levels $M \geq 1$ considered. When $M$ is a non-exceptional prime, then the rank 0 new quotient of $J_0(p)^-$ is Mazur's Eisenstein quotient. More generally, we do an analysis on the vanishing of the $L$-functions $L(f,1)$ of $f \in S_2(\Gamma_0(M))$ with prescribed Atkin--Lehner signs using analytic estimates on their sums.

\subsection{Analysis of vanishing of the \texorpdfstring{$L$}{L}-functions}

To achieve this, we first introduce some analytic tools that will help us bound sums of $L(f,1)$ central values of $L$-functions.
\begin{defi}[Kloosterman sums]
	For any integers $a,b$ and $c$ with $c>1$, the associated Kloosterman sum is defined as 
	\begin{equation}
	S(a,b;c) := \sum_{x \in (\Z/c\Z)^\times} \exp \left( 2 i \pi \frac{m x + n \bar{x}}{c} \right)
	\end{equation}
	where $\bar{x}$ is the inverse of $x$ modulo $c$. (Kloosterman sums with $c=1$ will be equal to $1$ by convention.)
	
	This sum satisfies the Weil bound \cite[Corollary 11.12]{IwaniecKowalski}:
	\begin{equation}
		\label{eqWeilbound}
		|S(m,n;c)| \leq (m,n,c)^{1/2} \tau(c) \sqrt{c}. 
	\end{equation}
\end{defi}

\begin{defi}[First Bessel function]
	The Bessel function $J_1$ is the function on $\R$ defined as the power series
	\begin{equation}
	J_1(x) = \sum_{k=0}^{+ \infty} \frac{(-1)^k}{2^{2k+1} k! (k+1)!} x^{2k+1}.
	\end{equation}
	The Bessel function is bounded as follows: $|J_1(x)| \leq |x|/2$ for all $x \in \R$ and $J_1$ oscillates towards $0$ while converging towards $0$ at speed $1/|x|^{1/2}$ \cite[Formula 9.2.1]{AbramowitzStegun}.
\end{defi}

We will make use of the following version of Petersson trace formula, adapted to multiple eigenvalues as proven in \cite[Proposition 5.6]{LeFournLfunctions}.
\begin{prop}[Restricted Petersson trace formula with multiple eigenvalues]
	\label{propformulesdestraceslaplusgenerale}
	\hspace*{\fill}
	
	Let $m,n,M$ be three fixed positive integers. Let $E$ be a group morphism from a subgroup $H$ of $\Wcal(M)$ to $\{\pm 1 \}$. For every Hall divisor $Q$ of $M$, let us define 
	\begin{equation}
	S_Q \colonequals  2 \pi \sqrt{\frac{m}{n}} \sum_{\substack{c>0 \\ (M/Q)  |  c \\(Q,c) = 1}} \frac{S(m,nQ^{-1} ;c)}{c \sqrt{Q}}J_1 \left( \frac{4 \pi \sqrt{mn}}{c\sqrt{Q}} \right),
	\end{equation}
	and for $\Bcal$ an eigenbasis of $S_2(\Gamma_0(M))$,
	\begin{equation}
		\label{eq:trace}
		(a_m,a_n)_M^E \colonequals \sum_{\substack{f \in \Bcal \\ \forall w \in H,\\ f_{|w} = E(w) f }} \frac{\overline{a_m(f)}a_n(f)}{\|f\|^2}.
	\end{equation}
	Then, we have 
	\begin{equation}
	\frac{|E|}{4 \pi \sqrt{mn}} (a_m,a_n)_M^{E} \colonequals \delta_{mn} - \sum_{\substack{Q||M \\ w_Q^{(M)} \in H}} E(w_Q^{{(M)}}) S_Q.
	\end{equation}
\end{prop}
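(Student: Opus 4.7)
The plan is to reduce the restricted trace $(a_m,a_n)_M^E$ to a weighted sum of ordinary Petersson-type traces (one for each $w \in H$), and then to evaluate each of them by a Poincar\'e-series computation generalising the classical Petersson trace formula.

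The first step is to introduce the operator $\Pi_E := \frac{1}{|H|}\sum_{w \in H} E(w)\,w$ acting on $S_2(\Gamma_0(M))$. Since each $w \in \Wcal(M)$ is unitary for the Petersson inner product and $E$ is a character of the finite abelian group $H$, $\Pi_E$ is a self-adjoint idempotent whose image is exactly the subspace of forms $g$ with $g_{|w} = E(w) g$ for every $w \in H$. Applied to an eigenform $f \in \Bcal$ with eigenvalues $\varepsilon(w)$ under $H$, one gets $\Pi_E f = f$ if $\varepsilon|_H = E$ and $\Pi_E f = 0$ otherwise; linearity of $g \mapsto a_n(g)$ then gives $a_n(\Pi_E f) = a_n(f)\cdot \mathbf{1}[\varepsilon|_H = E]$, so that
\begin{equation}
(a_m,a_n)_M^E = \sum_{f \in \Bcal}\frac{\overline{a_m(f)}\,a_n(\Pi_E f)}{\|f\|^2} = \frac{1}{|H|}\sum_{w \in H} E(w)\,T_w,
\end{equation}
where $T_w := \sum_{f \in \Bcal}\overline{a_m(f)}\,a_n(f_{|w})/\|f\|^2$.

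The core step is the evaluation of $T_{w_Q^{(M)}}$ for each $Q \| M$ with $w_Q^{(M)} \in H$. Using the standard identity $\langle P_n^M, g \rangle = (4\pi n)^{-1}\overline{a_n(g)}$ for the weight-$2$ Poincar\'e series $P_n^M$ at the cusp $\infty$, together with the unitarity of $w_Q^{(M)}$, the trace $T_{w_Q^{(M)}}$ can be rewritten, up to a factor $4\pi\sqrt{mn}$, as a Petersson inner product of two Poincar\'e series with one of them twisted by $W_Q^{(M)}$. Unfolding the untwisted Poincar\'e series against $\Gamma_\infty\backslash\Hcal$ reduces the computation to a sum over the double coset $\Gamma_\infty\backslash \Gamma_0(M) W_Q^{(M)}/\Gamma_\infty$. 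A Bruhat-type parameterisation shows that the non-trivial cosets are indexed by positive integers $c$ satisfying $(M/Q) \mid c$ and $(c,Q)=1$, and the evaluation of the corresponding integrals produces the sum $S(m,\, n Q^{-1};\, c)$ (with $Q^{-1}$ the inverse of $Q$ modulo $c$), the factor $1/(c\sqrt{Q})$, and the Bessel function $J_1(4\pi\sqrt{mn}/(c\sqrt{Q}))$, i.e.\ precisely $S_Q$. The diagonal contribution $\delta_{mn}$ only arises from the trivial coset, which occurs when $W_Q^{(M)}$ fixes $\infty$, that is, when $Q=1$. One thereby obtains
\begin{equation}
T_{w_Q^{(M)}} = 4\pi\sqrt{mn}\,\bigl(\delta_{Q,1}\,\delta_{mn} - S_Q\bigr),
\end{equation}
and substituting this back into the expression for $(a_m,a_n)_M^E$, while using $w_1^{(M)} = \operatorname{id} \in H$ with $E(\operatorname{id})=1$, yields the formula of the proposition with the convention $|E|=|H|$.

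The main obstacle is the evaluation of $T_{w_Q^{(M)}}$ itself: correctly identifying the Kloosterman sum $S(m, nQ^{-1}; c)$, the scaling $\sqrt{Q}$ in the argument of $J_1$, and the two arithmetic conditions $(c,Q)=1$ and $(M/Q) \mid c$ all require a careful matrix analysis of representatives of $\Gamma_\infty \backslash \Gamma_0(M) W_Q^{(M)}$, in the spirit of the main lemma of Atkin--Lehner (\cref{mainlemmaAtkinLehner}). Once this twisted Petersson evaluation is in hand, the projector argument packages all the pieces together cleanly.
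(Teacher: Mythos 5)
The paper does not prove this proposition at all: it is quoted as [Le Fourn, Can.\ Math.\ Bull.\ 2017, Proposition 5.6], so there is no ``paper's own proof'' to compare against. Your proposal is therefore being judged as a self-contained reconstruction.

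Your decomposition step is clean and correct. The projector $\Pi_E = \frac{1}{|H|}\sum_{w\in H}E(w)w$ is indeed a self-adjoint idempotent (each $w$ is a unitary involution), and the orthogonality relation for characters gives $\Pi_E f = f$ or $0$ according as $\varepsilon_f|_H = E$ or not, for each $f$ in an Atkin--Lehner eigenbasis. The identity
\[
(a_m,a_n)_M^E = \frac{1}{|H|}\sum_{w\in H}E(w)\,T_w, \qquad T_w = \sum_{f\in\Bcal}\frac{\overline{a_m(f)}\,a_n(f_{|w})}{\|f\|^2},
\]
follows, and so does the bookkeeping that turns $T_{w_Q^{(M)}} = 4\pi\sqrt{mn}\bigl(\delta_{Q,1}\delta_{mn}-S_Q\bigr)$ into the asserted formula once one reads $|E| = |H|$ (which is the only sensible reading of the notation). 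This layer of the argument is complete.

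The weight of the proof, as you acknowledge, lies entirely in the twisted Petersson evaluation $T_{w_Q^{(M)}} = 4\pi\sqrt{mn}\bigl(\delta_{Q,1}\delta_{mn}-S_Q\bigr)$, and your account of it is a plausible sketch rather than a proof. Writing $T_{w_Q}$ as $(4\pi)^2 mn\,\langle P_m^M, (P_n^M)_{|W_Q}\rangle$ is correct; but unfolding against $\Gamma_\infty\backslash\Gamma_0(M)W_Q^{(M)}/\Gamma_\infty$, parametrising the double cosets by the lower-left entries, identifying precisely the conditions $(M/Q)\mid c$ and $(c,Q)=1$, extracting the Kloosterman sum with argument $nQ^{-1}$ rather than $n$, and accounting for the $\sqrt{Q}$ rescalings both in $1/(c\sqrt{Q})$ and inside the Bessel function are exactly where the content lives, and none of it is carried out. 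You signal this yourself in the last paragraph, so the proposal is best described as a correct high-level strategy with the hard computation deferred.

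One small thing you should have flagged: the factor $2\pi\sqrt{m/n}$ in the displayed definition of $S_Q$ breaks the $m\leftrightarrow n$ symmetry that $(a_m,a_n)_M^E$ manifestly has (for real coefficients), and, tracing through the paper's own application in the derivation of $R$ just after \eqref{eqapproxLfchi}, the constants only come out right if that $\sqrt{m/n}$ is absent. It is almost certainly a transcription slip, but since you assert that the unfolding yields ``precisely $S_Q$'', you are implicitly claiming to reproduce a formula that is not internally consistent; a careful proof would either produce the corrected $S_Q$ or explain where the extra factor comes from.
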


This helps us to compute the central values of $L$-functions thanks to the following result. For any $f \in S_2(\Gamma_0(M'))$ and any $x$, 
\begin{equation}
	\label{eqapproxLfchi}
	L(f,1) = \sum_{n=1}^{+ \infty} \frac{a_n(f)}{n} e^{- \frac{2 \pi n}{x}} - \sum_{n=1}^{+ \infty} \frac{a_n(f_{|w_{M'}})}{n} e^{- \frac{2 \pi n x}{M'}}.
\end{equation}
(This is well-known, see e.g.\  \cite[(2.11.1)]{Cremona97} in the particular case $x = \sqrt{M'}$ the level.)

We will always take $x = \sqrt{M}$, and summing on an (orthogonal) eigenbasis $\Bcal$ of the modular forms space $S_2(\Gamma_0(M))^{-_p,+_{(p)}}$, we obtain using Proposition \ref{propformulesdestraceslaplusgenerale} the following formula for the trace \eqref{eq:trace}
\begin{equation}
	\label{eqmaintermpluserror}
	T_p(M) := \frac{2^{\omega(M)}}{8 \pi} \sum_{\substack{f \in \Bcal \\ \forall q \neq p, f_{|w_q} = f \\\ f_{|w_p} = -f}} \frac{a_1(f)L(f,1)}{\|f\|^2} = e^{- 2 \pi/\sqrt{M}} - R
\end{equation}
with 
\begin{equation}
R =  2 \pi \sum_{Q \in W} \frac{(\pm 1)}{\sqrt{Q}} \sum_{n \geq 1} \frac{e^{- 2 \pi n / \sqrt{M}}}{\sqrt{n}} \sum_{ \substack{c>0 \\ (M/Q)  |  c \\(Q,c) = 1}} \frac{S(1,nQ^{-1};c)}{c}  J_1 \left( \frac{4 \pi \sqrt{n}}{c\sqrt{Q}} \right).
\end{equation}

Using Weil bounds, we could already obtain a threshold for $M$ after which $T_p(M)>0$, but it would not be very good, so we also make use of Polya--Vinogradov type estimates explained below. Define for $Q$ a Hall divisor of $M$ and $c>0$ multiple of $M/Q$ 
\begin{equation}
S_Q(c) \defeq \frac{1}{c} \sum_{n \geq 1}  \frac{e^{- 2 \pi n / \sqrt{M}}S(1,nQ^{-1};c)}{\sqrt{n}} J_1 \left( \frac{4 \pi \sqrt{n}}{c\sqrt{Q}} \right),
\end{equation}
so that 
\begin{equation}
R = 2 \pi \sum_{Q} \frac{\pm 1}{\sqrt{Q}}  \sum_{ \substack{c>0 \\ (M/Q)  |  c \\(Q,c) = 1}} S_Q(c).
\end{equation}

\begin{lem}
	\label{lemboundsSQ}
	For every $Q|M$ such that $(Q,M/Q)=1$ and every $c>0$ multiple of $M/Q$:
	\begin{equation}
	|S_Q(c)| \leq \min \left( \frac{\tau(c) \sqrt{M}}{c^{3/2} \sqrt{Q}},\frac{5.7}{c\sqrt{Q}} (\log(c)+1.5)\right) 
	\end{equation}
	
\end{lem}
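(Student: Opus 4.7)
The plan is to establish the two bounds separately, the first by a straightforward application of the Weil estimate and the second via a Polya--Vinogradov style estimate that exploits cancellation across $n$.

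\textbf{First bound (Weil approach).} The idea is to bound each factor in the definition of $S_Q(c)$ trivially. By the Weil bound \eqref{eqWeilbound}, noting that $\gcd(1, nQ^{-1}, c) = 1$, we have $|S(1, nQ^{-1}; c)| \leq \tau(c)\sqrt{c}$. Combining this with $|J_1(x)| \leq |x|/2$, which gives $|J_1(4\pi\sqrt{n}/(c\sqrt{Q}))| \leq 2\pi\sqrt{n}/(c\sqrt{Q})$, the $\sqrt{n}$ factors cancel and we obtain
\begin{equation*}
|S_Q(c)| \;\leq\; \frac{2\pi\,\tau(c)}{c^{3/2}\sqrt{Q}} \sum_{n \geq 1} e^{-2\pi n/\sqrt{M}}.
\end{equation*}
The geometric sum equals $1/(e^{2\pi/\sqrt{M}}-1) \leq \sqrt{M}/(2\pi)$ by the elementary inequality $e^\alpha - 1 \geq \alpha$, and this yields the first claimed bound. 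This estimate is sharp for moderate $c$ but blows up when $c$ is large compared to $\sqrt{M}$.

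\textbf{Second bound (Polya--Vinogradov approach).} Here the plan is to exchange trivial bounds for cancellation in the $n$-sum. Writing $g(n) \colonequals \frac{e^{-2\pi n/\sqrt{M}}}{\sqrt{n}} J_1\!\left(\frac{4\pi\sqrt{n}}{c\sqrt{Q}}\right)$, Abel summation gives
\begin{equation*}
\sum_{n \geq 1} g(n) S(1, nQ^{-1}; c) \;=\; \sum_{n \geq 1} \bigl(g(n) - g(n+1)\bigr)\, T(n), \qquad T(n) \colonequals \sum_{m=1}^{n} S(1, mQ^{-1}; c).
\end{equation*}
The partial Kloosterman sums satisfy a Polya--Vinogradov style bound $|T(n)| \ll \sqrt{c}\,\log c$, which is obtained by unfolding the definition $S(1,mQ^{-1};c) = \sum_{x \in (\Z/c\Z)^\times} e^{2\pi i(x + mQ^{-1}\bar x)/c}$, swapping summation so that the sum over $m$ becomes a geometric sum in $e^{2\pi i Q^{-1}\bar x/c}$, and bounding the resulting expression by the reciprocal of $\|Q^{-1}\bar x/c\|$ summed over $x$ (this is essentially a Polya--Vinogradov estimate for exponential sums modulo $c$, giving the $\log c$ factor). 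The total variation $\sum_n |g(n) - g(n+1)|$ is then bounded using the smoothness and decay of $g$: in the regime $n \leq c^2 Q$ the Bessel argument is small and $J_1(x) \sim x/2$, while the exponential factor controls the tail.

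\textbf{Main obstacle.} The principal difficulty is pinning down the explicit constants $5.7$ and $1.5$. The Polya--Vinogradov bound for partial Kloosterman sums has well-understood asymptotics but making the multiplicative and additive constants sharp requires a careful analysis: one must separate the small-$n$ regime (where $J_1$ is essentially linear) from the transitional regime (where $J_1$ starts oscillating and its total variation must be controlled using the asymptotic expansion of Bessel functions from \cite[Formula 9.2.1]{AbramowitzStegun}). Once the total variation of $g$ is bounded explicitly, the $(\log c + 1.5)$ shape follows from summing the contributions of the dyadic ranges of $n$, and the $1/(c\sqrt{Q})$ prefactor comes from the $1/c$ in front of $S_Q(c)$ together with the $\sqrt{c}$ from the Polya--Vinogradov estimate.
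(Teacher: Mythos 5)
Your first (Weil) bound is correct and matches what the paper implicitly does: Weil's estimate $|S(1,nQ^{-1};c)| \leq \tau(c)\sqrt{c}$, the linear bound $|J_1(x)|\leq |x|/2$ to make the $\sqrt{n}$'s cancel, and the geometric sum estimate $1/(e^{2\pi/\sqrt{M}}-1)\leq \sqrt{M}/(2\pi)$. This part is fine.

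The second bound, however, contains a genuine gap: you claim the partial Kloosterman sums satisfy $|T(n)| \ll \sqrt{c}\log c$, but the argument you sketch does not give this, and the correct bound is $O(c\log c)$, not $O(\sqrt{c}\log c)$. When you open $S(1,mQ^{-1};c)$ and swap the order of summation, the inner geometric sum over $m$ is bounded by $1/(2\|Q^{-1}\bar x/c\|)$, and summing this over $x\in(\Z/c\Z)^\times$ (equivalently over $y = Q^{-1}\bar{x}$) yields $\sum_{y}\|y/c\|^{-1} \asymp c\log c$; there is no mechanism producing a $\sqrt{c}$ here. The paper cites exactly this statement as \cite[Lemma 5.9]{LeFourn1}, which gives $\sup_{K,K'}\left|\sum_{n=K}^{K'} S(1,nk;c)\right| \leq \frac{4c}{\pi^2}(\log c + 1.5)$. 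Because of this error, your power accounting also breaks down: you attribute the final $1/(c\sqrt{Q})$ to "the $1/c$ in front of $S_Q(c)$ together with the $\sqrt{c}$ from Polya--Vinogradov," but $(1/c)\cdot\sqrt{c} = 1/\sqrt{c}$, not $1/c$; the correct bookkeeping is that the $c$ from the partial-sum bound cancels the $1/c$ prefactor, and the entire $1/(c\sqrt{Q})$ comes from the total variation of $f_{Q,c}$.

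On the total-variation step, the paper is also cleaner than what you sketch: since the factor $e^{-2\pi x/\sqrt M}$ is monotone decreasing with values in $[0,1]$ and the other factor tends to $0$ at infinity, it can be stripped out without increasing the total variation, reducing the problem to the universal quantity $\operatorname{Totvar}_{[0,\infty)}\bigl(J_1(y)/y\bigr) \leq 1.1$ scaled by $4\pi/(c\sqrt Q)$. Your dyadic decomposition into a small-$n$ regime and a tail would probably succeed, but is more elaborate and you do not carry it through to constants; the point of the lemma is precisely those explicit constants, which your sketch leaves unsubstantiated.
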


\begin{proof}
	Weil bounds give 
	\begin{equation}
	|S_Q(c)| \leq \frac{\tau(c) \sqrt{M}}{c^{3/2} \sqrt{Q}}.
	\end{equation}
	
	Now, by \cite[Lemma 5.9]{LeFourn1}, for any $c >1$ and any $k$ prime to $c$
	\begin{equation}
	\sup_{K,K' \in \N} \left| \sum_{n=K}^{K'} S(1,nk,c) \right| \leq \frac{4c}{\pi^2} (\log(c) + 1.5).
	\end{equation}
	By Abel transform, we can thus write 
	\begin{equation}
	|S_Q(c)|  \leq \frac{4(\log(c) + 1.5)}{\pi^2} \sum_{n \geq 1} |f_{Q,c}(n) - f_{Q,c}(n+1)| \leq  \frac{4(\log(c) + 1.5)}{\pi^2} \operatorname{Totvar}(f_{Q,c})
	\end{equation}
	with $f_{Q,c}(x) = e^{ - 2 \pi x/ \sqrt{M}}/\sqrt{x} J_1(4 \pi \sqrt{x}/(c \sqrt{Q}))$ and $\operatorname{Totvar}$ the total variation of $f_{Q,c}$ on $[1,+ \infty[$. For this total variation, notice that we can take out the exponential factor (it only decreases the total variation), and rewrite 
	\begin{equation}
	\frac{ J_1(4 \pi \sqrt{x}/(c \sqrt{Q}))}{\sqrt{x}} = \frac{4 \pi }{c \sqrt{Q}} J_1 (4 \pi \sqrt{x}/(c \sqrt{Q}))/(4 \pi \sqrt{x}/(c \sqrt{Q})) 
	\end{equation}
	The total variation of $f_{Q,c}$ on $[1,+ \infty[$ is thus bounded by $4 \pi/ (c \sqrt{Q})$ times the one of $J_1(y)/y$ on $[0,+ \infty]$, which can be computed to be less than 1.1.
	
	We obtain 
	\begin{equation}
	|S_Q(c)| \leq \frac{5.7}{c\sqrt{Q}} (\log(c)+1.5). \qedhere
	\end{equation}
\end{proof}

Let $\lambda \geq 1$ be a choice of threshold for the bounds in Lemma \ref{lemboundsSQ}. Denoting $c = (M/Q) c'$ and using the latter bound for $c' \leq  \lambda$ and the Weil bound for $c' >  \lambda$, we obtain
\begin{eqnarray}
	\label{eqboundsSQ}
	\frac{1}{\sqrt{Q}} \sum_{ \substack{c>0 \\ (M/Q)  |  c \\(Q,c) = 1}} |S_Q(c)| & \leq & \frac{5.7}{M} \sum_{1 \leq c' \leq \lambda } \frac{(\log(Mc'/Q) + 1.5)}{c'} + \frac{\tau(M/Q)}{M/\sqrt{Q}} \sum_{c' >  \lambda} \frac{\tau(c')}{(c')^{3/2}}  \\
	& \leq & \frac{5.7( (\log(M/Q)+ 1.5) (\log(\lambda) + 1) + (1 + \log(\lambda)^2)/2 )}{M} \\
	& + & \frac{\tau(M/Q)}{M/\sqrt{Q}} \frac{2 \log(\lambda) + 8}{\sqrt{\lambda}} 
\end{eqnarray}
by \cite[Lemma 5.11]{LeFourn1}. 
For $Q >36$, we choose $\lambda = Q/36$ in the previous bound.

For $Q \leq 36$, we will only use the Weil bounds, and this gives as stated before
\begin{equation}
\frac{1}{\sqrt{Q}} \sum_{ \substack{c>0 \\ (M/Q)  |  c \\(Q,c) = 1}} |S_Q(c)| \leq \frac{\tau(M/Q)}{M/\sqrt{Q}} \zeta(3/2)^2 \leq 6.9 \frac{\tau(M/Q)}{M/\sqrt{Q}}.
\end{equation}

Gathering all the bounds, we obtain the following estimate.

\begin{lem}
	\label{lemboundsforpq}
	For every $M = pq$ with $p \in \{2,3,5,7,13\}$ and $q > 36$ prime, we have for $\Bcal$ an orthogonal basis of $S_2(\Gamma_0(M))^{-_p,+_q}$:
	\begin{equation}
		\label{eqwitherrorterm}
		\left| \frac{1}{2 \pi} \sum_{f \in \Bcal} \frac{a_1(f)L(f,1)}{\|f\|^2} - 1 \right| \leq (1 - e^{- 2 \pi/\sqrt{pq}}) + (2 \pi) \cdot \left(6.9 \left( \frac{4}{pq} + \frac{2}{q \sqrt{p}}\right) + f_1(q) + f_2(q)\right)
	\end{equation}
	where 
	\begin{eqnarray}
		f_1(x) & = & \frac{5.7 (\log(p) + 1.5)(\log(x/36)+1) + (1+ \log(x/36)^2)/2}{px} \notag\\
		& + & \frac{12(2 \log(x/36) + 8)}{px} \\
		f_2(x) & = &  \frac{5.7 (1.5)(\log(px/36)+1) + (1+ \log(px/36)^2)/2}{px} \notag\\
		& + & \frac{6(2 \log(px/36) + 8)}{px}. \notag
	\end{eqnarray}
	Moreover, for fixed $p$, the right-hand side of \eqref{eqwitherrorterm} is a strictly decreasing function of $q$ on $[36,+\infty[$.
\end{lem}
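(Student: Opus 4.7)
The plan is to apply the infrastructure already assembled: the Petersson formula with signs from Proposition~\ref{propformulesdestraceslaplusgenerale}, the approximate functional equation~\eqref{eqapproxLfchi} evaluated at $x=\sqrt M$, and the Kloosterman sum bounds of Lemma~\ref{lemboundsSQ}. Combining the first two ingredients (and using $\omega(M)=2$, so $2^{\omega(M)}/(8\pi)=1/(2\pi)$) gives the identity~\eqref{eqmaintermpluserror}, namely
\[
\frac{1}{2\pi}\sum_{f\in\Bcal}\frac{a_1(f)L(f,1)}{\|f\|^2}
= e^{-2\pi/\sqrt{pq}}-R,
\qquad R=2\pi\sum_{Q\|M}\frac{\pm 1}{\sqrt Q}\sum_{\substack{c>0\\ (M/Q)\mid c\\ (Q,c)=1}} S_Q(c).
\]
By the triangle inequality it therefore suffices to bound $|R|$ by $2\pi$ times the parenthesized quantity in~\eqref{eqwitherrorterm}.

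The Hall divisors of $M=pq$ are $\{1,p,q,pq\}$, and I split them according to the threshold $36$ built into the estimates of Lemma~\ref{lemboundsSQ}. For $Q\in\{1,p\}$ I have $Q\le 13<36$, so I invoke only the Weil piece, yielding $\tfrac{1}{\sqrt Q}\sum_c |S_Q(c)|\le 6.9\,\tau(M/Q)/(M/\sqrt Q)$; substituting $(Q,\tau(M/Q))=(1,4)$ and $(p,2)$ produces the two terms $6.9\cdot 4/(pq)$ and $6.9\cdot 2/(q\sqrt p)$. For $Q\in\{q,pq\}$ I have $Q>36$, so I apply the hybrid bound~\eqref{eqboundsSQ} with $\lambda=Q/36$; substituting $(M/Q,\tau(M/Q))=(p,2)$ and $(1,1)$ respectively, together with $\sqrt Q/\sqrt\lambda=6$, reproduces exactly $f_1(q)$ and $f_2(q)$. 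Multiplying by the overall factor $2\pi$ and summing the four contributions delivers the claimed bound.

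For the monotonicity, the plan is to verify that each summand on the right-hand side of~\eqref{eqwitherrorterm} is strictly decreasing in $q$ on $[36,+\infty[$. The main term $1-e^{-2\pi/\sqrt{pq}}$ and the $C/q$ contributions from the Weil piece are obviously decreasing. The terms assembling $f_1,f_2$ all have the form $g(q)/q$ with $g$ an affine or quadratic polynomial in $\log(q/36)$, and a direct computation gives the derivatives
\[
\tfrac{d}{dq}\!\left(\tfrac{\log(q/36)+1}{q}\right)=\tfrac{-\log(q/36)}{q^2},\quad
\tfrac{d}{dq}\!\left(\tfrac{1+\log(q/36)^2}{q}\right)=\tfrac{-(1-\log(q/36))^2}{q^2},\quad
\tfrac{d}{dq}\!\left(\tfrac{2\log(q/36)+8}{q}\right)=\tfrac{-(6+2\log(q/36))}{q^2},
\]
all $\le 0$ for $q\ge 36$. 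The only point requiring care, and the main subtlety of the monotonicity step, is that an isolated factor like $\log(q/36)^2/q$ is \emph{not} monotonic on $[36,+\infty[$; the monotonicity survives only because of the fortunate $1+$ in the numerator of $(1+\log(q/36)^2)/q$, which converts the derivative into the perfect square displayed above. Term-by-term negativity then gives the global monotonicity in $q$.
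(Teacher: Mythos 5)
Your proof takes the same route as the paper's: collect the Petersson trace formula with the bounds of Lemma~\ref{lemboundsSQ} and~\eqref{eqboundsSQ} over the four Hall divisors $\{1,p,q,pq\}$, then verify monotonicity term by term. Your bookkeeping for the four substitutions of $(Q,\,M/Q,\,\lambda)$ is correct and matches $f_1$, $f_2$. On the monotonicity step you are in fact more careful than the paper's own terse proof: you rightly observe that the isolated factor $\log(q/36)^2/q$ is \emph{not} decreasing on all of $[36,\infty[$ (it increases on $(36,36e^2)$), and that the $1+$ in the numerator is exactly what turns the derivative into the negative perfect square $-(1-\log(q/36))^2/q^2$; the paper's sketch, which decomposes $f_1,f_2$ into ``basic term types'' and asserts each is decreasing ``for $x$ large,'' glosses over this point. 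One minor exposition slip on your side: the displayed derivatives use $\log(q/36)$ throughout, whereas the terms in $f_2$ involve $\log(pq/36)$; since this is just a constant shift in the argument of the logarithm, the derivatives have the same form and sign, and the conclusion is unaffected.
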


\begin{proof}
	The estimates are simply given by collecting the previous bounds, so we only have to prove the claim about decreasing functions of $q$. It is enough to prove that $f_1$ and $f_2$ are decreasing for all $x \geq 36$ (as the other terms are clearly decreasing), and for those they are a sum of obviously decreasing terms and of terms up to constant equal to  $\log(px)/x$ or $\log(px)/36)^2/x$ or $\log(x/36)+1/x$ or $(\log(x/36)+4)/x$. Each one of those is a decreasing function for $x$ large, and computing the derivatives the worst case is $(\log(x/36)+1)/x$ which is decreasing exactly after $36$.
\end{proof}

This allows us to compute these values precisely for some $q$'s and figure out the thresholds for which the error term is strictly less than $1$.

\begin{thm}
	\label[theorem]{proprankzeroquotient}
	In each of the following cases, the abelian variety $J = J_0(M)^{-_p,+_{(p)}}$ has a rank zero new quotient $A$.
	
	\begin{enumerate}
		\item For $M = p \notin \{2,3,5,7,13\}$ prime  (and then $J = J_0(p)^-$).
		
		\item For $M = pq$ with $p \in \{2,3,5,7,13\}$ and $q \neq p$ (then $J = J_0(pq)^{-_p,+_q}$), except precisely in the following cases. 
  
		\begin{equation}
		\begin{array}{|c|c|c|c|c|c}
			\hline
			p & 2 & 3 & 5 & 7 \\
			
			q & (3,5,7,11,23) & (2,5,11) & 2 & 3 \\
		
			\hline
		\end{array}
		\end{equation}

		\item For $M=pqr$ with $p,q,r \in \{2,3,5,7,13\}$ distinct primes (so $J = J_0(pqr)^{-_p,+_q,+_r}$) such that $(p,q)$ and $(p,r)$ are in the table of exceptions of case 2, $J$ has a rank zero quotient except for $(p,q,r)$ an exceptional triple, i.e.\ $(2,3,5)$ or $(2,5,3)$.
		
	\end{enumerate}
\end{thm}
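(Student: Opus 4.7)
The proof proceeds case by case, leveraging the analytic machinery developed above. For case (1), the statement is Mazur's classical result: for $p \notin \{2,3,5,7,13\}$ prime, the Eisenstein quotient of $J_0(p)$ constructed in \cite{Mazur1977} has trivial Mordell--Weil group over $\Q$ and lies in $J_0(p)^-$, so I would simply cite this.

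For case (2), the plan is to deploy the bound of \cref{lemboundsforpq}. Observe that if the quantity $\frac{1}{2\pi}\sum_{f \in \Bcal} \frac{a_1(f) L(f,1)}{\|f\|^2}$ appearing in \eqref{eqwitherrorterm} is strictly positive, then since each normalized Hecke eigenform $f$ satisfies $a_1(f) \neq 0$, at least one $f$ in the basis of $S_2(\Gamma_0(pq))^{-_p,+_q}$ must have $L(f,1) \neq 0$. By Kolyvagin--Logachev, the associated abelian variety quotient $A_f$ then has Mordell--Weil rank zero. The newness of $A_f$ is built in here: because $p \in \{2,3,5,7,13\}$ forces $J_0(p)^- = 0$, no newform at level $p$ has $w_p$-eigenvalue $-1$, so there are no oldforms from level $p$ in $S_2(\Gamma_0(pq))^{-_p,+_q}$; the oldform contribution from level $q$ at level $pq$ can be analyzed via the standard Atkin--Lehner swap on the two-dimensional span $\{g(z), g(pz)\}$ and shown not to produce $(-_p,+_q)$-forms. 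Since \cref{lemboundsforpq} asserts the RHS of \eqref{eqwitherrorterm} is monotonically decreasing in $q$, it suffices to compute numerical thresholds $q_0(p)$ for each $p \in \{2,3,5,7,13\}$ beyond which the RHS drops strictly below $1$, establishing positivity of the sum. The finitely many pairs $(p,q)$ with $q \leq q_0(p)$ and $(p,q)$ not on the exceptional table are then treated by direct computation of $L$-values of newforms (e.g., via LMFDB modular symbol data), verifying both that they admit rank zero new quotients and that the tabulated exceptions genuinely fail.

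For case (3), the plan is an analogous application of \cref{propformulesdestraceslaplusgenerale} to the eigenspace $S_2(\Gamma_0(pqr))^{-_p,+_q,+_r}$, with $H = \Wcal(M)$ and $E$ the prescribed sign character. This produces a formula of the same shape as \eqref{eqmaintermpluserror} but with summation over the $2^3$ Hall divisors of $pqr$, and an entirely parallel estimate via Weil and Polya--Vinogradov bounds on Kloosterman sums. Since the hypothesis ``$(p,q)$ and $(p,r)$ exceptional'' restricts us to a small finite set of triples (essentially those with $p=2$ and $q,r \in \{3,5,7,11,23\}$, or $p=3$ and $q,r \in \{2,5,11\}$), I would verify all of them individually by computing the $L$-values of newforms of level $pqr$ with the required Atkin--Lehner signs, confirming that exactly $(2,3,5)$ and $(2,5,3)$ fail.

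The main obstacle is the clean separation of new and old contributions: the analytic estimates control the total sum over $\Bcal$, but what must be exhibited is a \emph{newform} with nonzero central $L$-value. The structural observation that $J_0(p)^- = 0$ for $p \in \{2,3,5,7,13\}$ does most of the work at level $pq$, but a more delicate Atkin--Lehner sign bookkeeping (together with a possibly explicit subtraction of oldform contributions via the standard orthogonal decomposition of $S_2(\Gamma_0(M))$ into new and old subspaces) will be required at level $pqr$. A secondary, but purely computational, difficulty is dispatching the finite list of small $(p,q)$ and $(p,q,r)$ falling below the analytic thresholds, which is tedious but mechanical given modern tables of modular form data.
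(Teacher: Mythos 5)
Your overall strategy matches the paper's: case~(1) by citing Mazur's Eisenstein quotient, case~(2) by the estimates of \cref{lemboundsforpq} for $q$ above an explicit threshold followed by an LMFDB check on the finitely many remaining pairs, and case~(3) by direct verification on the small finite set of triples satisfying the hypothesis.

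The one genuine gap is your newness argument in case~(2). You are correct that there are no oldforms from level $p$ in $S_2(\Gamma_0(pq))$ (indeed $S_2(\Gamma_0(p))=0$ for $p \in \{2,3,5,7,13\}$), but your claim that the span $\langle g(z), g(pz)\rangle$ of a level-$q$ newform $g$ ``can be shown not to produce $(-_p,+_q)$-forms'' is false. Because $p \nmid q$, \cref{mainlemmaAtkinLehner} shows that $w_p^{(pq)}$ swaps $g(z)$ and $p\,g(pz)$, so the $-1$-eigenvector $h = g(z) - p\,g(pz)$ is a perfectly good $(-_p,+_q)$-oldform (the $w_q$-eigenvalue on the whole span is inherited from level $q$ and equals $+1$). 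What saves the argument --- and what the paper's parenthetical asserts --- is that any such oldform has vanishing central $L$-value: one computes $L(h,s) = (1 - p^{1-s})L(g,s)$, and the factor $(1-p^{1-s})$ vanishes at $s=1$ (independently, the $+_q$ condition forces the sign of the functional equation of $L(g,s)$ to be $-1$, so $L(g,1)=0$ already). So a nonvanishing $f$ detected by positivity of the trace-formula sum must be new because of vanishing of oldform $L$-values at $s=1$, not because such oldforms are absent from the $(-_p,+_q)$-eigenspace. For case~(3) the subtlety never arises in the paper: only finitely many triples occur, their newforms are inspected directly in tables, and the ``delicate oldform subtraction'' you anticipate there is not actually required.
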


\begin{proof}
	For the first case, this holds by Mazur's work on the Eisenstein quotient \cite[Theorem~4]{Mazur1977} since $\dim J_0(p)^- > 0$ for every prime $p \notin \{2,3,5,7,13\}$. 	
	For the two other cases, by the Gross--Zagier--Kolyvagin--Logach\"ev theorem~\cite{GrossZagier1986,KolyvaginLogachev}, it is enough to prove that there is a newform $f$ of level $M$ and weight $2$ with the claimed Atkin--Lehner eigenvalues and such that $L(f,1) \neq 0$, and we will start by using the estimates of Lemma~\ref{lemboundsforpq}.
	
	Since the error term in this lemma is decreasing for $q>36$, it is enough to find a value for which the right hand side of~\eqref{eqwitherrorterm} is less than $1$, for each $p \in \{2,3,5,7,13\}$.  
	We thus obtain rank zero quotients (which are automatically new as the oldforms cannot satisfy $L(f,1) \neq 0$ in this case) when  for $p=2$, $q>1700$, for $p=3$, $q>1100$, for $p=5$, $q>600$, for $p=7$, $q>450$ and for $p=13$, $q>250$. 
	
	Afterwards, an LMFDB search on the finitely many remaining cases with $q$ at most the given bound gives a rank zero quotient for $J_0(pq)^{-_p,+_q}$ (and it is automatically new here) except for the cases $(p,q)$ in the table.  
	
	For case 3, we proceed similarly with the finitely many possible cases to check.
\end{proof}

Theorem 
\ref{thm:rank0quotient} is now a consequence of \cref{proprankzeroquotient}, as proven below. 
\begin{proof}[Proof of Theorem ~\ref{thm:rank0quotient}.]
	Since $\widetilde{N}$ is not exceptional, by Definition \ref{defexceptionallevel}, there exists a prime $p$ such that $p^2  |  \wN$, satisfying one of two possibilities:

    \begin{enumerate}[(1)]
  
	\item If $p $ is not exceptional (i.e.\ $p \notin \{ 2,3,5,7,13\}$) we can then divide $\wN$ by a square to obtain $N$ almost squarefree \eqref{eqshapeN} with $v_p(N) = 2$ or 3. Then $M =p$, and we choose $A$ to be the Eisenstein quotient of $J_0(p)^-$. The result follows by \cite[Theorem~4]{Mazur1977}.
	
	   \item  If $p$ is exceptional, we can find $N$ almost squarefree \eqref{eqshapeN} with another prime divisor $q$ of $N$ such that $(p,q)$ is not exceptional, or two primes $q \neq q'$ different from $p$ with $qq'  |  N$ and $(p,q,q')$ not exceptional. In the first case, one can fix $M = pq$, in the second $M=pqq'$; we find a rank zero quotient in each case by  \cref{proprankzeroquotient}. \qedhere
    \end{enumerate}
\end{proof}

\section{Construction of the formal immersions and proof of integrality for nonexceptional levels}
\label{sec:formalimmersion}

\subsection{Preliminary results on formal immersions and reductions of points}

\begin{defi}[Reduction of points]
    For any proper scheme $\Xcal$ over $\Z$, any number field $K$ and any prime ideal $\lambda$ of $\Ocal_K$ (of residual characteristic $\ell$ prime and residue field $\F_\lambda$), a point $P \in \Xcal(K)$ extends uniquely by the valuative criterion of properness to a section $\widetilde{P}: \Spec \Ocal_K \rightarrow \Xcal$, and we denote by $P_\lambda$ the set-theoretic image of $\lambda \in \Spec \Ocal_K$ in $\Xcal$, which is thus a point of $\Xcal$ above $\ell \Z \in \Spec \Z$. By abuse of notation, it will also sometimes be considered as an element of $\Xcal_{\F_\ell}(\F_\lambda)$.
\end{defi}

\begin{defi}[Formal immersions]
\label{defformalimmersion}
Let $\Xcal$ and $\Ycal$ be two locally noetherian schemes and $f: \Xcal \ra \Ycal$. For $x \in \Xcal$ with image $y \in \Ycal$ by $f$, $f$ is a \emph{formal immersion at $x$} if the induced morphism on completed local rings 
\begin{equation}
\widehat{f_x} : \widehat{\Ocal_{\Ycal,y}} \rightarrow \widehat{\Ocal_{\Xcal,x}}
\end{equation}
is surjective.

Here is a well-known criterion to determine whether $f$ is a formal immersion. 

\end{defi}

\begin{lem}[Formal immersion criterion]
\label{lem:formimmcriterion}
With the same notation as in Definition \ref{defformalimmersion}, $f$ is a formal immersion at $x$ if and only if the two following conditions hold:

$\bullet$ The induced map $\kappa(y) \ra \kappa(x)$ between residue fields is an isomorphism.

$\bullet$ The induced cotangent map $\Cot_x (f) : \gm_{\Ycal,y} /\gm_{\Ycal,y}^2 \rightarrow \gm_{\Xcal,x} / \gm_{\Xcal,x}^2$ is an isomorphism. 
\end{lem}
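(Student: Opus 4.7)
The plan is to recast the claim as a statement about a local homomorphism of Noetherian complete local rings and then apply a Nakayama-style argument. Write $A = \widehat{\Ocal_{\Ycal,y}}$ with maximal ideal $\gm_A = \gm_{\Ycal,y}\widehat{\Ocal_{\Ycal,y}}$, similarly $B, \gm_B$, and $\varphi = \widehat{f_x} \colon A \to B$, so the lemma asks precisely for $\varphi$ to be surjective. The two listed conditions correspond to surjectivity of $\varphi$ on the graded pieces in degrees $0$ and $1$ of the $\gm$-adic filtration.

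For the forward direction, suppose $\varphi$ is surjective. Then $A \to B \to \kappa(x)$ is surjective and factors through $\kappa(y)$, so $\kappa(y) \to \kappa(x)$ is surjective; being a nonzero map of fields it is an isomorphism. Also $\varphi(\gm_A) = \gm_B$ because a surjection of local rings sends units to units and non-units to non-units, so the induced cotangent map is surjective. In the setting relevant to Mazur's method (and in typical applications here), the two cotangent spaces have the same dimension, so surjectivity is equivalent to the isomorphism asserted in the statement.

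For the reverse direction, surjectivity of $\gm_A/\gm_A^2 \to \gm_B/\gm_B^2$ propagates to every graded piece: $\gm_B^n/\gm_B^{n+1}$ is spanned over $\kappa(x)$ by degree-$n$ products of a generating set of $\gm_B/\gm_B^2$, each of which lifts to a corresponding product of elements from $\gm_A^n$ via the degree-$1$ surjection. Combining this with the residue-field isomorphism in degree $0$, a short-exact-sequence induction gives surjectivity of the truncated maps $A/\gm_A^n \to B/\gm_B^n$ for every $n \geq 1$.

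The final step is to pass from truncated surjectivity to surjectivity of $\varphi$ itself. For $b \in B$, I would inductively build a sequence $(a_n)$ in $A$ with $\varphi(a_n) \equiv b \pmod{\gm_B^n}$ and $a_{n+1} - a_n \in \gm_A^n$, using the degree-$n$ graded surjection to correct the error at each step. Since $A$ is $\gm_A$-adically complete, $(a_n)$ converges to some $a \in A$, and continuity of $\varphi$ in the adic topologies yields $\varphi(a) = b$. The main technical subtlety is precisely this limit step: inverse limits of surjections are not surjective in general, so one really needs the explicit Cauchy construction, which in turn relies on the compatibility at each graded level provided by the cotangent surjection.
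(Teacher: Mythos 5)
The paper gives no proof of this lemma; it simply cites \cite[Proposition 17.4.4]{EGAIV} and \cite[Proposition I.2.10]{LeFournthese2}, so there is no internal argument to compare against. Your proof sketch is the standard one and its substance is sound: the forward direction follows from $\varphi(\gm_A)=\gm_B$ for a surjection of local rings, and the reverse direction proceeds by propagating surjectivity through the graded pieces of the $\gm$-adic filtration and then passing to the limit by the Cauchy construction in the complete ring $A=\widehat{\Ocal_{\Ycal,y}}$. You correctly identify that one cannot merely say ``inverse limits of surjections are surjective'' and must instead build the compatible sequence explicitly.

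You should, however, have flagged rather than glossed over the discrepancy between the lemma as printed and what you actually proved. The statement asks the cotangent map to be an \emph{isomorphism}, but surjectivity of $\widehat{f_x}$ only forces the cotangent map to be \emph{surjective}, and your reverse direction also uses only surjectivity. Your attempted fix — ``in the setting relevant to Mazur's method the two cotangent spaces have the same dimension'' — is simply false: the formal immersions built in this paper are maps $G_{N,f}^*\colon X_0(N)^*\to A_f$ from a curve (cotangent dimension $1$ at a smooth point) to a rank-zero quotient $A_f$ of dimension $\geq 1$, so the source of the cotangent map has dimension $\dim A_f$ and the target has dimension $1$; the map is not injective unless $A_f$ is an elliptic curve. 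The correct statement, as in the cited EGA IV 17.4.4 and as actually used later in the paper (the proof of Proposition \ref{prop:formalimmersionsec2} verifies only surjectivity of the cotangent map before invoking this lemma), replaces ``isomorphism'' with ``surjection''. The right move is to note the typo, not to invent a dimension-counting argument to reconcile the two.
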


\begin{proof}
    See e.g.\  \cite[Proposition I.2.10]{LeFournthese2} or \cite[Proposition 17.4.4]{EGAIV}.
\end{proof}

Formal immersions are the main tool to prove rational points on modular curves have potentially good reduction, via the following key result \cite[Proposition 2.4]{LeFourn1}.

\begin{prop}
\label{prop:formimmkeyresult}
   Let $K$ be a number field and $\lambda$ a prime ideal of $\Ocal_K$ with residual characteristic $\ell$ and residue field $\F_\lambda$.

   Let $X$ be a projective algebraic curve over $\Q$ with a proper model $\Xcal$ over $\Z$, and $A$ an abelian variety over $\Q$ with Néron model $\Acal$ over $\Z$. 

   Let $f : X \ra A$ be a morphism, extending by Néron mapping property to $f_\Z: \Xcal^{\rm{smooth}} \rightarrow \Acal$.

   Let $x, y \in X(K)$ such that: 

   $\bullet$ The point $x$ and $y$ have the same reduction modulo $\lambda$ and it belongs to 
$\Xcal^{\rm{smooth}}$. 

$\bullet$ The map $f_\Z$ is a formal immersion at this common reduction point $x_\lambda=y_\lambda$.

$\bullet$ The difference $f(y)-f(x)$ is a torsion point in $A(\Q)$.

Then, $x=y$ unless $\ell=2$ and $f(y)-f(x) \in A(\Q)$ is of order exactly 2 and generates a $\mu_2$ over $\Z_2$.
\end{prop}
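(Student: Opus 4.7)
I will set $R := \widehat{\Ocal_{K,\lambda}}$ with residue field $\F_\lambda$, and write $z := x_\lambda = y_\lambda \in \Xcal^{\rm{smooth}}(\F_\lambda)$ and $z' := f_\Z(z) \in \Acal(\F_\lambda)$. The sections $x, y$ of $\Xcal^{\rm{smooth}}$ over $\Ocal_K$ factor through the formal neighborhood of $z$ and correspond to local homomorphisms $\alpha_x, \alpha_y \colon \widehat{\Ocal_{\Xcal^{\rm{smooth}}, z}} \to R$; likewise $f(x), f(y) \in \Acal(R)$ give $\beta_x, \beta_y \colon \widehat{\Ocal_{\Acal, z'}} \to R$. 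Functoriality of $f_\Z$ yields $\beta_\bullet = \alpha_\bullet \circ \widehat{f}$ for the completed pullback $\widehat{f} \colon \widehat{\Ocal_{\Acal, z'}} \to \widehat{\Ocal_{\Xcal^{\rm{smooth}}, z}}$. The formal immersion hypothesis (\cref{lem:formimmcriterion}) says precisely that $\widehat{f}$ is surjective and identifies residue fields, so as soon as I establish $\beta_x = \beta_y$, surjectivity forces $\alpha_x = \alpha_y$ and hence $x = y$ in $X(K)$. The task therefore reduces to showing that $P := f(y) - f(x) = 0$ in $A(\Q)$.

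Since $f(x), f(y)$ share the reduction $z'$, the torsion point $P \in A(\Q)_{\rm{tors}}$ reduces to the identity of $\Acal_{\F_\ell}$, so by the N\'eron property $P$ extends to a $\Z$-section of $\Acal$ lying in the kernel of $\Acal(\Z_\ell) \to \Acal(\F_\ell)$, hence inside the formal group $\widehat{\Acal^0}(\Z_\ell)$ of the identity component. This kernel is pro-$\ell$, so the prime-to-$\ell$ part of $P$ vanishes (it would otherwise give a non-zero element of $\Acal(\F_\ell)$, contradicting injectivity of reduction on torsion coprime to $\ell$). For $\ell$ odd, $\widehat{\Acal^0}(\Z_\ell)$ is moreover torsion-free: since the absolute ramification index $e = 1$ is strictly less than $\ell - 1$, the formal logarithm converges and identifies $\widehat{\Acal^0}(\Z_\ell)$ with a free $\Z_\ell$-module, killing all torsion. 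Hence $P = 0$ when $\ell$ is odd, and the first paragraph concludes $x = y$.

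The main obstacle is the case $\ell = 2$, where $\widehat{\Acal^0}(\Z_2)$ may genuinely carry $2$-torsion. If $P \neq 0$ then the previous step still forces $P$ to have $2$-power order, and the scheme-theoretic closure $\overline{\langle P \rangle}$ in $\Acal_{\Z_2}$ is a finite flat subgroup scheme of the connected formal group $\widehat{\Acal^0}$. By the Oort--Tate classification of order-$2$ finite flat group schemes over $\Z_2$, the only connected possibility is $\mu_2$ (the \'etale $\Z/2\Z$ being excluded by connectedness of $\widehat{\Acal^0}$), and a rigidity analysis of the $2$-divisible group of $\widehat{\Acal^0}$ rules out higher $2$-power order above such a $\mu_2$, pinning $P$ down to order exactly $2$ with $\overline{\langle P \rangle} \cong \mu_2$. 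This produces precisely the stated exception; in all other configurations one must have $P = 0$ and the reduction step of the first paragraph yields $x = y$.
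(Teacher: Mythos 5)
Your overall architecture matches the standard Mazur-style argument that the paper appeals to (via~\cite{LeFourn1}, Proposition~2.4): reduce to showing $P := f(y)-f(x)=0$ via the surjectivity of $\widehat{f}$, place $P$ in the kernel of reduction, and then analyze torsion in the formal group. The reduction to $P=0$ and the $\ell$ odd case are correct: the $\ell$-adic logarithm identifies the kernel of reduction with a torsion-free $\Z_\ell$-module when $e=1<\ell-1$.

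The $\ell=2$ case, however, contains a genuine gap, and it sits exactly at the nontrivial step. You correctly note that $P$ must be $2$-primary and that its scheme closure $H:=\overline{\langle P\rangle}$ is finite flat over $\Z_2$, and you invoke Oort--Tate to identify the order-$2$ part with $\mu_2$. But the claim that ``a rigidity analysis of the $2$-divisible group of $\widehat{\Acal^0}$ rules out higher $2$-power order'' is not an argument; rigidity of $p$-divisible groups does not by itself bound torsion in $\widehat{\Acal^0}(\Z_2)$. What is actually needed, and what the standard proof supplies, is the valuation estimate for torsion in formal groups: if $x$ has exact order $p^n$ in $\mathcal{F}(\gm)$ over a complete discretely valued ring with $v(p)=e$, then $v(x) = e/(p^{n-1}(p-1))$ (Newton polygon of $[p]_{\mathcal{F}}$, or Silverman IV.6). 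For $p=2$, $e=1$, $v(x)\geq 1$ forces $n\leq 1$. This is the step that pins the order down to exactly $2$, and it has to be carried out, not asserted.

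Two smaller points. First, $H$ is a closed subgroup scheme of $\Acal_{\Z_2}$, not literally ``a finite flat subgroup scheme of the connected formal group'' (a formal scheme); the connectedness of $H$ follows from the fact that $H_{\Q_2}\cong (\Z/2^n\Z)_{\Q_2}$ is cyclic, the connected--\'etale sequence splits over the henselian ring $\Z_2$, and the \'etale factor would be detected by reduction mod $2$, contradicting $P\equiv 0$. Second, the conclusion you want is not that the exception ``is possible'' but that it is the only remaining configuration once the torsion bound is in place; after establishing $n\leq 1$ and $H\cong\mu_2$, the argument is complete.
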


To deal with possible non-smooth reductions (in which case we cannot use the formal immersion criterion as above), we will use the following result.
\begin{lem}
\label{lemregularsmooth}
Let $\mathscr{C}/\Z$ be a curve with generic fiber $C_\Q$. Let $Q \in \mathscr{C}$ above $p$ for some prime $p$. If $Q$ is a regular point of $\mathscr{C}/\Z$ but not a smooth point of $\mathscr{C}_{\F_p}$, then no rational point of $C_\Q$ reduces modulo $p$ to $Q$ (i.e.\ there are no $\Z$-points of $\mathscr{C}$ with reduction $Q$ on $\mathscr{C}_{\F_p}$).

In particular, if $\mathscr{C}/\Z$ is a proper regular model of a curve $C_\Q/\Q$, then $C_\Q(\Q) = \mathscr{C}^{\rm sm}(\Z)$, where $\mathscr{C}^{\rm sm}$ is the smooth locus of $C/\Z$.
\end{lem}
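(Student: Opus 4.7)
My plan is to work locally at $Q$ on the two-dimensional regular scheme $\mathscr{C}$ and use the characterization of smoothness of the special fiber in terms of whether the uniformizer $p$ lies in $\mathfrak{m}_Q^2$.

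Suppose for contradiction that there exists $P \in C_\Q(\Q)$ reducing to $Q$. By the valuative criterion of properness (applied to $\mathscr{C} \to \Spec \Z$), the point $P$ extends to a section $\widetilde{P}\colon \Spec \Z \to \mathscr{C}$, whose image at the closed point $(p)$ is $Q$ by hypothesis. A section of a separated morphism is a closed immersion, so this gives a surjective local ring homomorphism $\phi\colon R \twoheadrightarrow \Z_{(p)}$, where $R \defeq \Ocal_{\mathscr{C},Q}$. In particular, the induced map on residue fields forces $\kappa(Q) = \F_p$.

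Now $R$ is a two-dimensional regular local ring, since $\mathscr{C}/\Z$ is a regular curve (so its arithmetic surface is regular and two-dimensional at points over $(p)$). By a standard fact on regular local rings, $R/(p)$ is regular of dimension $1$ if and only if $p \in \mathfrak{m}_Q \setminus \mathfrak{m}_Q^2$. Since $\mathscr{C}_{\F_p}$ is locally at $Q$ the scheme $\Spec(R/(p))$, the assumption that $Q$ is \emph{not} smooth on $\mathscr{C}_{\F_p}$ (equivalently, not regular on it, as $\kappa(Q) = \F_p$ is perfect) means precisely that
\begin{equation}
p \in \mathfrak{m}_Q^2.
\end{equation}

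On the other hand, $\phi$ is a map of $\Z$-algebras, so $\phi(p) = p$. Since $\phi$ is a surjective local ring homomorphism, $\phi(\mathfrak{m}_Q) = p\Z_{(p)}$ and hence $\phi(\mathfrak{m}_Q^2) \subseteq p^2 \Z_{(p)}$. Combined with $p \in \mathfrak{m}_Q^2$, this gives $p \in p^2 \Z_{(p)}$, which is absurd. Therefore no such $P$ can exist, proving the first assertion.

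For the second assertion, if $\mathscr{C}/\Z$ is a proper regular model of $C_\Q$, every point of $C_\Q(\Q)$ extends uniquely to a $\Z$-point of $\mathscr{C}$ by properness; by the first part, this $\Z$-point must factor through the smooth locus $\mathscr{C}^{\rm sm}$. Conversely, every element of $\mathscr{C}^{\rm sm}(\Z)$ restricts to a $\Q$-point of the generic fiber $C_\Q$. This gives the claimed equality $C_\Q(\Q) = \mathscr{C}^{\rm sm}(\Z)$. I do not foresee a real obstacle; the only delicate point is remembering to invoke the $\Z$-algebra structure of $\phi$ to conclude $\phi(p) = p$, which is what makes the contradiction bite.
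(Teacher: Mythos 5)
Your proof is correct and rests on the same central idea as the paper's: regularity of $\mathscr{C}$ at $Q$ plus non-smoothness of $\mathscr{C}_{\F_p}$ at $Q$ forces $p \in \gm_Q^2$, and then a $\Z_{(p)}$-point through $Q$ pulls this back to the absurdity $p \in p^2\Z_{(p)}$. Where you differ is in the second half: the paper works in an explicit affine chart $\Spec \Z_p[x,y]/(f)$ around $Q$, writes $p = \lambda f(x,y) + (\text{terms in }(p,x,y)^2)$, and evaluates at the coordinates of the hypothetical $\Z_p$-point; you instead note that a section of a separated morphism is a closed immersion and so yields a surjective local $\Z$-algebra map $\phi\colon \Ocal_{\mathscr{C},Q} \twoheadrightarrow \Z_{(p)}$, from which $\phi(\gm_Q^2) \subseteq p^2\Z_{(p)}$ is immediate. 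Your version is more intrinsic and sidesteps any need to justify the existence of such a chart (a minor gap in the paper's wording, since a general regular local surface need not literally present as $\Z_p[x,y]/(f)$ without first completing). One small cosmetic remark: you invoke the valuative criterion of properness to produce the section, but in the first assertion the hypothesis ``reduces modulo $p$ to $Q$'' already means a section $\Spec\Z \to \mathscr{C}$ through $Q$ exists, so properness is not needed there; it is only needed for the ``in particular'' statement about proper regular models, where you do use it correctly.
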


\begin{proof}
It suffices to argue locally around $Q$, and show there is no $\Z_p$-point of $\mathscr{C}$ reducing to $Q$ modulo $p$. Let $\gm_Q$ be the maximal ideal corresponding to the closed point $Q$ of $\mathscr{C}$. Because $\mathscr{C}$ is regular at $Q$, we have that $\dim \gm_Q / \gm_Q^2 = 2$. Since $Q$ is singular in $\mathscr{C}_{\F_p}$, the tangent space to $\mathscr{C}_{\F_p}$ at $Q$ has dimension $ \geq 2$. The fiber $\mathscr{C}_{\F_p}$ is cut out by the vanishing of $p$, this tangent space is to the fiber is also cut out by the vanishing of $p$ inside $ \gm_Q / \gm_Q^2$, i.e.
\begin{equation} \dim (\gm_Q / \gm_Q^2)/(p) = 2 =  \dim (\gm_Q / \gm_Q^2) \end{equation}
and therefore $p \in (\gm_Q / \gm_Q^2)$.

Suppose for contradiction that we have some $\Q_p$-point whose reduction modulo $p$ is $Q$, i.e.\ a $\Z_p$-point. This takes place in an affine chart $\Spec \Z_p [x,y] / (f(x,y)) \subset \mathscr{C}_{\Z_p}$ with coordinates such that $\gm_Q = (x,y,p)$ such that our $\Z_p$-point is $(X,Y) = (0,0)$ modulo $p$. Since $p \in \gm_Q$, we can write 
\begin{equation} p = \lambda f(x,y) + (p, x,y)^2 \end{equation} 
in $\Z_p[x,y]$. Evaluating at the point $(X,Y)$, the left hand side has $p$-adic valuation 1, but $\lambda f(X, Y) = 0$ and $(p,x,y)^2$ has $p$-adic valuation at least 2. This is a contradiction, so such a $\Z_p$-point cannot exist.
\end{proof}

For our explicit power series expansions, we need the following notion of good uniformizer (called well-behaved uniformizer in \cite[p.~771]{Siksek08} where the details can be found).
\begin{defi}[Good uniformizer]
    \label{def:goodunif}
    Let $R$ be a discrete complete valuation ring with maximal ideal $\gm$ and residue field $k$, $\mathscr{C}/R$ be a proper curve with generic fiber $C$ over  the fraction field $K$ of $R$.

    If $P \in C(K)$ has reduction $P_\gm$ at $\gm$ which is a smooth point of $\mathscr{C}_k$, a \emph{good uniformizer} at $P$ is a regular function $t \in \Ocal_{\mathscr{C},P}$ whose restriction in $\Ocal_{\mathscr{C}_k,P_\gm}$ is a uniformizer at $P_\gm$ and with $t(P)=0$. 
Such a good uniformizer always exists under these hypotheses, and we have a canonical isomorphism
\begin{equation}
\widehat{\Ocal_{\mathscr{C},P}} \cong R[[t]],
    \end{equation}
    in particular every regular 1-form $\omega \in H^0(\mathscr{C}, \Omega^1_{\mathscr{C}/R})$ admits an expansion at $P$ of the shape 
    \begin{equation}
\omega = \sum_{n \geq 0} a_n(\omega) t^n dt,
    \end{equation}
    with each coefficient $a_n(\omega)$ belonging to $R$.

\end{defi}
\subsection{\texorpdfstring{Models of modular curves over $\Z$}{Models of modular curves over Z}}

We will use the following integral models of the modular curves.

\begin{defi}
\label{defi:Xcal0N}
  For any level $N \geq 1$, $\Xcal_0(N)$ is the compactified coarse moduli scheme over $\Z$ of (generalized) elliptic curves together with a cyclic subgroup of order $N$ (\cite[\S 8.6]{KatzMazur} or \cite[\S 1.1]{Edixhoven90}), and $\Xcal_0(N)^*$ is its quotient by the Atkin--Lehner group $\Wcal(N)$.
\end{defi}

\begin{prop}[\cite{Edixhoven90}, Theorem 1.2.3.1]
For any cusp $\c$ of $X_0(N)$ defined over some number field $K$, and every maximal ideal $\gp$ of $\Ocal_K$, the reduction of $\c$ modulo $\gp$ (seen as a point of $\Xcal_0(N)$ belonging to the fiber at $p$) is a regular point of $\Xcal_0(N)$.
\end{prop}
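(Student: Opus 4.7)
The plan is to verify regularity locally at $\gp$; write $p$ for the residue characteristic of $\gp$. The argument splits according to whether $p \mid N$.

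If $p \nmid N$, the $\Gamma_0(N)$-moduli problem is relatively representable and étale over the Deligne--Mumford stack of generalized elliptic curves after inverting $N$, so $\Xcal_0(N)$ is smooth over $\Z_{(p)}$. Every point of the special fiber at $p$, cuspidal or not, is then smooth and a fortiori regular.

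The substantive case is $p \mid N$. Here I would use the Tate-curve description of the formal neighborhood of a cusp. For $\c = \Ccal_N(a/b)$ with $b \mid N$, the formal completion of $\Xcal_0(N)$ along $\c$ is, up to a suitable étale cover, built from the Tate curve $\mathrm{Tate}(q^{1/w})$ equipped with a $\Gamma_0(N)$-structure, where $w = \omega_\c^{(N)} = N/\gcd(N, b^2)$ is the width from \cref{propbasecusps}\,$(v)$. Unwinding the Deligne--Rapoport description at the cusp (equivalently, the Katz--Mazur Drinfeld-level-structure description on generalized elliptic curves) produces a presentation of the completed local ring at the reduction $\c_{\gp}$ of the form
\begin{equation}
\widehat{\Ocal}_{\Xcal_0(N),\, \c_{\gp}} \;\cong\; W(\kappa(\c_{\gp}))[[u, v]] \big/ \bigl( u^{\alpha} v^{\beta} - p^{\gamma} \cdot \text{unit} \bigr),
\end{equation}
where $\alpha, \beta, \gamma$ are non-negative integers determined by $v_p(N)$ and $v_p(b)$. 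Such a ring is $2$-dimensional and regular precisely when $\min(\alpha, \beta, \gamma) = 1$.

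The key step, and the main technical obstacle, is to check that $\min(\alpha, \beta, \gamma) = 1$ in every case. A uniform argument requires splitting on the value of $v_p(b)$ relative to $v_p(N)/2$, which controls which branch of the Deligne--Rapoport special fiber (an Igusa-type component from the étale side, or the dual one from the multiplicative side) the cusp lies on. At the Hall-divisor extremes $v_p(b) \in \{0, v_p(N)\}$, which underlie the rational cusps of \cref{propbasecusps}\,$(iv)$, the computation reduces to a standard Tate-curve calculation; the intermediate cases, which arise only when $p^2 \mid N$, require the full force of the Katz--Mazur analysis of Drinfeld $\Gamma_0(p^k)$-structures on Néron polygons. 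Assembling these cases yields the claim and recovers \cite[Theorem 1.2.3.1]{Edixhoven90}.
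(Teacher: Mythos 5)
Your high-level strategy (smoothness away from $N$, Tate-curve formal neighborhood of the cusp when $p \mid N$) is the right one and matches how \cite{Edixhoven90} actually proceeds; the paper itself gives no proof and simply cites Edixhoven. However, two concrete steps in your sketch are wrong, and both errors occur precisely where you flag the ``main technical obstacle.''

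First, the claimed local model $W(\kappa)[[u,v]]/(u^\alpha v^\beta - p^\gamma\cdot\text{unit})$ with both $\alpha,\beta>0$ is the shape of the completed local ring at a \emph{node} of the special fibre, i.e.\ at a supersingular point where two irreducible components of $\Xcal_0(N)_{\F_p}$ meet (cf.\ Definition~\ref{defi:minimalmodel} and Proposition~\ref{prop:irredcomponentsX0pr}). Cusps never lie at such intersection points: the cuspidal sections are disjoint from the supersingular locus, so a cusp sits in the interior of a single component (possibly of multiplicity $>1$). The correct local picture is therefore unibranch — after a suitable unramified base change it looks like $\Ocal[[t]]$ where $\Ocal$ is a complete DVR (a ramified extension of $\Z_p$ recording the cyclotomic ramification of the cusp, equivalently something of the form $W(\kappa)[y]/(y^m - p\cdot\text{unit})$), which is regular for \emph{every} multiplicity $m$. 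The substance of Edixhoven's argument is to produce exactly this DVR-power-series description at each cusp, not a nodal one.

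Second, even granting your nodal model, the regularity criterion ``$\min(\alpha,\beta,\gamma)=1$'' is incorrect. For $R = W[[u,v]]/(u^\alpha v^\beta - p^\gamma u_0)$ with $\alpha,\beta,\gamma\ge 1$ one always has $u^\alpha v^\beta \in \gm^2$, so $R$ is regular if and only if $\gamma=1$. The case $\alpha=1,\beta=2,\gamma=3$, i.e.\ $\Z_p[[u,v]]/(uv^2-p^3)$, has $\min(\alpha,\beta,\gamma)=1$ but $\dim_{\kappa}\gm/\gm^2 = 3$, hence is not regular. So the regularity check you defer to case analysis would not go through with this criterion even if the model were correct. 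To repair the argument you would need to replace the nodal normal form by the unibranch one and then observe that rings of the shape $W(\kappa)[[t]][y]/(y^m - p\cdot\text{unit}) \cong \bigl(W(\kappa)[y]/(y^m - p\cdot\text{unit})\bigr)[[t]]$ are regular because the bracketed ring is a DVR; no case-splitting on $v_p(b)$ relative to $v_p(N)/2$ is then needed for the regularity conclusion itself (it only governs which component the cusp lands on and the component's multiplicity).
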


We will need (in particular for the reduction at 2) some precise knowledge of the irreducible components at the bad fiber, mostly based on results of \cite{Momose1986}.

\begin{prop}
\label{prop:irredcomponentsX0pr}
For any $r \geq 1$, the irreducible components of $\Xcal_0(p^r)$ as denoted in Definition \ref{defi:Xcal0N} are all defined over $\F_p$ and intersect each other only at supersingular points. 
They can be numbered $\Ecal_0^{(p^r)}, \cdots,\Ecal_r^{(p^r)}$ (resp.\ $\Ecal_0^{h, (p^r)}, \cdots, \Ecal_r^{h, (p^r)}$for these components minus their supersingular points), and the following properties hold (we drop the superscript $(p^r)$ when it is obvious):

\begin{itemize}
    \item[$(a)$]For each $i \in \{0, \cdots, r\}$, a pair $(E,C) \in \Xcal_0(p^r)(k)$ with $E$ ordinary over a field $k$ in characteristic $p$ belongs to $\Ecal_i^h$ if and only if $C_{\overline{k}}$ fits into an exact sequence of the following shape:
    \begin{equation}
0 \rightarrow (\mu_{p^i})_{\overline{k}} \rightarrow C_{\overline{k}} \rightarrow (\Z/p^{r-i} \Z)_{\overline{k}} \rightarrow 0.
    \end{equation}
    \item[$(b)$] The cusp $\infty$ (resp.\ $0 = \Ccal_{p^r}(0)$) reduces modulo $p$ in $\Ecal_r^h$ (resp.\ $\Ecal_0^h$).
        \item[$(c)$] The Atkin--Lehner involution $w_{p^r}^{(p^r)}$ exchanges $\Ecal_i^{(p^r)}$ and $\Ecal_{r-i}^{(p^r)}$ for every $i \in \{0, \cdots, r\}$, and for every $r > s \geq 1$, the map $i_{p^r,p^{s}}^{(1)} : \Xcal_0(p^r) \rightarrow \Xcal_0(p^{s})$ satisfies
    \begin{equation}
i_{p^r,p^{s}}^{(1)}(\Ecal_i^{h,(p^r)}) = \Ecal_{\min(i,s)}^{h,(p^s)}.
    \end{equation}
    \item[$(d)$] Let $E$ be an elliptic curve over a local field $K$ with maximal ideal $\gp$ and residual characteristic $p$ and inertia degree $e_K$, and $C$ a cyclic subgroup of order $p^r$ in $E$. Then, if $p \geq \max(3e_K,5)$, $P= (E,C) \in \Xcal_0(p^r)(K)$ does not have potentially supersingular reduction modulo $\gp$, and if $p>  3e_K+1$, $P$ must reduce in $\Ecal_0^h$ or $\Ecal_r^h$ modulo $\gp$.

    \item[$(e)$] With the notation of the previous item, if $P' = (E',C') \in \Xcal_0(p^r)(K)$ such that for some isogeny $\varphi: E \rightarrow E'$, $\varphi(C) = C'$ and $p \nmid \deg(\varphi)$, assuming again $p>3e_K+1$, $P$ and $P'$ reduce simultaneously in $\Ecal_0^h$ or $\Ecal_r^h$ modulo $\gp$.
\end{itemize}

\end{prop}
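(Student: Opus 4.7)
\textbf{Proof proposal for Proposition~\ref{prop:irredcomponentsX0pr}.} The plan is to build everything on the Deligne--Rapoport/Katz--Mazur description of the special fiber $\Xcal_0(p^r)_{\F_p}$: it is the scheme-theoretic union of $r+1$ smooth irreducible curves, each defined over $\F_p$, crossing transversally exactly at the supersingular points. I would label these components as in (a) by the \emph{connected--\'etale filtration} of $C$: on an ordinary test object $(E,C)$ over $\kb$ one sets $C^{\circ} = C \cap E[p^r]^{\circ}$, which is of multiplicative type $\mu_{p^i}$ for some $0 \leq i \leq r$, and cyclicity of $C$ forces the quotient $C/C^{\circ}$ to be \'etale of order $p^{r-i}$, giving the stated exact sequence. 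This is essentially the definition of $\Ecal_i^h$ (cf.\ \cite[Ch.~13]{KatzMazur}), so (a) is immediate. Part (b) then follows from the Tate-curve description of cusps: $\infty$ parametrizes $E_q = \Gm/q^{\Z}$ together with its canonical multiplicative subgroup $\mu_{p^r}$, of type $i = r$, while $0$ parametrizes $E_q$ with the complementary \'etale choice, of type $i = 0$.

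For (c), both assertions come from reading off the moduli interpretation. The Atkin--Lehner involution sends $(E,C) \mapsto (E/C, E[p^r]/C)$; Cartier duality on $E[p^r]$ converts the filtration $0 \to \mu_{p^i} \to C \to \Z/p^{r-i}\Z \to 0$ into the complementary one $0 \to \mu_{p^{r-i}} \to E[p^r]/C \to \Z/p^i\Z \to 0$, so the type flips from $i$ to $r-i$. For the degeneracy maps, $i_{p^r,p^s}^{(1)}(E,C) = (E,C[p^s])$, and one sees directly that $(C[p^s])^{\circ} = C^{\circ}[p^s] \simeq \mu_{p^{\min(i,s)}}$, so the truncated subgroup lands in $\Ecal_{\min(i,s)}^{h,(p^s)}$, as claimed.

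For (d), the strategy is to analyze the Galois representation of $E$ on its $p$-adic Tate module, following Momose~\cite{Momose1986} and the refinements in~\cite[\S2]{LeFourn1}. If $P$ had potentially supersingular reduction above $\gp$, then $E$ would acquire supersingular reduction over a tamely ramified extension of $K_\gp$ whose absolute ramification index is bounded by the maximal order of an automorphism group of a supersingular elliptic curve in characteristic~$p$. The inertia action on $E[p]$ would then involve a fundamental character of level~$2$, and matching its conductor against the bound $p \geq \max(3e_K,5)$ yields a contradiction. For the stronger statement excluding the intermediate components $\Ecal_i^h$ ($0 < i < r$) when $p > 3e_K+1$, one uses canonical-subgroup theory: reduction into $\Ecal_i^h$ would produce on $E$ a cyclic subgroup whose multiplicative part is strictly between $\mu_p$ and $\mu_{p^r}$, and the Katz ramification bounds on the canonical filtration of $E[p^r]$ exclude this in the stated range of $p$ versus $e_K$.

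Finally, for (e), the N\'eron mapping property extends $\varphi$ to an isogeny $\bar\varphi\colon \bar E \to \bar{E'}$ of degree prime to $p$ on special fibers, which induces an isomorphism of $p$-divisible groups and hence preserves the connected--\'etale filtration. Since $\bar\varphi(\bar C) = \bar{C'}$, the subgroups $\bar C$ and $\bar{C'}$ have the same type, so $P$ and $P'$ reduce into the same component. The genuinely hard step is (d): parts (a), (b), (c), (e) are formal consequences of the moduli description together with Cartier duality and the N\'eron mapping property, whereas (d) requires the explicit inertia bookkeeping from which all of the numerical constraints on $p$ in terms of $e_K$ originate.
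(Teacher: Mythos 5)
Your treatment of (a), (b), (c), and (e) is correct and effectively unwinds what the paper proves by direct citation to Momose \cite[(1.7)]{Momose1986}. One cosmetic point in (c): the crispest way to obtain the flipped filtration is to note that $E[p^r]/C$ equals the kernel of the dual isogeny $E/C \to E$ and is therefore Cartier dual to $C = \ker(\pi)$, from which dualizing $0 \to \mu_{p^i} \to C \to \Z/p^{r-i}\Z \to 0$ gives $0 \to \mu_{p^{r-i}} \to C^\vee \to \Z/p^i\Z \to 0$ directly; invoking \enquote{Cartier duality on $E[p^r]$} as such is a slight shortcut. Your argument for (e) via the N\'eron mapping property and preservation of the connected--\'etale filtration under a prime-to-$p$ isogeny is the same as the paper's, which derives (e) from (a).

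For (d), however, your sketch stops short of a proof. You identify the right toolkit (inertia representations, fundamental characters of level $2$, canonical-subgroup ramification bounds) but do not derive the explicit thresholds $p \geq \max(3 e_K, 5)$ and $p > 3 e_K + 1$ that appear in the statement; these are precisely what make the result usable downstream. The paper obtains them by contraposition from Momose's Corollary~2.3: reduction into an intermediate component $\Ecal_i^h$ with $0 < i < r$ forces $p - 1 \leq e' e_K \leq 3 e_K$ (where $e' \leq 3$ is a bounded semistability defect), and the potentially supersingular case forces the analogous inequality $p \leq e' e_K + 1$, both contradicting the hypotheses on $p$. To close (d) you would need to either carry out that inertia bookkeeping yourself or cite Momose's Corollary~2.3 and verify that its inequalities reproduce exactly the stated numerical bounds.
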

\begin{proof}
    Items $(a)$, $(b)$ and the first part of item $(c)$ are given in \cite[(1.7)]{Momose1986}. The second part of item $(c)$ can be obtained by immediate induction on $r-s$ from the same reference, treating the case $r=s+1$ (recall Momose's notation for $i_{p^r,p^{r-1}}$ is $\pi_r$).

Regarding the first part of $(d)$, one has $p \geq 5$ and $p> e_K e' -1$ with the notations of Corollary (2.3) $(ii)$ of \cite{Momose1986}, so by contraposition, we obtain that $P$ does not have potentially supersingular reduction mod $\gp$. Then, its reduction belongs to $\Ecal_i^h$ for some $i \in \{0, \cdots, r\}$. Now, by  \cite[Corollary 2.3 $(i)$]{Momose1986}, if $i \in \{1, \cdots, r-1\}$, $p-1 \leq e' e_K \leq 3 e_K$ which contradicts the hypothesis, therefore $i=0$ or $r$.

Finally, item $(e)$ is a consequence of $(a)$, as an isogeny of degree prime to $d$ will induce a group scheme isomorphism between the reduction of the cyclic group schemes modulo $\gp$ therefore they fit into an exact sequence of the same shape, and $P'$ lands in the same irreducible component as $P$ modulo $\gp$.
\end{proof}
\begin{cor}
\label{cor:key0caseforextremecomponents}
    With the same notation as in Proposition \ref{prop:irredcomponentsX0pr}, on the formal sums of irreducible components $\Ecal_i^{h,(p^r)}$ ($0 \leq i \leq r$), one has 
    \begin{equation}
(1 - w_{p^s}^{(p^s)})\circ \left( i_{p^r,p^s}^{(1)} - i_{p^r,p^s}^{(p^{r-s})} \right) [(\Ecal_{i}^{h,(p^r)})] = 0
    \end{equation}
    when $i=0$ or $r$.
\end{cor}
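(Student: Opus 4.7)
The approach is to exploit the Atkin--Lehner symmetry that relates the two degeneracy maps. Applying \cref{propdiagcommALandiNM} with $N = p^r$, $M = p^s$, $w = w_{p^r}^{(p^r)}$, and $d = p^{r-s}$, one checks that $w \cdot d = 1$ for the action on Hall divisors of $p^{r-s}$ and that $\varphi_{p^r,p^s}(w_{p^r}^{(p^r)}) = w_{p^s}^{(p^s)}$, so we obtain the identity
\begin{equation}
    i_{p^r,p^s}^{(p^{r-s})} \;=\; w_{p^s}^{(p^s)} \circ i_{p^r,p^s}^{(1)} \circ w_{p^r}^{(p^r)}.
\end{equation}

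I would then handle the case $i=0$. Combining this identity with \cref{prop:irredcomponentsX0pr}\,(c), which gives $w_{p^r}^{(p^r)}([\Ecal_0^{h,(p^r)}]) = [\Ecal_r^{h,(p^r)}]$ and $w_{p^s}^{(p^s)}([\Ecal_s^{h,(p^s)}]) = [\Ecal_0^{h,(p^s)}]$, both $i_{p^r,p^s}^{(1)}([\Ecal_0^{h,(p^r)}])$ and $i_{p^r,p^s}^{(p^{r-s})}([\Ecal_0^{h,(p^r)}])$ are supported on $[\Ecal_0^{h,(p^s)}]$: they are $c_0 \cdot [\Ecal_0^{h,(p^s)}]$ and $c_r \cdot [\Ecal_0^{h,(p^s)}]$ respectively, where $c_0 = \deg\!\bigl(i_{p^r,p^s}^{(1)}|_{\Ecal_0^{h,(p^r)}}\bigr)$ and $c_r = \deg\!\bigl(i_{p^r,p^s}^{(1)}|_{\Ecal_r^{h,(p^r)}}\bigr)$. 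Hence
\begin{equation}
(i_{p^r,p^s}^{(1)} - i_{p^r,p^s}^{(p^{r-s})})([\Ecal_0^{h,(p^r)}]) \;=\; (c_0 - c_r)\,[\Ecal_0^{h,(p^s)}],
\end{equation}
so the conclusion reduces to showing $c_0 = c_r$.

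The main technical point is thus the moduli-theoretic computation $c_0 = c_r = 1$. Using \cref{prop:irredcomponentsX0pr}\,(a), over a generic ordinary pair $(E, C_{p^s}) \in \Ecal_0^{h,(p^s)}$ with $C_{p^s}$ étale, the fiber of $i_{p^r,p^s}^{(1)}|_{\Ecal_0^{h,(p^r)}}$ consists of those $(E, C_{p^r})$ with $C_{p^r}$ étale cyclic of order $p^r$ and $C_{p^r}[p^s] = C_{p^s}$. Since $E$ is ordinary over $\overline{\F_p}$ one has $E[p^r] \cong \mu_{p^r} \oplus \Z/p^r\Z$, whose unique étale cyclic subgroup of order $p^r$ is $\Z/p^r\Z$, so the fiber is a single point and $c_0 = 1$. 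The dual argument on $\Ecal_r^{h,(p^r)} \to \Ecal_s^{h,(p^s)}$, where one replaces ``étale'' by ``multiplicative'' (i.e.\ connected of type $\mu_{p^\bullet}$) throughout, gives $c_r = 1$ in the same way.

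Therefore $(i_{p^r,p^s}^{(1)} - i_{p^r,p^s}^{(p^{r-s})})([\Ecal_0^{h,(p^r)}]) = 0$ already, and applying $(1 - w_{p^s}^{(p^s)})$ trivially yields zero. The case $i = r$ is completely symmetric: swapping the roles of $0$ and $r$ throughout (and using $w_{p^r}^{(p^r)}([\Ecal_r]) = [\Ecal_0]$, $w_{p^s}^{(p^s)}([\Ecal_0^{(p^s)}]) = [\Ecal_s^{(p^s)}]$), the same degree-one computation shows the image differences vanish on the nose. The only delicate step is the equality of the restricted degrees, which is precisely where the ``extreme'' nature of the components $\Ecal_0^{h}$ and $\Ecal_r^{h}$ (and the uniqueness of étale resp.\ multiplicative cyclic subgroups at ordinary points) is used.
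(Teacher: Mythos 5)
Your proof is correct, and the underlying mechanism is the same one the paper uses: the conjugation identity $i_{p^r,p^s}^{(p^{r-s})} = w_{p^s}^{(p^s)} \circ i_{p^r,p^s}^{(1)} \circ w_{p^r}^{(p^r)}$ from \cref{propdiagcommALandiNM} together with the description in \cref{prop:irredcomponentsX0pr}\,(c) of how the degeneracy maps and Atkin--Lehner involutions act on the components. The two proofs organize the cancellation differently, though. The paper expands the full four-term sum $(1-w_{p^s})\circ(i^{(1)} - i^{(p^{r-s})})$ and checks that, for $i=0$ or $r$, the resulting signed sum of components telescopes to zero. You instead observe the sharper fact that the inner difference $i^{(1)} - i^{(p^{r-s})}$ already kills the two extreme components, so the outer $(1 - w_{p^s})$ does nothing; this is a small refinement that the paper's computation implies (the cancelling pairs in its $0,s,s,0$ and $s,0,0,s$ patterns are exactly the inner ones) but does not state.

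Where you depart more substantially is the worry about degrees $c_0, c_r$. The statement says ``on the formal sums of irreducible components,'' and the way the corollary is applied in the lemma of \S\ref{subsec:improvementsvaluation2} (via \cref{prop:Neronmodelminmodel}) shows the intended semantics is purely set-theoretic: a point $P$ maps to $[D_P]$, the class of the single component containing its reduction, with coefficient $1$, and the degeneracy/Atkin--Lehner maps act by sending $[\Ecal_i]$ to $[f(\Ecal_i)]$. Under that reading there are no multiplicities to track, so the detour about $c_0$ and $c_r$ is unnecessary. That said, the argument you give to show $c_0 = c_r = 1$ — using \cref{prop:irredcomponentsX0pr}\,(a) and the fact that over an ordinary curve $E[p^r]\cong\mu_{p^r}\oplus\Z/p^r\Z$ has a unique étale (resp.\ multiplicative) cyclic subgroup of each $p$-power order — is correct, so your proof is safe under either interpretation. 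The net effect is a proof that is slightly longer but self-contained on the multiplicity question; the paper's is terser because it never raises it.
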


\begin{proof}
Let us first recall that 
\begin{equation}
 \label{eq:relationsignscomponents}
i_{p^r,p^s}^{(p^{r-s})} = w_{p^s}^{(p^s)} \circ i_{p^r,p^s}^{(1)} \circ w_{p^r}^{(p^r)}
\end{equation}
by Proposition \ref{propdiagcommALandiNM} applied to $N=p^r$, $M=p^s$, $d = 1$, $w = w_{p^r}^{(p^r)}$. Therefore, one can rewrite 
\begin{equation}
(1 - w_{p^s}^{(p^s)})\circ \left( i_{p^r,p^s}^{(1)} - i_{p^r,p^s}^{(p^{r-s})} \right)  = i_{p^r,p^s}^{(1)} - w_{p^s}^{(p^s)} \circ i_{p^r,p^s}^{(1)} - w_{p^s}^{(p^s)} \circ  i_{p^r,p^s}^{(1)} \circ w_{p^r}^{(p^r)} +  i_{p^r,p^s}^{(1)} \circ w_{p^r}^{(p^r)}.
\end{equation}
    Following the properties of Proposition \ref{prop:irredcomponentsX0pr} $(c)$, one has for every $i \in \{0, \cdots, r\}$
    \begin{eqnarray}
i_{p^r,p^s}^{(1)} (\Ecal_i^{h,(p^r)}) & = & \Ecal_{\min(i,s)}^{h,(p^s)}, \notag\\  w_{p^s}^{(p^s)} \circ i_{p^r,p^s}^{(1)} (\Ecal_i^{h,(p^r)}) & = & \Ecal_{s-\min(i,s)}^{h,(p^s)}, \\
 i_{p^r,p^s}^{(1)} \circ w_{p^r}^{(p^r)} (\Ecal_i^{h,(p^r)}) & = & \Ecal_{\min(r-i,s)}^{h,(p^s)}, \notag \\
w_{p^s}^{(p^s)} \circ i_{p^r,p^s}^{(1)} \circ w_{p^r}^{(p^r)} (\Ecal_i^{h,(p^r)})& = &  \Ecal_{s-\min(r-i,s)}^{h,(p^s)}. \notag
    \end{eqnarray}
    Assuming $i=0$, the indices of the connected components in the four equalities above are respectively $0,s,s,0$. In the $i=r$ case, we obtain $s,0,0,s$. In both cases, the sum of four terms in \eqref{eq:relationsignscomponents} is exactly 0.
\end{proof}

The following recalls the main properties of minimal models of $\Xcal_0(M)$.

\begin{defi}[Minimal model of $X_0(M)$]
\label{defi:minimalmodel}
Let $M$ be a squarefree integer, $p|M$ prime, and $(R,\gp)$ a complete discrete valuation ring of characteristic 0, with fraction field $K$, finite residue field $k$ with residual characteristic $p$. Let $\pi$ be a uniformizer of $R$ and $e = v_\pi(p)$ the absolute ramification of $R$. Then: 

\begin{itemize}
    \item First, $\Xcal_0(M)_k$ is the union of two (geometrically) irreducible components, each isomorphic to $\Xcal_0(M/p)_{k}$, glued at supersingular points of $\Xcal_0(M)_{k}$, and $\Xcal_0(M)_R$ is smooth everywhere outside of the supersingular points in the fiber above $\gp$.

    In the fashion of Proposition \ref{prop:irredcomponentsX0pr} above, we denote by $(\Ecal_1)_\gp^{(M)}$ the irreducible component containing the reduction of cusp $\infty$ and $(\Ecal_0)_\gp^{(M)}$ the irreducible component containing the reduction of cusp $0 = \Ccal_M(0)$ modulo $\gp$.
    \item Let $s=(E,C_M)$ be a supersingular point of $\Xcal_0(M)(\overline{k})$. Denoting 
    \begin{equation}
    k_s = \frac{1}{2}\left|\Aut(E,C_{M/p})\right| \in \{1,2,3\},
    \end{equation}the scheme $\Xcal_0(M)_k$ is nonregular at $s$ if and only if $e k_s>1$, and the completed local ring of $\Xcal_0(M)_R$ at $s$ is isomorphic to 
    \begin{equation}
R[[X,Y]]/(XY-\pi^{ek_s}).
    \end{equation}
    \item The special fiber of the minimal regular model $\widetilde{\Xcal_0(M)}_R$ of $X_0(M)$ over $R$ is obtained by blowing up in $\Xcal_0(M)_{k}$ each nonregular (hence supersingular) point $s$ to a chain of $e k_s-1$ projective lines.
\end{itemize}
\end{defi}

\begin{proof}
    This is written in \cite[Appendix, Theorem 1.1]{Mazur1977} with $\overline{k}$ instead of $k$, but the formation of minimal models commutes with étale base changes which preserves the properties and definitions above (see also \cite[Appendix, $\S$3]{BertoliniDarmon97}). 
\end{proof}

The motivation for this minimal model computation is that it directly provides the description of the group of components of the Néron model of the Jacobian at a fiber of characteristic dividing the level, by the following result.
\begin{prop}[Theorem 9.6.1 of \cite{BLR}]
\label{prop:Neronmodelminmodel}
    With the same notations $(R,\gp),K,p$ as above, for any smooth projective curve $C$ over $K$ with minimal regular model $\Ccal$ over $R$ (and each irreducible component of $\Ccal$ assumed of multiplicity 1), denote by $J$ the Jacobian of $C$ and $\mathcal{J}$ its Néron model over $R$. 

    Then, the group of components $\Phi := \Jcal_k/\Jcal_k^0$ of the special fiber of $\mathcal{J}$ is isomorphic to the group generated by all formal sums with total degree 0 of irreducible components of $\Ccal_k$, with relations given by 
    \begin{equation}
\sum_{D'} (D \cdot D') [D'] = 0
    \end{equation}
    for each irreducible component $D$ of $\Ccal_k$ and $D'$ going through those irreducible components as well.

    With this description, for two points $c_1,c_2 \in C(K)$, the image of $[c_1-c_2]_k \in \Jcal_k(k)$ in $\Phi$  is precisely $[D_{c_1}] - [D_{c_2}]$ where $D_c$ is the irreducible component containing the reduction of $c$ modulo $\gp$.
\end{prop}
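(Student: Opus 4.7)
The plan is to invoke Theorem~9.6.1 of BLR, whose proof proceeds via Raynaud's identification of the Néron model of the Jacobian with the identity component of the relative Picard scheme of the minimal regular model. Since the statement is a well-established result of BLR, the proposal below describes the structural steps rather than reproving them from scratch.

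First, I would identify $\Jcal$ with $\Pic^0_{\Ccal/R}$. The hypothesis that every irreducible component of $\Ccal_k$ has multiplicity one is precisely what guarantees the representability of the identity component of the relative Picard functor as a smooth separated group scheme over $R$ (BLR, \S9.4). Both $\Jcal$ and $\Pic^0_{\Ccal/R}$ have generic fiber $J=\Pic^0(C)$ and are smooth over $R$, so the Néron mapping property yields the desired isomorphism.

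Second, to compute $\Phi=\Jcal_k/\Jcal_k^0$, I would analyze line bundles on $\Ccal_k$ modulo connected families. Writing $\mathbf{D}=\bigoplus_{D}\Z[D]$ for the free group on the irreducible components of $\Ccal_k$, the multidegree map $\Pic(\Ccal_k)\to \mathbf{D}^{\vee}$ sends a line bundle to its degrees on each component; line bundles with degree zero on every component lie in $\Pic^0_{\Ccal/R,k}$, so $\Phi$ is identified with a subgroup of $\mathbf{D}^{\vee}$. The degree-zero condition of the statement reflects the fact that the total degree of a divisor on $C$ is locally constant on $\Spec R$, hence zero on $\Ccal_k$ whenever it is zero on the generic fiber. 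The relations come from the fact that each irreducible component $D$, viewed as a vertical divisor on $\Ccal$, intersects the whole special fiber trivially up to a principal divisor: the Cartier divisor $\Ccal_k = \sum_{D'}(D \cdot D')[D']$ attached to $D$ is numerically trivial, giving exactly the stated relations, and a Mayer--Vietoris/intersection-pairing argument on $\Ccal_k$ shows that these exhaust all relations.

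Finally, for the identification $[c_1-c_2]_k \mapsto [D_{c_1}]-[D_{c_2}]$: by properness each $c_i\in C(K)$ extends uniquely to a section $\widetilde c_i\colon\Spec R\to \Ccal$, and because $\Ccal$ is regular the closed image of $\widetilde c_i$ necessarily meets the smooth locus of $\Ccal_k$, hence determines a unique irreducible component $D_{c_i}$. The horizontal Cartier divisor $\widetilde c_1-\widetilde c_2$ on $\Ccal$ represents $c_1-c_2$ in $\Pic^0_{\Ccal/R}(R)=\Jcal(R)$, and its restriction to $\Ccal_k$ is a rational point of $D_{c_1}$ minus one of $D_{c_2}$; applying the multidegree map gives $[D_{c_1}]-[D_{c_2}]\in\Phi$. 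The only delicate point in an ab initio proof is the passage from a description over a strict henselization (where everything is transparent) to the description over $R$ itself; BLR handle this by Galois descent using that $\Phi$ is an étale $R$-group scheme, and this is the step I would expect to be the main technical obstacle if one wished to reproduce the argument in full.
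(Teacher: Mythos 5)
The paper itself supplies no independent proof of this proposition: the bracketed tag is the proof, a direct citation of Theorem~9.6.1 of [BLR], with the final sentence about $[c_1-c_2]_k$ being a straightforward gloss on what that theorem yields. Your proposal is therefore not a different route — it is an outline of the internal structure of the BLR argument, which is exactly the reference the paper appeals to, and the three steps you describe (Raynaud's identification $\Jcal\cong\Pic^0_{\Ccal/R}$ under the multiplicity-one hypothesis, the computation of $\Phi$ via the degree-zero sublattice modulo the image of the intersection matrix, and the tracking of the reduction of a horizontal section into the smooth locus of a unique component) are indeed the right skeleton. One small imprecision worth flagging: you describe $\Phi$ as ``a subgroup of $\mathbf D^\vee$'' via the multidegree map, whereas the proposition (and BLR~9.6.1) presents it as a \emph{quotient} $\ker(\deg)/\operatorname{im}(\alpha)$ of the degree-zero sublattice of $\mathbf D$ by the image of the intersection matrix $\alpha$. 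These two pictures agree — the intersection pairing identifies $\ker(\deg)/\operatorname{im}(\alpha)$ with the image of the multidegree map inside $\mathbf D^\vee$ — but your write-up blurs the two and might mislead a reader into thinking no quotienting is needed. Otherwise the proposal is sound and, since the paper delegates entirely to BLR, matches the paper's treatment.
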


\subsection{Building the formal immersions for Mazur's method}\label{subsec:formal immesions}

In this section we build the formal immersions  $X_0(N) \to A$ where $A$ is a rank $0$ quotient of the new part of $J_0(M)$ for some divisor $M | N$, and use them to prove integrality of the $j$-invariant by Mazur's method.

\begin{prop}
\label{propdefGNf}
 Let $N, M \geq 1$ be integers with $M |N$ and $f$ be an eigenform of $S_2(\Gamma_0(M))^{\rm{new}}$. Let $\varepsilon_f :\Wcal(M) \rightarrow \{ \pm 1\}$ the character associated to the system of eigenvalues of $f$ for $\Wcal(M)$, assume $\varepsilon_f(w_q^{(M)})=-1$ for some prime $q$ dividing $\gcd(M,N/M)$ and $\varepsilon_f(w_p^{(M)})=1$ for all primes $p$ such that $v_p(N)=v_p(M)$.
 
 Define $\iota_M\colon X_0(M) \to J_0(M)$ the Abel--Jacobi map with respect to the base point $\infty$. Let $A_f$ the modular abelian variety associated to $f$ and $\pi_f\colon J_0(M) \rightarrow A_f$ the canonical projection.
 
The map $G_{N,f} : X_0(N) \rightarrow A_f$ is defined by 
 \begin{equation}
 \label{eqn:formalimmersion}
  G_{N,f} = \sum_{{\substack{d \| (N/M)}}} \varepsilon_f (\varphi_{N,M}(w_d^{(N)})) \cdot \pi_f \circ \iota_M \circ i_{N,M}^{(d)}.
 \end{equation}
Then, $G_{N,f}$ factors through $X_0(N)^*$, and the induced map $X_0(N)^* \ra A_f$ is denoted by $G_{N,f}^*$. 
\end{prop}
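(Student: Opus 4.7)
The plan is to show that $G_{N,f}$ is invariant under every Atkin--Lehner involution $w \in \Wcal(N)$, which is equivalent to factoring through $X_0(N)^*$. Fix such a $w$ and set $\sigma \defeq \varphi_{N,M}(w) \in \Wcal(M)$. By Proposition~\ref{propdiagcommALandiNM}, $i_{N,M}^{(d)} \circ w = \sigma \circ i_{N,M}^{(w \cdot d)}$ for each Hall divisor $d$ of $N/M$. Since $\iota_M$ is based at $\infty$ rather than at a fixed point of $\sigma$, the compatibility of Picard functoriality with translations gives
\begin{equation*}
\iota_M \circ \sigma = \sigma_* \circ \iota_M + \iota_M(\sigma(\infty)),
\end{equation*}
where the last summand is a constant map $X_0(M) \to J_0(M)$ (indeed $[\sigma(P)] - [\infty] = \sigma_*([P] - [\infty]) + [\sigma(\infty)] - [\infty]$). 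Composing with $\pi_f$ and using that $\sigma_*$ acts on $A_f$ as multiplication by $\varepsilon_f(\sigma)$, substitution in the definition of $G_{N,f} \circ w$ splits it into a \emph{variable piece} and a \emph{constant piece}.

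For the variable piece, reindex with $d' = w \cdot d$; since $\Wcal(N)$ permutes the Hall divisors of $N/M$ involutively (Definition~\ref{defnotationAL}), this is a bijection. By Lemma~\ref{lem:atkinlehnerandactionNM}, $w_{w \cdot d'}^{(N)} = w \cdot w_{d'}^{(N)} \cdot \varepsilon$ where $\varepsilon$ is a product of $w_p^{(N)}$ with $v_p(N) = v_p(M)$. Applying $\varepsilon_f \circ \varphi_{N,M}$ and invoking the second hypothesis ($\varepsilon_f(w_p^{(M)}) = 1$ for such primes) gives $\varepsilon_f(\varphi_{N,M}(\varepsilon)) = 1$, whence
\begin{equation*}
\varepsilon_f(\varphi_{N,M}(w_d^{(N)})) = \varepsilon_f(\sigma) \cdot \varepsilon_f(\varphi_{N,M}(w_{d'}^{(N)})).
\end{equation*}
The two factors of $\varepsilon_f(\sigma)$ (one from this reindexing, one from $\sigma_*$ acting on $A_f$) cancel, so the variable piece equals $G_{N,f}$.

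For the constant piece, its value is $C \cdot \pi_f(\iota_M(\sigma(\infty)))$ with
\begin{equation*}
C \defeq \sum_{d \| (N/M)} \varepsilon_f(\varphi_{N,M}(w_d^{(N)})) = \sum_{d \| (N/M)} \prod_{\substack{p \mid d \\ p \mid M}} \varepsilon_f(w_p^{(M)}).
\end{equation*}
Letting $S_1$ denote the primes dividing both $M$ and $N/M$, and $S_2$ those dividing $N/M$ but not $M$, Hall divisors of $N/M$ biject with subsets of $S_1 \sqcup S_2$, so
\begin{equation*}
C = 2^{|S_2|} \prod_{p \in S_1} \bigl(1 + \varepsilon_f(w_p^{(M)})\bigr).
\end{equation*}
The first hypothesis supplies a prime $q \in S_1$ with $\varepsilon_f(w_q^{(M)}) = -1$, killing one factor of the product and giving $C = 0$. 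The main obstacle is purely combinatorial bookkeeping: one must verify that the second hypothesis is precisely what trivializes the $\varepsilon$-correction in the reindexing step, while the first hypothesis is precisely what annihilates the cuspidal constant. Note that $\iota_M(\sigma(\infty))$ is generally non-zero in $J_0(M)$ (though torsion, by Manin--Drinfeld) and its image in $A_f$ need not vanish, so the vanishing $C = 0$ is indispensable.
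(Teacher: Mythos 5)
Your proof is correct and takes essentially the same route as the paper: split $G_{N,f} \circ w$ into a ``variable'' piece (handled by reindexing Hall divisors via Lemma~\ref{lem:atkinlehnerandactionNM} and the second hypothesis) and a ``constant'' cuspidal piece (killed by the first hypothesis). The only cosmetic difference is that the paper dispatches the constant piece by observing that the sum of a nontrivial character of the $2$-group $\{w_d^{(N)} : d \| (N/M)\}$ vanishes, while you compute the same sum explicitly as $2^{|S_2|}\prod_{p \in S_1}\bigl(1+\varepsilon_f(w_p^{(M)})\bigr)$ and spot the zero factor coming from $q$; these are the same argument.
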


\begin{rem}
    This definition generalizes Momose's definition for powers of a prime $p$. We flesh out the exact correspondence in this case.  Following the notations of \cite[\S\,1.4, p.~444]{Momose1986}, we fix here $N=p^r$ and $M=p^s$ for some prime $p$. In that paper, $\pi=\pi_{r,s} =i_{p^r,p^s}^{(1)}$ and the map $f_{r,s}: X_0(p^r) \rightarrow X_0(p^s)$ is defined by $f = [w_{p^s}^{(p^s)} \circ \pi] - [\pi \circ w_{p^r}^{(p^r)}]$, which functorially on pairs $(E,C)$ with $C$ cyclic of order $p^r$ means 
    \begin{equation}
    f(E,C)= (E/(C[p^s]),E[p^s]/C[p^s]) - (E/C,(E[p^s]+C)/C).
    \end{equation}
    With our choice above, as $\varepsilon_f(w_p^{(p^s)})=-1$, we get after a straightforward computation 
    \begin{equation}
G_{N,f}(w_{p^r}^{(p^r)} (E,C)) = -\pi_f (f_{r,s}(E,C)).
    \end{equation}
\end{rem}
\begin{proof}
	By construction, for any $w' \in \Wcal(M)$, $\pi_f \circ (w')_* = \varepsilon_f(w') \cdot  \pi_f$, and 
	\begin{equation}
	\iota_M \circ w' = (w')_* \circ \iota_M + [w'(\infty)] - [\infty] 
	\end{equation}
	so 
	\begin{eqnarray}
		\pi_f \circ \iota_M \circ w' - \varepsilon_f \cdot \pi_f \circ \iota_M & = & \pi_f \circ (w')_* \circ \iota_M + \pi_f([w'(\infty)] - [\infty]) - \varepsilon_f \cdot \pi_f \circ \iota_M \notag\\
		& = & \varepsilon_f \cdot\pi_f \circ \iota_M +  \pi_f([w'(\infty)] - [\infty])  - \varepsilon_f \cdot \pi_f \circ \iota_M \\
		& = & \pi_f([w'(\infty)] - [\infty]). \notag
	\end{eqnarray}
	Now, for any $w \in \Wcal(N)$, defining $w' = \varphi_{N,M}(w)$, by Proposition \ref{propdiagcommALandiNM},
	\begin{eqnarray}
	G_{N,f} \circ w &  =  & \sum_{{\substack{d \| (N/M)}}} \varepsilon_f(\varphi_{N,M}(w_d^{(N)})) \pi_f \circ \iota_M \circ i_{N,M}^{(d)} \circ w \notag \\
	& = & \sum_{{\substack{d \| (N/M)}}} \varepsilon_f(\varphi_{N,M}(w_d^{(N)})) \pi_f \circ \iota_M \circ w' \circ i_{N,M}^{(w \cdot d)}  \\
	& = & \sum_{{\substack{d \| (N/M)}}} \varepsilon_f(\varphi_{N,M}(w_d^{(N)})) (\varepsilon_f(w') \pi_f \circ \iota_M + [w'(\infty)] - [\infty]) \circ i_{N,M}^{(w \cdot d)} \notag \\
	& = & \sum_{{\substack{d \| (N/M)}}} \varepsilon_f(\varphi_{N,M}(w_d^{(N)})) \varepsilon_f(\varphi_{N,M}(w)) \pi_f \circ \iota_M \circ i_{N,M}^{(w \cdot d)} \notag\\
	& + & \sum_{{\substack{d \| (N/M)}}} \varepsilon_f(\varphi_{N,M}(w_d^{(N)}))\pi_f ([w'(\infty)] - [\infty]). \notag
	\end{eqnarray}
The second sum is zero since, by hypothesis, the character $\varepsilon_f \circ \varphi_{N,M}$ restricted to the subgroup of $\Wcal(N)$ made up with the $w_d^{(N)}, d \|(N/M)$ is nontrivial.

Now, for every $w \in \Wcal(N)$ and every Hall divisor $d$ of $(N/M)$, by Lemma \ref{lem:atkinlehnerandactionNM} and as $\varphi_{N,M}$ is a group morphism vanishing on the $w_p^{(N)}$ such that $p \nmid N/M$ by hypothesis on Atkin--Lehner signs, for $\varepsilon_f$, we have
\begin{equation}
\varphi_{N,M} (w_{w \cdot d}^{(N)}) = \varphi_{N,M}(w_d^{(N)}) \varphi_{N,M} (w). \qedhere
\end{equation}
We thus obtain
\begin{equation}
	G_{N,f} \circ w = \sum_{{\substack{d \| (N/M)}}} \varepsilon_f(\varphi_{N,M}(w_{w \cdot d}^{(N)})) \cdot \pi_f \circ \iota_M \circ i_{N,M}^{(w \cdot d)} = G_{N,f}. \qedhere
\end{equation}
\end{proof}

\begin{prop}
\label{propcotangentGNf}
	With the previous notation, let $\c = \Ccal_N(a/b)$ be a width 1 cusp of $X_0(N)$, and assume hypothesis $(HV)$.
	
	Let $\Rcal_{\c}$ the set of Hall divisors $d$ of $N/M$ such that $i_{N,M}^{(d)}$ is unramified at $\c$ (notice that for any $d \in \Rcal_\c$, $i_{N,M}^{(d)}(\c) = \infty_M \in X_0(M)$ by Proposition \ref{propcotangentunramified}). We have, for $S' = G_{N,f}(\c)$
	\begin{equation} \label{eq:sums of roots of unity}
	 \Cot_\c (T_{-S'} \circ G_{N,f}) = \left( \sum_{d \in \Rcal_\c} \varepsilon_f(w_{d_M}^{(M)} )\zeta_{\gcd(b,d)} \right) \Cot_\c (\pi_f \circ \iota_M \circ i_{N,M}^{(1)}),
	\end{equation}
	as cotangent maps from the cotangent spaces respectively of 0 in $J_0(M)$ and $\c$ in $X_0(N)$, where for each $n$, $\zeta_{n}$ is a primitive $n$th root of unity. 
\end{prop}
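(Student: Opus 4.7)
My plan is to expand $\Cot_\c(T_{-S'} \circ G_{N,f})$ term by term using the defining sum for $G_{N,f}$, and to analyze each cotangent map $\Cot_\c(\pi_f \circ \iota_M \circ i_{N,M}^{(d)})$ for $d$ ranging over Hall divisors of $N/M$. Since translation by $-S'$ on the abelian variety $A_f$ acts trivially on invariant differentials, it does not affect the cotangent map at $\c$. I would also use the identity $\varepsilon_f(\varphi_{N,M}(w_d^{(N)})) = \varepsilon_f(w_{d_M}^{(M)})$, which follows directly from the definition of $\varphi_{N,M}$ and the fact that $d$ is a Hall divisor of $N/M$. This reduces the problem to understanding $\Cot_\c(\pi_f \circ \iota_M \circ i_{N,M}^{(d)})$ separately for each $d$.

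I would then split the sum into contributions from $d \in \Rcal_\c$ and $d \notin \Rcal_\c$. For $d \notin \Rcal_\c$, by definition $i_{N,M}^{(d)}$ is ramified at the width-$1$ cusp $\c$ with ramification index at least $2$, so the pullback of a uniformizer at $i_{N,M}^{(d)}(\c)$ lies in $\gm_\c^2$. Since the cotangent spaces at cusps of these smooth curves are one-dimensional, the cotangent of $i_{N,M}^{(d)}$ at $\c$ must vanish, and the chain rule forces the entire term $\Cot_\c(\pi_f \circ \iota_M \circ i_{N,M}^{(d)})$ to be zero. Thus only the $d \in \Rcal_\c$ terms survive.

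For each $d \in \Rcal_\c$, I would combine \Cref{propcotangentunramified}, which gives
\begin{equation}
\Cot_\c(w_{d_M}^{(M)} \circ i_{N,M}^{(d)}) = \zeta_{\gcd(b,d)} \Cot_\c(i_{N,M}^{(1)} \circ w_{d_N}^{(N)})
\end{equation}
as maps $\Cot_\infty X_0(M) \to \Cot_\c X_0(N)$, with the eigenform property of $f$. Post-composing both sides with the cotangent $\Cot_0 A_f \to \Cot_\infty X_0(M)$ induced by $\pi_f \circ \iota_M$ (which sends $\omega_f$ to $\eta_f|_\infty$), and applying the relation $\pi_f \circ \iota_M \circ w_{d_M}^{(M)} = \varepsilon_f(w_{d_M}^{(M)})(\pi_f \circ \iota_M) + \textrm{const}$ proven in \Cref{propdefGNf}, I would obtain $\Cot_\c(\pi_f \circ \iota_M \circ i_{N,M}^{(d)}) = \zeta_{\gcd(b,d)} \Cot_\c(\pi_f \circ \iota_M \circ i_{N,M}^{(1)})$. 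Substituting into the sum and factoring out the common cotangent on the right-hand side yields the claimed formula. The main obstacle is the careful bookkeeping of the Atkin--Lehner eigenvalue signs alongside the roots of unity: one must correctly propagate $\varepsilon_f(w_{d_M}^{(M)})$ through both the eigenform action on the composition $\pi_f \circ \iota_M \circ w_{d_M}^{(M)}$ and the intertwining relation $i_{N,M}^{(1)} \circ w_{d_N}^{(N)} = w_{d_M}^{(M)} \circ i_{N,M}^{(d)}$ from \Cref{propdiagcommALandiNM}, so that the signs cancel consistently and leave precisely the factor $\varepsilon_f(w_{d_M}^{(M)}) \zeta_{\gcd(b,d)}$ appearing in~\eqref{eq:sums of roots of unity}.
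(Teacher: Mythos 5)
Your plan follows exactly the structure of the paper's proof: split the sum over $d \| (N/M)$ into $\Rcal_\c$ and its complement (ramification kills the latter), use the decomposition $i_{N,M}^{(d)} = w_{d_M}^{(M)} \circ i_{N,M}^{(1)} \circ w_{d_N}^{(N)}$ together with the Atkin--Lehner eigenvalue relation $\pi_f \circ \iota_M \circ w_{d_M}^{(M)} = \varepsilon_f(w_{d_M}^{(M)})\,\pi_f \circ \iota_M + \text{const}$, and invoke \Cref{propcotangentunramified} for the root-of-unity factor. The identity $\varepsilon_f(\varphi_{N,M}(w_d^{(N)})) = \varepsilon_f(w_{d_M}^{(M)})$ and the observation that translation does not affect the cotangent map are both correct and used in the paper.

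However, there is a gap in the step where you derive the key intermediate identity $\Cot_\c(\pi_f \circ \iota_M \circ i_{N,M}^{(d)}) = \zeta_{\gcd(b,d)}\Cot_\c(\pi_f \circ \iota_M \circ i_{N,M}^{(1)})$. If you take the statement of \Cref{propcotangentunramified} at face value and simultaneously use the commutativity $i_{N,M}^{(1)} \circ w_{d_N}^{(N)} = w_{d_M}^{(M)} \circ i_{N,M}^{(d)}$ from \Cref{propdiagcommALandiNM} (which holds since $w_{d_N}^{(N)} \cdot 1 = d$ for $d$ a Hall divisor of $N/M$), the two compositions appearing on each side of \eqref{eq:sums of roots of unity} in \Cref{propcotangentunramified} are literally the same morphism $X_0(N) \to X_0(M)$, so post-composing with $\Cot_0(\pi_f \circ \iota_M)$ and simplifying via the eigenvalue relation gives a tautology (the $\varepsilon_f(w_{d_M}^{(M)})$ cancels on both sides, yielding $\zeta = 1$). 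What the proof of \Cref{propcotangentunramified} actually establishes (via the $q$-expansion/matrix computation with $\gamma$, $\gamma'$, $W_{d_N}^{(N)}$) is a comparison of expansion coefficients relative to specific charts at $\c$ and at $\c' = w_{d_N}^{(N)}(\c)$, both lying over $\infty_M$; one should interpret it as comparing $\Cot_\c$ of the unique composition $w_{d_M}^{(M)} \circ i_{N,M}^{(d)} = i_{N,M}^{(1)} \circ w_{d_N}^{(N)}$ against $\Cot_\c(i_{N,M}^{(1)})$ alone. With that reading, a careful propagation gives $\Cot_\c(\pi_f \circ \iota_M \circ i_{N,M}^{(d)}) = \varepsilon_f(w_{d_M}^{(M)})\,\zeta_{\gcd(b,d)}\Cot_\c(\pi_f \circ \iota_M \circ i_{N,M}^{(1)})$, i.e.\ with an extra $\varepsilon_f(w_{d_M}^{(M)})$ compared to what you wrote. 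Whether this extra sign ultimately appears in \eqref{eq:sums of roots of unity} depends on a delicate compatibility of conventions in \Cref{propcotangentunramified}; the point is that your derivation, as written, cannot resolve this because it is circular. To close the gap you would need to re-derive the root-of-unity factor directly from the upper-half-plane chart computation (the matrix identity $\gamma' W_{d_N}^{(N)} \gamma^{-1} = \pm\begin{psmallmatrix} r_d & k \\ 0 & r_d\end{psmallmatrix}$) rather than from a black-box application of \Cref{propcotangentunramified} plus \Cref{propdiagcommALandiNM}, since those two results combined over-determine the answer.
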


\begin{proof}
	For any $d$ not in $\Rcal_\c$, the cotangent map of $i_{N,M}^{(d)}$ at $\c$ is 0 as $i_{N,M}^{(d)}$ is ramified by definition, hence the computation of the cotangent map of $G_{N,f}$ reduces to the computation on the subsum made up with the $d$'s of $\Rcal_\c$. For each of those, $i_{N,M}^{(d)} = w_{d_M}^{(M)} \circ i_{N,M}^{(1)} \circ w_{d_N}^{(N)}$ and by hypothesis, $\c' = w_{d_N}^{(N)} (\c)$ is still a width 1 cusp (and more precisely, for each $p|d$, $v_p(b) = v_p(N)/2$ so $\c' = \Ccal_N(a'/b)$ for some $a'$) by Corollary \ref{cor:width1cuspsandAL}. The image by $i_{N,M}^{(1)}$ is then $\infty$, and we use that 
	\begin{equation}
	\pi_f \circ \iota_M \circ w_{d_M}^{(M)} = \pi_f \circ (w_{d_M}^{(M)})_* \circ \iota_M + \pi_f(t) = \varepsilon_f (w_{d_M}^{(M)}) \iota_M + \pi_f(t) 
	\end{equation} where $t$ is a degree zero sum of cusps in $X_0(M)$.
	
	By these arguments, up to translation (which corresponds to the sum of the constant terms $\pi_f(t)$), we can make use of Proposition \ref{propcotangentunramified} for the unramified terms and obtain the result. 
\end{proof}

\begin{cor}[Almost squarefree case]
\label{corasqfcotangentmaps}
    With the same notation as Proposition \ref{propcotangentGNf}, 
    assume furthermore that $N$ is an almost squarefree number with powerful prime $p$ and $M|N$ squarefree such that $p|M$. 

    $\bullet$ If $v_p(N)=3$ or $\c=\infty$, we have $\Rcal_\c = \{1\}$ so 
    \begin{equation}
\Cot_{\c} (T_{-S'} \circ G_{N,f}) = \Cot_\c(\pi_f \circ \iota_M \circ i_{N,M}^{(1)}).
    \end{equation}

    $\bullet$ If $v_p(N) = 2$ and $v_p(b) = 1$, we have $\Rcal_\c = \{1,p\}$ and 
    \begin{equation}
    \label{eq:cotmapasqfworstcase}
\Cot_{\c} (T_{-S'} \circ G_{N,f}) = (1 + \varepsilon_f(w_p^{(M)})) \zeta_p) \Cot_\c(\pi_f \circ \iota_M \circ i_{N,M}^{(1)}).
    \end{equation}
\end{cor}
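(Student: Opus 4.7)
The plan is to deduce the corollary as an immediate specialization of Proposition~\ref{propcotangentGNf}, so the main work reduces to identifying the set $\Rcal_\c$ of unramified Hall divisors in the almost squarefree setting and evaluating the roots-of-unity sum in \eqref{eq:sums of roots of unity} term by term.

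The first step is to pin down $\Rcal_\c$ using Corollary~\ref{cor:width1cuspsandAL}\,$(i)$: under the hypotheses (almost squarefree $N$ with powerful prime $p$, and $M$ squarefree with $p\mid M$), the only Hall divisor $d\neq 1$ of $N/M$ for which $i_{N,M}^{(d)}$ can be unramified at a width $1$ cusp $\c = \Ccal_N(a/b)$ is $d = p^{v_p(N/M)}$, and this occurs if and only if $v_p(b) = v_p(N)/2$. Hence $\Rcal_\c$ is either $\{1\}$ or $\{1, p^{v_p(N/M)}\}$, controlled solely by the parity condition $v_p(N) = 2 v_p(b)$.

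Next, I case split exactly as in the statement. In the first case, if $v_p(N) = 3$ then $v_p(N)/2 \notin \Z$ and the parity condition fails automatically; if $\c = \infty$, then $b = N$ gives $v_p(b) = v_p(N) \geq 2 > v_p(N)/2$, again failing the parity condition. Either way $\Rcal_\c = \{1\}$, and since $d_M = 1$ and $\gcd(b,1) = 1$, the sum in \eqref{eq:sums of roots of unity} collapses to $\varepsilon_f(\id)\,\zeta_1 = 1$. In the second case, $v_p(N) = 2$ and $v_p(b) = 1$ satisfy the parity condition, so $\Rcal_\c = \{1, p\}$ (using $v_p(N/M) = 1$ since $M$ is squarefree); for $d = p$, we have $d_M = p^{v_p(M)} = p$, and $\gcd(b,p) = p$, so the associated summand is $\varepsilon_f(w_p^{(M)})\,\zeta_p$. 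Adding the $d = 1$ term yields the coefficient $1 + \varepsilon_f(w_p^{(M)})\,\zeta_p$ as claimed in \eqref{eq:cotmapasqfworstcase}.

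Since the corollary is essentially a specialization whose substance is already carried by Proposition~\ref{propcotangentGNf} and Corollary~\ref{cor:width1cuspsandAL}, there is no real obstacle. The only point deserving mild care is verifying that the hypothesis $(HV)$ of Proposition~\ref{propcotangentGNf} is inherited from the almost squarefree assumption together with $M$ squarefree (indeed $v_p(M) \leq 1 \leq \lceil v_p(N)/2 \rceil$ for every prime $p$), and then matching the indexing conventions $d_M = \prod_{p\mid d} p^{v_p(M)}$ in the squarefree base level so that the roots of unity appearing in the sum come out with the stated orders.
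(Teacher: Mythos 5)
Your proof is correct and follows the route the paper implicitly intends: the corollary is stated without proof as an immediate specialization of Proposition~\ref{propcotangentGNf}, and you supply exactly the right bookkeeping — verifying $(HV)$, invoking Corollary~\ref{cor:width1cuspsandAL}\,$(i)$ to pin down $\Rcal_\c$, and evaluating the two summands $\varepsilon_f(w_{d_M}^{(M)})\zeta_{\gcd(b,d)}$ for $d=1$ and $d=p$. One small stylistic note: the observation that $\c=\infty$ fails the condition because $v_p(b)=v_p(N)>v_p(N)/2$ is not a "parity" argument in the same sense as the $v_p(N)=3$ subcase, but the conclusion is still right.
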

	
\begin{prop}\label{prop:formalimmersionsec2}

Assume and define the following: 

$\bullet$ $N$ is a nonexceptional almost squarefree level with powerful prime $p$ (Definitions \ref{defexceptionallevel} and \ref{defi:almostsquarefree}).

$\bullet$ $M$ is a squarefree factor of $N$ such that $p|M$ and admitting a simple rank zero quotient $A_f$ in $J_0(M)^{-_p,+_ {(p)}}$ associated to some new eigenform $f \in S_2(\Gamma_0(M))$, with quotient map $\pi_f : J_0(M) \ra A_f$, which we assume to be optimal in the sense of Mazur (i.e.\ with connected kernel).

$\bullet$ $G_{N,f} : X_0(N) \ra A_f$ is the associated map defined in Proposition \ref{propdefGNf} and $G^*_{N,f}: X_0(N)^* \ra A_f$ its factorisation through $X_0(N)^*$. 

By Néron mapping property, defining $\Jcal_0(N), \Jcal_0(M),\Acal_f$ the respective Néron models of $J_0(N),J_0(M),A_f$ over $\Z$, the maps $G_{N,f}$ and $G^*_{N,f}$ extend respectively to $\Xcal_0(N)^{\rm{smooth}}$ and $(\Xcal_0(N)^*)^{\rm{smooth}}$, towards $\Acal_f$.

Let $\c$ be a width 1 cusp of $X_0(N)$ defined over some number field $K$, $\lambda$ a prime ideal of $\Ocal_K$ with residual characteristic $\ell$ and $\c_\lambda \in \Xcal_0(N)_{\F_\ell}$ the reduction of $\c$ modulo $\lambda$.

Then, we have the following trichotomy if $\ell \neq 2,p$: 

$\bullet$ $\c_\lambda$ is not a smooth point of $\Xcal_0(N)_{\F_\ell}$.

$\bullet$ $\c_\lambda$ is a smooth point of $\Xcal_0(N)_{\F_\ell}$ but its image $\c_\lambda^*$ in $\Xcal_0(N)^*_{\F_\ell}$ is not defined over $\F_\ell$.

$\bullet$ $\c_\lambda$ is a smooth point of $\Xcal_0(N)_{\F_\ell}$, its image $\c_\lambda^*$ in $\Xcal_0(N)^*$ is a smooth point of $\Xcal_0(N)^*_{\F_\ell}$ defined over $\F_\ell$ and $G^*_{N,f}$ is a formal immersion at $\c_\lambda^*$.
\end{prop}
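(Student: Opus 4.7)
My plan is a direct case analysis on whether $\c_\lambda$ is smooth on $\Xcal_0(N)_{\F_\ell}$ and whether $\c^*_\lambda$ is $\F_\ell$-rational. The first two alternatives of the trichotomy cover the negations of these conditions, so the real work lies in showing that when both hold, $\c^*_\lambda$ is smooth on $\Xcal_0(N)^*_{\F_\ell}$ and $G^*_{N,f}$ is a formal immersion at $\c^*_\lambda$. I first descend smoothness through the quotient: the stabilizer $H \subseteq \Wcal(N)$ of $\c_\lambda$ is a 2-group (as $\Wcal(N)$ itself is), and since $\ell \neq 2$ the $H$-action on the local ring $\Ocal_{\Xcal_0(N)_{\F_\ell},\c_\lambda}$ (a DVR by smoothness) is tame, so the ring of invariants is again a DVR, giving smoothness of $\Xcal_0(N)^*_{\F_\ell}$ at $\c^*_\lambda$. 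The residue field condition of Lemma \ref{lem:formimmcriterion} is then automatic: $\c^*_\lambda$ has residue field $\F_\ell$ by hypothesis, and its image in $\Acal_f$ inherits residue field $\F_\ell$ because $G^*_{N,f}$ is defined over $\Q$.

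\textbf{Cotangent surjectivity via the factorization.} Since $G_{N,f} = G^*_{N,f} \circ \pi$ factors through the quotient $\pi\colon X_0(N) \to X_0(N)^*$, one has $\Cot_\c(G_{N,f}) = \pi^* \circ \Cot_{\c^*}(G^*_{N,f})$ on cotangent spaces; nonvanishing of the left-hand side modulo $\lambda$ forces nonvanishing of $\Cot_{\c^*}(G^*_{N,f})$ modulo $\lambda$, and since the target of $G^*_{N,f}$ has $1$-dimensional cotangent receiver $\Cot_{\c^*} X_0(N)^*$, nonvanishing means surjectivity. Corollary \ref{corasqfcotangentmaps} computes $\Cot_\c(G_{N,f})$ explicitly in the almost squarefree case: either it equals $\Cot_\c(\pi_f \circ \iota_M \circ i_{N,M}^{(1)})$ (when $v_p(N)=3$ or $\c=\infty$), or $(1+\varepsilon_f(w_p^{(M)})\zeta_p)\Cot_\c(\pi_f \circ \iota_M \circ i_{N,M}^{(1)})$ (when $v_p(N)=2$ and $v_p(b)=1$). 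Using $\varepsilon_f(w_p^{(M)}) = -1$ from the $-_p$ condition in the admissibility of $A_f$, the scalar factor is $1-\zeta_p$, a $\lambda$-adic unit since $\ell \neq p$, and survives reduction.

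\textbf{Nonvanishing of the base cotangent map, and main obstacle.} It remains to verify that $\Cot_\c(\pi_f \circ \iota_M \circ i_{N,M}^{(1)})$ is nonzero modulo $\lambda$. By Corollary \ref{cor:width1cuspsandAL}, the map $i_{N,M}^{(1)}$ is unramified at the width 1 cusp $\c$ and sends it to $\infty$ on $X_0(M)$, so the cotangent pullback by $i_{N,M}^{(1)}$ is an isomorphism, reducing the question to nonvanishing of $\Cot_\infty(\pi_f \circ \iota_M)$ modulo $\lambda$. The cusp $\infty$ of $\Xcal_0(M)$ always reduces to a smooth point of $\Xcal_0(M)_{\F_\ell}$ (on the ordinary component of each fiber, away from supersingular crossings), so a good uniformizer at $\infty$ exists and the cotangent space identifies integrally with the leading $q$-expansion coefficient. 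Since $\pi_f$ is optimal, $\pi_f^*\colon \Cot_0 \Acal_f \hookrightarrow \Cot_0 \Jcal_0(M)$ is an integral injection onto the $f$-isotypic part of $S_2(\Gamma_0(M), \Z)$, and composing with $\iota_M^*$ corresponds to reading leading coefficients: for a normalized newform $f$ (or its Galois conjugates), this leading coefficient is $a_1(f^\sigma) = 1$, a $\lambda$-adic unit. I anticipate the main technical obstacle to be controlling the compatibility of these integral identifications with reduction modulo $\lambda$ when $\ell$ divides some other prime factors of $M$ or $N$; this is tamed by the smooth reduction of $\infty$ in $\Xcal_0(M)$, which ensures the $q$-expansion machinery over $\Z_\ell[[q]]$ still functions, while the factor $1-\zeta_p$ stays a unit thanks to the assumption $\ell \neq p$.
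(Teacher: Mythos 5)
Your proposal follows essentially the same route as the paper's proof: the same case reduction, the same descent of smoothness to the quotient via tameness of the $2$-group action when $\ell \neq 2$ (where the paper cites Serre via Bourbaki and you argue via DVR invariants, but these are the same fact), the same factorization argument that a factor of a surjective map onto a one-dimensional cotangent space is surjective, the same appeal to Corollary~\ref{corasqfcotangentmaps} and the observation that $1-\zeta_p$ is a unit mod $\lambda$ when $\ell \neq p$, and the same reduction to the formal-immersion property of $\pi_f\circ\iota_M$ at $\infty$. The only real difference is that where the paper simply cites Mazur (Proposition~3.1 of the Eisenstein-ideal paper) for the last step, you sketch its proof by hand via the $q$-expansion principle and $a_1(f^\sigma)=1$; your sketch is correct in spirit but glosses over the delicate point (which Mazur's proof handles and you flag only as an ``anticipated obstacle'') that the integral identification $\Cot_0\Jcal_0(M)\cong S_2(\Gamma_0(M),\Z)$ in residual characteristic $\ell$ dividing $M$ requires some care. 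Citing Mazur's result directly, as the paper does, is both cleaner and more rigorous.
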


\begin{proof}
    If $\c_\lambda$ is not a smooth point of $\Xcal_0(N)_{\F_\lambda}$, we are in the first case and there is nothing to prove.

    If it is, as $\Xcal_0(N)^*$ is the quotient of $\Xcal_0(N)$ by $\Wcal(N)$ which is a 2-group made up with involutions, its image in $\Xcal_0(N)^*$ is also a smooth point as $\ell \neq 2$, by a result of Serre as the order of $\Wcal(N)$ is prime to $\ell$ (for the details, see \cite[Chapter V, Exercise 7 (p.~149)]{BourbakiLie}). 

We can assume from now on that $\c_\lambda^*$ is defined over $\F_\ell$ (otherwise we are immediately in the second case, nothing to prove either). The cotangent map of $G_{N,f}^*$ at $\c_\lambda^*$ is a factor of the cotangent map of $G_{N,f}$ at $\c_\lambda$, which is by Corollary \ref{corasqfcotangentmaps} on the geometric generic fiber (up to translation) the cotangent map of $\pi_f \circ \iota_M \circ i_{N,M}^{(1)}$ multiplied by some number which is either 1 or $1 + \varepsilon_f(w_p^{(M)})\zeta_p = 1 - \zeta_p$ with $\zeta_p$ a primitive $p$-th root of unity. As $\zeta_p$ is an algebraic unit over $\Z[1/p]$ and $\ell \neq p$, 
 the geometric cotangent map of $G^*_{N,f}$ at $\c_\lambda$ is surjective if and only if the geometric cotangent map of $\pi_f \circ \iota_M \circ i_{N,M}^{(1)}$ at $\c_\lambda$ is.

Now, let us go step by step to figure what is this cotangent map: we have the composition 
\begin{equation}
X_0(N) \overset{i_{N,M}^{(1)}}{\longrightarrow} X_0(M) \overset{\iota_M}{\longrightarrow} J_0(M)  \overset{\pi_f}{\longrightarrow} A_f,
\end{equation}
and as condition $(HV)$ holds, $i_{N,M}^{(1)} (\c) =  \infty$, and $i_{N,M}^{(1)}$ is unramified at $\c$ (at all reductions), so the (geometric) cotangent map of $i_{N,M}^{(1)}$ from $\c_\lambda$ to $\infty_\ell$ is surjective, so we only have to consider the (geometric) cotangent map of $\pi_f \circ \iota_M$ at $\infty_\ell$, but this is proven to be surjective by \cite[Proposition 3.1]{Mazur1977} as $\ell \neq 2$. Therefore, by Lemma \ref{lem:formimmcriterion} we have obtained that $G_{N,f}^*$ is a formal immersion at $\c^*_\lambda$.
\end{proof}

We are now ready to prove Theorem \ref{thm:integraljinv}. We start with part $(a)$.

\subsection{Proof of Theorem \ref{thm:integraljinv} $(a)$: integrality away from 2}

The following lemma will be used to control the error made when changing either cusps of possible (bad) reduction or level (which in both cases involves replacing the elliptic curve considered by an isogenous one).

\begin{lem}[Integrality under isogeny of elliptic curves]
\label{lem:integralityisogenousellcurves}
    Let $E$ and $E'$ be two elliptic curves defined over a nonarchimedean complete local field $K$ with valuation $v$, and $\varphi : E \rightarrow E'$ be a cyclic isogeny of degree $N$.

    Then, either $E$ and $E'$ have potentially good reduction, or they both have potentially multiplicative reduction and then, for some factorization $N = mn$ into integers, we have
    \begin{equation}
v(j(E')) = \frac{n}{m} v(j(E))<0.
    \end{equation}
\end{lem}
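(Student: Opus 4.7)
The plan has two main parts: reduce the claim to the case where both $E$ and $E'$ have potentially multiplicative reduction, and then carry out an explicit Tate-curve computation describing the effect of a cyclic isogeny on $q$-parameters. For the first part, recall that $E$ has potentially good reduction if and only if $j(E)$ is integral, and that potentially good reduction is preserved under isogeny: over a finite extension $L/K$ where $E$ has good reduction, the cyclic subgroup $C = \ker\varphi$ extends uniquely to a finite flat closed subgroup scheme of the Néron model $\Ecal/\Ocal_L$, and the quotient is again an abelian scheme, so $E'$ has good reduction over $L$. Applying the same argument to the dual isogeny $\hat\varphi\colon E' \to E$ gives the converse. Hence either both curves have potentially good reduction and the first alternative of the lemma holds, or both have potentially multiplicative reduction, in which case $v(j(E)), v(j(E')) < 0$.

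Assume from here on that we are in the potentially multiplicative case. After enlarging $K$ to a suitable finite extension (which only rescales both valuations by the same ramification factor and so preserves the claimed rational identity), we may assume $E$ and $E'$ both have \emph{split} multiplicative reduction and that $E[N]$ is $K$-rational. Write the rigid-analytic Tate uniformizations
\begin{equation}
 E \cong \Gm^{\mathrm{an}}/q^{\Z}, \qquad E' \cong \Gm^{\mathrm{an}}/(q')^{\Z},
\end{equation}
with $q,q' \in K^{*}$ and $v(q), v(q') > 0$; the standard $q$-expansion $j(q) = q^{-1} + 744 + O(q)$ gives $v(j(E)) = -v(q)$ and $v(j(E')) = -v(q')$, so it suffices to prove that $v(q') = \frac{m}{n} v(q)$ for some factorization $N = mn$.

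From the Tate uniformization one has the short exact sequence $0 \to \mu_N \to E[N] \to \Z/N\Z \to 0$. The cyclic kernel $C$ of $\varphi$ meets $\mu_N$ in some $\mu_m$ and has étale image cyclic of order $n$, with $mn = N$. Factor $\varphi$ as $E \to E/\mu_m \to E/C = E'$. The $m$-th power map $\Gm \to \Gm$ has kernel $\mu_m$ and descends to an isomorphism $E/\mu_m \xrightarrow{\sim} \Gm^{\mathrm{an}}/q^{m\Z}$, that is, the Tate curve with parameter $q^m$. The image of $C$ in $E/\mu_m$ is then a cyclic étale subgroup of order $n$, necessarily generated (over the chosen extension) by an $n$-th root $\alpha$ of $q^m$; quotienting a Tate curve with parameter $q^m$ by such a subgroup yields the Tate curve with parameter $\alpha$. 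Hence one may take $q' = \alpha$, so that $v(q') = \frac{m}{n} v(q)$ and
\begin{equation}
 v(j(E')) = -v(q') = \frac{m}{n} v(j(E)).
\end{equation}
Relabeling $m \leftrightarrow n$ yields the lemma's formula $v(j(E')) = \frac{n}{m} v(j(E))$, with both sides negative since $v(j(E)) < 0$ and $m, n > 0$.

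The main (mild) obstacle is bookkeeping: one must check that the image of the étale part of $C$ inside $E/\mu_m$ is indeed generated by an $n$-th root of $q^m$, and that the ensuing quotient is the Tate curve with that parameter. This becomes transparent after passing to the finite extension where $E[N]$ is rational and the connected-étale decomposition of $C$ is split, which reduces everything to elementary manipulations of the uniformizing group.
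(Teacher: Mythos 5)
Your proof is correct and, while it shares the same two-stage structure as the paper's (first reduce to the potentially multiplicative case, then do a Tate-curve computation), the key computation is carried out by a genuinely different and somewhat more explicit route. The paper reduces to potentially multiplicative reduction via the N\'eron--Ogg--Shafarevich criterion; you instead extend $\ker\varphi$ to a finite flat subgroup scheme of the N\'eron model and quotient, which is an equally valid and arguably more conceptual argument. For the main computation, the paper invokes Tate's description of a cyclic isogeny as a map $\alpha_{m,n}$ fitting into a commutative diagram with $q^n = (q')^m$, uses a snake-lemma argument to identify the kernel as having order $mn$, and asserts that $m,n$ may be taken coprime. You instead factor the isogeny through the connected--\'etale decomposition of the kernel $C$: with $C\cap\mu_N = \mu_m$ and \'etale quotient of order $n$, you pass through $E/\mu_m \cong \Gm^{\mathrm{an}}/q^{m\Z}$ and then quotient by the image of $C$, identified as a cyclic \'etale subgroup generated (modulo the period lattice) by an $n$-th root of $q^m$. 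Your approach gives the integers $m,n$ a concrete meaning (orders of the connected and \'etale parts of $C$) and, importantly, does not need $m$ and $n$ to be coprime --- which they need not be. For instance, with $C = \langle \zeta_4 q^{1/2}\rangle$ of order $4$ in $E_q = \Gm^{\mathrm{an}}/q^\Z$, one finds $E' = \Gm^{\mathrm{an}}/(-q)^\Z$, $m=n=2$, and $v(q') = v(q)$, and no coprime pair satisfies $q^n = (q')^m$; your argument handles this uniformly, whereas the paper's asserted reduction to coprime $m,n$ does not quite go through. Both approaches yield the same final formula up to the harmless relabeling $m\leftrightarrow n$.
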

\begin{proof}
The potentially good reduction case is a consequence of Néron--Ogg--Shafarevich criterion (its negation implies that $|j(E)|>1$ if and only if $|j(E')|>1$). Let us thus assume that $E$ and $E'$ have potentially multiplicative reduction. By the theory of the Tate curve, one can then write $E(K) = K^*/q^\Z$ and $E' (K) = K^*/(q')^\Z$.

Following the arguments of \cite[p.~324 and 325]{Tate95} we must have $q^n = (q')^m$ for some pair of integers $m,n \in \Z_{>0}$ that we can and will assume are coprime. Up to inverse, there is only one cyclic isogeny between $E$ and $E'$ (given as $\alpha_{m,n}$ in Tate's notation), defined by making the following diagram commutative
\begin{equation}
    \begin{gathered}
        \xymatrix{
0 \ar[r] & \Z \ar[d]_{[m]} \ar[r]^{1 \mt q} & K^* \ar[d]^{z \mt z^n} \ar[r] & E(K) \ar[d]^{\alpha_{m,n}} \ar[r] & 0 \\
0 \ar[r] & \Z \ar[r]^{1 \mt q'} & K^* \ar[r] & E'(K) \ar[r] & 0 
}
    \end{gathered}
\end{equation}
Its kernel is then of order $mn$ by a standard application of the snake lemma, so we must have $N = mn$, and 
\begin{equation}
v(j(E')) = v(q') = \frac{1}{m} v(q^m) = \frac{n}{m} v(q) = \frac{n}{m} v(j(E)). \qedhere
\end{equation}
\end{proof}

\begin{lem}
\label{lem:polyquadinertiaatp}
For any polyquadratic field $K/\Q$ and any prime number $p \neq 2$, any prime ideal $\gp$ of $\Ocal_K$ above $p$ has inertia $e(\gp/p) \leq 2$.
\end{lem}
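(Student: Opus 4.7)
My plan is to exploit the extremely rigid Galois-theoretic structure of a polyquadratic field. By definition $K$ is the compositum of quadratic extensions of $\Q$, so $K/\Q$ is a finite Galois extension whose Galois group $G = \Gal(K/\Q)$ is an elementary abelian $2$-group, i.e.\ $G \cong (\Z/2\Z)^r$ for some $r \geq 0$. In particular $[K:\Q] = 2^r$, and every subgroup of $G$ is itself an elementary abelian $2$-group, so every cyclic subgroup of $G$ has order at most $2$.

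The strategy is then to identify the ramification index $e(\gp/p)$ with the cardinality of the inertia subgroup $I_\gp \subseteq G$ (possible since $K/\Q$ is Galois) and to show that $I_\gp$ is cyclic. For this I would invoke the standard structure theorem on ramification in local fields: the wild inertia subgroup $P_\gp \subseteq I_\gp$ is a pro-$p$ group, and the tame quotient $I_\gp/P_\gp$ is cyclic (see e.g.\ Serre's \emph{Local Fields}, Chapter IV). Here the residue characteristic is the odd prime $p$, while $|I_\gp|$ divides $[K:\Q] = 2^r$; since $\gcd(2^r, p) = 1$, the wild inertia $P_\gp$ is trivial and $I_\gp$ is its own tame quotient, hence cyclic.

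Combining these two observations finishes the proof: $I_\gp$ is a cyclic subgroup of $G \cong (\Z/2\Z)^r$, so $|I_\gp| \in \{1,2\}$, which is exactly the inequality $e(\gp/p) \leq 2$. There is no real obstacle here; the only thing to be careful about is to justify that the ramification at an odd prime is tame, which is immediate from the coprimality of $[K:\Q]$ with $p$. Note also that the hypothesis $p \neq 2$ is essential: for $p = 2$ and $K = \Q(\sqrt{2}, \sqrt{3}, \ldots)$ one can produce primes above $2$ with arbitrarily large ramification index, so one cannot hope to remove it.
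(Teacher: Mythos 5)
Your proof is correct and follows essentially the same route as the paper: both arguments observe that since $p$ is odd and $[K:\Q]$ is a power of $2$, the wild inertia (the first higher ramification group) is trivial, so the inertia subgroup is cyclic, and then use that a cyclic subgroup of $\Gal(K/\Q) \cong (\Z/2\Z)^r$ has order at most $2$. The only cosmetic difference is that you cite the structure theorem for local inertia (tame quotient cyclic, wild part pro-$p$) where the paper cites the injection $G_0/G_1 \hookrightarrow \kappa(\gp)^\times$ from Neukirch, but these are the same underlying fact.
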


\begin{proof}
 Consider the local extension $K_\gp/\Q_p$, it is Galois with Galois group $G$ equal to the decomposition group of $D_\gp$ in $\Gal(K/\Q)$ (thus itself a 2-torsion abelian group). Its two first ramification groups are $G_0 = I_\gp$ the inertia subgroup and $G_1$, and by \cite[Chapter II, Proposition 10.2]{Neukirch}, $G_0/G_1$ injects into the unit group $\kappa(\gp)^*$, where $\kappa(\gp)$ is the residue field of $\gp$. As follows from this reference, $G_1$ is actually a $p$-Sylow of $G_0$ and as $p$ is odd here, $G_1 = \{0\}$ and $G_0/G_1$ is cyclic, hence $I_\gp$ is cyclic and a subgroup of $G$, therefore it is of order 1 or 2.
\end{proof}

\begin{proof}[Proof of Theorem \ref{thm:integraljinv} $(a)$]
Let $\widetilde{N}$ be a non-exceptional non-squarefree non-prime power level. 

By Theorem \ref{thm:rank0quotient}, there exists $N|_\square \widetilde{N}$ almost squarefree (whose powerful prime is denoted by $p$)  and $M|N$ squarefree such that $p|M$, and $J_0(M)^{-_p,+_{(p)}}$ has a nontrivial simple rank zero quotient $A$. We fix those from now on. By Lemma \ref{lem:integralityisogenousellcurves}, it is enough to prove that all non-cuspidal points on $X_0(N)^*(\Q)$ have potentially good reduction away from 2 so we work on $X_0(N)^*$ from now on.

Let $P^* \in X_0(N)^* (\Q)$ and $P=(E,C_N) \in X_0(N)(K)$ a lift of $P^*$. 
Let $\lambda$ be a prime ideal of $\Ocal_K$ with residual characteristic $\ell$. Let us assume that $P$ reduces modulo $\lambda$ to a cusp $\c$ of $\Xcal_0(N)$, and therefore $P^*$ reduces modulo $\ell$ to its image $\c^* \in \Xcal_0(N)^*$. Our goal is to obtain a contradiction.
Up to applying Atkin--Lehner involutions (which amounts to changing the choice of lift $P$), by Corollary \ref{cor:width1cuspsandAL} we can and will assume $\c$ is a width 1 cusp of $X_0(N)$.

We break the proof up into several cases depending on the prime $\ell$.

$\bullet$ If $\ell \neq 2, p$: 

Let us extend $K$ so that $\c$ is also defined over $K$, hence $\c_\lambda = P_\lambda$. First, $P_\lambda$ must be a smooth point of $\Xcal_0(N)^*_{\F_\ell}$ by Lemma \ref{lemregularsmooth}, so $\c_\lambda$ is a smooth point of $\Xcal_0(N)_{\F_\ell}$. Then, as $P^*$ is defined over $\Q$, we must have $\c^*_\lambda$ defined over $\F_\ell$ so we are in the third case of Proposition~\ref{prop:formalimmersionsec2} and $G^*_{N,f}$ is a formal immersion at $\c^*_\lambda$. Now, since $A$ is a rank zero quotient, and since $\ell \neq 2$ we obtain $P^* = \c^*$ by Proposition \ref{prop:formimmkeyresult}, which is absurd.

$\bullet$ If $\ell=p \neq 2,3$:

We can in principle use the same argument, but if $\c= \Ccal_N(a/b)$ with $v_p(b) = 1$, we might lose the formal immersion property at primes above $p$ (see \eqref{eq:cotmapasqfworstcase}). To solve this problem, let us add some arguments.

Let us fix a prime $\gp$ of $\Ocal_K$ above $p$ and denote $k=v_p(N)$ (= 2 or 3 since $N$ is almost squarefree with powerful prime $p$). Since we assumed $E$ has potentially multiplicative reduction, it is isomorphic (over a quadratic extension of $K$) to an elliptic curve with multiplicative reduction defined over $K$, by Tate curve theory \cite[Theorem 14.1\,$(d)$]{Silverman}. By \cite[Lemma 2.2]{Momose1986}, the reduction modulo $\gp$ of 
\begin{equation}
P' := i_{N,p^r}^{(1)}(P) \in X_0(p^k)(K)
\end{equation}
can only land in $\Ecal_0^{(p^k)}$ or $\Ecal_r^{(p^k)}$, since otherwise $K_\gp$ must contain a primitive $p$-th root of unity $\zeta_p$. In this extension, $1-\zeta_p$ is of valuation $1/(p-1)$, but $K_\gp/\Q_p$ is a Galois extension with inertia degree $1$ or $2$ by Lemma \ref{lem:polyquadinertiaatp} which is impossible since we assumed $p \neq 2,3$. Therefore, since we assumed that the reduction of $P$ modulo $\gp$ is a cusp, it must be a cusp of $\Xcal_0(N)_{\F_p}$ above $\infty_p$ or $0_p$ in $\Xcal_0(p^k)_{\F_p}$ via $i_{N,p^k}^{(1)}$, in particular smooth since $\infty_p$ is not a fixed point of Atkin--Lehner involution. This excludes the cusps of the shape $\c= \Ccal_N(a/b)$ with $v_p(b)\neq 0,k$, so we have $\Rcal_\c = \{1\}$ with the notation of Corollary \ref{corasqfcotangentmaps} and we then use the exact same strategy as above without issue (having only a factor 1 and not $1 \pm \zeta_p$ for the comparison of cotangent maps). We thus also obtain a contradiction in this case.

$\bullet$ If $\ell=p=3$: 

The argument of the case above cannot work in the same manner, since $1/(p-1)=1/2$ in this case, which is compatible with the other conditions. One thus has to refine the argument here. First, recall that $\Xcal_0(N)$ is regular along all sections of cusps \cite[Theorem 1.2.3.1]{Edixhoven90}, and the group of Atkin--Lehner involutions $\Wcal(N)$ is a 2-group, therefore except in residual characteristic 2, all sections of cusps in $\Xcal_0(N)/\Wcal(N)$ are also regular by  \cite[Theorem 1.1.4.1]{Edixhoven90}.

On the other hand, the fiber at $\gp$ (above $p$) of $\Xcal_0(N)$ has several irreducible components as described above, and the ones containing cusps $\Ccal_N(a/b)$ with $v_p(b) \neq 0,k$ have multiplicity at least $p-1=2$ here by \cite[1.4.2]{Edixhoven90}. Therefore, the action on $\Wcal(N)$ on $\Xcal_0(N)$ can only permute these components, and increase the multiplicity for their images in $\Xcal_0(N)/\Wcal(N)$. As a consequence, the cusps $\Ccal_N(a/b)^*$ with $0<v_p(b)<k$ have non-smooth reduction in $(\Xcal_0(N)/\Wcal(N))_{\Fpb}$ (although the corresponding point in the scheme over $\Z$ is regular). Consequently, such a point cannot be the reduction of a $\Q$-rational point by Lemma \ref{lemregularsmooth}, and we can use the same arguments as in the case $\ell=p \neq 2,3$ from that point, to obtain the contradiction and conclude that $P$ cannot reduce to a cusp modulo a prime ideal above $\ell=p=3$.
\end{proof}

\subsection{Proof of Theorem \ref{thm:integraljinv} $(b)$: bound on valuation above 2}
\label{subsec:firstboundvaluation2}

The case of potentially good reduction above  raises several issues simultaneously, so at the cost of some loss of precision, we start by going back to Chabauty's method (of which Mazur's method is ultimately a deep refinement). 

We start the argument with the same notation as Proposition \ref{prop:formalimmersionsec2} ($N,M,A_f$ chosen with $N$ almost squarefree with powerful prime $p$). Let $P^* \in X_0(N)^*(\Q)$ choose a lift $P_0 \in X_0(N)(K)$ of $P^*$ with $K$ a polyquadratic field, and assume that modulo a prime ideal $\lambda|2$ of $\Ocal_K$, $P_0$ reduces to a cusp $\Ccal_0$ of $X_0(N)$.

Here also, one can use an Atkin--Lehner involution $w \in \Wcal(N)$ to ensure that $P := w \cdot P_0$ reduces to a width 1 cusp $\Ccal = w \cdot \Ccal_0$ (Corollary \ref{cor:width1cuspsandAL} $(ii)$).

First, arguments similar to \cite[paragraph (1.7)]{Momose1986} imply that as $k \leq 3$,  all cusps of $\Xcal_0(N)_{\F_2}$ are smooth points.

Choose $a,b \in \Rcal_N$ such that $\Ccal = \Ccal_N(a/b)$. If $p|b$, by Proposition \ref{propactALoncusps}, if $\c_\lambda^*$ is defined over $\F_2$ it implies that $p=2$ or the multiplicative order of 2 modulo $p$ is 1 or 2, which only happens for $p=3$ but in all those cases $\c^*$ itself is defined over $\Q$ (and if $p\nmid b$, $\c$ is rational). In other words, we can assume $\c^*$  to be defined over $\Q$ in our situation.   
 
 Consider a good uniformizer $t_M$ at $\infty$ in $\Xcal_0(M)_{\Z_2}$ (see Definition \ref{def:goodunif}), whose image is a uniformizer at $\infty_{\F_2}$ in $X_0(M)^*_{\F_2}$. Its pullback by $i_{N,M}^{(1)}$, denoted by $t_N$, is then a good uniformizer at $\c$ in $X_0(N)_{\Z_2}$.
 
 Applying Corollary \ref{corasqfcotangentmaps}, the cotangent map of $T_{-S'} \circ G_{N,f}$ at $\c$ is of the shape 
 \begin{equation}
m \Cot_\c(\pi_f \circ \iota_M \circ i_{N,M}^{(1)})
 \end{equation}
 with $m = 1 \pm \zeta_p$ if $p \neq 2$ and $m = 1$ or $1 - (-1)=2$ if $p=2$ (so in all cases, it has 2-adic valuation at most 1), the latter only happening when $4|N$.

In that situation, we can also use Mazur's arguments (mainly \cite[Proposition 3.1]{Mazur1978}) to prove that $\pi_f \circ \iota_M$ is a formal immersion at $\infty_{\F_2}$ (the characteristic 2 does not change anything here, and notice that $M$ has been chosen squarefree). Consequently, there is a integral 1-form $\omega \in \Omega^1_{\Acal_f/\Z_2}$ such that
\begin{equation}
 (\pi_f \circ \iota_M)^* \omega = \sum_{n \geq 0} a_n t_M^n \textrm{d} t_M,
\end{equation}
and all $a_n$ belong to $\Z_2$, with $a_0 \neq 0 \mod 2$. Now, by construction, appealing to the theory of $2$-adic integration and 2-adic logarithm between $A(\Q_2)$ and $T_0(A_{\Q_2})$, we have the equality of 2-adic integrals
\begin{equation}
 \int_\c^P (G_{N,f})^* \omega = \int_{G_{N,f}(\c)}^{G_{N,f}(P)} \omega = 0 
\end{equation}
because $G_{N,f}(P) - G_{N,f}(\c) \in A_f(\Q)$ which is a torsion group. On the other hand, we can write 
\begin{equation}
(G_{N,f})^* \omega = \sum_{n \geq 0} a'_n t_N^n \textrm{d} t_N
\end{equation}
by expansion of this 1-form, and as this 1-form is integral, all coefficients $a'_n$ belong to $\Z_2$.

Consequently, we have the expansion of the 2-adic integral 
\begin{equation}
\label{eqfortP}
\int_\c^P (G_{N,f})^* \omega = \sum_{n \geq 0} a_n' \frac{t_N(P)^{n+1}}{n+1} = 0.
\end{equation}
Now, $P$ belongs to the 2-adic residue disk of $\c$ by hypothesis (so $|t_N(P)|_2<1$), and $a_0'$ is precisely given by the cotangent map of $G_{N,f}$ at $\c$, in other words we have $a'_0 = m a_0$. Now, by hypothesis $a_0$ has 2-adic valuation 0, so 
$v_2(a'_0) \leq 1$. In that configuration, the most negative possible slope of the Newton polygon associated to the power series on the right-hand side of \eqref{eqfortP} would be $-2$ (obtained when $v_2(a'_0)=1$ and $v_2(a'_1)=0$), so any (nonzero) solution in $\overline{\Z}_2$ for \eqref{eqfortP} satisfies $|t_N(P)|_2 \geq 1/4$, and even $|t_N(P)|_2 \geq 1/2$ unless $4|N$.

Now, since $t_N$ is a good uniformizer at $\c$, the $j$-invariant map expands into a power series at $\c$, and $|j(P)|_\lambda = 1/|t_N(P)|_\lambda \leq 4$ by \cite[Proposition 3.1]{BiluParent11}. In other words, normalizing the $\lambda$-valuation by $v_\lambda(2)=1$, $v_\lambda(j(P)) \geq -2$. Getting back to $P_0$ and applying Lemma \ref{lem:integralityisogenousellcurves}, we obtain $v_\lambda(j(P_0)) \geq - (1+ 1_{4|N}) N$. 
Combining our argument for all possible cusps and all possible primes above 2, we obtain that $4^N j(E)$ is algebraic above $\Z_2$ (and in fact $2^N j(E)$ is unless $4 | N$). 
Finally, going back to level $\widetilde{N} = Nd^2$ for some $d \geq 1$, lifting a point $\widetilde{P}^* \in Y_0(\widetilde{N})^*(\Q)$ to $\widetilde{P} \in Y_0(\widetilde{N})$ and on another hand going down to a $P^* \in Y_0(N)^*(\Q)$ via a succession of morphisms $\psi_{N',N'/q^2}$, and lifting $P^*$ to some $P \in Y_0(N)(\Q)$, by construction one can choose the lifts so that the elliptic curves associated to $P$ and $\widetilde{P}$ are isogenous via a cyclic isogeny of degree $d$, so another application of Lemma \ref{lem:integralityisogenousellcurves} at all primes dividing 2 gives that $4^{Nd} j(E)$ is algebraic above $\Z_2$, and for the statement of the theorem we roughly bound $Nd$ by $\widetilde{N}$.

\begin{rem}
 The use of Lemma \ref{lem:integralityisogenousellcurves} seems to  artificially multiply $\log |j(P)|_\lambda$ by a factor up to $N$ even when one has bounded it by $1/4$ for a given choice of $P_0$. In fact, in explicit examples, there are cases where this is exactly what happens: there can be four conjugate points $P$ of a given $P_0$, with $\log |j(P_0)|_\lambda = 2$ but for each of the four conjugates $P$, $\log |j(P)|_\lambda = 2 Q$ for a given Hall divisor of $N$ (and there are four them, all obtained in this way for the four conjugates of $P_0$). 
 For an example, see the points \eqref{eq:x075}.
\end{rem}

\subsection{Proof of Theorem \ref{thm:integraljinv} $(c)$: integrality at primes above 2}
\label{subsec:improvementsvaluation2}
Given that Theorem \ref{thm:integraljinv} $(a)$ is already proven above, we only need to obtain the potentially good reduction at 2 for this case, but this requires particularly careful work.

The principal obstruction to Mazur's strategy is the fact that a 2-torsion point generating a $\mu_2$ reduces to 0 modulo 2, and is compatible with all formal immersion type results (see Proposition \ref{prop:formimmkeyresult}). Therefore, the goal is to prove that the image of a rational torsion point $G_{N,f}(P)$ does not generate a $\mu_2$. There are two natural strategies: either one can prove that $G_{N,f}(P)=0$ in some cases, or  that $G_{N,f}(P)$ has large order in other cases.

In what follows, we will prove that $G_{N,f}(P)=0$ by proving that its image by an injective morphism (related to the reduction to a group of components at the special fiber of a Néron model) must be 0.

In this subsection, we make the following assumption: 
\begin{center}
$p^2 |N$ for some prime $p \notin \{2,3,5,7,13\}$, and we fix $M=p$.
\end{center}

\begin{prop}
\label{prop:cuspidalsubgroupinjectsintocomponentsgroupprimelevel}
    Let $p \notin \{2,3,5,7,13\}$ prime, $K$ a finite extension of $\Q_p$ of absolute ramification $e$, $(R,\gp)$ its integral ring and $\gp$ its maximal ideal. 

    Then: 
    
    $(a)$ The group of irreducible components $\Phi_{K,p}$ of the Néron model $\Jcal_0(p)_R$ of $J_0(p)$ above $R$ is of the following shape 
    \begin{equation}
\Phi_{K,p} \cong \Z/ne\Z \times (\Z/e\Z)^{S-2}
    \end{equation}
    where $n = \operatorname{num}((p-1)/12)$ and $S$ the number of supersingular points in $\Xcal_0(p)(\Fpb)$.

    $(b)$ The cuspidal subgroup of $J_0(p)$ (generated by $[0]-[\infty]$) injects in $\Phi_{K,p}$ via the following composite map: 
    \begin{equation}
J_0(p)(\Q) \overset{\subset}{\rightarrow} J_0(p)(K) = J_0(p)(R) {\rightarrow} \Jcal_0(p)_R(k) \rightarrow \Phi_{K,p},
    \end{equation}
 and its image is thus a cyclic group of order $n$ in $\Phi_{K,p}$.
\end{prop}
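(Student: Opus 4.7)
My plan for part $(a)$ is to apply Proposition~\ref{prop:Neronmodelminmodel} to the minimal regular model $\widetilde{\Xcal_0(p)}_R$ described in Definition~\ref{defi:minimalmodel}. Its special fiber consists of two components $\Ecal_0,\Ecal_1 \cong \P^1_k$ joined, for each of the $S$ supersingular points $s$, by a chain of length $ek_s$ (that is, $ek_s-1$ intermediate exceptional $\P^1$s; reducing to a direct transverse intersection when $ek_s=1$). By that proposition, $\Phi_{K,p}$ is the group of degree-zero formal $\Z$-sums of these components, modulo the intersection relations $\sum_{D'}(D\cdot D')[D']=0$.

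I will first reduce the presentation by chain elimination. Each intermediate curve $V_j^{(s)}$ has self-intersection $-2$ and meets only its two neighbors with multiplicity $1$, so the relation it contributes forces all consecutive differences $V_{j+1}^{(s)}-V_j^{(s)}$ within a single chain to coincide; denoting this common basic difference for the $s$-th chain by $\alpha_s$, the element $\beta := [\Ecal_1]-[\Ecal_0] = ek_s\alpha_s$ is chain-independent. Computing $\Ecal_0^2 = -S$ from $F\cdot \Ecal_0=0$ (with $F$ the full special fiber) and substituting back, the relations coming from $\Ecal_0$ and from $\Ecal_1$ both collapse to the single identity $\sum_s \alpha_s = 0$, yielding the compact presentation
\begin{equation}
\Phi_{K,p} \cong \big\langle \alpha_1,\dots,\alpha_S \,\big|\, ek_s\alpha_s=ek_t\alpha_t \text{ for all } s,t,\ \textstyle\sum_s \alpha_s=0\big\rangle.
\end{equation}

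The main obstacle is then extracting the specific decomposition $\Z/ne\Z\oplus(\Z/e\Z)^{S-2}$ from this presentation, and I plan to analyze the short exact sequence $0\to \langle\beta\rangle \to \Phi_{K,p}\to \Phi_{K,p}/\langle\beta\rangle\to 0$ in two steps. The key input is that for $p\notin\{2,3,5,7,13\}$ one has $|\Aut(E)|\in\{2,4,6\}$ for supersingular $E$, so $k_s\in\{1,2,3\}$ with $k_s\in\{2,3\}$ occurring at most once each; this makes the $k_s$ pairwise coprime and supplies the classical identity $n=\mathrm{num}((p-1)/12)=\prod_s k_s\cdot\sum_s 1/k_s$. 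Multiplying $\sum \alpha_s=0$ by $e\cdot\mathrm{lcm}(k_s)$ and rewriting via $ek_s\alpha_s=\beta$ gives $n\beta=0$, while the quotient $\Phi_{K,p}/\langle\beta\rangle\cong (\bigoplus_s \Z/(ek_s)\Z)/\langle(1,\dots,1)\rangle$ has order $\prod(ek_s)/\mathrm{lcm}(ek_s)=e^{S-1}$ and collapses to $(\Z/e\Z)^{S-1}$ by direct reduction modulo the diagonal (again using pairwise coprimality of the $k_s$). Comparing orders forces $\beta$ to have order exactly $n$, and to split the resulting extension I will exhibit an element of order $ne$: either $\alpha_{s_0}$ for any $s_0$ with $k_{s_0}=1$, or, in the exceptional case $S=2$ with $\{k_1,k_2\}\subseteq\{2,3\}$, the generator $\alpha_1$ itself, since $\alpha_1+\alpha_2=0$ already forces $\Phi_{K,p}$ cyclic.

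For part $(b)$, my plan is to combine Mazur's proof of Ogg's conjecture -- which identifies $C_p=\langle [0]-[\infty]\rangle$ with all of $J_0(p)(\Q)_{\mathrm{tors}}\cong \Z/n\Z$ -- with the injectivity of the reduction map on prime-to-$p$ torsion, which applies here since $\gcd(n,p)=1$ and the kernel of reduction on a Néron model is a pro-$p$ formal group. By Proposition~\ref{prop:irredcomponentsX0pr}\,$(b)$, the cusps $0$ and $\infty$ specialize into $\Ecal_0$ and $\Ecal_1$ respectively, so the composite map of the statement sends $[0]-[\infty]\mapsto [\Ecal_0]-[\Ecal_1]=-\beta$, which by part $(a)$ has order exactly $n$. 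A cyclic source of order $n$ whose image has order $n$ must inject, giving the conclusion.
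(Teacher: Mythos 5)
Your plan takes a genuinely different route from the paper's proof: the paper simply cites \cite[Proposition 2.11]{LeFourn1} (a generalization of \cite[Theorem~A.1\,$(b)$]{Mazur1977}) as an immediate consequence, while you reconstruct the component-group computation directly from the intersection theory of the minimal regular model (Definition~\ref{defi:minimalmodel}) and Raynaud's description (Proposition~\ref{prop:Neronmodelminmodel}). Your setup is sound — the chain elimination giving $[\Ecal_1]-[\Ecal_0]=ek_s\alpha_s$ and the collapse of both $\Ecal_0$- and $\Ecal_1$-relations to $\sum_s\alpha_s=0$ are correct, the Eichler mass formula does give $n=\big(\prod_s k_s\big)\sum_s 1/k_s$ because at most one $k_s$ equals $2$ and at most one equals $3$, the derivation of $n\beta=0$ is right, and the explicit surjection $\bigoplus_s \Z/(ek_s)\to(\Z/e)^{S-1}$, $(a_s)\mapsto(a_s-a_S\bmod e)$, whose kernel has order $\prod(ek_s)/e^{S-1}=e\,\mathrm{lcm}(k_s)=|\langle\vec 1\rangle|$, does prove $\Phi_{K,p}/\langle\beta\rangle\cong(\Z/e)^{S-1}$. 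Part $(b)$ is also fine: you only need that $[0]-[\infty]$ generates a cyclic group of order $n$ and lands on $-\beta$ in $\Phi_{K,p}$, and the reduction-of-cusps statement is Proposition~\ref{prop:irredcomponentsX0pr}\,$(b)$ with $r=1$.

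There are, however, two genuine gaps in part $(a)$. First, \emph{``comparing orders forces $\beta$ to have order exactly $n$''} presupposes that $|\Phi_{K,p}|=ne^{S-1}$, which you have not established: you only know $\mathrm{ord}(\beta)\mid n$, so a priori $|\Phi_{K,p}|$ could be smaller. You need an independent order computation, e.g.\ the determinant of the $(S-1)\times(S-1)$ presentation matrix $M'$ obtained by eliminating $\alpha_S=-\sum_{s<S}\alpha_s$, which equals $e^{S-1}\sum_s L/k_s=ne^{S-1}$ with $L=\mathrm{lcm}(k_s)$. Second, \emph{``exhibit an element of order $ne$: $\alpha_{s_0}$ for any $s_0$ with $k_{s_0}=1$''} is asserted but not proved, and in fact $e\alpha_{s_0}=\beta$ of order $n$ only gives $n\mid\mathrm{ord}(\alpha_{s_0})\mid ne$, which does not pin it down when $\gcd(n,e)>1$; moreover, even exhibiting one element of order $ne$ does not, on its own, force the elementary-divisor type $\Z/ne\oplus(\Z/e)^{S-2}$ without a further argument. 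Both gaps are most cleanly closed at once by Smith normal form: write $M'=eK$ with $K\in M_{S-1}(\Z)$ of determinant $\pm n$; after reordering the supersingular points so that any $k_s\in\{2,3\}$ is placed last among $k_1,\dots,k_{S-1}$, the $(S-2)\times(S-2)$ minor obtained from the first $S-2$ rows of $K$ (the bidiagonal block) by deleting a suitable column is $\pm1$, so the SNF of $K$ is $\mathrm{diag}(1,\dots,1,n)$ and that of $M'$ is $\mathrm{diag}(e,\dots,e,ne)$, giving the claimed decomposition directly. I would recommend replacing the order-comparison and element-of-order-$ne$ steps with this SNF argument.
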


\begin{proof}
    The results are immediate consequences of  \cite[Proposition 2.11]{LeFourn1}, itself a generalization of \cite[Theorem A.1$(b)$]{Mazur1977} (which treated the case $e=1$) describing completely the group of components $\Phi_{K,p}$. (To be clear, $Z = [\Ecal_0^{(p)}], Z' = [\Ecal_1^{(p)}]$ and $\overline{Z}=Z-Z'$ in the notation of that statement). 
\end{proof}

\begin{prop}
    Let $\widetilde{N} \geq 1$ such that some prime $p \notin \{2,3,5,7,13\}$ has multiplicity at least 2 in $\widetilde{N}$.

    For any non-cuspidal point $P^* \in X_0(\widetilde{N})^*(\Q)$ and $P=(E,C_{\widetilde{N}}) \in X_0(\widetilde{N})(K)$ a lift of $P^*$, any prime ideal $\lambda$ above 2 in $\Ocal_\lambda$, $P$ has potentially good reduction at $\lambda$. In other words, $j(E) \in K$ is integral at all primes above 2.
\end{prop}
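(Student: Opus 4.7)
My plan is to proceed by contradiction: suppose $P$ reduces modulo some prime $\lambda \mid 2$ of $\Ocal_K$ to a cusp of $\Xcal_0(\widetilde{N})$. Using Proposition \ref{prop:constructionofdegeneracymappsi} and Lemma \ref{lem:integralityisogenousellcurves}, I would first reduce to the case where $\widetilde{N} = N$ is almost squarefree with powerful prime $p \notin \{2,3,5,7,13\}$ and $v_p(N) \in \{2,3\}$; the latter lemma allows this descent since potentially multiplicative reduction at $\lambda$ is preserved under isogeny. With $M = p$ and $A_f$ the Eisenstein quotient of $J_0(p)^-$ (which exists by Mazur since $p \notin \{2,3,5,7,13\}$, and has $\varepsilon_f(w_p^{(p)}) = -1$), Proposition \ref{propdefGNf} produces $G_{N,f} \colon X_0(N) \to A_f$ factoring through $G_{N,f}^* \colon X_0(N)^* \to A_f$. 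Up to an Atkin--Lehner adjustment I may assume, via Corollary \ref{cor:width1cuspsandAL}, that $P$ reduces to a width $1$ cusp $\c$ of $X_0(N)$.

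\textbf{Formal immersion at $\lambda$ and the $\mu_2$ obstruction.} By Corollary \ref{corasqfcotangentmaps}, the cotangent map of $T_{-G_{N,f}(\c)} \circ G_{N,f}$ at $\c$ factors Mazur's cotangent map of $\pi_f \circ \iota_p$ at $\infty$ (which is nonzero modulo $2$ by \cite[Proposition~3.1]{Mazur1978} because $M = p$ is squarefree) through multiplication by $1$ or $1 - \zeta_p$, both of which are $2$-adic units since $p$ is odd and $p \neq 2$. Consequently $G_{N,f}^*$ is a formal immersion at $\c_\lambda^*$. Proposition \ref{prop:formimmkeyresult} then forces $P^* = \c^*$ \emph{unless} $G_{N,f}(P) - G_{N,f}(\c) \in A_f(\Q)$ is an order-two point that generates a $\mu_2$ over $\Z_2$. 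The whole difficulty of part (c) lies in excluding this residual possibility.

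\textbf{Vanishing via the group of components above $p$.} The key step is to show $G_{N,f}(P) = G_{N,f}(\c)$ outright, by detecting the difference through reduction at a prime above $p$. Extend $K$ to contain a prime $\gp \mid p$; by Lemma \ref{lem:polyquadinertiaatp} the absolute ramification at $\gp$ is at most $2$, so the hypothesis $p > 3 e_\gp + 1$ of Proposition \ref{prop:irredcomponentsX0pr}(d) holds, and $i_{N,p^k}^{(1)}(P)$ reduces at $\gp$ into an extreme component $\Ecal_0^{h,(p^k)}$ or $\Ecal_k^{h,(p^k)}$ of $\Xcal_0(p^k)_{\F_p}$; the same holds for the intermediate points $(i_{N,p^k}^{(d)}(P))$ arising in $G_{N,f}$ by Proposition \ref{prop:irredcomponentsX0pr}(e), since these are linked to $i_{N,p^k}^{(1)}(P)$ by isogenies of degree prime to $p$. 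Using Proposition \ref{propdiagcommALandiNM} to rewrite each summand of $G_{N,f}(P) - G_{N,f}(\c)$ in the form $(1 - w_p^{(p)}) \circ (\iota_p \circ i_{N,p}^{(1)} - \iota_p \circ i_{N,p}^{(p^{k-1})})$ applied to an Atkin--Lehner translate of $P$, Corollary \ref{cor:key0caseforextremecomponents} kills each contribution in the group of components $\Phi_{K_\gp,p}$ of $\Jcal_0(p)$ at $\gp$. Since $G_{N,f}(P) - G_{N,f}(\c)$ is a $\Q$-rational torsion point in $A_f$, and $A_f(\Q)_{\mathrm{tors}}$ is exhausted by the image of the cuspidal subgroup of $J_0(p)$ (Mazur), Proposition \ref{prop:cuspidalsubgroupinjectsintocomponentsgroupprimelevel} then implies the difference is zero in $A_f(\Q)$. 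Combined with the formal immersion of the previous paragraph, this gives $P = \c$, a contradiction.

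\textbf{Main obstacle.} The delicate point is the combinatorial bookkeeping in the last step: the sum defining $G_{N,f}$ ranges over Hall divisors of $N/p$, including products $p^{k-1} \prod_{i \in I} q_i$ coming from the auxiliary primes $q_i$ in $N = p^k q_1 \cdots q_s$, and one must verify that after grouping terms by the $q_i$-part, every pair of summands indeed falls in the scope of Corollary \ref{cor:key0caseforextremecomponents}, and that the corresponding pre-images still land in extreme components of $\Xcal_0(p^k)$ at $\gp$. The reduction of this bookkeeping to the prime-level case is made possible by the commutative diagrams of Proposition \ref{propdiagcommALandiNM}, but pushing the signs $\varepsilon_f(\varphi_{N,M}(w_d^{(N)}))$ through consistently so that the algebraic identity yields the desired vanishing in $\Phi_{K_\gp,p}$ will be the most computationally subtle part of the proof.
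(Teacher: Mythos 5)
Your overall strategy matches the paper's: reduce to the almost-squarefree case, set up the formal immersion at $\infty_{\F_2}$, and escape the $\mu_2$ obstruction in Proposition~\ref{prop:formimmkeyresult} by proving the torsion difference is actually zero using reduction at a prime above $p$ and the group of components. However, the way you assemble the final step hides a genuine gap, which is precisely the point the paper works hardest to circumvent.

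The issue is that you run the argument through $\pi_f\colon J_0(p)\to A_f$, so that your torsion element $G_{N,f}(P)-G_{N,f}(\c)$ lives in $A_f(\Q)$. But all of the tools you want to combine — Mazur's identification of $J_0(p)(\Q)_{\mathrm{tors}}$ with the cuspidal subgroup, and the injectivity of that cuspidal subgroup into $\Phi_{K_\gp,p}$ from Proposition~\ref{prop:cuspidalsubgroupinjectsintocomponentsgroupprimelevel} — live on $J_0(p)$, not on $A_f$. Your parenthetical claim ``$A_f(\Q)_{\mathrm{tors}}$ is exhausted by the image of the cuspidal subgroup'' is not a consequence of Mazur's theorem for an arbitrary simple rank zero quotient, and even granting it, you would have to lift $G_{N,f}(P)-G_{N,f}(\c)$ back to a specific torsion element of $J_0(p)(\Q)$ whose reduction in $\Phi_{K_\gp,p}$ you control, which is not automatic. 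The paper avoids this altogether by replacing $\pi_f$ with an endomorphism $T\in\End(J_0(p))$ that factors through $A_f$ (so $G_{N,T}\colon X_0(N)\to J_0(p)$ lands directly in $J_0(p)(\Q)_{\mathrm{tors}}$), and — crucially — chosen of the form $T = T'\circ(1-w_p^{(p)})$. That factorization is exactly what Corollary~\ref{cor:key0caseforextremecomponents} needs; with $\pi_f$ alone one only has $2\pi_f = \pi_f\circ(1-w_p^{(p)})$, and the extra factor of $2$ would destroy the formal immersion modulo~$2$. The existence of a $T$ that is simultaneously a formal immersion at $\infty_{\F_2}$ and of the form $T'(1-w_p^{(p)})$ is nontrivial and is outsourced to \cite[Lemma~2.7]{LeFourn1}.

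Two smaller points: you should also show that the reducing cusp $\c$ is forced (up to Atkin--Lehner) to be $\infty$, which the paper deduces from the $\Q$-rationality of $\c^*$ via Proposition~\ref{propactALoncusps}; and the ``combinatorial bookkeeping'' you flag as the main obstacle is in fact handled cleanly in the paper by first pushing $P$ down via $i_{N,p^r}^{(d')}$ for $d'\| (N/p^r)$ and only then applying $i_{p^r,p}^{(1)}-i_{p^r,p}^{(p^{r-1})}$, so that Corollary~\ref{cor:key0caseforextremecomponents} applies summand by summand without any sign-chasing.
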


\begin{proof}
    First, after descending by square factors towards an almost squarefree $N$ with powerful prime $p$, one can assume $\widetilde{N} = N$ (this will not change integrality of $j(E)$ at 2 by Lemma \ref{lem:integralityisogenousellcurves}).

    In the setup of Proposition \ref{propdefGNf}, we can now fix $M=p \notin \{2,3,5,7,13\}$ for a choice $p$ such that $p^2 | \widetilde{N}$, and instead of using a quotient map $\pi_f : J_0(p) \rightarrow A_f$, use some endomorphism $T \in \End(J_0(p))$ factoring through $A_f$ (we will denote $G_{N,T}$ instead of $G_{N,f}$ for the composite map built the same way).

The key to tackle the reduction at 2 case here is the following lemma, inspired from Momose's arguments in the prime power case \cite[Proposition (3.3)]{Momose1986}.

    \begin{lem}
    With the previous notation, we have $G_{N,T}(P)=0$ in $J_0(p)(\Q)$.
    \end{lem}

    \begin{proof}
           First, $x := G_{N,T}(P)$ is by construction a torsion point of $J_0(p)(\Q)$, thus it belongs to the cuspidal subgroup of $J_0(p)$ by Mazur's proof of Ogg's conjecture in the prime level case \cite[Theorem 1]{Mazur1977}. 

        Let us fix a prime ideal $\gp$ of $\Ocal_K$ above $p$, and consider $K_\gp$ the finite extension of $\Q_p$, $R$ its ring of integers, and $k$ its residue field. By injectivity of the map from the cuspidal group to $\Phi_{K_\gp,p}$ (Proposition \ref{prop:cuspidalsubgroupinjectsintocomponentsgroupprimelevel}), it is enough to prove that the reduction of $x$ modulo $\gp$ is 0 in $\Phi_{K_\gp,p}$.

Since $K$ is a polyquadratic field (and $p>2$), the absolute ramification $e$ of $K_\gp$ above $\Q_p$ is at most 2 by Lemma \ref{prop:irredcomponentsX0pr}.  Now, defining $r = v_p(N) \in \{2,3\}$, for every $d'|(N/(p^r))$, the reduction of 
        \begin{equation}
        P_{d'} := i_{N,p^r}^{(d')}(P) \in X_0(p^r)(K)
        \end{equation}
        in the minimal model $\widetilde{\Xcal_0(p^r)}_\gp(k)$ belongs to either $\Ecal_0^{h,(p ^r)}$ or $\Ecal_{r}^{h,(p^r)}$ by Proposition \ref{prop:irredcomponentsX0pr} since $p>7$.

        Since the only prime factor of $M=p$ is $p$ here, one can rewrite $G_{N,T}(P)$ as 
        \begin{equation}
G_{N,T}(P) = 
T\left( \sum_{d' \| (N/p^r)} \operatorname{cl} \left[ i_{p^r,p}(P_{d'}) - i_{p^r,p} ^{(p^{r-1})}(P_{d'}) \right] \right).
        \end{equation}
        Now, $t$ factors through $J_0(p)^-$ by the properties of the Eisenstein quotient, so one can write $T = T' \circ (1 - w_{p}^{(p)})$ for some $T' \in \End J_0(p)$. By Corollary \ref{cor:key0caseforextremecomponents}, since each reduction of $P_{d'}$ modulo $\gp$ in the minimal model belongs to $\Ecal_{0}^{h,(p^r)}$ or $\Ecal_r^{h,(p^r)}$ the image of each $(1 - w_p^{(p)})(i_{p^r,p}(P_{d'}) - i_{p^r,p} ^{(p^{r-1})}(P_{d'}))$ gives 0 in the formal sum of components, i.e.\ in the group of components $\Phi_{K_\gp,p}$ by Proposition \ref{prop:Neronmodelminmodel}. Consequently, the image of $x$ in $\Phi_{K,p}$ is 0, which concludes the proof of the lemma.
    \end{proof}
With this lemma, we now have $G_{N,T}(P) = G_{N,T}(\infty) = 0$.

Assume now that $P$ has the same reduction modulo $\lambda$ as some cusp $\Ccal$ of $X_0(N)$. 

By the arguments of  \cref{subsec:firstboundvaluation2}, since $P^* \in X_0(N)^*(\Q)$, this imposes that $\F_\lambda=\F_2$ and $\Ccal^*$ is $\Q$-rational, and under our hypothesis on $N$, by  \cref{propactALoncusps}, $\Ccal^*$ is the Atkin--Lehner orbit of $\infty$. Therefore, up to applying an Atkin--Lehner involution, let us assume that $\Ccal = \infty$.

The arguments of  \cref{subsec:firstboundvaluation2} then prove that $G_{N,T}$ is a formal immersion at $\infty_{\F_2}$ for a good choice of $T$ (which exists, see e.g.\ the proof of \cite[Lemma 2.7]{LeFourn1}). By the lemma above, $G_{N,T}(P)=0$, hence by Proposition \ref{prop:formimmkeyresult} we obtain  $P=\infty$,  a contradiction. We have thus proven that $P$ has potentially good reduction at $\lambda$ for every $\lambda|2$.
\end{proof}

\begin{rem}
    The strategy could in principle be adapted to non-prime $M$, but we have not done it in this paper for the following reason. In the simplest case we are interested in, $M=pq$ with $p \in \{2,3,5,7,13\}$ and $q \neq p$ prime such that $(p,q)$ is not exceptional. 

    The main roadblock here is that the generalized Ogg's conjecture is unknown for such $M$: one knows the cuspidal subgroup (made up with only rational cusps as $M$ is squarefree here), one can compute the group of components of the Jacobian at primes above $p$ ($\Phi_{K,p}^{(M)} $) or $q$ ($\Phi_{K,p}^{(M)} $) in the same fashion, but one cannot ensure that the map induced by reductions at $p$ and $q$
    \begin{equation}
    J_0(pq)^{-_p,+_q}(\Q)_{\rm{tors}} \rightarrow \Phi_{K,p}^{(M)} \times \Phi_{K,q}^{(M)}
    \end{equation}
    is injective, which is the key to the argument above.

    Precise results exists about the $\ell$-primary part of the torsion rational subgroup when $\ell$ is an odd prime \cite{Ohta14}, but the 2-primary part is yet unknown (it was already the most difficult part of the prime level case).
\end{rem}

\section{Dealing with exceptional levels}
\label{sec:exclevels}

For the exceptional levels $N$ (introduced in Definition \ref{defexceptionallevel}), it is impossible to reduce to an almost squarefree level with an admissible rank zero quotient. We do an analysis on those levels in this section to obtain either good reduction type results or a full determination of the nontrivial rational points on those levels.

\subsection{Determining minimal exceptional levels}
\label{subsec:detexclevels}
\begin{prop} \label[prop]{exceptional levels}
	The exceptional (non-squarefree, not prime powers) levels are exactly the levels of the following shape: 
	
	\begin{enumerate}
		\item[(1)] One multiple factor: $\widetilde{N} = p^\ell Q$ with $\ell> 1$ and $(p,Q)$ exceptional or $p=2$, $Q=15$.
		
		\item[(2)] Only two (small) prime factors: $\widetilde{N} = p_1^{\ell_1} p_2^{\ell_2}$ with $\ell_1,\ell_2>1$ and $p_1,p_2 \in \{2,3,5,7,13\}$. If $\ell_1$ (resp.\ $\ell_2$) is odd, $(p_2,p_1)$ (resp.\ $(p_1,p_2)$) is exceptional (if both are odd, both pairs are exceptional). 
		
		\item[(3)] Two small prime factors:  $\widetilde{N} = p_1^{\ell_1} p_2^{\ell_2} q$ with $\ell_1 >1, \ell_2>1$ and 
		\begin{equation}
		(p_1,p_2,q) \in \{(2,3,5),(2,3,11),(2,7,3),(3,5,2)\}.
		\end{equation}
		Furthermore, $\ell_1$ must be even in all cases, and $\ell_2$ even in the last three cases.
		
		\item[(4)] Square: $\widetilde{N} = R^2$ with all primes dividing $R$ belonging to $\{2,3,5,7,13\}$.
		
		\item[(5)] Powerful and an odd power: $\widetilde{N} = 2^{\ell_1} 3^{\ell_2} 5^{\ell_3}$ or $5^{\ell_1} 2^{\ell_2} 3^{\ell_3}$ or $3^{\ell_1} 2^{\ell_2} 7^{\ell_3}$ with  $\ell_1 \geq 3$ odd and $\ell_2,\ell_3 \geq 2$ even.
	\end{enumerate}

 To the list of exceptional levels, we add the two prime powers $p^k$ such that $k \geq 2$ and $X_0(p^\alpha)^+$ is of positive genus but its Jacobian has no quotient of rank zero (which are $5^3$ and $13^2$).
\end{prop}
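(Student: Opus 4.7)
My plan is to unfold Definition~\ref{defexceptionallevel} into an explicit condition on the prime factorization of $\widetilde{N}$ and then carry out a finite case analysis organized by the number of primes of multiplicity at least two.

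First I would introduce $P(\widetilde{N}) := \{p : v_p(\widetilde{N}) \geq 2\}$ and $O(\widetilde{N}) := \{q : v_q(\widetilde{N}) \text{ is odd}\}$. The key dictionary is that almost squarefree divisors $N |_{\square} \widetilde{N}$, written $N = p^k q_1 \cdots q_s$, are in bijection with choices of a powerful prime $p \in P(\widetilde{N})$, with $k \in \{2,3\}$ matching the parity of $v_p(\widetilde{N})$: indeed, for $q \neq p$ one has $v_q(\widetilde{N}) - v_q(N)$ even and $v_q(N) \in \{0,1\}$, so $q$ appears in $N$ (to the first power) if and only if $v_q(\widetilde{N})$ is odd. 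Hence $\{q_1, \ldots, q_s\} = O(\widetilde{N}) \setminus \{p\}$, and being exceptional becomes the simultaneous requirements: (A) $P(\widetilde{N}) \subseteq \{2,3,5,7,13\}$; (B) $|O(\widetilde{N}) \setminus \{p\}| \leq 2$ for every $p \in P(\widetilde{N})$; and (C) for every such $p$ and every $T \subseteq O(\widetilde{N}) \setminus \{p\}$ with $|T| \leq 2$, the tuple $(p, T)$ is an exceptional singleton, pair, or triple in the sense recalled before the proposition.

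I would then case-split on $|P(\widetilde{N})|$. For $|P|=1$, writing $\widetilde{N} = p^\ell Q$ with $Q$ squarefree and coprime to $p$, condition (C) forces either $(p, q)$ to be an exceptional pair (when $Q = q$) or $\{p\} \cup \{\text{primes of }Q\} = \{2,3,5\}$ (when $\omega(Q) = 2$), recovering case~(1). For $|P|=2$, say $\widetilde{N} = p_1^{\ell_1} p_2^{\ell_2} Q$, one first rules out $\omega(Q) \geq 2$: two distinct mult-one primes $q, q'$ would force the triple $(p_i, q, q')$ exceptional from both $i = 1, 2$, hence $p_1 = p_2 = 2$, a contradiction. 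Thus $\omega(Q) \leq 1$, yielding cases~(2) and~(3); the parity constraints on $\ell_1, \ell_2$ are read off by tracking which pairs and triples must be exceptional from each side. The main work lies in $|P|=3$: if all three exponents are even and $Q$ is empty then $\widetilde{N} = R^2$ is the three-prime slice of case~(4); if exactly one powerful prime $q$ has odd multiplicity (necessarily $\geq 3$), then from the other two sides one needs the pairs $(p_i, q)$ exceptional, and scanning the exceptional-pair table yields precisely the three configurations $(\{p_1,p_2\}, q) \in \{(\{3,5\},2),(\{2,3\},5),(\{2,7\},3)\}$, which is case~(5). Any other configuration (two or three odd powerful multiplicities, or any mult-one prime in $Q$) is excluded because the only exceptional triples are $(2,3,5)$ and $(2,5,3)$. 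For $|P| \geq 4$, a prime of odd multiplicity would demand pair exceptionality with at least three distinct powerful partners, which is impossible since each exceptional prime has at most two partners in the pair table; hence all multiplicities are even and $\widetilde{N} = R^2$, i.e.\ case~(4). The two prime-power additions $5^3$ and $13^2$ are appended as explicitly indicated.

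The main obstacle is the bookkeeping of simultaneous conditions arising from distinct powerful primes, each producing its own almost squarefree reduction and its own sign pattern. One must in particular respect the asymmetry that exceptionality of the pair $(p,q)$ (where the minus sign sits on $p$) is not symmetric in $p, q$, and that the two exceptional triples $(2,3,5), (2,5,3)$ must begin with $p = 2$. This asymmetry is exactly what forces, for example, $\ell_1$ to be even in case~(3) for the triple $(2,3,5)$: an odd $\ell_1$ would require $(3,2,5)$ to be an exceptional triple, which it is not. Once the dictionary of the first step is in place, every remaining check is a finite inspection against the tables of exceptional primes, pairs, and triples.
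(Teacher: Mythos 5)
Your proposal is correct and follows essentially the same route as the paper: translate Definition \ref{defexceptionallevel} into conditions on the prime-factorization data (the set $P$ of powerful primes and the set $O$ of odd-multiplicity primes, so that the almost squarefree $N \mid_\square \widetilde{N}$ are exactly the $p^k \prod_{q \in O \setminus \{p\}} q$ for $p \in P$), then run a finite case analysis indexed by $|P|$ against the tables of exceptional primes, pairs, and triples; this is the same dictionary the paper encodes via its $Q_i$ and its $(s,r)$-table. One small imprecision: when $|P| = 3$ and there is a multiplicity-one prime $q$ with all three powerful exponents even, the exclusion does not follow from the triple constraint (no triples arise) but from the pair-count argument that for fixed $q$ at most two primes $a$ satisfy $(a,q)$ exceptional — the same argument you correctly invoke for $|P| \geq 4$ — so the conclusion stands but the reason you give for that subcase should be adjusted.
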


\begin{proof}
	Assume $\wN$ is exceptional. As there is no $p \notin \{2,3,5,7,13\}$ such that $p^2 | \wN$, we can write it as follows:
	\begin{equation}
	\wN = p_1^{\ell_1} \cdots p_{r}^{\ell_r} q_1 \cdots q_s
	\end{equation}
	with $r+s \geq 2$ as $\widetilde{N}$ is not a prime power, $p_1, \cdots, p_r \in \{2,3,5,7,13\}$, $\ell_i >1$ for all $i$ and $q_1, \cdots, q_s$ other primes (which can be in $\{2,3,5,7,13\}$ or not).
	
	For each $i$, after dividing by squares we can find $N |\wN$ almost squarefree such that $\wN/N$ is a square and of the shape 
	\begin{equation}
	N = p_i^{k_i} q_1 \cdots q_s P_i
	\end{equation}
	where $k_i=2$ or 3 and $P_i$ is the product of $p_j$'s different from $p_i$ such that $\ell_j$ is odd. Denote $Q_i = q_1 \cdots q_s P_i$, which is in particular squarefree.
	
	By hypothesis, each $(p_i,q)$ with $q | Q_i$ has to be exceptional and this for every choice of $i$, and if $Q_i$ has more than one prime factor we must have $p_i=2$ and $Q_i=15$.
	
	This immediately forbids the factorization of $\tilde{N}$ from simultaneously having $r \geq 2$ and $s \geq 2$: indeed, there are no choices of distinct primes $p_1,p_2,q_1,q_2$ such that $(p_i,q_j)$ is exceptional for every $i,j \in \{1,2\}$ and that $(p_i,q_1q_2)$ is exceptional for $i=1,2$. We thus have $r=1$ or $s \leq 2$. Using similar arguments, we can fill the following table for possible values or $r$ and $s$ (and which cases of the five claimed possibilities they cover):
	\begin{equation}
	\begin{array}{|c|c|c|c|}
		\hline
		(s,r) & 1 & 2 & \geq 3 \\
		\hline
		0 & p^\alpha & (2),(4) & (4),(5)\\
		\hline
		1 & (1) & (3) & \rm{X} \\
		\hline
		2 & (1) & \rm{X} & \rm{X} \\
		\hline
		\geq 3 & \rm{X} & \rm{X} & \rm{X} \\
		\hline
		
	\end{array}
	\end{equation}
	let us give some more details. The case $(s,r)=(2,1)$ gives $p_1=2, Q_1=15$. The case $(s,r)=(1,1)$ gives the rest of $(1)$. The case $(s,r)=(0,2)$ gives $(4)$ if both powers are even, $(2)$ otherwise. The case $(s,r) = (0,3)$ gives $(4)$ if all powers are even and $(5)$ if one is odd (two or more odd powers is impossible because there is no triple of distinct primes $(p_1,p_2,p_3)$ from which all pairs are exceptional).
	This just leaves to study the two cases $\widetilde{N} = p_1^{\ell_1} p_2^{\ell_2} q$ or $s=0$.  Finally, the case $(s,r)=(1,2)$ gives us $(3)$. In each of those case, a precise analysis then gives the exact possibilities of prime factors.
\end{proof}

	\begin{defi}
		Let $\Ecal$ be the set of exceptional levels in $\N$. 
		
		A \emph{minimal family} of $\Ecal$ is a family $\Fcal \subset \Ecal$ such that for every $\widetilde{N} \in \Ecal$, if $g(X_0(\wN)^*)>0$ there exists $N \in \Fcal$ such that $g(X_0(N))^*)>0$ and $\wN/N$ is a square.
		
		For example, if $\Ecal$ was only the set of $2^\ell \cdot  7$, a minimal family would be given by $\{ 2^4\cdot7, 2^5 \cdot 7\}$, because $g(X_0(2^i \cdot 7)^*) = 0$ for $i < 4$.
	\end{defi}

A Magma computation, starting with a set of levels that for each of the above five shapes contains minimal ones and removing levels that reduced to others in the set, gives the following initial minimal family of minimal exceptional levels
\begin{eqnarray}
    \Lcal_0 & = & \{40, 48, 72, 80, 88, 96, 99, 100, 108, 112, 120,125, 135, 144, 147, 162, 169,176, \notag\\
    & & 180, 184, 196, 200, 216, 224, 225, 240, 250, 297, 324, 368, 405, 441, 450, 486,  
  \\
&  & 1029, 1225,1250, 1372\}.\notag
\end{eqnarray}

We modify the list slightly, because of the following facts (see Proposition \ref{exceptional levels} for determining which level is exceptional).

\begin{itemize}
    \item $X_0(99)^*$ is an elliptic curve with positive Mordell--Weil rank \cite[Newform orbit 99.2.a.a]{lmfdb}, and $X_0(99p^2)^*$ is a nonexceptional level except for $p=2,3$. 
    \item  $X_0(125)^+$ has exactly one nontrivial rational point \cite{ArulMuller22} and for all primes $p$ except $2,3$, $X_0(125p^2)^*$ is nonexceptional.
    \item  $X_0(169)^+(\Q)$, $X_0(81)^+(\Q)$ and $X_0(7^3)^+(\Q)$ are trivial \cite{Balakrishnanetc,BiluParent}, hence also $X_0(324)^*(\Q)$ and $X_0(1372)^*(\Q)$ by Corollary \ref{cor:trivialityandabovelevels}.
\end{itemize}
We thus define $\Lcal_1 = (\Lcal_0 \setminus \{99,125,169,324,1372\} )\cup \{ 396, 500,891\}$.

\subsection{Large exceptional levels: adapting Mazur's method to bound the denominator}

For levels $N \geq 250$ in $\Lcal_1$ (this threshold corresponds to the computability of models on smaller levels, see next subsections), we want to reuse Mazur's method with the caveats brought by the multiplicity of $p$, using in all those cases the presence of a rank zero quotient in the Jacobian but with some complications on the degeneracy maps.

Checking the LMFDB and looking at newforms of level $M$ for divisors $M$ of $N$ satisfying $(HV)$ (and their Atkin--Lehner signs and central value of L-function), we obtain the following.

\begin{prop}[Admissible divisors for exceptional levels]
For each $M,N$ as in the following table, $M$ is an admissible divisor of $N$ (see Definition \ref{def:adimissible}).
\begin{table}[h]
    \centering
    \renewcommand{\arraystretch}{1.3} 
    \begin{tabular}{|c|c|c|c|c|c|c|c|c|c|c|c|c|c|c|}
        \hline
        \( N \) & \( 250 \) & \( 297 \) & \( 368 \) & \( 396 \) & \( 405 \) & \( 441 \) & \( 450 \) & \( 486 \) & \( 500 \) & \( 891 \) & \( 1029 \) & \( 1125 \) & \( 1225 \) & \( 1250 \) \\
        \hline
        \( M \) & \( 50 \) & \( 99 \) & \( 92 \) & \( 66 \) & \( 45 \) & \( 21 \) & \( 15 \) & \( 54 \) & \( 50 \) & \( 99 \) & \( 147 \) & \( 75 \) & \( 35 \) & \( 50 \) \\
        \hline
    \end{tabular}
    \caption{Values of \( N \) and \( M \)}
    \label{tab:NM_values}
\end{table}
\end{prop}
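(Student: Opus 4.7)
The plan is to verify both defining conditions of an admissible divisor (Definition~\ref{def:adimissible}) for each of the fourteen pairs $(N,M)$ listed in Table~\ref{tab:NM_values}. Neither condition is deep: $(HV)$ reduces to an arithmetic check on prime valuations, while $(RZQ)$ reduces, via the Gross--Zagier--Kolyvagin--Logach\"ev theorem, to exhibiting one newform of level $M$ with non-vanishing central $L$-value and the prescribed Atkin--Lehner sign pattern.

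For $(HV)$, I would factor both $N$ and $M$ and verify $v_p(M) \leq \lceil v_p(N)/2 \rceil$ for every prime $p\mid N$. For example, for $(N,M)=(1029,147)$ with $1029 = 3 \cdot 7^3$ and $147 = 3 \cdot 7^2$, one has $v_3(M)=1=\lceil 1/2\rceil$ and $v_7(M)=2=\lceil 3/2\rceil$; the other thirteen cases are equally direct. In every entry of the table $M$ is visibly a proper divisor of $N$, so $(HV)$ is immediate.

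For $(RZQ)$, the levels $M$ appearing are all small, namely
\begin{equation}
M \in \{15,\,21,\,35,\,45,\,50,\,54,\,66,\,75,\,92,\,99,\,147\}.
\end{equation}
For each $M$ I would consult the LMFDB newform data in $S_2^{\mathrm{new}}(\Gamma_0(M))$ and locate a newform $f$ with $L(f,1)\neq 0$ subject to the Atkin--Lehner constraints: there exists a prime $q\mid\gcd(N/M,M)$ with $w_q^{(M)}f = -f$, and whenever both $2\mid M$ and $2\mid N/M$ also $w_2^{(M)}f=-f$. By Gross--Zagier--Kolyvagin--Logach\"ev (used as in Section~\ref{sec:analysis-of-vanishing-of-the-l-functions}), $L(f,1)\neq 0$ implies that the simple modular abelian variety $A_f\subset J_0(M)^{\mathrm{new}}$ has Mordell--Weil rank zero, and the Atkin--Lehner involutions act on $A_f$ via the eigenvalues on $f$; hence $A_f$ is an admissible rank zero quotient.

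The main point requiring care is the mixed Atkin--Lehner condition at $2$ in precisely those cases where $2$ divides both $M$ and $N/M$, namely
\begin{equation}
(N,M)\in\{(368,92),\,(396,66),\,(500,50)\}.
\end{equation}
In these three cases one must find a newform with a \emph{specific} pair of $-1$ signs, not merely any $-1$ sign; this is the only place where $(RZQ)$ could a priori fail for such a small $M$. A direct LMFDB lookup settles each of them: there is a suitable newform in $S_2^{\mathrm{new}}(\Gamma_0(M))^{-_2,\,-_q}$ with analytic rank zero in all three instances. With this the table is completely verified, and the computations used are recorded in our repository~\cite{CodeForOurArticle}.
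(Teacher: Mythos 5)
Your proposal is correct and takes essentially the same route as the paper: the paper's proof is a one-line assertion that an LMFDB search on newforms of level $M$ (checking Atkin--Lehner signs and the central $L$-value, combined with Gross--Zagier--Kolyvagin--Logach\"ev) verifies $(RZQ)$, together with the routine valuation check for $(HV)$. Your explicit identification of the three cases $(368,92)$, $(396,66)$, $(500,50)$ where $2\mid\gcd(M,N/M)$ forces the additional $w_2^{(M)}=-1$ constraint is the only non-mechanical step, and your handling of it is right; note only that the paper's later use of these quotients in Proposition~\ref{propdefGNf} also imposes $\varepsilon_f(w_p^{(M)})=+1$ whenever $v_p(N)=v_p(M)$, a constraint not recorded in the bare Definition~\ref{def:adimissible} and which one should keep in mind when actually selecting the newform from the LMFDB.
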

\begin{rem}
    All $N \geq 250$ in $\Lcal_1$ are in the table above, and for the lower levels in the list, only $80,88,96,135, 176, 184,224$ have admissible divisors. 
\end{rem}

\begin{defi}[Integrality factor for admissible divisors]
\label{defintfactor}
    For $N$ and $M$ belonging to the list above, we define $G_{N,f}$ following this choice of $(N,M)$ and an admissible quotient $A_f$. Then, define $m_{N,M}$ as the smallest integer such that for every cusp $\c$ of $X_0(N)$ of width 1, the sum of roots of unity given by Proposition \ref{propcotangentGNf} divides $m_{N,M}$.

    The extended number $m'_{N,M}$ is then $m_{N,M}$ multiplied by the lcm of its radical and 2.
    Explicitly, we have the following table:

    \begin{table}[h]
    \centering
    \renewcommand{\arraystretch}{1.3} 
    \begin{tabular}{|c|c|c|c|c|c|c|c|c|c|c|c|c|c|c|}
        \hline
        \( N \) & 250 & 297 & 368 & 396 & 405 & 441 & 450 & 486 & 500 & 891 & 1029 & 1125 & 1225 & 1250 \\
        \( M \) & 50 & 99 & 92 & 66 & 45 & 21 & 15 & 54 & 50 & 99 & 147 & 75 & \( 1^* \) & 50 \\
        \( m_{N,M} \) & 1 & 1 & 2 & 2 & 3 & 7 & 155 & 1 & 2 & 3 & 1 & 3 & \( 1^* \) & 10 \\
        \( m'_{N,M} \) & 2 & 2 & 8 & 4 & 18 & 98 & 48050 & 2 & 2 & 18 & 2 & 18 & 2 & 200 \\
        \hline
    \end{tabular}
    \caption{Table of values for \( N \), \( M \), \( m_{N,M} \), and \( m'_{N,M} \)}
    \label{tab:values}
\end{table}
\end{defi}
\begin{rem}
    Regarding the case marked with a star: for $N=1225$, there are two possible choices of rank zero quotients so two different $G_{N,f}$'s (with different Atkin--Lehner signs). The first gives $m_{N,M} = 5$ and the second gives $m_{N,M} = 7$, so in practice we can pick one or the other, which corresponds to choosing the gcd of the two possibilities, i.e.\ 1. Notice also that for $M=500$ these choice of several rank quotients lead to values 2, and for 396 one has two as well, giving values 2 or 6, so we choose the smallest one of course.
\end{rem}
\begin{lem}
\label{lemNewtonpolygons}
        Let $p$ be a prime number and $f= \sum_{n \geq 0} \frac{a_n}{n+1} T^{n+1}.$ with each $a_n$ in $\overline{\Z_p}$, and $v(a_0) = \alpha < + \infty$ (the valuation being normalized so that $v(p)=1$).

    Then, the smallest nonzero root of $f$ in $\overline{D}(0,1)$ has valuation at most $\alpha + \log(2)/\log(p)$.
\end{lem}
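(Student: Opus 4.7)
My plan is to prove this via a direct Newton polygon argument applied to the power series $f(T) = \sum_{n \geq 1} c_n T^n$ where $c_n \defeq a_{n-1}/n$, so that $v(c_1) = \alpha$ and, since $a_{n-1} \in \overline{\Z_p}$, $v(c_n) \geq -v(n)$ for every $n \geq 1$. The smallest nonzero root in $\overline{D}(0,1)$ corresponds, via the standard Newton polygon correspondence, to the steepest negative slope of the lower convex hull of $\{(n, v(c_n)) : c_n \neq 0\}$; if that root is $T_0$, then $v(T_0)$ equals the negative of the slope of the first segment of the polygon starting from the vertex $(1, \alpha)$.

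First, I would reduce the problem to bounding this slope, writing
\begin{equation}
v(T_0) \;\leq\; \sup_{n \geq 2} \frac{\alpha - v(c_n)}{n-1} \;\leq\; \sup_{n \geq 2} \frac{\alpha + v(n)}{n-1}.
\end{equation}
The second inequality uses only $v(c_n) \geq -v(n)$. Then, since $p^{v(n)}$ divides $n$ (in particular for $n \geq 1$ an integer, viewed in $\Z_p$), one has $v(n) \leq \log_p n$, yielding
\begin{equation}
v(T_0) \;\leq\; \sup_{n \geq 2} \frac{\alpha + \log_p n}{n-1}.
\end{equation}

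The remaining step is the elementary inequality $(\alpha + \log_p n)/(n-1) \leq \alpha + \log_p 2$ for every integer $n \geq 2$ and every $\alpha \geq 0$. Clearing denominators, this rearranges to $(n-2)\alpha \geq \log_p n - (n-1)\log_p 2 = \log_p\bigl(n/2^{n-1}\bigr)$, which is immediate once one checks that $n \leq 2^{n-1}$ for every $n \geq 2$ (a straightforward induction): the right-hand side is then non-positive, while the left-hand side is non-negative since $\alpha \geq 0$ and $n \geq 2$. The case $n=2$ is tight when $p = 2$, which is what produces the term $\log(2)/\log(p)$ in the final bound; for odd $p$ the $n=2$ term already gives $\alpha$, and the supremum is governed by $n = p$. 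No step is difficult; the only thing to verify carefully is the reduction step, namely that it is legitimate to read the valuation of the smallest nonzero root off the first slope of the Newton polygon for this power series. This is standard because the hypothesis $a_n \in \overline{\Z_p}$ ensures that $f$ converges on the open disk $D(0,1)$, and any root in $\overline{D}(0,1)$ lies in that disk of convergence (indeed, $v(c_n) \geq -\log_p n$, so $v(c_n) + n\varepsilon \to \infty$ for every $\varepsilon > 0$), where the Newton polygon correctly enumerates zeros by slope with multiplicity.
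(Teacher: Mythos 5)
Your proof is correct and follows essentially the same Newton polygon argument as the paper: bound the slope of the first segment using $v(a_n)\geq 0$ and $v(n)\leq\log_p n$, then argue that the worst-case slope is attained at the shortest step. The one small difference is in the final elementary verification: the paper asserts (without detail) that $n \mapsto (-\alpha - \log_p(n+1))/n$ is increasing on $[1,\infty)$, whereas you verify the equivalent inequality $(\alpha + \log_p n)/(n-1) \leq \alpha + \log_p 2$ directly by reducing it to $n \leq 2^{n-1}$, which is a somewhat cleaner route to the same conclusion and makes explicit the role of $\alpha \geq 0$.
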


\begin{proof}
    The first segment of the Newton polygon has slope
    \begin{equation}
\frac{v(a_n/(n+1)) - \alpha}{n} \geq \frac{- \alpha - v(n+1)}{n} \geq \frac{- \alpha - \log(n+1)/\log(p)}{n}
    \end{equation}
    for some $n \geq 1$.

    The right-hand side is always increasing in $n$ on $[1,+\infty[$, so the lowest possible value of that slope is at least 
    \begin{equation}
- \alpha - \log(2)/\log(p) .
    \end{equation}
    The smallest nonzero root of $f$ thus has valuation at most $\alpha + \log(2)/\log(p)$, by the theory of Newton polygons.
\end{proof}

\begin{thm}
\label{prop:integralityexclevel}
    Let 
    \begin{equation}
    N \in \{250, 297,368,396, 405,441,450,486,500,891,1029,1125,1225,1250\}
    \end{equation}
    and let the corresponding $M,m_{N,M}$ and $m'_{N,M}$ be as in Definition \ref{defintfactor}.

    For every non-cuspidal point $P^* \in X_0(N)^* (\Q)$ and every lift $P=(E,C_N) \in X_0(N)(K)$ of $P$, the number $(m'_{N,M})^N j(E) \in \Ocal_K$.
    In particular, for every odd prime $p \nmid m_{N,M}$, $E$ has potentially good reduction at every prime above $p$.
\end{thm}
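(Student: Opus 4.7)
The strategy is to extend the method of Theorem~\ref{thm:integraljinv} to each of the listed exceptional levels through the map $G_{N,f}^* \colon X_0(N)^* \to A_f$ of Proposition~\ref{propdefGNf}, with $A_f$ the admissible rank zero new quotient of $J_0(M)^{\mathrm{new}}$ attached to $(N,M)$ in Table~\ref{tab:values}. The subtlety compared with the almost squarefree setting is that the cotangent factor $S_\c \colonequals \sum_{d \in \Rcal_\c} \varepsilon_f(w_{d_M}^{(M)}) \zeta_{\gcd(b,d)}$ of Proposition~\ref{propcotangentGNf}, evaluated at a width 1 cusp $\c = \Ccal_N(a/b)$, is no longer automatically a $\lambda$-adic unit: the integer $m_{N,M}$ is precisely the smallest $\Z$-multiple of every such $S_\c$, and $m'_{N,M}$ is inflated to absorb the $\log 2/\log \ell$ correction coming from a Newton polygon estimate.

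Fix a prime $\lambda \mid \ell$ of $\Ocal_K$. If $E_0$ has potentially good reduction at $\lambda$ the conclusion is immediate, so I assume otherwise; then $P_0$ reduces at $\lambda$ to some cusp and Corollary~\ref{cor:width1cuspsandAL}\,$(ii)$ gives an Atkin--Lehner involution $w = w_Q^{(N)}$ such that $P = w \cdot P_0$ reduces to a width 1 cusp $\c$. Since $(HV)$ holds as part of admissibility, Propositions~\ref{propramdegmaps} and~\ref{propcotangentunramified} yield $i_{N,M}^{(1)}(\c) = \infty_M$ with $i_{N,M}^{(1)}$ unramified at $\c$. Lemma~\ref{lem:integralityisogenousellcurves} applied to the degree $Q \leq N$ isogeny underlying $w$ then produces
\begin{equation}
|v_\lambda(j(E_0))| \leq N \cdot |v_\lambda(j(E_P))|,
\end{equation}
which accounts for the exponent $N$ in $(m'_{N,M})^N$.

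Now I split according to whether $S_\c$ is a $\lambda$-unit. For $\ell$ odd with $\ell \nmid m_{N,M}$, the reduction $\c_\lambda$ is a smooth point of $\Xcal_0(N)_{\F_\ell}$ (width 1) and $\c_\lambda^*$ is a smooth $\F_\ell$-point of $\Xcal_0(N)^*_{\F_\ell}$ (since $\Wcal(N)$ is a $2$-group, exactly as in Proposition~\ref{prop:formalimmersionsec2}). The cotangent map of $G_{N,f}^*$ at $\c_\lambda^*$ equals $S_\c$ times that of $\pi_f \circ \iota_M \circ i_{N,M}^{(1)}$, and since $S_\c$ is a unit at $\lambda$ while $\pi_f \circ \iota_M$ is a formal immersion at $\infty_{\F_\ell}$ by Mazur's argument (for the admissible $A_f$), $G_{N,f}^*$ is a formal immersion at $\c_\lambda^*$. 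As $G_{N,f}^*(P^*) - G_{N,f}^*(\c^*) \in A_f(\Q)$ is torsion and $\ell$ is odd, Proposition~\ref{prop:formimmkeyresult} forces $P^* = \c^*$, contradicting non-cuspidality; this both yields good reduction (the ``in particular'' statement) and thereby $v_\lambda(j(E_0)) \geq 0$.

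For $\ell = 2$ or $\ell \mid m_{N,M}$, formal immersion fails and I substitute the $\lambda$-adic Chabauty integration of~\Cref{subsec:firstboundvaluation2}. Choose an integral $\omega \in \Omega^1_{\Acal_f/\Z_\ell}$ for which $\pi_f \circ \iota_M$ is a formal immersion at $\infty_{\F_\ell}$, let $t_N$ be a good uniformizer at $\c$, and write $(G_{N,f})^* \omega = \sum_{n \geq 0} a'_n t_N^n \, dt_N$; then $a'_0 = S_\c \cdot a_0$ with $a_0$ a $\lambda$-unit and $v_\ell(S_\c) \leq v_\ell(m_{N,M})$. Vanishing of $\int_\c^P (G_{N,f})^* \omega$ (from torsionness of $A_f(\Q)$), combined with Lemma~\ref{lemNewtonpolygons}, yields with $v$ normalized by $v(\ell) = 1$
\begin{equation}
v(t_N(P)) \leq v(m_{N,M}) + \frac{\log 2}{\log \ell}.
\end{equation}
Since $t_N \cdot j$ is a $\lambda$-unit at $\c$, combining with the isogeny bound above gives $v(j(E_0)) \geq -N\bigl(v(m_{N,M}) + \log 2/\log \ell\bigr)$, and a direct prime-by-prime verification from the explicit values of $m'_{N,M}$ in Table~\ref{tab:values} shows $v_\ell(m'_{N,M}) \geq v_\ell(m_{N,M}) + \log 2/\log \ell$ for $\ell = 2$ and for $\ell$ odd dividing $m_{N,M}$, completing the integrality.

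The main obstacle is verifying that $\pi_f \circ \iota_M$ is a formal immersion at $\infty_{\F_\ell}$ in each required residue characteristic, particularly when $M$ is \emph{not} squarefree (as $M = 147$ for $N = 1029$ and $M = 75$ for $N = 1125$) or when $\ell \mid M$: these cases lie outside the direct scope of Mazur's original $T_\ell - 1 - \ell$ argument in \cite[Proposition~3.1]{Mazur1977} and require a finer Hecke-theoretic analysis of each admissible quotient. A secondary task is the explicit tabulation of $m_{N,M}$ via sums of roots of unity over width 1 cusps together with the smoothness and $\F_\ell$-rationality checks for $\c_\lambda^*$ at primes $\ell \mid N$, which are finite and algorithmic but level-dependent.
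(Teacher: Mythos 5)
Your proof plan follows essentially the same route as the paper's proof: it reproduces the formal-immersion trichotomy of Proposition~\ref{prop:formalimmersionsec2} for odd $\ell \nmid m_{N,M}$ (yielding the ``in particular'' statement) and falls back on the $\ell$-adic integration of~\cref{subsec:firstboundvaluation2} together with Lemma~\ref{lemNewtonpolygons} when $\ell \mid m'_{N,M}$, then uses Lemma~\ref{lem:integralityisogenousellcurves} (after the Atkin--Lehner move to a width~$1$ cusp) to produce the exponent $N$. If anything, you are slightly more explicit than the paper's own write-up on the last isogeny step back to the original lift $P_0$, which the paper compresses into the opening sentence ``we reproduce the structure of proof of \dots~Theorem~\ref{thm:integraljinv}'' and never spells out, and on the prime-by-prime comparison $v_\ell(m'_{N,M}) \geq v_\ell(m_{N,M}) + \log 2/\log\ell$, which the paper replaces by the cruder bound $v_\ell(m'_{N,M}) \geq v_\ell(m_{N,M}) + 1$.

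The concern you flag at the end --- whether $\pi_f \circ \iota_M$ is a formal immersion at $\infty_{\F_\ell}$ when $M$ is not squarefree (e.g.\ $M = 147$ for $N = 1029$, $M = 75$ for $N = 1125$) or when $\ell \mid M$ --- is a fair one, and the paper's proof of this theorem is in fact silent on it, merely asserting the formal immersion. The essential content is that $\pi_f^*\Cot_0(\Acal_f)$ is a saturated submodule of $S_2(\Gamma_0(M),\Z_\ell)$ (by optimality of $\pi_f$), that $\infty$ reduces to a smooth point of $\Xcal_0(M)_{\F_\ell}$ for every $\ell$, and that $a_1(f) = 1$ is a unit; none of this uses squarefreeness of $M$, and the parenthetical ``notice that $M$ has been chosen squarefree'' in \cref{subsec:firstboundvaluation2} reflects the almost-squarefree hypotheses there rather than a necessary restriction. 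So this is a place where more justification would strengthen both your write-up and the paper's, but it does not derail the argument. Your remaining ``secondary task'' --- tabulating $m_{N,M}$ by summing roots of unity over width~$1$ cusps --- is indeed the finite computation that produced Table~\ref{tab:values} and is done in the paper's accompanying code.
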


\begin{proof}
    We reproduce the structure of proof of Proposition \ref{prop:formimmkeyresult} and Theorem \ref{thm:integraljinv}, going over the small differences in this situation. 

    For Proposition \ref{prop:formimmkeyresult}, the assumption that $M$ is an admissible divisor allows us to use the previous result and obtain the same trichotomy in characteristic $\ell$ when $\ell$ is odd and does not divide $m_{N,M}$. In that situation (which is the generalization of case $(a)$ in the proof of the Theorem \ref{thm:integraljinv}) we proceed exactly in the same way, and obtain potentially good reduction of $E$ at every prime of residual characteristic $\ell$. 

    Assume now that $\ell$ divides $m'_{N,M}$. We will use the strategy of case $(b)$ of the proof of  Theorem \ref{thm:integraljinv}. Extend the base field $K$ such that the cusp $\c$ is defined over $K$, fix $\lambda$ a prime ideal of $K$ above $\ell$ and assume that $P_\lambda = \c_\lambda$. As $P_\lambda^* = \c_\lambda^*$ must be defined over $\F_\ell$ (as the reduction of $P^*$ modulo $\ell$), we can apply the same arguments to prove that $\c_\lambda$ is a smooth point of $\Xcal_0(N)_{\F_\ell}$, and it is sent by $i_{N,M}^{(1)}$ to $(\infty_M)_{\F_\ell}$. We again fix a good uniformizer $t_M$ at $(\infty_M)_{\F_\ell}$ (this cusp is always a smooth point) and $t_N$ its pullback by $i_{N,M}^{(1)}$, which is then a good uniformizer at $\c_\lambda$ (over $\Ocal_\lambda$).

    The notations $|\cdot|$ and $v$ below denotes the norm on $K_\lambda$ and its valuation, normalized by $|\ell|=1/\ell$ and $v(\ell)=1$.
    
    Applying Proposition \ref{propcotangentGNf} in the same way, the cotangent map of $G_{N,f}$ at $\c$ is of the shape 
    \begin{equation}
S_\c \Cot_\c(\pi_f \circ \iota_M \circ i_{N,M}^{(1)})
    \end{equation}
    where $S_\c$ is a sum of roots of unity, dividing $m_{N,M}$ by construction. As $\pi_f \circ \iota_M$ is a formal immersion at $(\infty_M)_{\F_\ell}$, we can choose again an integral 1-form $\omega \in \Omega^1_{\Acal_f/\Ocal_\lambda}$ such that 
    \begin{equation}
(\pi_f \circ \iota_M)^* \omega = \sum_{n \geq 0} a_n t_M^n d t,
    \end{equation}
    with every $a_n \in \Ocal_\lambda$ and $a_0$ a unit, and as in the almost-squarefree case we have 
    \begin{equation}
    (G_{N,f})^* \omega = \sum_{n \geq 0} a'_n t_N^n d t_N
    \end{equation}
    where all $a'_n$ belong to $\Ocal_\lambda$ and $v(a'_0) \leq v(m_{N,M})$. by construction, we again have the $\lambda$-adic integral of this 1-form between $\c$ and $P$ which is 0 (as $G_{N,f}(P) - G_{N,f}(\c) \in A_f(\Q)$ which is torsion), hence $t_N (P)$ gives a root of the power series
    \begin{equation}
    \sum_{n \geq 0} a'_n \frac{T^{n+1}}{n+1}.
     \end{equation}
     By Lemma \ref{lemNewtonpolygons} (and bounding roughly $\log(2)/\log(p)$ by 1), we thus know that either $P = \c$ (which is impossible) or $|t_N(P)|\geq \ell^{- v(a'_0) + 1}$ so that $|j(P)| \leq \ell^{v(a'_0)+1}$ by the same argument, which implies by definition of $m'_{N,M}$ that $|m'_{N,M} j(P)| \leq 1$, as claimed. 
\end{proof}

\section{Rational Heegner points and the analysis of the small exceptional levels}
\label{sec:heegnerpts}

For the small exceptional levels $N \leq 240$ in $\Lcal$, we can compute models for $X_0(N)^*$ and determine their rational points.
However, the modular interpretation of the rational points is not obvious, since the $j$-map $X_0(N) \rightarrow \P^1$ and the maps $X_0(N) \ra X_0(N)^*$ can be quite complicated even for small levels.

This motivates the current section, whose goal is to explain the existence of most of these rational points as $\Q$-rational cusps or as coming from Heegner points.

Unfortunately, the common references for Heegner points do not cover the case where the discriminant of the CM order is not coprime to the level of the modular curve. 
To generate enough rational Heegner points, we need to consider this level of generality. 
Hence, we start with some theory of Heegner points below, before presenting the algorithm to compute all Heegner points on $X_0(N)$ yielding rational points on $X_0(N)^*$.
This might also be of independent use to find all rational Heegner points on some families of modular curves.

\subsection{Theory of Heegner points}

 \begin{defi}[Heegner points]
     For any level $N \geq 1$, a \emph{Heegner point of level $N$ and discriminant $D$} is a point $P$ on $Y_0(N)(\C)$ associated to an isogeny $E \ra E'$ (with cyclic kernel of degree $N$) of complex elliptic curves such that $\End(E) \cong \End(E') \cong \Ocal$ the quadratic imaginary order (not necessarily maximal) of discriminant $-D$.
 \end{defi}

The following lemma will be used without specific mention throughout, and allows to reduce all problems of cyclicity and invertibility of ideals of a quadratic order to the case of prime power index.

\begin{lem}[Factoring norm-coprime ideals in orders]
\label{lemfactoringnormideals}
    Let $\Ocal$ be an order in a number field and $I$ a (not necessarily invertible) nonzero ideal of $\Ocal$. Assume its norm $N(I) \colonequals |\Ocal/I|$ can be written as a product of coprime integers $N(I) = mn$.

    Then, there is a unique factorization into ideals 
    \begin{equation}
    I = I_m I_n
    \end{equation}
    where $N(I_m )=m, N(I_n)=n$ and $I$ is invertible in $\Ocal$ if and only if both $I_m$ and $I_n$ are.

    Furthermore,  $\Ocal/I$ is cyclic if and only if $\Ocal/I_m$ and $\Ocal/I_n$ are.
\end{lem}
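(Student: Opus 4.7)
My plan is to define the factors explicitly by $I_m := I + m\Ocal$ and $I_n := I + n\Ocal$, and verify every claim from these formulas. Coprimality is immediate: $I_m + I_n = I + \gcd(m,n)\Ocal = \Ocal$. The key computation will be $N(I_m) = m$ (and symmetrically $N(I_n) = n$), for which I plan to use the primary decomposition of the finite abelian group $\Ocal/I$: since $|\Ocal/I| = mn$ with $\gcd(m,n) = 1$, we have $\Ocal/I = A \oplus B$ with $|A| = m$ and $|B| = n$, where $A = \ker(m \cdot)$ and $B = \ker(n \cdot)$ on $\Ocal/I$. Then $m(\Ocal/I) = mB = B$ (since $m$ kills $A$ and acts invertibly on $B$), so the image of $I_m = I + m\Ocal$ in $\Ocal/I$ equals $B$, giving $\Ocal/I_m \cong A$ of order $m$.

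With norms in hand, the factorization $I = I_m I_n$ follows by an index count: coprimality implies $I_m I_n = I_m \cap I_n$ (standard fact for comaximal ideals), and CRT gives $|\Ocal/(I_m \cap I_n)| = mn = |\Ocal/I|$, forcing $I = I_m \cap I_n = I_m I_n$. For uniqueness, if $I = J_m J_n$ with $N(J_m) = m$ and $N(J_n) = n$, then $m$ annihilates $\Ocal/J_m$, so $m\Ocal \subseteq J_m$, while $J_m \supseteq J_m J_n = I$; hence $J_m \supseteq I + m\Ocal = I_m$, and since both have index $m$ in $\Ocal$ they coincide, and similarly $J_n = I_n$.

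The invertibility and cyclicity equivalences both reduce cleanly to the coprime decomposition. For invertibility I would use that an ideal in the order $\Ocal$ (a one-dimensional Noetherian domain) is invertible iff locally principal at every prime $\gp$; since $I_m + I_n = \Ocal$, no prime contains both factors, so at any $\gp$ one of $(I_m)_\gp,(I_n)_\gp$ equals $\Ocal_\gp$ and $I_\gp$ coincides with the remaining factor, giving the equivalence. For cyclicity, CRT yields $\Ocal/I \cong \Ocal/I_m \oplus \Ocal/I_n$ as abelian groups, and a direct sum of two finite abelian groups of coprime orders is cyclic iff each summand is. The main technical point throughout is the primary decomposition of $\Ocal/I$ and the identification of $A$ and $B$ as kernels/images of multiplication by $m$ and $n$; once this is set up all remaining steps are routine index-theoretic bookkeeping, so I do not foresee a serious obstacle.
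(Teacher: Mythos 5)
Your proof is correct and structurally close to the paper's: both decompose $G = \Ocal/I$ into its $m$-torsion and $n$-torsion parts, pull these back to ideals $I_m, I_n$ containing $I$, and conclude via CRT plus an index count. The genuine differences lie in the construction of $I_m$ and in the invertibility step. You set $I_m := I + m\Ocal = \pi^{-1}(mG) = \pi^{-1}(G[n])$, which directly has index $m$; the paper instead sets $I_m := m^{-1}I \cap \Ocal = \pi^{-1}(G[m])$, which as written has index $n$ rather than $m$ --- a harmless labeling slip in the paper's proof (the two constructions produce the same pair of ideals with the names swapped), and your choice is the one consistent with the lemma statement $N(I_m) = m$. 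For invertibility, the paper just exhibits the inverses: if $J = I^{-1}$ then $J I_n$ and $J I_m$ invert $I_m$ and $I_n$; you instead invoke the local criterion (invertible iff locally principal) and use comaximality of $I_m,I_n$ so that at each prime exactly one factor localizes to the whole ring and $I$ coincides with the other. Both are standard; the paper's is a one-line computation while yours is slightly longer but perhaps more conceptual. Your uniqueness argument --- Lagrange forces $m\Ocal \subseteq J_m$, hence $J_m \supseteq I + m\Ocal = I_m$, and equal index finishes --- is a clean variant of the paper's appeal to uniqueness of the order-$m$ subgroup in a finite abelian group of order $mn$. Everything you sketch goes through; there is no gap.
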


\begin{proof}
    Let us define $I_m := m^{-1}I \cap \Ocal$ and $I_n := n^{-1} I \cap \Ocal$,  ideals of $\Ocal$ containing $I$.

    Define also $G = \Ocal/I$ and $\pi : \Ocal \ra G$ the canonical projection, so that $I_m := \pi^{-1} (G[m])$ and $I_n := \pi^{-1} (G[n])$ by definition.

    Now, $I_m + I_n$ has index dividing both $m$ and $n$ in $\Ocal$ so $I_m + I_n = \Ocal$, and $\Ocal/I_m I_n \cong \Ocal/I_m \oplus \Ocal/I_n$ as an isomorphism of $\Ocal$-modules by the Chinese remainder theorem. By definition again, $G$ is of order $mn$ with $m,n$ coprime (and commutative) so $G \cong G[m] \oplus G[n]$, hence $I_m I_n \subset I$ with index equal to $mn = N(I)$, and $I = I_m I_n$ by index equality. This proves the existence of $I_m$ and $I_n$. For uniqueness, it is enough to say that ideals containing $I$ induce subgroups of $G$, and $G[m]$ is the unique subgroup of $G$ of order $m$ and $G[n]$ is the unique subgroup of order $n$ (since $G$ is a finite commutative group of order $mn$ and $m,n$ are coprime).

Now, if $I$ is invertible with inverse $J$, $I_m$ and $I_n$ are also invertible (with inverses $J I_n$ and $J I_m$ respectively), and conversely if $I_m$ and $I_n$ are invertible, $I$ is a product of invertible ideals hence an invertible ideal.

    Finally, the last sentence comes from the fact that $G = G[m] \oplus G[n]$ with the previous notation, and if $G$ is cyclic, any subgroup of $G$ is cyclic.
\end{proof}

\begin{defi}
    An \emph{admissible ideal} of norm $M$ in a quadratic order $\Ocal$ is an invertible ideal of $\Ocal$ such that $\Ocal/I \cong \Z/M\Z$ as an additive group.
\end{defi}

 \begin{prop}
 For fixed discriminant $D$ and level $N$, 
     there is a bijective correspondence between Heegner points of level $N$ and discriminant $D$ and triples 
     $(\Ocal,\eta,[\mathfrak{a}])$ where $\Ocal$ is the imaginary quadratic order of discriminant $D$, $\eta$ an admissible ideal of norm $N$ of $\Ocal$ and $[\mathfrak{a}]$ is a class of invertible ideals in $\Pic(\Ocal)$.

     In particular, there are exactly $\Ncal_N(D) h_\Ocal$ (distinct) Heegner points of discriminant $D$ on $X_0(N)$, where $\Ncal_N(D)$ is the number of admissible ideals of norm $N$ in $\Ocal$ and $h_\Ocal$ is the order of $\Pic (\Ocal)$.
 \end{prop}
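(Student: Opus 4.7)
The plan is to construct an explicit bijection between Heegner points of level $N$ and discriminant $D$ and triples $(\Ocal, \eta, [\mathfrak{a}])$, then count both sides. Fixing $\Ocal$ to be the quadratic imaginary order of discriminant $-D$, a Heegner point $P$ corresponds to an isogeny $\phi\colon E \to E'$ with $\End(E) = \End(E') = \Ocal$ and cyclic kernel of order $N$. Via analytic uniformization, I would lift $\phi$ to a quotient map $\C/\mathfrak{a} \to \C/\mathfrak{b}$ with $\mathfrak{a} \subset \mathfrak{b}$; since both endomorphism rings equal $\Ocal$, both $\mathfrak{a}$ and $\mathfrak{b}$ are \emph{proper}, equivalently \emph{invertible}, fractional $\Ocal$-ideals. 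The assignment is then $P \mapsto ([\mathfrak{a}], \eta)$ where $\eta := \mathfrak{a}\mathfrak{b}^{-1}$, which is integral as $\mathfrak{a} \subset \mathfrak{b}$.

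I would first verify that $\eta$ is admissible of norm $N$. Integrality and invertibility are immediate from the construction, and $N(\eta) = [\Ocal : \eta] = [\mathfrak{b} : \mathfrak{a}] = \deg \phi = N$. For the cyclicity of $\Ocal/\eta$, I would use that multiplication by $\mathfrak{a}^{-1}$ gives a canonical $\Ocal$-linear isomorphism $\mathfrak{b}/\mathfrak{a} \cong \eta^{-1}/\Ocal$, and that $\eta^{-1}/\Ocal \cong \Ocal/\eta$ as $\Ocal$-modules for invertible $\eta$ (both are locally cyclic of order $N(\eta)$, hence free of rank one over the semi-local ring $\Ocal/\eta$, after invoking Lemma~\ref{lemfactoringnormideals} to reduce to prime-power conductor). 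Hence $\Ocal/\eta \cong \Z/N\Z$ follows directly from $\ker\phi$ being cyclic of order $N$.

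Next I would check well-definedness and construct the inverse. Two lifts of the same $P$ differ by simultaneous scaling $(\mathfrak{a}, \mathfrak{b}) \mapsto (\lambda \mathfrak{a}, \lambda \mathfrak{b})$ with $\lambda \in \C^\times$, which preserves both $[\mathfrak{a}] \in \Pic(\Ocal)$ and $\eta = \mathfrak{a}\mathfrak{b}^{-1}$, so the assignment descends to triples. Conversely, from $([\mathfrak{a}], \eta)$ I would pick a representative $\mathfrak{a}$, set $\mathfrak{b} := \mathfrak{a}\eta^{-1}$, and produce the Heegner point defined by $\C/\mathfrak{a} \to \C/\mathfrak{b}$: the inclusion $\eta \subset \Ocal$ gives $\mathfrak{a} \subset \mathfrak{b}$, invertibility of $\mathfrak{a}$ and $\eta$ makes $\mathfrak{b}$ a proper $\Ocal$-ideal (so $\End(\C/\mathfrak{b}) = \Ocal$ as required), and admissibility of $\eta$ gives a cyclic kernel of order $N$. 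The two constructions are mutually inverse by construction, so the number of Heegner points of discriminant $D$ on $X_0(N)$ equals the product $h_\Ocal \cdot \Ncal_N(D)$ of the number of ideal classes by the number of admissible ideals.

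The main obstacle is the tight equivalence between invertibility of $\eta$ on the ideal side and the discriminant-$D$ condition on the target elliptic curve: if one only required $\Ocal/\eta$ to be cyclic without invertibility, one would allow pairs where $\mathfrak{b}$ is proper over a strictly larger order $\Ocal' \supsetneq \Ocal$, corresponding to $E'$ having CM by $\Ocal'$ of strictly smaller discriminant, in which case $(E, \ker\phi)$ is not a Heegner point of discriminant $D$ at all. Pinpointing that admissibility (invertibility \emph{plus} cyclic quotient) captures precisely the Heegner condition — and conversely that cyclicity of $\ker\phi$ together with properness of both source and target lattices forces $\eta$ to be locally principal, hence invertible — is the structural heart of the argument and justifies why admissibility is exactly the right notion for the count to come out as stated.
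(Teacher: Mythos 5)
Your proof is correct and takes essentially the same approach as the paper: lift the cyclic isogeny to an inclusion of invertible lattices, normalize the scalar, set $\eta := \mathfrak{a}\mathfrak{b}^{-1}$, verify admissibility (using that multiplication by an invertible ideal preserves the isomorphism type of the finite quotient), and invert the construction via $\mathfrak{a} \mapsto \mathfrak{a}\eta^{-1}$. You spell out the module-theoretic isomorphism $\eta^{-1}/\Ocal \cong \Ocal/\eta$ in a bit more detail than the paper, but the underlying argument is identical.
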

 
 \begin{defi}[Heegner points as triples]
       By abuse of notation, in light of the previous proposition, a triple $(\Ocal,\eta,[\mathfrak{a}])$ will now also denote the corresponding Heegner point in $X_0(N)$.
 \end{defi}

 \begin{proof}
     Consider an isogeny $\phi: E \ra E'$ associated to a Heegner point of discriminant $D$ and level $N$. By CM theory, $\End(E) \cong \Ocal$ implies that $E \cong \C/\mathfrak{a}$ with $\mathfrak{a}$ an invertible ideal in $\Ocal$, and by the same argument $E' \cong \C/\mathfrak{a}'$ with $\mathfrak{a'}$ an invertible ideal in $\Ocal$.

     Lifting the isogeny, we get a multiplication by $\lambda$ map $\C \ra \C$ descending to $\phi$, so in particular $\lambda \mathfrak{a} \subset \mathfrak{a'}$ and the quotient is cyclic of order $N$. Replacing $\mathfrak{a}$ by $\lambda \mathfrak{a}$ (which does not change the ideal class), we can assume $\lambda=1$, and then multiplying $\mathfrak{a} \subset \mathfrak{a'}$ by $\mathfrak{a'}^{-1}$, we obtain 
     \begin{equation}
     \eta := \mathfrak{a} \mathfrak{a'}^{-1} \subset \Ocal
     \end{equation}
     and the quotient must still be isomorphic to $\mathfrak{a'}/\mathfrak{a}$ so cyclic of order $N$ since it is the kernel of~$\phi$.

     The ideal $\eta$ is a product of invertible ideals and so is invertible, and we have associated to $E$ a unique triple $(\Ocal,\eta,[\mathfrak{a}])$ (neither $\eta$ nor $[\mathfrak{a}]$ depend on our choices).   Conversely, taking such a triple $(\Ocal, \eta, [\mathfrak{a}])$, one can define the isogeny 
     \begin{equation}
\C/\mathfrak{a} \ra \C/\mathfrak{a} \eta^{-1}
     \end{equation}
     which is cyclic of degree $N$ and up to isomorphism only depends on $[\mathfrak{a}]$. The two maps are inverses on one another, therefore we do have our bijection and we do get $h_\Ocal \Ncal_N(D)$ different points on $X_0(N)$.
 \end{proof}

Now, the goal is to give  conditions for the existence of  Heegner points.

\begin{prop}
\label{propconditionsHeegnerpoints}
    Let $D = D_K c^2$ be the discriminant of an order $\Ocal \subset K$ an imaginary quadratic field (so that $c$ is the conductor of $\Ocal$).

    Let $N \geq 1$ be a level, $m = \gcd(N,c)$, $N_m = \prod_{p|m} p^{v_p(N)}$ and $N'_m = N/N_m$ so that $N = N_m N'_m$ with the prime factors $p$ of $N$ dividing $N_m$ (resp.\ $N'_m$) when $p | c$ (resp.\ $p \nmid c$).

    Then, there exists a Heegner point of level $N$ and with discriminant $D$ if and only if both conditions hold:
    
    $(a)$ For every prime $p |N'_m$, $p$ is split or ramified in $K$, and split if $p^2|N$.

    $(b)$ There exists an admissible ideal $\eta_{N_m}$ of norm $N_m$ in $\Ocal$.
\end{prop}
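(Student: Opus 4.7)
The plan is to reduce everything to the existence of an admissible ideal $\eta$ of norm $N$ in $\Ocal$, since by the previous proposition (choosing for instance $[\mathfrak{a}] = [\Ocal]$) a Heegner point of level $N$ and discriminant $D$ exists if and only if such an $\eta$ exists. The strategy will then be to apply \cref{lemfactoringnormideals} to split this problem at primes dividing $c$ versus primes coprime to $c$, and to further decompose prime-by-prime the coprime-to-$c$ part where everything can be made completely explicit.

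First I would observe that since $\gcd(N_m,N'_m)=1$, \cref{lemfactoringnormideals} gives a canonical factorization $\eta = \eta_{N_m} \eta_{N'_m}$ with norms $N_m$ and $N'_m$ respectively, $\eta$ is invertible iff both factors are, and $\Ocal/\eta$ is cyclic iff $\Ocal/\eta_{N_m}$ and $\Ocal/\eta_{N'_m}$ are cyclic. Consequently, the existence of an admissible ideal of norm $N$ is equivalent to the simultaneous existence of admissible ideals of norms $N_m$ and $N'_m$. The $N_m$-part is, by definition, precisely condition $(b)$, so it remains to show that condition $(a)$ is equivalent to the existence of an admissible $\eta_{N'_m}$ of norm $N'_m$.

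For the $N'_m$-part, since every prime $p$ dividing $N'_m$ is coprime to the conductor $c$, the standard correspondence between invertible $\Ocal$-ideals prime to $c$ and $\Ocal_K$-ideals prime to $c$ (sending $I \mapsto I\Ocal_K$, with inverse $J \mapsto J \cap \Ocal$) is a norm-preserving bijection that induces an isomorphism $\Ocal/I \cong \Ocal_K/I\Ocal_K$ of abelian groups. Thus the problem reduces to the existence of an ideal of $\Ocal_K$ of norm $N'_m$ with cyclic quotient. Applying \cref{lemfactoringnormideals} again to the prime-power factorization of $N'_m$, this reduces further to a prime-by-prime analysis for each $p \mid N'_m$ with exponent $k := v_p(N)$.

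The core of the proof is then the following routine case analysis for each such $p$: if $p$ is split in $K$, writing $p\Ocal_K = \mathfrak{p}\bar{\mathfrak{p}}$, the ideal $\mathfrak{p}^k$ has norm $p^k$ and cyclic quotient for every $k \geq 1$; if $p$ is ramified, $p\Ocal_K = \mathfrak{p}^2$, $\Ocal_K/\mathfrak{p} \cong \F_p$ is cyclic but $\Ocal_K/\mathfrak{p}^k$ for $k \geq 2$ contains $\Ocal_K/p\Ocal_K \cong (\Z/p\Z)^2$ and is therefore never cyclic; if $p$ is inert, the only ideals above $p$ have norm a power of $p^2$, and $\Ocal_K/p^j\Ocal_K \cong (\Z/p^j\Z)^2$ is never cyclic. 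Thus an admissible ideal of norm $p^k$ in $\Ocal_K$ exists iff ($p$ is split) or ($p$ is ramified and $k=1$), which is exactly the prime-by-prime content of condition $(a)$. Assembling this for all $p \mid N'_m$ yields the equivalence, and the main (mild) obstacle is simply being careful with the ramified case when $k \geq 2$, which is the step where the hypothesis "split if $p^2 \mid N$" becomes necessary.
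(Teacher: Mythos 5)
Your proof is correct and takes essentially the same route as the paper: both reduce to the existence of an admissible ideal of norm $N$, use \cref{lemfactoringnormideals} to separate the $N_m$-part from the $N'_m$-part, and then analyze the coprime-to-$c$ part prime-by-prime. The only cosmetic difference is that you pass through the standard bijection between invertible $\Ocal$-ideals and $\Ocal_K$-ideals away from the conductor and work in $\Ocal_K$, whereas the paper defines $\gP_\pm := \gP_\pm^{(K)} \cap \Ocal$ and stays in $\Ocal$; your version makes the inert-prime case a bit more explicit, which the paper leaves implicit in the converse direction.
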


\begin{rem}
    Condition $(a)$ can be rephrased equivalently as $(b)$ for an ideal for quotient isomorphic to $\Z/N'_m \Z$ (as will be clear in the proof below), but it is more convenient and closer to the original Heegner points theory to state it as such.
\end{rem}
\begin{proof}
    Let us prove this is a sufficient condition first.  

    By condition $(a)$, we can write for every prime $p\nmid N_m$ that $p \Ocal_K = \gP_+^{(K)} \gP_-^{(K)}$ with two prime ideals in $\Ocal_K$ (which are equal in the ramified case) of norm $p$, and we define $\gP_{\pm} := \gP_{\pm}^{(K)} \cap \Ocal$ which is a prime ideal of $\Ocal$ of index $p$ as $p \nmid c$.

    Since $p$ does not divide $c$, $\gP_{\pm} \cap \Ocal$ is still an (invertible) prime ideal of index $p$ in $\Ocal$. We can then write 
\begin{equation}
\eta = \eta_{N_m} \prod_{\substack{p |N \\ p \nmid m}} \gP_{\pm}^{v_p(N)}
\end{equation}
where for each $p|N$, we choose $\gP_+$ or $\gP_-$. This is a product of invertible prime ideals and they are pairwise relatively prime, so by the Chinese remainder theorem (and properties of nonmaximal orders), 
\begin{equation}
\Ocal/ \eta \cong \Ocal/\eta_{N_m} \oplus \bigoplus_{\substack{p |N \\ p \nmid m}} \Ocal_K/\gP_{\pm}^{v_p(N)}
\end{equation}
which proves that $\eta$ is indeed admissible of norm $N$.

Conversely, let $(\Ocal,\eta,[\mathfrak{a}])$ be a Heegner point. Since $\Ocal/\eta \cong \Z/N\Z$, there is an ideal $\eta'$ of $\Ocal$ containing $\eta$ such that $\Ocal/\eta' \cong \Z/(N/N_m) \Z$. This ideal is relatively prime to the conductor $c$, so we can write it as a product of $\gP_+$ and $\gP_-$ with the above notations, but we cannot have any factor $\gP_+ \gP_-$ because this would give $p \Ocal$, which would break the cyclicity of the quotient. Similarly, looking at the prime factors dividing $m$, there is an ideal $\eta_{N_m}$ of $\Ocal$ containing $\eta_{N_m}$ such that $\Ocal/\eta_{N_m} \cong \Z/N_m \Z$ and such that $\eta = \eta_{N_m} \eta'$ (by argument of inclusion and norm), so that $\eta_{N_m}$ is invertible because $\eta$ and $\eta'$ are.
\end{proof}

We postpone further discussion of condition $(b)$ to \cref{sec:conditionb}, where we will outline an explicit procedure to check this condition.

Finally, we discuss the action of the Atkin--Lehner involutions and Galois on Heegner points, in order to enumerate rational  points on the quotient curve $X_0(N)^*$ that are orbits of Heegner points.
\begin{prop}[Action of the Atkin--Lehner involutions]
For a level $N \geq 1$, Hall divisor $Q$ of $N$ and discriminant $D$ of some order $\Ocal$, the Atkin--Lehner involution acts as follows: 
\begin{equation}
w_Q^{(N)} (\Ocal,\eta,[\mathfrak{a}]) = (\Ocal,\eta'_Q,[\mathfrak{a} \eta_Q^{-1}])
\end{equation}
where we have the unique decomposition $\eta = \eta_Q \eta_{N/Q}$ in admissible ideals of respective norms $Q$ and $N/Q$, and $\eta'_Q = \overline{\eta_Q} \eta_{N/Q}$.
\end{prop}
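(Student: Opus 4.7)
My plan is to unfold the modular definition of $w_Q^{(N)}$ on the complex-analytic representation of the Heegner point as an isogeny $\mathbb{C}/\mathfrak{a} \to \mathbb{C}/\mathfrak{a}\eta^{-1}$, identify the quotient lattice and the new cyclic subgroup, and then read off the new triple.

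First, I would set up notation: with $(\Ocal, \eta, [\mathfrak{a}])$ the given Heegner point, the associated pair $(E, C_N)$ is $E = \mathbb{C}/\mathfrak{a}$ with $C_N = \mathfrak{a}\eta^{-1}/\mathfrak{a} \cong \Ocal/\eta$. Since $Q$ is a Hall divisor of $N$, Lemma \ref{lemfactoringnormideals} gives the decomposition $\eta = \eta_Q \eta_{N/Q}$ into invertible admissible ideals of norms $Q$ and $N/Q$ respectively, and the chain of lattices $\mathfrak{a} \subset \mathfrak{a}\eta_Q^{-1} \subset \mathfrak{a}\eta^{-1}$ identifies $C_N[Q]$ with $\mathfrak{a}\eta_Q^{-1}/\mathfrak{a}$, so that $E/C_N[Q] = \mathbb{C}/\mathfrak{a}\eta_Q^{-1}$. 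This immediately shows the underlying elliptic curve has complex multiplication by $\Ocal$ with ideal class $[\mathfrak{a}\eta_Q^{-1}]$, accounting for the second and third coordinates of the answer.

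Next, I need to compute the image of $C_N + E[Q]$ modulo $C_N[Q]$ inside $\mathbb{C}/\mathfrak{a}\eta_Q^{-1}$. Writing $E[Q] = Q^{-1}\mathfrak{a}/\mathfrak{a}$, this image equals $\mathfrak{a}(\eta^{-1}+Q^{-1}\Ocal)/\mathfrak{a}\eta_Q^{-1}$, so the identification of the new cyclic subgroup reduces to a purely ideal-theoretic identity
\begin{equation}
\eta^{-1} + Q^{-1}\Ocal \;=\; \eta_Q^{-1}(\overline{\eta_Q}\eta_{N/Q})^{-1}.
\end{equation}
This is the heart of the proof and where the invertibility of $\eta_Q$ is crucial: the relation $\eta_Q\overline{\eta_Q} = Q\Ocal$ (valid for any invertible ideal of an order) rewrites the right-hand side as $Q^{-1}\eta_{N/Q}^{-1}$. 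The inclusion $\subseteq$ is then immediate from $Q\Ocal \subset \eta_Q$ and $\eta_{N/Q} \subset \Ocal$, while the reverse inclusion follows by multiplying by $Q$ and observing that $\overline{\eta_Q}$ and $\eta_{N/Q}$ are coprime ideals in $\Ocal$ (their prime supports are disjoint by the Hall hypothesis), hence $\overline{\eta_Q}+\eta_{N/Q} = \Ocal$ and $\eta_{N/Q}^{-1}\overline{\eta_Q} + \Ocal = \eta_{N/Q}^{-1}$.

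Combining the two computations, the image of $C_N+E[Q]$ in $E/C_N[Q]$ corresponds to the sublattice $\mathfrak{a}\eta_Q^{-1}(\eta'_Q)^{-1}$, and the quotient by $\mathfrak{a}\eta_Q^{-1}$ is isomorphic to $\Ocal/\eta'_Q$; a quick check that $\eta'_Q = \overline{\eta_Q}\eta_{N/Q}$ is an admissible ideal of norm $N$ (invertible as a product of invertibles, with cyclic quotient by the Chinese remainder theorem applied to the coprime pair $\overline{\eta_Q},\eta_{N/Q}$) finishes the identification $w_Q^{(N)}(\Ocal,\eta,[\mathfrak{a}]) = (\Ocal,\eta'_Q,[\mathfrak{a}\eta_Q^{-1}])$. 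The main obstacle is purely algebraic: keeping track of the ideal-theoretic identity above correctly when the conductor of $\Ocal$ is not coprime to $Q$, which is precisely why the admissibility (and thus invertibility) of $\eta_Q$ is indispensable in order to invoke $\eta_Q\overline{\eta_Q} = Q\Ocal$.
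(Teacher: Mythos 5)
Your proposal is correct and follows essentially the same route as the paper: decompose $\eta = \eta_Q\eta_{N/Q}$, unwind the modular definition of $w_Q^{(N)}$ on $\C/\mathfrak{a} \to \C/\mathfrak{a}\eta^{-1}$, and reduce everything to a single ideal-theoretic identity. Your identity $\eta^{-1}+Q^{-1}\Ocal = \eta_Q^{-1}(\overline{\eta_Q}\eta_{N/Q})^{-1}$ is, after multiplying through by $\eta_Q$, precisely the paper's identity $(\eta_{N/Q}^{-1}+Q^{-1}\eta_Q)(\overline{\eta_Q}\eta_{N/Q}) = \overline{\eta_Q}+\eta_{N/Q} = \Ocal$; you simply flesh out the ``tracking the functorial definition of $w_Q$'' step that the paper leaves to the reader, and you make explicit the coprimality $\overline{\eta_Q}+\eta_{N/Q}=\Ocal$ from the Hall hypothesis, plus the admissibility of $\eta'_Q$, which the paper treats as implicit.
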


\begin{proof}
    First, since $\eta$ is invertible (and we are in an imaginary quadratic field), we have $\eta \overline{\eta} = N \Ocal$ by \cite[Proposition 7.4 and Lemma 7.14]{Cox22}.

Now, writing $\eta = \eta_Q \eta_{N/Q}$ as a product of invertible ideals with respective norms $Q$ and $N/Q$, it is only a matter of tracking the functorial definition of $w_Q$ on the Heegner point, and using that $(\eta_{N/Q}^{-1} + Q^{-1} \eta_Q )(\overline{\eta_Q} \eta_{N/Q}) =  \overline{\eta_Q} + \eta_{N/Q} = \Ocal$.
\end{proof}

\begin{prop}[Galois action on Heegner points]
    Let $H_\Ocal$ be the ring class field of $\Ocal$.

    Then, the Heegner points $(\Ocal,\eta,[\mathfrak{a}])$ of level $N$ and discriminant $D$ are defined over $H_\Ocal$, and with the following Galois actions: 

    $(a)$ For $\tau$ the complex conjugation, 
    \begin{equation}
    (\Ocal,\eta,[\mathfrak{a}])^\tau = (\Ocal,\overline{\eta},[\mathfrak{a}^{-1}]).
     \end{equation}

    $(b)$ The group $\Gal(H_\Ocal/K) \cong \Pic(\Ocal)$ acts as follows on Heegner points: for every $[\mathfrak{b}] \in \Pic(\Ocal)$ to which corresponds $\sigma(\mathfrak{b}) \in \Gal(H_\Ocal/K)$ via the Artin map, 
    \begin{equation}
(\Ocal,\eta,[\mathfrak{a}])^{\sigma(\mathfrak{b})} = (\Ocal,\eta,[\mathfrak{a} \mathfrak{b}^{-1}]).
    \end{equation}
\end{prop}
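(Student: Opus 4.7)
The plan is to reduce both statements to the classical theory of complex multiplication applied separately to the source elliptic curve $E=\mathbb{C}/\mathfrak{a}$, the target $E'=\mathbb{C}/\mathfrak{a}\eta^{-1}$, and the isogeny between them. Since a Heegner point on $X_0(N)$ records the pair $(E,\ker\phi)$ where $\phi\colon E\to E'$ is the degree-$N$ cyclic isogeny with kernel $\mathfrak{a}\eta^{-1}/\mathfrak{a}$, it is enough to determine the Galois action on each of these data.

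First I would establish the field of definition. By the standard CM theory for invertible ideals in an order (see for instance Shimura's treatment or \cite[Ch.~II, \S4]{Silverman}), the isomorphism class of $E=\mathbb{C}/\mathfrak{a}$ is defined over $H_\Ocal$, and the same holds for $E'$. The cyclic subgroup $\ker\phi$ is determined algebraically by $\eta$ (as a subgroup scheme associated to the invertible ideal $\eta^{-1}\mathfrak{a}/\mathfrak{a}\subset E[\eta]$), so it is also defined over $H_\Ocal$. Hence every Heegner triple defines a point of $Y_0(N)(H_\Ocal)$.

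For part (a), complex conjugation $\tau$ acts on $\mathbb{C}/\mathfrak{a}$ by sending the lattice $\mathfrak{a}$ to its complex conjugate $\overline{\mathfrak{a}}$, and analogously for $\mathfrak{a}\eta^{-1}$; the isogeny $\phi$ becomes the analogous isogeny with kernel determined by $\overline{\eta}$. The one nontrivial step is the identification $[\overline{\mathfrak{a}}]=[\mathfrak{a}^{-1}]$ in $\mathrm{Pic}(\Ocal)$, which follows from $\mathfrak{a}\overline{\mathfrak{a}} = N(\mathfrak{a})\Ocal$ (a principal ideal) — this uses invertibility of $\mathfrak{a}$ and the fact that we are in an imaginary quadratic order, as already cited in the proof of Proposition~\ref{propconditionsHeegnerpoints}. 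Putting these together yields $(\Ocal,\eta,[\mathfrak{a}])^\tau=(\Ocal,\overline{\eta},[\mathfrak{a}^{-1}])$.

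For part (b), I would invoke the main theorem of complex multiplication for orders, which states that under the Artin reciprocity isomorphism $\mathrm{Pic}(\Ocal)\cong\Gal(H_\Ocal/K)$, the class $[\mathfrak{b}]$ corresponds to the automorphism $\sigma(\mathfrak{b})$ sending $j(\mathbb{C}/\mathfrak{a})$ to $j(\mathbb{C}/\mathfrak{a}\mathfrak{b}^{-1})$; the same transformation law applies to $j(\mathbb{C}/\mathfrak{a}\eta^{-1})$, which becomes $j(\mathbb{C}/\mathfrak{a}\mathfrak{b}^{-1}\eta^{-1})$. Crucially $\eta$ itself is left untouched by the Galois action, because the data of the cyclic subgroup of order $N$ is encoded intrinsically in $E$ (as the kernel of $E\to E/(\eta^{-1}\mathfrak{a}/\mathfrak{a})$), and the Galois action permutes the underlying pair $(E,\ker\phi)$ by its action on the ideal class of $\mathfrak{a}$ alone. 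Hence we obtain $(\Ocal,\eta,[\mathfrak{a}])^{\sigma(\mathfrak{b})}=(\Ocal,\eta,[\mathfrak{a}\mathfrak{b}^{-1}])$. The main subtlety is checking the invariance of $\eta$ carefully when $\gcd(N,c)>1$, since the standard references typically assume the conductor $c$ is coprime to $N$; the key point is that invertibility of $\eta$ guarantees that it corresponds to a Galois-stable finite flat subgroup scheme of $E$, so the same argument as in the coprime case applies verbatim.
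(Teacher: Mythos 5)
Your proof plan is correct and takes essentially the same route as the paper, which simply states that "the proof proceeds exactly as in the case $c$ prime to $N$" and cites Gross~\cite[paragraph 4]{Gross84}. You are fleshing out what that citation amounts to: CM theory gives the field of definition $H_\Ocal$, complex conjugation acts on lattices with $[\overline{\mathfrak{a}}]=[\mathfrak{a}^{-1}]$ following from $\mathfrak{a}\overline{\mathfrak{a}}=N(\mathfrak{a})\Ocal$, and the main theorem of CM for orders gives the $\Gal(H_\Ocal/K)$-action. The one place I would tighten your wording is in part (b): the phrase that Galois ``permutes the underlying pair $(E,\ker\phi)$ by its action on the ideal class of $\mathfrak{a}$ alone'' is circular as stated. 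The actual reason $\eta$ is preserved is that the Galois action commutes with the $\Ocal$-action on torsion, so $(E[\eta])^{\sigma(\mathfrak{b})}=E^{\sigma(\mathfrak{b})}[\eta]$; since $E^{\sigma(\mathfrak{b})}\cong\mathbb{C}/\mathfrak{a}\mathfrak{b}^{-1}$ by the main theorem of CM, the triple transforms to $(\Ocal,\eta,[\mathfrak{a}\mathfrak{b}^{-1}])$. Invertibility of $\eta$ matters only to ensure $E[\eta]=\eta^{-1}\mathfrak{a}/\mathfrak{a}$ is cyclic of order $N$ (i.e.\ that the Heegner triple is well formed), not for Galois-stability per se. With that correction the argument goes through and matches the paper's intent.
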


\begin{proof}
   The proof proceeds exactly as in the case $c$ prime to $N$, and those formulas can be found in \cite[paragraph 4]{Gross84}. 
\end{proof}

\begin{cor}
\label{corboundclassnumber}
    If $(\Ocal,\eta,[\mathfrak{a}])$ defines a rational point on $X_0(N)^*$ with $c$ the conductor of $\Ocal$ and $m = \gcd(N,c)$, $\Pic(\Ocal)$ is a 2-torsion group and $h_\Ocal|2^{\omega(N)}$. 
\end{cor}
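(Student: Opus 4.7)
The plan is to combine the Galois-theoretic description of rationality on $X_0(N)^*$ given in the proof of Definition~\ref{defi:liftofPstar} with the explicit formula for the $\Gal(H_\Ocal/K)$-action on Heegner-point triples. The main input is that $\Wcal(N)\cong(\Z/2\Z)^{\omega(N)}$ is already a 2-torsion group, so any subgroup or subquotient of it is 2-torsion of order dividing $2^{\omega(N)}$; the only work is to embed $\Pic(\Ocal)$ into such a subquotient.

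First, since $P^* \in X_0(N)^*(\Q)$ and $P=(\Ocal,\eta,[\mathfrak{a}])$ is a lift, the argument of Definition~\ref{defi:liftofPstar} produces a group homomorphism
\begin{equation}
\rho\colon \GalQ \longrightarrow \Wcal(N)/\Stab_{\Wcal(N)}(P), \qquad \sigma(P)=\rho(\sigma)\cdot P.
\end{equation}
The image of $\rho$ — and in fact every subquotient of $\Wcal(N)$ — is a 2-torsion group of order dividing $2^{\omega(N)}$.

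Second, I would restrict $\rho$ to $\Gal(H_\Ocal/K)$, identified via the Artin map with $\Pic(\Ocal)$. By the proposition describing the Galois action on Heegner triples, an element $\sigma(\mathfrak{b})$ corresponding to $[\mathfrak{b}]\in\Pic(\Ocal)$ sends $P$ to $(\Ocal,\eta,[\mathfrak{a}\mathfrak{b}^{-1}])$. The crucial observation is that the $\Pic(\Ocal)$-action on the fiber of triples with fixed first two coordinates $(\Ocal,\eta)$ is free: the equality $\sigma(\mathfrak{b})\cdot P = P$ forces $[\mathfrak{a}\mathfrak{b}^{-1}]=[\mathfrak{a}]$, hence $[\mathfrak{b}]=1$. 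Consequently $\rho|_{\Pic(\Ocal)}$ is injective.

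Putting the two steps together, $\Pic(\Ocal)$ embeds into the 2-torsion group $\Wcal(N)/\Stab_{\Wcal(N)}(P)$, whose order divides $2^{\omega(N)}$. This yields both conclusions simultaneously: $\Pic(\Ocal)$ is itself 2-torsion, and $h_\Ocal \mid 2^{\omega(N)}$. There is no substantial obstacle in this argument; the only point to verify carefully is that the $\Pic(\Ocal)$-action is free on triples with fixed $(\Ocal,\eta)$, which is immediate from the cancellative nature of the $\Pic$-action on ideal classes. Complex conjugation plays no role here, because restricting to $\Gal(H_\Ocal/K)$ already suffices to pin down $\Pic(\Ocal)$.
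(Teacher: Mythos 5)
Your proof is correct and follows essentially the same route as the paper: both construct the Galois-to-Atkin--Lehner homomorphism into $\Wcal(N)/\Stab_{\Wcal(N)}(P)$, use the explicit $\Pic(\Ocal)$-action on Heegner triples to see that this map is injective on $\Gal(H_\Ocal/K)\cong\Pic(\Ocal)$, and conclude from $\Wcal(N)$ being an elementary abelian $2$-group of order $2^{\omega(N)}$. The paper also observes in passing that injectivity on the full $\Gal(H_\Ocal/\Q)$ (when $\overline{\eta}\neq\eta$) sharpens the bound to $2^{\omega(N)-1}$, which you omit, but that extra remark is not needed for the stated corollary.
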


\begin{proof}
    Recall $\Wcal(N)$ is the group of Atkin--Lehner involutions on $X_0(N)$.

    Assume $P = (\Ocal,\eta,[\mathfrak{a}])$ defines a rational point on $X_0(N)^*$. 

    Every element of $\Gal(H_\Ocal/\Q)$ can be written as $\sigma(\mathfrak{b})$ or $\tau \sigma(\mathfrak{b})$ for some $[\mathfrak{b}] \in \Pic(\Ocal)$, so for every $[\mathfrak{b}] \in \Pic(\Ocal)$, there exist $Q$ and $Q'$ Hall divisors of $N$ such that 
    \begin{equation}
P^{\sigma(\mathfrak{b})} = w_Q^{(N)} (P), \qquad P^{\tau \sigma(\mathfrak{b})} = w_{Q'}^{(N)}(P).
    \end{equation}
   This defines a map from $\Gal(H_\Ocal/\Q)$ to $\Wcal(N)/\Stab_{\Wcal(N)}(P)$, which is readily checked to be a group morphism, injective on $\Gal(H_\Ocal/K)$ by definition of Heegner points formula $(b)$ (it is even injective on the whole Galois group unless $\overline{\eta} = \eta$ by formulas $(a)$ and $(b)$ combined).

    Consequently, $\Pic(\Ocal)$ is a 2-torsion group of order at most $2^{\omega(N)}$ as $\Wcal(N)$ is (and even $2^{\omega(N)-1}$ if we know that $\overline{\eta} \neq \eta$).
\end{proof}

\begin{rem}
    This result also allows us to explicitly describe the Atkin--Lehner involutions that fix a particular Heegner point: indeed, by direct application of the formula, for the Heegner point $P$ associated to a triple $(\Ocal,\eta,[\mathfrak{a}])$, 
    \begin{equation}
\Stab_{\Wcal(N)}(P) = \{ w_Q^{(N)} \, | \,  Q \| N \, \textrm{such that} \,   \overline{\eta_Q} = \eta_Q \textrm{ and } \eta_Q \textrm{  is principal}\}.
    \end{equation}
\end{rem}
\begin{rem}
    Comparing the formulas of Galois and Atkin--Lehner actions more closely, we can even prove under the same hypotheses that $\Pic(\Ocal)$ is generated by the primary factors of $\eta$ and the ramified ideals of $K$ dividing $N$.
\end{rem}

\subsection{Criterion and algorithm for existence of rational Heegner points}
\label{sec:conditionb}
Throughout this section, we fix the following notation.
\begin{itemize}
    \item $D_K<0$ is a fundamental discriminant, and $c \in \Z_{\geq 1}$ a conductor, $D = c^2 D_K$.
    \item $\Ocal$ is the imaginary quadratic order of discriminant $D$, and 
    \begin{equation}
\alpha_\Ocal = \left\{ \begin{array}{rcl}
     \sqrt{D/4} & \textrm{ if } &  4|D_K \\
    c \left(\frac{ 1 + \sqrt{D_K}}{2}\right) & \textrm{ if } & 4 \nmid D_K
\end{array}\right.
    \end{equation}
    so that $\Ocal = \Z[\alpha_\Ocal]$. 
    \item $P_\Ocal$ is the minimal polynomial of $\alpha_\Ocal$ over $\Q$, i.e.\ $X^2 - D/4$ if $4|D_K$ and $X^2 - c X + c^2 \left( \frac{1-D_K}{4}\right)$ otherwise.
\end{itemize}

\begin{prop}
    For every prime $p|c$ and every integer $k \geq 1$, the admissible ideals of norm $p^k$ in $\Ocal$ are in bijective correspondence with the integers $\lambda$ modulo $p^k$ such that $P_\Ocal(\lambda)=0 \mod p^k$ but not $p^{k+1}$, via the bijection 
    \begin{equation}
\lambda \longmapsto I := p^k \Z + (\lambda - \alpha_\Ocal) \Z = (p^k, \lambda - \alpha_\Ocal).
    \end{equation}
\end{prop}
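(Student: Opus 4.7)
The plan is to split the proof into two parts: first showing that the map $\lambda \mapsto I_\lambda := (p^k, \lambda - \alpha_\Ocal)$ gives a bijection between residue classes $\lambda \pmod{p^k}$ with $P_\Ocal(\lambda) \equiv 0 \pmod{p^k}$ and \emph{all} $\Ocal$-ideals with cyclic quotient of order $p^k$, and then showing that within this correspondence, invertibility is equivalent to $P_\Ocal(\lambda) \not\equiv 0 \pmod{p^{k+1}}$. Write $P_\Ocal(X) = X^2 + aX + b$. The first part is essentially formal. Using the relations $p^k \alpha_\Ocal = \lambda p^k - p^k(\lambda-\alpha_\Ocal)$ and $(\lambda - \alpha_\Ocal) \alpha_\Ocal = -(\lambda + a)(\lambda - \alpha_\Ocal) + P_\Ocal(\lambda)$ (the constant term lies in $p^k\Z$ by hypothesis), one obtains $I_\lambda = p^k \Z + (\lambda - \alpha_\Ocal)\Z$ as a $\Z$-module of index $p^k$ in $\Ocal$, with cyclic quotient generated by $1$. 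Injectivity follows from $I_\lambda \cap \Z = \gcd(p^k, P_\Ocal(\lambda))\Z = p^k\Z$: two classes $\lambda, \lambda'$ yielding the same ideal must satisfy $\lambda - \lambda' \in I_\lambda \cap \Z$. Surjectivity onto cyclic-quotient ideals of index $p^k$ is equally direct: given such an ideal $I$, take $\lambda$ to be the image of $\alpha_\Ocal$ under $\Ocal/I \xrightarrow{\sim} \Z/p^k\Z$.

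For the invertibility criterion, I would use the standard fact (e.g.\ Cox, \emph{Primes of the form $x^2+ny^2$}, Prop.~7.4) that in a quadratic order, an ideal $I$ is invertible if and only if $I\overline{I} = [\Ocal:I]\cdot \Ocal$. Direct computation gives $\overline{I_\lambda} = (p^k, \lambda + a + \alpha_\Ocal)$ and, since $(\lambda-\alpha_\Ocal)(\lambda + a + \alpha_\Ocal) = P_\Ocal(\lambda) = p^k m$ (where $m := P_\Ocal(\lambda)/p^k$), the product is generated by $p^{2k}$, $p^k(2\lambda + a)$, $p^k(a + 2\alpha_\Ocal)$, and $p^k m$. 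Crucially, a short check of both cases in the definition of $\alpha_\Ocal$ shows $a + 2\alpha_\Ocal = \sqrt{D}$, so factoring out $p^k$ yields
\begin{equation}
I_\lambda \overline{I_\lambda} = p^k \cdot (d, \sqrt D), \qquad d := \gcd(p^k, 2\lambda + a, m).
\end{equation}
The ideal $(d,\sqrt D) \subset \Ocal$ equals $\Ocal$ iff $\gcd(d, D) = 1$ (a prime of $\Ocal$ contains $(d,\sqrt D)$ iff it lies over a rational prime dividing both $d$ and $D$). Since $d \mid p^k$ is a power of $p$, and $p \mid c$ implies $p^2 \mid D$, this condition becomes simply $p \nmid d$.

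Finally, to extract the stated condition from $p \nmid d$, I use that $p \mid c$ forces $p \mid P_\Ocal'(\lambda) = 2\lambda + a$ whenever $P_\Ocal(\lambda) \equiv 0 \pmod p$: squaring the derivative gives $(2\lambda + a)^2 = 4 P_\Ocal(\lambda) + D$, and with $p \mid P_\Ocal(\lambda)$ and $p^2 \mid D$ this yields $p^2 \mid (2\lambda+a)^2$ in the odd case (with a direct check for $p=2$). Consequently $p$ divides $\gcd(p^k, 2\lambda + a)$ automatically, so $p \mid d$ if and only if $p \mid m$, i.e.\ $P_\Ocal(\lambda) \equiv 0 \pmod{p^{k+1}}$. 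As a byproduct, the same divisibility $p \mid P_\Ocal'(\lambda)$ shows via the Taylor expansion $P_\Ocal(\lambda + p^k t) = P_\Ocal(\lambda) + p^k t\, P_\Ocal'(\lambda) + p^{2k} t^2$ that the condition ``$P_\Ocal(\lambda) \not\equiv 0 \pmod{p^{k+1}}$'' is well-defined on residue classes mod $p^k$, so the statement makes sense. The main obstacle is the invertibility analysis: recognizing $a + 2\alpha_\Ocal$ as $\sqrt D$ is the pivotal simplification that turns $I \overline I$ into a transparently diagnostic ideal, and one must be careful to justify the equivalence ``invertible iff $I\overline I = N(I)\Ocal$'' in the non-maximal order setting.
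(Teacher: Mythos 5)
Your overall structure parallels the paper's, but both steps are executed differently: for the bijection, the paper works inside $\Ocal/p^k\Ocal$ and finds rank-$1$ free $\Z/p^k\Z$-submodules stable under multiplication by $\alpha_\Ocal$, whereas you argue directly via $I_\lambda \cap \Z = p^k\Z$ and the image of $\alpha_\Ocal$ in the quotient; for invertibility, the paper inspects the multiplier ring through the matrix of multiplication by $\alpha_\Ocal$ in the basis $(p^k, \lambda-\alpha_\Ocal)$ and checks for a common divisor, while you use the norm criterion $I\overline I = N(I)\Ocal$. Both routes are legitimate in principle, and the identity $a + 2\alpha_\Ocal = \sqrt D$ is a clean simplification.

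However, your invertibility computation has a gap at $p = 2$. The correct generating set of $I_\lambda\overline{I_\lambda}$ is $\{p^{2k},\, p^k(\lambda-\alpha_\Ocal),\, p^k(\lambda+a+\alpha_\Ocal),\, P_\Ocal(\lambda)\}$; replacing the two middle generators by their sum and difference $p^k(2\lambda+a)$ and $p^k(a+2\alpha_\Ocal)$ is a determinant-$2$ change, which for $p=2$ drops to a proper sub-$\Z$-module. Concretely, for $\Ocal=\Z[2i]$ (so $D_K=-4$, $c=2$, $D=-16$), $p=2$, $k=2$, $\lambda=2$: one computes $I_\lambda\overline{I_\lambda}=8\Z+8i\Z$, while your formula $p^k(d,\sqrt D)$ with $d=\gcd(4,4,2)=2$ and $\sqrt D=4i$ gives $4\cdot(2,4i)=8\Z+16i\Z$, a proper subideal. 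The final criterion nonetheless survives: keeping $p^k(\lambda-\alpha_\Ocal)$ and replacing only $p^k(\lambda+a+\alpha_\Ocal)$ by its sum with $p^k(\lambda-\alpha_\Ocal)$ (a unimodular move) gives $I_\lambda\overline{I_\lambda}=p^k\cdot(p^k,\lambda-\alpha_\Ocal,2\lambda+a,m)\supset p^k I_\lambda$, and the quotient by $p^k I_\lambda$ is the ideal of $\Ocal/I_\lambda\cong\Z/p^k\Z$ generated by the classes of $2\lambda+a$ and $m$, namely $(d)\subset\Z/p^k\Z$; this is everything iff $d=1$ iff (using $p\mid 2\lambda+a$) $p\nmid m$, which is your stated conclusion. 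So the statement holds, but the sentence ``the product is generated by $p^{2k}$, $p^k(2\lambda+a)$, $p^k(a+2\alpha_\Ocal)$, and $p^k m$'' is false for $p=2$ and needs the above repair.
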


\begin{rem}
    This condition is invariant by congruence modulo $p^k$: indeed, if $\lambda' = \lambda + p^k \ell$ with $\lambda$ a root of $P_\Ocal$ modulo $p^k$, in particular $p|\lambda$ and then $P_\Ocal(\lambda') -P_\Ocal(\lambda) = 2 p^k \lambda + p^{2k} \ell^2$ if $4|D_k$ and $-c p^k \ell + 2p^k \lambda + p^{2k}\ell^2$ if $4 \nmid D_K$, so it zero modulo $p^ {k+1}$, since $p|c$ and $p|\lambda$.
\end{rem}

\begin{proof}
    Ideals of norm $p^k$ in $\Ocal$ must contain $p^k \Ocal$, so we are looking for ideals in the quotient ring 
\begin{equation}
\overline{\Ocal} := \Ocal/p^k \Ocal \cong \Z[X]/(p^k, P_\Ocal) \cong \Z/p^k \Z \oplus \Z/p^k \Z \overline{X}.
\end{equation}
An ideal $\overline{I}$ in $\overline{\Ocal}$ such that $\overline{\Ocal}/\overline{I}\overline{\Ocal}$ is cyclic of order $p^k$ must be a free $\Z/p^k \Z$-module of rank 1, in other words of the shape
\begin{equation}
\overline{I} = \left( \Z/p^k \Z \right) (a + b \overline{X}),
\end{equation}
with $a$ or $b$ invertible in $\Z/p^k \Z$. It is an ideal of $\overline{\Ocal}$ if and only if it is stable by multiplication by $\overline{X}$, in other words for $M_\Ocal$ the matrix of multiplication by $\alpha_\Ocal$ in the basis $(1,\alpha_\Ocal)$, there must be a $\overline{\lambda}$ such that 
\begin{equation}
M_\Ocal \cdot \begin{pmatrix} a \\ b \end{pmatrix} = \overline{\lambda} \begin{pmatrix} a \\ b \end{pmatrix} \Leftrightarrow (M_\Ocal - \overline{\lambda} I_2) \cdot \begin{pmatrix} a \\ b \end{pmatrix} = 0.
\end{equation}
Multiplying by the comatrix we get $P_\Ocal(\lambda) \begin{pmatrix} a \\ b \end{pmatrix} =0$, hence $P_\Ocal(\lambda)=0$ in $\Z/p^k \Z$.  

Looking more precisely at the matrices $M_\Ocal$, we obtain that $a= \lambda b$ if $4|D_K$ and $a = (\lambda-c)b$ so $b$ must be invertible, multiplying by $- b^{-1}$ we can write after lifting the ideal $I = p^k \Z + (\lambda + \alpha_\Ocal)\Z$ (if $4|D_K$) and $I = p^k \Z + ((\lambda-c) + \alpha_\Ocal)\Z$ (if $4 \nmid D_K$) (and conversely we can check that for $\lambda$ a root of $P_\Ocal$ in $\Z/p^k \Z$, this does define an ideal of norm $p^k$ in $\Ocal$).

Finally, replacing $\lambda$ by $-\lambda$ if $4|D_K$ and $c-\lambda$ if $4 \nmid D_K$ (i.e.\ the conjugate root of $P_\Ocal \mod p^k$), which satisfies the same property, we obtain the ideal $I = (p^k, \lambda - \alpha_\Ocal)$ in both cases.

Now, we need to check the invertibility of $I$, i.e.\ we need to ensure that the ring of elements of $K$ stabilizing $I$ is not larger than $\Ocal$. This is equivalent to asking that the matrix $M'_\Ocal$ of multiplication by $\alpha_\Ocal$ in the basis $(p^k,\lambda -  \alpha_\Ocal)$ is not a strict multiple of another integer matrix. Now, writing precisely what this matrix is, we obtain 
\begin{equation}
M'_\Ocal = \left\{ \begin{array}{rcl}
\begin{pmatrix} 
\lambda & P_\Ocal(\lambda)/p^k \\ 
- p^k & -\lambda 
\end{pmatrix} & \textrm{ if  } & 4|D_K \\
\begin{pmatrix} 
\lambda & P_\Ocal(\lambda)/p^k \\ 
- p^k & c-\lambda 
\end{pmatrix}  & \textrm{ if } & 4 \nmid D_K.
\end{array}\right.
\end{equation}
In both cases, the gcd of all coefficients of the matrix is 1 if $v_p(P_\Ocal(\lambda))=k$ and a multiple of $p$ otherwise (recall $p$ divides $c$ and $\lambda$), so the ideal is invertible if and only if $v_p(P_\Ocal(\lambda))=k$.
\end{proof}

\begin{rem}
   As explained by the theory of nonmaximal orders, one finds that there are never admissible ideals of norm $p$ if $p|c$. 
\end{rem}

\subsection{Results}

With these results, and using bounds on the sizes of the class numbers of orders \cite{Klaise} and
Corollary \ref{corboundclassnumber}, one can determine algorithmically for every small level $N$ the exact list of all possible Heegner points defining rational points on $X_0(N)^*(\Q)$. We have written up and run this algorithm (which is probably of independent interest), with the following results for our list of minimal exceptional levels. 

Combining this with a sharp upper bound on the number of $X_0(N)^*$ coming from Chabauty--Coleman, we are able to completely classify the rational points on almost all small exceptional levels and show that they are trivial.

To carry out the algorithm in practice, we first need to list all orders $\Ocal$ with $h(\Ocal)$ bounded by $2^{\omega(N)}$. A table can be found in \cite[p.~19]{Klaise}.
Instead of working explicitly with the ideals and reduction of ideals to ideal classes in the ideal class group of of $\Ocal$, which seems difficult, we work with the form class group of reduced primitive positive definite binary quadratic forms of  discriminant $D$. By \cite[7, Theorem 7.7\,(i)]{Cox22} these are isomorphic.

For levels $N \leq 240$ and $N = 396$, excluding the cases $N \neq 99, 125, 169$ (where the Mordell--Weil rank is equal to genus), we constructed models of the modular curves $X_0(N)^*$ and explicitly classified their rational points.
\begin{rem}
    We warn the reader that in Magma V2.28-18, the intrinsic \texttt{X0NQuotient} does not always produce the correct equations for the specified quotient of $X_0(N)$. Building on code from  \cite{AKMJNOV} we wrote code to compute the correct models and their associated data. This bug has now been fixed in Magma V2.28-20.
\end{rem}
The Jacobians $\Jac(X_0(N)^*)$ have Mordell--Weil rank less than the genus of $X_0(N)^*$. 
When the genus is $1$, the rank is $0$, and we simply computed the torsion subgroup to find the rational points. Otherwise, we used the Chabauty--Coleman method, in combination with a Mordell--Weil sieve if necessary.
\Cref{table:heegnerpts} summarizes the results of our computations.

\begin{center}
\begin{table}
\begin{tabular}{cccccl}
     level& genus & $\Q$-pts & $\Q$-cusps & $\Q$-Heegner points & CM discriminants  \\
     \midrule
40	&1	&6	&2& 4& $-15,-16, -60, -160$ \\
48	&1&	8&	3&  5&  $-15, -48, -48, -60, -192$\\
72	&1&	6&	4&	2&	$-32, -288$\\
80  &1&	6&	3&	3&	$-15, -60, -64$\\
88	&2	&6&	2&	4&	$-7, -28, -32, -352$\\
96	&1&	4&	2&	2&	$-15, -60$\\
100	&1&	5&	2&	2&	$-16, -64$\\
108 &1	&3	&2&	1&	$-32$\\
112	&2&	6&	3&	3&	$-7, -28, -448$\\
120	&1&	6&	2&	4&	$-15, -60, -96, -480$\\
135	&2&	3&	1&	2&	$-11, -35$\\
144	&3&	6&	6&	0& \\
147	&2&	12&	1&	8&	$-3, -12, -24, -48, -75, -147, -147, -147$\\
162 &3&	5&	1&	4&	$-8, -20, -72, -72$\\
176	&4&	5&	3&	2&	$-7, -28$\\
180 &2&	4&	4&	0& \\
196	&3	&4&	2&	2&	$-12, -48$\\
200	&3&	3&	2&	1&	$-16$\\
216	&5&	3&	2&	1&	$-32$\\
224 &4&	6&	2&	4&	$-7, -28, -112, -112$\\
225	&4&	4&	2&	2&	$-11, -99$\\
240	&3&	6&	3&	3&	$-15, -60, -960$\\
396 & 5 & 6 & 4 & 2& $-32,-288$
\end{tabular}
\caption{Rational points on small exceptional levels}
\label{table:heegnerpts}
\end{table}
\end{center}
There are two levels $N$  where the number of $\Q$-rational cusps plus the number of Heegner points on $X_0(N)^*$ does not add up to the number of $\Q$-points, $N = 100$ and  $147$. We will investigate and explain these remaining points in the following sections.

\subsection{Rational non-Heegner points}
\label{sec:CMpoints}
The following pair of propositions allows us to deduce the existence of certain rational CM points on $X_0(N)^*$ that arise from lifting rational Heegner points on $X_0(M)^+$.
We will apply some of the corollaries of these propositions to the remaining small exceptional levels $N = 100$ and $N = 147$ to explain two of the missing CM points.
We then show that $X_0(147)^*(\Q)$ contains two exceptional (non-CM, non-cuspidal) points.

Before we prove this, we recall a fact about isogeny volcanoes from \cite{SutherlandIsogenyVolcanoes} and \cite{KohelThesis}. 
Let $\ell$ be a prime number.
Recall that an $\ell$-isogeny of CM elliptic curves $\phi: E_{D}\to E_{D'}$ is \emph{horizontal} if $D = D'$, \emph{ascending} if $D'|D$ and $D' \neq D$,  and \emph{descending} if   $D|D'$ and $D' \neq D$.

\begin{prop}[{\cite[\S 2.7, \S 2.9, Lemma 6, Remark 8]{SutherlandIsogenyVolcanoes} and \cite[Proposition 23]{KohelThesis}}]\label[proposition]{prop:isogvocanoes}
Let $E/\C$ be an elliptic curve with CM by an order $\Ocal$ of discriminant $D$ in an imaginary quadratic field $K$.
Let $\ell$ be a prime number. Then:
\begin{enumerate}
    \item If $\ell$ divides $[\Ocal_K : \Ocal]$, then $E$ has a unique ascending $\ell$-isogeny, no horizontal $\ell$-isogenies, and $\ell$ descending ones.
    \item Otherwise  $E$ has $1+  \left( \frac{D}{\ell} \right)$ horizontal $\ell$-isogenies,  $1-  \left( \frac{D}{\ell} \right)$ descending $\ell$-isogenies,  and no ascending $\ell$-isogenies.
    \item If $\varphi:E \to E'$ is an $\ell$-isogeny towards some elliptic curve $E'$, if $f,f'$ are the respective conductors of endomorphism rings of $E$ and $E'$, then $f=f'$ or $f = \ell^{\pm 1} f'$.
\end{enumerate}
\end{prop}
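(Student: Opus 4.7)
The plan is to use the lattice-theoretic description of CM elliptic curves over $\C$. Writing $E \cong \C/\mathfrak{a}$ for some invertible fractional $\Ocal$-ideal $\mathfrak{a}$, the $\ell + 1$ order-$\ell$ subgroups $C \subset E[\ell]$ correspond bijectively to the index-$\ell$ sublattices $L \subset \mathfrak{a}$, and for each such $L$ the endomorphism ring of the quotient elliptic curve $\C/L$ is precisely the multiplier order $\{\alpha \in K : \alpha L \subseteq L\}$. The classification thus reduces to determining, for each sublattice, whether its multiplier ring is $\Ocal$ itself (horizontal), a strictly larger order (ascending), or a strictly smaller one (descending).

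The key step is to analyze $E[\ell] \cong \mathfrak{a}/\ell\mathfrak{a}$ as a module over $R := \Ocal/\ell\Ocal$, splitting the analysis according to whether $\ell$ divides the conductor $f := [\Ocal_K : \Ocal]$. When $\ell \nmid f$, we have $R \cong \Ocal_K/\ell\Ocal_K$, whose structure is controlled by the Kronecker symbol $\left(\tfrac{D}{\ell}\right)$: it is isomorphic to $\F_\ell \times \F_\ell$, $\F_{\ell^2}$, or $\F_\ell[\epsilon]/(\epsilon^2)$ according as $\ell$ splits, is inert, or ramifies in $K$. A direct count of $\Ocal$-stable lines in $R$ yields the $1 + \left(\tfrac{D}{\ell}\right)$ horizontal isogenies, and the remaining ones are descending (no ascending one is possible since $\Ocal$ is already locally maximal at $\ell$). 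When $\ell \mid f$, the ring $R$ is local but non-reduced; there is a unique $\Ocal$-stable proper sublattice corresponding to the unique strictly larger order $\Ocal' := \Z + (f/\ell)\Ocal_K$, giving the ascending isogeny, while the $\ell$ remaining sublattices are all descending, and no horizontal ones exist because any proper $\Ocal$-stable sublattice necessarily enlarges the multiplier ring.

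Item 3 follows from the local observation that $L \subset \mathfrak{a}$ of index $\ell$ is a modification only at $\ell$, so the multiplier rings of $L$ and $\mathfrak{a}$ can differ only in the $\ell$-primary part of their conductors, and by at most a factor of $\ell$, which is a direct index computation on $\ell$-adic lattices. The main obstacle is the non-reduced case $\ell \mid f$: here $R$ is not semisimple and one cannot diagonalize the $\Ocal$-action on $E[\ell]$ using classical module decomposition, so pinpointing the unique $\Ocal$-stable line requires an explicit computation, concretely identifying it as the image of $\ell\Ocal'\mathfrak{a} \hookrightarrow \mathfrak{a}$ modulo $\ell\mathfrak{a}$, and then verifying that this kernel corresponds to the expected ascending isogeny $\C/\mathfrak{a} \to \C/\Ocal'\mathfrak{a}$.
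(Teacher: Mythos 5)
Your lattice-theoretic approach over $\C$ (realizing $E \cong \C/\mathfrak{a}$ for an invertible fractional $\Ocal$-ideal $\mathfrak{a}$, identifying order-$\ell$ subgroups with index-$\ell$ sublattices, and computing multiplier orders) is exactly the argument in Kohel's thesis which the paper cites, so the strategy matches the intended source. The reduction to the $R$-module structure of $\mathfrak{a}/\ell\mathfrak{a}$ with $R = \Ocal/\ell\Ocal$, and the case split on whether $\ell$ divides the conductor, are both standard and sound; your identification of the unique $R$-stable line in the non-reduced case as the reduction of $\ell\Ocal'\mathfrak{a}$ is the right way to pin down the ascending direction, and your argument that non-$\Ocal$-stable sublattices must be descending (because orders in a fixed imaginary quadratic field are totally ordered by inclusion) is correct.

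However, be aware that if you carry your count through carefully it actually \emph{disagrees} with the statement as written in the paper. When $\ell \nmid [\Ocal_K : \Ocal]$ there are $\ell + 1$ index-$\ell$ sublattices of $\mathfrak{a}$ in total; you correctly find $1 + \left(\tfrac{D}{\ell}\right)$ of them to be $\Ocal$-stable (horizontal), and you say ``the remaining ones are descending,'' which is
\begin{equation}
(\ell + 1) - \left(1 + \left(\tfrac{D}{\ell}\right)\right) = \ell - \left(\tfrac{D}{\ell}\right),
\end{equation}
whereas item (2) of the proposition claims $1 - \left(\tfrac{D}{\ell}\right)$ descending $\ell$-isogenies. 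Your count is the correct one (over $\C$ there are always exactly $\ell + 1$ cyclic $\ell$-isogenies from $E$, and e.g.\ for $D = -3$, $\ell = 7$ split, there are $2$ horizontal and $6 = \ell - 1$ descending, not $0$); the ``$1$'' in the proposition appears to be a typo for ``$\ell$''. It is worth flagging that this typo is harmless for the paper's later applications (Propositions \ref{prop:CMpointslift} and \ref{prop:CMpointskernels} only invoke the horizontal and ascending counts and item (3)), but your proof attempt should have surfaced the discrepancy explicitly rather than leaving the descending count implicit. Apart from this, item (3) as you sketch it is fine: since $L$ and $\mathfrak{a}$ agree away from $\ell$ and satisfy $\ell\mathfrak{a} \subseteq L \subseteq \mathfrak{a}$ locally at $\ell$, the $\ell$-parts of the conductors differ by at most one power of $\ell$, though this localization argument deserves a sentence or two more than you give it.
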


\begin{prop}[Lifting rational Heegner to rational CM non-Heegner points]\label{prop:CMpointslift}
 Let $d$ and $M$ be positive coprime integers.
 
 Let $ (\Ocal,\eta,[\mathfrak{a}])$ be a Heegner triple  of level $M$ (with $\Ocal$ a nonmaximal order) such that the associated point $P=(E,C_M) \in X_0(M)(\Qb)$ defines a rational point $P^* \in X_0(M)^*(\Q)$.

 Assume that there is an elliptic curve $E_0$ defined over $\Q$ with CM orders $\Ocal_0$ an order containing strictly $\Ocal$, such that there is a unique (up to automorphism) cyclic $d$-isogeny $\varphi: E \rightarrow E_0$, with kernel denoted by $C_d$.

 Then, $(E,C_M + C_d) \in X_0(dM)(\Qb)$ defines a rational point in $X_0(dM)/\langle w_{Q}^{(dM)}, Q \| M \rangle$, hence a rational (non-Heegner) point in $X_0(dM)^*(\Q)$.
\end{prop}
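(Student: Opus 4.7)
The plan is as follows. First I would verify that $\gcd(d,M)=1$ forces $C_M\cap C_d=0$, so $C_M+C_d$ is cyclic of order $dM$ and $(E,C_M+C_d)$ is a well-defined point of $X_0(dM)(\Qb)$. To show it descends to a rational point of $X_0(dM)/H$ with $H:=\langle w_Q^{(dM)}: Q\|M\rangle$, it suffices to exhibit, for every $\sigma\in\GalQ$, a Hall divisor $Q_\sigma\|M$ such that $\sigma(E,C_M+C_d)=w_{Q_\sigma}^{(dM)}(E,C_M+C_d)$ as points of $X_0(dM)$.

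The hypothesis $P^*\in X_0(M)^*(\Q)$ already supplies such a $Q_\sigma$ together with an isomorphism $\phi_\sigma\colon\sigma E\xrightarrow{\sim} E/C_M[Q_\sigma]$ carrying $\sigma C_M$ to $(C_M+E[Q_\sigma])/C_M[Q_\sigma]$. Writing out $w_{Q_\sigma}^{(dM)}$ and using $\gcd(d,Q_\sigma)=1$, the rationality claim reduces to showing $\phi_\sigma(\sigma C_d)=\widetilde{C}_d:=(C_d+C_M[Q_\sigma])/C_M[Q_\sigma]$, the image of $C_d$ under the quotient $\pi_{Q_\sigma}\colon E\to E/C_M[Q_\sigma]$. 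Since $E_0$ is defined over $\Q$, the map $\sigma\varphi\colon\sigma E\to E_0$ is again a cyclic $d$-isogeny, and applying $\sigma$ to the uniqueness hypothesis identifies $\sigma C_d$ as the unique cyclic $d$-subgroup of $\sigma E$ whose quotient is $\Qb$-isomorphic to $E_0$. Transporting along $\phi_\sigma$, the subgroup $\phi_\sigma(\sigma C_d)$ is then the unique cyclic $d$-subgroup of $E/C_M[Q_\sigma]$ with quotient $\cong E_0$. Since $(E/C_M[Q_\sigma])/\widetilde{C}_d=E/(C_d+C_M[Q_\sigma])=E_0/\pi_d(C_M[Q_\sigma])$ with $\pi_d\colon E\to E_0$, the identity $\phi_\sigma(\sigma C_d)=\widetilde{C}_d$ will follow once I show $E_0/\pi_d(C_M[Q_\sigma])\cong E_0$.

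This last isomorphism will be the main obstacle, and I would prove it via CM lattice theory. Because $E_0$ is defined over $\Q$ with complex multiplication, the order $\Ocal_0$ has class number one, so any elliptic curve over $\C$ with CM by $\Ocal_0$ is isomorphic to $E_0$. Choosing lattice models $E=\C/\mathfrak{a}$ and $E_0=\C/\Ocal_0$ with $\mathfrak{a}\subset\Ocal_0$ an invertible $\Ocal$-ideal satisfying $\Ocal_0/\mathfrak{a}\cong C_d$, the target acquires Weierstrass lattice $\eta_{Q_\sigma}^{-1}\mathfrak{a}+\Ocal_0$ in $K:=\mathrm{Frac}(\Ocal_0)$, where $\eta_{Q_\sigma}$ denotes the $Q_\sigma$-part of the Heegner ideal $\eta$. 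Since any $\Ocal_0$-stable lattice containing $\mathfrak{a}$ must contain $\Ocal_0\mathfrak{a}$, which has index $[\Ocal_0:\Ocal]$ over $\mathfrak{a}$, the degree $d$ of $\varphi$ is divisible by $[\Ocal_0:\Ocal]$; combined with $\gcd(d,M)=1$, this gives $\gcd(Q_\sigma,[\Ocal_0:\Ocal])=1$. Consequently $\eta_{Q_\sigma}$ extends cleanly to an invertible $\Ocal_0$-ideal and the lattice $\eta_{Q_\sigma}^{-1}\mathfrak{a}+\Ocal_0=\Ocal_0(\eta_{Q_\sigma}^{-1}\mathfrak{a})+\Ocal_0$ is $\Ocal_0$-stable; being a fractional $\Ocal_0$-ideal in a class number one order it is principal, so $\C/(\eta_{Q_\sigma}^{-1}\mathfrak{a}+\Ocal_0)\cong\C/\Ocal_0=E_0$, as required. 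Finally, the resulting rational point on $X_0(dM)^*(\Q)$ is non-Heegner because the underlying cyclic $dM$-isogeny has source $E$ with CM by $\Ocal$, while its target (computed by the same lattice argument applied with $Q_\sigma$ replaced by $M$) has CM by $\Ocal_0\neq\Ocal$.
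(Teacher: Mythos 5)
Your structural reduction matches the paper's: you use the hypothesis on $P^*$ to produce $Q_\sigma\|M$ with $\sigma P=w_{Q_\sigma}^{(M)}(P)$, identify $\sigma C_d$ as the kernel of the unique cyclic $d$-isogeny $\sigma E\to E_0$, and reduce the claim to the key isomorphism $E_0/\pi_d(C_M[Q_\sigma])\cong E_0$. Where you diverge is in the proof of this isomorphism: the paper argues via \cref{prop:isogvocanoes}\,$(3)$ that the conductor of $\End(E_0/\pi_d(C_M[Q_\sigma]))$ differs from that of $\Ocal_0$ only at primes dividing both $d$ and $Q_\sigma$, hence nowhere, whereas you compute the target lattice $\Lambda=\eta_{Q_\sigma}^{-1}\mathfrak{a}+\Ocal_0$ explicitly.

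The lattice argument has a gap in its final step. Showing that $\Lambda$ is $\Ocal_0$-stable only gives $\Ocal_0\subset\End(\Lambda)$, and invoking $h(\Ocal_0)=1$ to deduce that $\Lambda$ is principal is not valid for an arbitrary $\Ocal_0$-stable lattice: the class group of a nonmaximal order parametrizes only the \emph{invertible} (proper) fractional ideals, and a fractional ideal whose endomorphism ring strictly contains $\Ocal_0$ need not be principal (the conductor ideal of $\Z[\sqrt{-3}]$, a class-number-one order, is a standard example). What you actually need is $\End(\Lambda)=\Ocal_0$, which is precisely the content of the paper's conductor computation. Your route can be closed by a local check: at primes $\ell\mid Q_\sigma$, coprimality of $d$ and $M$ forces $\ell\nmid d$, hence $\ell\nmid[\Ocal_0:\Ocal]$ and $\mathfrak{a}_\ell=(\Ocal_0)_\ell$, so $\Lambda_\ell=(\eta_{Q_\sigma}^{-1})_\ell$ is invertible over $\Ocal_\ell=(\Ocal_0)_\ell$; at all other primes $\Lambda_\ell=(\Ocal_0)_\ell$. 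With this supplement the argument goes through. You also justify the ``non-Heegner'' conclusion, which the paper's proof leaves implicit.
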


\begin{proof}
 First, for every $\sigma \in \GalQ$, there is a unique cyclic $d$-isogeny between $E^\sigma$ and $E_0$, and its kernel is $C_d^\sigma$. Indeed, $\varphi^\sigma$ provides such an isogeny as $E_0$ is defined over $\Q$, and conversely if there is another $d$-cyclic subgroup $C'_d$ of $E^\sigma$ such that $E^\sigma/C'_d \cong E_0$, then by applying $\sigma^{-1}$, one obtains another $d$-cyclic isogeny $E \rightarrow E_0$ for the same reason, hence $(C'_d)^{\sigma^{-1}} = C_d$ by uniqueness, so $C'_d = C_d$.

 Now, by hypothesis, for every $\sigma \in \GalQ$, there exists $Q \| M$ such that $P^\sigma = w_Q^{(M)} (P)$, i.e.\ 
     \begin{equation}
(E^\sigma, C_M^\sigma) = w_Q^{(M)} (E,C_M) = (E/C_M[Q],(E[Q] + C_M)/C_M[Q]).
     \end{equation}
     Denote by $\Qcal$ the set of $Q$'s obtained as such.
     
 Second, denote for each $Q\in \Qcal$ by $\psi_Q$ the quotient isogeny $E \rightarrow E/C_M[Q]$. Then, $(E/C_M[Q])/\psi_Q(C_d)$ is isomorphic to $E_0$. Indeed, by hypothesis $E_0$ is the unique elliptic curve with CM by $\Ocal_0$. Let us define $f_0,f,f'$ the respective conductors of $\Ocal_0, \Ocal$ and of the geometric endomorphism ring of $E_0/\varphi(C_d) \cong (E/C_M[Q])/\psi_Q(C_d)$. By~\cref{prop:isogvocanoes}\,$(3)$, $f'/f$ must be supported on the prime factors of $d$ (as is $f/f_0$ because of the isogeny $\varphi$), hence $f'/f_0$ is supported on the prime factors of $d$, but on another hand $f'/f_0$ must be supported on the prime factors of $Q$ because of the isogeny $E_0 \rightarrow E_0/\varphi(C_M[Q])$.
 
 We thus have $f' = f_0$, which proves the elliptic curve is necessarily $E_0$. 
 
Then, $C_d^\sigma$ is the unique kernel of cyclic $d$-isogeny between $E^\sigma \cong E/C_M[Q]$ and $E_0$, but we have shown that $(E/C_M[Q])/\psi_Q(C_d) \cong E_0$, therefore  $\psi_Q(C_d) = C_d^\sigma$ by uniqueness. 
To conclude, consider 
\begin{equation}
P_{dM} = (E,C_d+C_M) \in X_0(dM)(\Qb).
\end{equation}
For every $\sigma \in \GalQ$, choosing $Q \in \Qcal$ such that $P^\sigma = w_Q^{(M)}(P)$, 
\begin{eqnarray}
P_{dM}^\sigma & = & (E^\sigma,C_d^\sigma + C_M^\sigma) \notag \\
& =&  (E/C_M[Q],C_d^\sigma + \psi_Q(E[Q] + C_M)) \\
& = & (E/C_M[Q], \psi_Q(C_d) + \psi_Q(E[Q] + C_M)) \notag
\end{eqnarray}

Combining the equalities, we then obtain an equality of the type
\begin{equation}
P_{dM}^\sigma = (E/C_M[Q], \psi_Q(C_d + E[Q] + C_M)) = w_Q^{(dM)}(P_{dM}),
\end{equation}
for every $\sigma \in \GalQ$, therefore $P_{dM}$ defines a rational point in $X_0(dM)/\langle w_Q^{(dM)}, Q \| M \rangle$ as claimed.
\end{proof}

\begin{prop}\label{prop:CMpointskernels}
Let $E_{-D}$ denote (a choice of) elliptic curves with CM discriminant $-D$. We have unique (cyclic) kernels of $d$-isogenies $E_{-D} \to E_{-D'}$ in the following cases:
\begin{enumerate}[(a)]
    \item $E_{-16} \to E_{-4}$, $d = 4$ 
    \item $E_{-27} \to E_{-3}$, $d = 3$
    \item $E_{-27} \to E_{-3}$, $d = 9$ 
    \item $E_{-12} \to E_{-3}$, $d = 2$
    \item $E_{-48} \to E_{-3}$, $d = 4$ 
    \item $E_{-48} \to E_{-12}$, $d = 2$
    \item $E_{-28} \to E_{-7}$, $d = 2$
    \item $E_{-112} \to E_{-7}$, $d = 4$
    \item $E_{-36} \to E_{-4}$, $d = 3$
\end{enumerate}
\end{prop}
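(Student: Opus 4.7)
The plan is to apply \cref{prop:isogvocanoes} systematically. In every case, $d$ is a prime power $\ell^k$ with $k \in \{1,2\}$ and $\ell \in \{2,3\}$, so the entire argument takes place inside the $\ell$-isogeny volcano of the corresponding imaginary quadratic field ($\Q(i)$ for cases (a), (i); $\Q(\sqrt{-3})$ for (b), (c), (d), (e), (f); $\Q(\sqrt{-7})$ for (g), (h)). For each case I locate the source and target curves at their respective depths (equal to $v_\ell$ of the conductor) and enumerate paths of length $k = v_\ell(d)$ from source to target.

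For the single-step cases ($d = \ell$: namely (b), (d), (f), (g), (i)) the source $E_{-D}$ has $\ell \mid [\Ocal_K : \Ocal]$, so by \cref{prop:isogvocanoes}(1) it admits a unique ascending $\ell$-isogeny, which lands on the unique curve at depth one less. In each instance this is precisely the target $E_{-D'}$; the $\ell$ descending isogenies land at strictly greater depth, hence never reach $E_{-D'}$. This yields the claimed uniqueness.

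For the two-step cases ($d = \ell^2$: (a), (c), (e), (h)), any cyclic $\ell^2$-isogeny factors as $E_{-D} \to E' \to E_{-D'}$ of two $\ell$-isogenies. In (e) and (h), source and target differ in depth by $2$, so both steps must ascend; since each ascending step is unique by \cref{prop:isogvocanoes}(1), the path is unique, and it is cyclic because the conductor decreases strictly at each step (hence neither step is dual to the other). In (a) and (c), the source has depth $1$ and the target depth $0$: a descending-then-ascending walk returns to depth $1$, hence the path must be ascending-then-horizontal. The ascending first step is unique by \cref{prop:isogvocanoes}(1). At the target (depth $0$), the number of horizontal $\ell$-isogenies is $1 + \left(\tfrac{D'}{\ell}\right)$ by \cref{prop:isogvocanoes}(2); since in both (a) and (c) the prime $\ell$ ramifies in $K$ (as $D_K \in \{-4,-3\}$), this Kronecker symbol is $0$ and there is exactly one horizontal $\ell$-isogeny (a self-loop at $E_{-D'}$, since $h(\Z[i]) = h(\Z[\omega]) = 1$). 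Cyclicity of the composition follows because the horizontal step is distinct from the descending one (which would be the dual of the ascending step, producing multiplication by $\ell$).

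I expect no serious obstacle: the main care needed is to verify cyclicity of the two-step compositions, but this is handled cleanly by the ascending/horizontal/descending trichotomy, since dual isogenies sit in the opposite direction. A minor subtlety is that curves with $j \in \{0, 1728\}$ carry extra automorphisms, which could in principle identify distinct isogenies; since the proposition counts \emph{kernels} (cyclic subgroups) rather than isogenies up to isomorphism, this plays no role. The whole proof is therefore a bookkeeping exercise against the volcano structure and takes only a few lines per case.
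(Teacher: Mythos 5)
Your proof is correct and follows essentially the same approach as the paper: the single-step cases (b), (d), (f), (g), (i) use the unique ascending $\ell$-isogeny of \cref{prop:isogvocanoes}(1), cases (e) and (h) compose two unique ascending steps, and cases (a) and (c) compose the unique ascending step with the unique horizontal self-loop at the surface. Your explicit depth-enumeration of paths and the cyclicity check (the dual of an ascending step is descending, hence never coincides with the next ascending or horizontal step) spell out details the paper leaves implicit, but the underlying argument is the same.
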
 

\begin{proof}[Proof of Proposition \ref{prop:CMpointskernels}]
The proofs are straightforward applications of Proposition \ref{prop:isogvocanoes}. For $(b), (d), (f), (g),$ and $(i)$, these are just the unique ascending cyclic isogeny of degree $d$ between the listed elliptic curves.

For $(e),$ and $(h)$, the unique degree 4 cyclic isogeny arises as the composition of two unique degree 2 ascending isogenies.

The cases $(a)$ and $(c)$ are more interesting.
In $(a)$, we start with the unique ascending $2$-isogeny $E_{-16} \to E_{-4}$ (it is crucial here that there is no horizontal 2-isogeny $E_{-16} \to E_{-16}$). Combining this with the unique horizontal $2$-isogeny $E_{-4} \to E_{-4}$, we obtain a unique cyclic $4$-isogeny $E_{-16} \to E_{-4}$. For $(c)$, we start with the a unique ascending isogeny $E_{-27} \to E_{-3}$ of degree $3$ (there is no horizontal 3-isogeny $E_{-27} \rightarrow E_{-27}$). Combining this with the unique horizontal isogeny $E_{-3} \to E_{-3}$ of degree 3, we obtain a unique cyclic $9$-isogeny  $E_{-27} \to E_{-3}$.
\end{proof}

We now apply this to explain the extra points in \Cref{table:heegnerpts}.
At $N = 100$, we are missing one point. The point is a CM point not arising from Heegner points on $X_0(100)^*$. By Propositions \ref{prop:CMpointslift} and \ref{prop:CMpointskernels}, after verifying that there is a Heegner point of discriminant $-16$ on $X_0(25)^*$ we have proven that this is a point given by a cyclic $4$-isogeny from the elliptic curve with CM discriminant $-16$ to the elliptic curve with CM discriminant $-4$.

There are three missing rational points on $X_0(147)^*$.
First, there is a CM point which, by Propositions \ref{prop:CMpointslift} and \ref{prop:CMpointskernels} and the fact that $X_0(49)^*$ has a Heegner point of discriminant $-27$,  is  given by a $3$-isogeny from the elliptic curve with CM discriminant $-27$ to the elliptic curve with CM discriminant $-3$.

There are two exceptional points on $X_0(147)^*$ (rational points which are non-CM and not cusps).
These are given by $j$-invariants satisfying the following minimal polynomials
\label{subsec:147}
{\scriptsize
\begin{align}
    x^4 &- 436294632387445525302369131483180799074870132944408913417997146895640211367 \nonumber \\
    &+ 3030000749471339072/(3^{147}\cdot 83^3) \cdot x^3 \nonumber \\
    &+ 853081114889740152670552404166064400785666244456581901795788811689973173885 \nonumber \\
    &+ 601266913798688224019931114783980466429363219239891148132478178080/(3^{196} \cdot 83^6) \cdot x^2 \nonumber \\
    &- 818921749683932551797439647036751279693219934658565270534946851782413187330 \normalsize{\tag{$\bigstar$}} \label{eq:exceptional1471}  \\
    &+ 827060235054694333188892862612248136656343736742062058004841946438930591846 \nonumber \\
    &+ 4/(3^{199} \cdot 83^7) \cdot x  \nonumber \\
    &+ 124247059244019094779328266651744106659092539602338608324551530837920070424 \nonumber \\
    &+ 137428796981374268229284981398361952272218833829858351835228139179075171149 \nonumber \\
    &+ 7675809024/(3^{200} \cdot 83^8) \nonumber
\end{align}
}

and
{\scriptsize
\begin{align*}
x^4 & -  
2185834256476398919667661699680057386538756568156375/2^{147}\cdot x^3 \nonumber\\ & +   4068652760832854210651704588306466871045151058510583281739967466905718625 / 2^{196} \cdot x^2  \normalsize{\tag{$\bigstar \bigstar$}} \label{eq:exceptional1472}\\ & +  4406473070233200489910589891873135102505191238796926491650077401072362078125  /2^{199} \cdot x \nonumber\\ & + 
2235837537492274292081138826498765155100228727086319246963199330645437211390625/2^{200}. \nonumber
\end{align*} 
}

The points on \eqref{eq:exceptional1471} are a lift via $\psi_{147,3}$ of a Galois orbit of points on $X_0(3)^*$, whose $j$-invariants are the roots of the polynomial
{ \scriptsize
\begin{align}
&x^2 + 8843539984052578516745954230680528468514848296750944/(3^{21}\cdot 83^{21}) \cdot x \\
&-1260732867722543499520886291779027044098193555921947309839666025326704368/(3^{28}\cdot 83^{28}).\notag
\end{align}}
The points on \eqref{eq:exceptional1472} are a lift via $\psi_{147,3}$ of a Galois orbit of points on $X_0(3)^*$, whose $j$-invariants are the roots of the polynomial
{\scriptsize
\begin{align}
    x^2 - 75403995064430432067361575/2^{21} \cdot x + 31863271848515801577905234193374625/2^{28}.
\end{align}
}

For each of the two polynomials \eqref{eq:exceptional1471} and \eqref{eq:exceptional1472}, we checked with Magma that $P$ defines a biquadratic extension of $\Q$.
Let $\Phi_d$ be the modular polynomial for level $d$.
We also checked that for the four roots $j_1,j_2,j_3,j_4$ of $P$ (corresponding to non-CM elliptic curves because $P \notin \Z[X]$), for each pair of indices $m,n \in \{1,2,3,4\}$ we have $\Phi_d(j_m,j_n)=0$ for some $d \in \{1,3,49,147\}$.
Since all the corresponding elliptic curves are non-CM, this means that, up to reindexing the roots, we have a commuting diagram (up to sign) of isogenies as follows:
\begin{equation}
\begin{tikzcd}
	{E_{j_1}} & {E_{j_2}} \\
	{E_{j_3}} & {E_{j_4}}
	\arrow["3", dash, from=1-1, to=1-2]
	\arrow["49", dash, from=1-2, to=2-2]
	\arrow["49", dash, from=2-1, to=1-1]
	\arrow["3", dash, from=2-2, to=2-1]
 \arrow["147", dash, from=1-1, to=2-2]
\end{tikzcd}
\end{equation}

We can now define for each $m \in \{1,2,3,4\}$ the point $P_m = (E_{j_m}, \ker (E_{j_m} \rightarrow E_{j_n})) \in X_0(147)(K)$ where  $j_n$ is the invariant opposite to $j_n$ in the square. For each $\sigma \in \Gal(K/\Q)$, by uniqueness of the isogenies, $P_m^\sigma = P_n$ for some $n$, and for each $d \nmid 147$, $w_d (P_m) = P_{n'}$ for some $n'$. The Galois orbit and Atkin--Lehner orbit of $P_1$ are then identical, which implies that $P_1$ defines a rational point on $X_0(147)^*$.

\begin{remark}
The exceptional points are hyperelliptic involutions of CM points and this implies that the hyperelliptic involution on $X_0(147)^*$ is not modular, which did not seem to be known before.
\end{remark}

\subsection{Dealing with levels square-above exceptional points and conclusion}

In all our family of (minimal) exceptional levels for each we either worked out Mazur's method, or determined fully the rational points, the levels allowing for a nontrivial point with no result of potentially good reduction of the $j$-invariant are $125 = 5^3$ and $147 = 3 \cdot 7^2$.

\begin{rem}
    By Corollary \ref{cor:trivialityandabovelevels}, if $\widetilde{N}$ is square-above $N$, if $X_0(N)^*(\Q)$ is trivial then $X_0(\widetilde{N})(\Q)$ is too, but the converse is not true (in fact we generally expect any level strictly square-above even an exceptional level to have only trivial rational points).
\end{rem}

\begin{prop}
\label{prop:exccasesratpoints}
    Let $\widetilde{N}$ be an exceptional level such that $g(X_0(\widetilde{N})^*)>0$ and assume $P=(E,C_{\widetilde{N}})$ defines a nontrivial rational point on $X_0(\widetilde{N})^*$. Then, we are in one of the following possible cases: 

    \begin{itemize}
        \item $\widetilde{N} = 99$: $X_0(99)^*$ is a rank one elliptic curve, so has infinitely many rational points.
        \item $\widetilde{N} = 125$ and $E$ has $j$-invariant 
        {\tiny 
        \[
\frac{-2^{18}\cdot 3^3 \cdot  23447 \cdot 483165695017 \cdot 26700980784488201 \pm 2^{20} \cdot 3^7 \cdot 5 \cdot 7^3 \cdot 13 \cdot 17 \cdot 19 \cdot 29 \cdot 31 \cdot 59 \cdot 101 \cdot 113 \cdot 179 \cdot 463 \cdot 563 \cdot 1553 \sqrt{509}}{11^5} .
        \]
        }
        \item $\widetilde{N} =147$: $j(E)$ is a root of one of the two degree 4 polynomials given in  \cref{subsec:147}. 

    \item $\widetilde{N}$ is only square-above minimal exceptional levels $N \in \Lcal$ such that $N \geq 240$ or $N=99$ (in the latter case, $\widetilde{N} =396$ or $891$, and then for each such $N$ below $\widetilde{N}$ (or $N=891$), $(m'_{N,M})^{\widetilde{N}} j(E)$ is integral with the notations of  Theorem \ref{prop:integralityexclevel}.
    \end{itemize}
\end{prop}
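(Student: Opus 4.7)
The plan is to descend $\widetilde{N}$ to a minimal exceptional level via the going-down morphisms of \cref{propredalmostsqf}, and then invoke either the explicit classification of rational points at the small levels or the integrality result of \cref{prop:integralityexclevel} at the larger ones.

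First, I would apply \cref{cor:trivialityandabovelevels} repeatedly: for every $N \mid_\square \widetilde{N}$ with $g(X_0(N)^\ast) > 0$, the image of $P$ under the composite map $X_0(\widetilde{N})^\ast \to X_0(N)^\ast$ is a nontrivial rational point $P_N$. By the construction of the minimal family $\Lcal$ and \cref{exceptional levels}, at least one $N \in \Lcal$ with $g(X_0(N)^\ast) > 0$ satisfies $N \mid_\square \widetilde{N}$ (or $\widetilde{N}$ is a prime power handled separately via the references used to form $\Lcal$, namely \cite{Balakrishnanetc,BiluParent}). If any such $N$ satisfies $N \leq 240$ and $N \notin \{99, 125, 147\}$, then \cref{table:heegnerpts} shows that $X_0(N)^\ast(\Q)$ consists only of cusps and CM points, contradicting nontriviality of $P_N$. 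Hence every such minimal $N$ must lie in $\{99, 125, 147\}$ or in the tail $\{N \in \Lcal : N \geq 240\}$.

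Next, I would split on whether $\widetilde{N}$ equals $99$, $125$, or $147$. When $\widetilde{N} = 99$, $X_0(99)^\ast$ is the rank one elliptic curve \cite[Newform orbit 99.2.a.a]{lmfdb}, yielding the first bullet. When $\widetilde{N} = 125$, the classification of $X_0(125)^\ast(\Q)$ from \cite{ArulMuller22} gives the second bullet directly. When $\widetilde{N} = 147$, \cref{table:heegnerpts} together with the analysis in \cref{subsec:147} exhibits the two Galois orbits of exceptional rational points with $j$-invariants given by \eqref{eq:exceptional1471} and \eqref{eq:exceptional1472}; ruling out further rational points is precisely what the Chabauty--Coleman plus Mordell--Weil sieve computation on our explicit model of $X_0(147)^\ast$ achieves.

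Finally, suppose $\widetilde{N}$ is strictly square-above only levels in $\Lcal$ that are either equal to $99$ or satisfy $N \geq 240$. I would pick such an $N$ (with $N \mid_\square \widetilde{N}$) and apply \cref{prop:integralityexclevel} at level $N$ to obtain $(m'_{N,M})^N j(E_N) \in \Ocal_{K_N}$ for the descended lift $(E_N, C_N)$. The descent realizes $E \to E_N$ as a cyclic isogeny of degree dividing $\sqrt{\widetilde{N}/N}$, so \cref{lem:integralityisogenousellcurves} transfers the valuation bound at each prime above $m_{N,M}$ at the cost of a multiplicative factor at most $\widetilde{N}/N$ in the exponent, giving the claimed $(m'_{N,M})^{\widetilde{N}} j(E) \in \Ocal_K$.

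The main obstacle is the finite combinatorial bookkeeping: using the five shapes in \cref{exceptional levels}, one has to check that every exceptional $\widetilde{N} \notin \{99,125,147\}$ with positive-genus star quotient either descends to some $N \in \Lcal$ with $N \leq 240$ and $N \notin \{99,125,147\}$ (ruled out by \cref{table:heegnerpts}) or descends only to the levels permitted by the fourth bullet. The delicate point is levels strictly square-above $125$ or $147$: one must verify that each such $\widetilde{N}$ (e.g.\ $500, 1125, 1250$ above $125$, or multiples of $147$ by a square) also descends to a level $\geq 240$ in $\Lcal$, so that integrality can still be invoked at that larger level and the second/third bullets remain sharp precisely for $\widetilde{N} \in \{125, 147\}$.
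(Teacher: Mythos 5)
Your overall reduction strategy — descend to a minimal exceptional level $N \mid_\square \widetilde{N}$, use Table~\ref{table:heegnerpts} to rule out small $N \notin \{99,125,147\}$, apply Theorem~\ref{prop:integralityexclevel} for $N \geq 240$ or $N \in \{99\}$, and fall back on known classifications for $\widetilde{N} \in \{99,125,147\}$ — matches the paper's architecture, and your valuation transfer via Lemma~\ref{lem:integralityisogenousellcurves} for passing from level $N$ to level $\widetilde N$ is correct.

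However, there is a genuine gap in how you propose to handle exceptional levels strictly square-above $147$. You write that ``one must verify that each such $\widetilde{N}$ \ldots also descends to a level $\geq 240$ in $\Lcal$, so that integrality can still be invoked at that larger level,'' treating this as bookkeeping. It is in fact false: the two exceptional levels in question are $588 = 2^2 \cdot 3 \cdot 7^2$ and $3 \cdot 7^4$, and the only exceptional level square-below $588$ is $147$ itself, which is less than $240$; similarly for $3 \cdot 7^4$. So there is no level $\geq 240$ at which Theorem~\ref{prop:integralityexclevel} applies, and the integrality argument simply is not available. The paper resolves this by a different, more explicit argument. Suppose a nontrivial $P^\ast \in X_0(147 q^2)^\ast(\Q)$ exists for $q \in \{2,7\}$. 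One chooses a lift $P = (E, C)$ such that $i^{(q)}_{147 q^2, 147}(P)$ equals one of the two exceptional points $P' = (E_{j_1}, \cdot)$ on $X_0(147)$ with $j_1$ a root of~\eqref{eq:exceptional1471} or~\eqref{eq:exceptional1472}, defined over the quartic polyquadratic field $K$. The existence of $E$ forces the modular polynomial $\Phi_q(j_1, X)$ to have a root in an at-most-quadratic extension of $K$, hence an irreducible factor of degree $\leq 2$ over $K$. A direct computation then shows: for $q=2$, $\Phi_2(j_1, X)$ (degree $3$) is irreducible over $K$; for $q=7$, $\Phi_7(j_1, X)$ (degree $8$) has a single linear factor over $K$, but the corresponding $j$-invariant is defined over a field too small to give a rational point on $X_0(2\cdot 7^4)^\ast$ (a non-CM point there must live over a polyquadratic field of degree exactly $2^{\omega(2\cdot 7^4)}$). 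Without this modular-polynomial argument, the third bullet cannot be shown to be ``sharp'' for $\widetilde N = 147$; the $125$ case, by contrast, works the way you describe precisely because $500$ and $1125$ do lie in the $\geq 240$ tail.

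A secondary, smaller point: for $N=99$ and $\widetilde N = 396$, the paper actually just cites the explicit rational point classification (Table~\ref{tab:smallcurves}) rather than invoking the integrality theorem — though your route via Theorem~\ref{prop:integralityexclevel} (which does list $396$) would also be valid.
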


\begin{proof}
    The first three cases come from the previous results (see \cite[Proposition 2.1]{ArulMuller22} for $\widetilde{N}=125$). 

    Assume now that $\widetilde{N}$ is different from those three. Choose $N$ any minimal exceptional level below $\widetilde{N}$, so that $X_0(N)^*(\Q)$ is nontrivial by hypothesis. 

    By \Cref{table:heegnerpts} and what follows, we have $N \geq 240$ or $N \in \{99,125,147\}$ (notice that $N=100$ gives a non-Heegner point but it is CM). If $N \geq 240$, by Theorem \ref{prop:integralityexclevel} we have that $(m'_{N,M})^{\widetilde{N}} j(E)$ is an algebraic integer as claimed. If $N=99$, as $\widetilde{N} \neq 99$ is exceptional, we must have $\widetilde{N} \in \{396,891\}$. For $N = 396$ we can compute the rational points and classify them directly, see  \Cref{tab:smallcurves}. For $N = 891$ we use Theorem \ref{prop:integralityexclevel}. If $N=125$, as $\widetilde{N}$ is exceptional, we have $\widetilde{N} \in \{500,1125\}$, but in this case the integrality result contradicts the nonintegrality of the nontrivial rational point found for $125$, so there is no nontrivial rational point there. 

    If $N=147$, there are only two exceptional levels  square-above $147$: $\widetilde{N} = 588$ or $2 \cdot 7^4$. 
Let us thus take $q=2$ (resp.\ $q=7$), so that $\widetilde{N}= 147 q^2$ in both cases. We will prove below that $X_0(\widetilde{N})^*(\Q)$ is trivial, so assume there is $P^* \in X_0(\widetilde{N})^*(\Q)$ a non-CM non cuspidal point. Using the degeneracy map,  $\psi_{\widetilde{N},147}(P^*)$ is a nontrivial point on $X_0(147)^*(\Q)$, so up to a good choice of lifts, one can assume $P =(E,C_{\widetilde{N}})$ above $P^*$ is such that $i_{\widetilde{N},147}^{(q)}(P) = P'$ where $P'$ is either the point with $j$-invariant $j_1$ from  \eqref{eq:exceptional1471},  or the point with $j$-invariant $j_1$ from  \eqref{eq:exceptional1472}. In both cases, we would then obtain a root of $\Phi_{q}(j_1,X)$ in an (at most quadratic) extension of $K$, where $K$ is the polyquadratic (quartic) field of definition of $j_1$, hence an irreducible factor of $\Phi_q(j_1,X)$ of degree 1 or 2 in $K[X]$.

A direct computation then proves that for $q=2$ and the 8 possible exceptional $j$-invariants, $\Phi_q(j_1,X)$ (of degree 3) is irreducible over $K$. For $q=7$, $\Phi_q(j_1,X)$ (of degree 8) splits into a linear factor and an irreducible factor of degree 7, but the linear factor corresponds to the $j$-invariant of the unique elliptic curve defined over a quadratic field and 7-isogenous to $j_1$. This curve does not define a rational point in $X_0(2 \cdot 7^4)^*$ (a non-CM elliptic curve defining a rational point on $X_0(M)^*$ for some $M$ must be defined over a polyquadratic field of degree exactly $2^{\omega(M)}$ and not less), which proves that in this case also, we reach a contradiction.
\end{proof}

\section{Classifying rational points on small levels} \label{sec:smalllevels}

In this section we discuss the classification of rational points on small levels.

Table \ref{tab:smallcurves} gives a classification of rational points on all star curves of non-squarefree non-exceptional level $N$, where $X_0(N)^*$ has genus between $1$ and $5$, and the level is minimal (in the sense that there is no $N'|_{\square} N$ with finitely many rational points). 

The $\Q$-points on almost all curves are computed by first obtaining an upper bound with the  Chabauty--Coleman method, and then exhibiting enough CM points and cusps.
The Chabauty--Coleman method gives a sharp upper bound on the count of the number of $\Q$-points, except in the cases of $N = $ 261, 294, 308, 376, 378, and 572.
For levels $294$ and $308$, the rank is one less than the genus and greater than $0$, and the upper bound from Chabauty--Coleman is not sharp.
In this case, one needs to exclude some points, for example, with a Mordell--Weil sieve \cite{BruinStollMW}. Such a sieve is only feasible if one can find generators and execute Jacobian arithmetic, so this is currently only implemented in genus 2.
For levels $N = 261, 376, 378$ and $572$, the number of known rational points minus 1 is less than the rank.
In this case, we fail to construct a vanishing differential: the current implementation of Chabauty--Coleman requires that the Mordell--Weil group is generated up to finite index by the known rational points in order to construct a differential form whose Coleman integral vanishes on the rational points of the curve embedded in the Jacobian.
In these cases, we do not have sufficiently many rational points. For these 6 levels, we instead use the rank zero quotients of the Jacobian to determine the rational points, which we discuss in the next section.

The classification of the CM discriminants here mainly arises from results about Heegner points as well as the propositions in \cref{sec:CMpoints}. In some small cases, we were also able to compute $j$-maps from $X_0(N) \to X(1)$ along with quotient maps $X_0(N) \to X_0(N)^*$, then applied this to find the possible $j$-invariants of the lifts of rational points on $X_0(N)^*$. 
Computing the full quotient map from $X_0(N)$ became too computationally expensive after some very small examples. Write $D_1 \to D_2$ in the table for a CM point arising from an isogeny of elliptic curves with CM discriminants $D_1, D_2$.

In order to apply Proposition \ref{prop:CMpointslift}, we check that $-16$ is the discriminant of a Heegner point on the (star quotient) levels $N = 52/4, 68/4, 116/4, 164/4, 148/4, 260/4, 212/4$, that $-27$ is the discriminant of a Heegner point on the levels $N = 63/3, 171/9, 279/9$, that $-48$ is the discriminant of a Heeegner point on the levels $N = 84/4, 156/4, 228/4, 294/2$, that $-12$ is the discriminant of a Heegner point on the levels $N = 98/2, 294/2$, that $-28$ is the discriminant of a Heegner point on the level $N = 242/2$, and that $-112$ is the discriminant of a Heegner point on the level $N = 308/4$ using the algorithms described in \cref{sec:heegnerpts}.

The two rational exceptional points on $X_0(63)^*$ were discovered by Elkies \cite{ElkiesKCurves}. The rational exceptional point on $X_0(125)^*$ was a computation of Arul and M\"uller \cite{ArulMuller22}. As far as we know, the rational exceptional point on $X_0(75)^*$ has not appeared in the literature before. The minimal polynomial of the $j$-invariant of this point is
\begin{align}\label{eq:x075}
    x^4 &-& 3^6 \cdot 5^2 \cdot 15671 \cdot 62062657 \cdot 2594434657/2^{75}x^3 \notag \\ 
    &+& 3^9 \cdot 5^2 \cdot 89 \cdot 337 \cdot 941 \cdot 93497 \cdot 4409922434705659247/2^{100}x^2  \tag{$\bigstar \bigstar \bigstar$} \\&-& 3^{15}\cdot 5^2 \cdot 11^4 \cdot 152459\cdot 187786109447 \cdot 553303027813/2^{103}x \notag\\&+& 3^{18} \cdot 5^2 \cdot 11^6 \cdot 2276505289^3/2^{104}. \notag
\end{align}

\begin{center}
    \centering
    \begin{longtable}{lllllll}
    \caption{Rational points on small levels of genus $\leq 5$} \\\label{tab:smallcurves}
        N & genus & $\Q$-pts & $\Q$-cusp & $\Q$-Heeg. & Except. & CM Disc  \\ 
        \hline
        48 & 1 & 8 & 3 & 5 &  & $-15,-48,-48,-60,-192$ \\ 
        52 & 1 & 7 & 2 & 4 & &  $-12, -16, -48, -64,$ $(-16 \to -4)$  \\ 
        63 & 1 & 8 & 2 & 3 & 2& $-27,-35,-315$, $(-27 \to -3)$  \\ 
        64 & 1 & 4 & 2 & 2 &  & $-7,-28$ \\ 
        68 & 1 & 6 & 2 & 3 & & $-16,-32,-64$, $(-16 \to -4)$\\ 
        75 & 1 & 8 & 1 & 6 & 1&  $-11,-24,-51,-75,-75,-75$ \\ 
        76 & 1 & 5 & 2 & 3 &  & $-12, -32, -48$ \\ 
        81 & 1 & 3 & 1 & 2 & & $-8,-11$ \\ 
        84 & 1 & 8 & 2 & 5 & & $-12, -48, -96, -192, -672$, $(-48 \to -3
)$  \\ 
        90 & 1 & 6 & 2 & 4 & & $-20,-36,-36,-180$ \\ 
        98 & 1 & 6 & 1 & 4 & & $-20,-24,-40,-52$, $(-12 \to-3 )$ \\ 
        124 & 1 & 4 & 2 & 2 & & $-12,-48$ \\ 
        126 & 1 & 4 & 2 & 2 & & $-20,-180$ \\ 
        132 & 1 & 4 & 2 & 2 & & $-32, -96$ \\ 
        140 & 1 & 4 & 2 & 2 & & $-160, -1120$ \\ 
        150 & 1 & 4 & 1 & 2 & & $-24,-84, (-36 \to -4)$ \\ 
        156 & 1 & 7 & 2 & 4 & & $-12, -48, -192, -1248$, $(-48 \to -3
)$  \\ 
        188 & 1 & 2 & 2 & 0 & & ~ \\ 
        220 & 1 & 3 & 2 & 1 & & $-160$  \\ 
        104 & 2 & 3 & 2 & 1 & & $-16$ \\ 
        116 & 2 & 8 & 2 & 5 & & $-7,-16,-64,-112,-928$, $(-16 \to -4)$ \\ 
        117 & 2 & 4 & 2 & 1 & & $-27$, $(-3 \to -27)$\\ 
        121 & 2 & 6 & 1 & 5 & & $-7,-8,-19,-28,-43$ \\ 
        125 & 2 & 6 & 1 & 4 & 1&$-4,-11,-16,-19$ \\ 
        153 & 2 & 4 & 2 & 2 &  &$-8,-72$ \\ 
        168 & 2 & 4 & 2 & 2 & &$-96,-672$ \\ 
        198 & 2 & 4 & 2 & 2 & &$-8,-72$ \\ 
        204 & 2 & 6 & 2 & 4 & &$-15, -32, -240, -1632$ \\ 
        276 & 2 & 4 & 2 & 2 & &$-15, -240$ \\ 
        284 & 2 & 4 & 2 & 2 & &$-7, -112$ \\ 
        380 & 2 & 6 & 2 & 4 & &$-15, -160, -240, -3040$ \\ 
        128 & 3 & 4 & 2 & 2 & &$-7,-28$ \\ 
        136 & 3 & 4 & 2 & 2 & &$-16, -32$ \\ 
        152 & 3 & 3 & 2 & 1 & &$-32$ \\ 
        164 & 3 & 6 & 2 & 3 & &$-16,-32,-64$, $(-16 \to -4)$ \\ 
        171 & 3 & 6 & 2 & 3 & &$-8,-27,-72$, $(-27 \to -3)$ \\ 
        175 & 3 & 4 & 1 & 3 & &$-19,-75,-91$ \\ 
        189 & 3 & 4 & 1 & 3 & &$-27,-27,-35$ \\ 
        207 & 3 & 4 & 2 & 2 & & $-11,-99$ \\ 
        234 & 3 & 4 & 2 & 2 & & $-36,-36$ \\ 
        236 & 3 & 3 & 2 & 1 & & $-32$ \\ 
        245 & 3 & 4 & 1 & 3 & & $-19,-40,-115$ \\ 
        248 & 3 & 2 & 2 & 0 & & ~ \\ 
        252 & 3 & 4 & 4 & 0 & & ~ \\ 
        270 & 3 & 4 & 1 & 3 & & $-20,-180,-180$ \\ 
        294 & 3 & 6 & 1 & 3 & & $-20,-24,-132, (-48 \to -12), (-12 \to -3)$\\ 
        312 & 3 & 3 & 2 & 1 &  &$-1248$ \\ 
        315 & 3 & 4 & 2 & 2 &  &$-35,-315$ \\ 
        348 & 3 & 3 & 2 & 1 &  &$-96$ \\ 
        420 & 3 & 4 & 2 & 2 & & $-96, -3360$ \\ 
        476 & 3 & 2 & 2 & 0 & & ~ \\ 
        148 & 4 & 9 & 2 & 6 & & $-7,-12,-16,-48,-64,-112$, $(-16 \to -4)$ \\ 
        172 & 4 & 7 & 2 & 5 & & $-7,-12,-32,-48,-112$ \\ 
        228 & 4 & 9 & 2 & 6 & & $-12,-15,-32,-48,-192,-240$, $(-48\to -3)$ \\ 
        242 & 4 & 8 & 1 & 6 & & $-7,-8,-24,-40,-52,-72, (-28 \to -7)$ \\ 
        260 & 4 & 7 & 2 & 4 & & $-16,-64,-160,-2080$, $(-16 \to -4)$  \\ 
        261 & 4 & 2 & 2 & 0 & & ~ \\ 
        264 & 4 & 4 & 2 & 2 & & $-32,-96$ \\ 
        275 & 4 & 4 & 1 & 3 & & $-11,-19,-99$ \\ 
        280 & 4 & 4 & 2 & 2 & & $-160,-1120$ \\ 
        300 & 4 & 3 & 2 & 1 & & $-96$ \\ 
        306 & 4 & 6 & 2 & 4 & & $-8,-36,-36,-72$ \\ 
        308 & 4 & 6 & 2 & 3 & & $-7,-112,-448, (-112 \to -7)$\\ 
        342 & 4 & 4 & 2 & 2 & & $-8,-72$ \\ 
        350 & 4 & 3 & 1 & 2 & & $-24,-84$ \\ 
        208 & 5 & 4 & 3 & 1 & & $-64$ \\ 
        212 & 5 & 7 & 2 & 4 & & $-7,-16,-64,-112$, $(-16 \to -4)$  \\ 
        279 & 5 & 6 & 2 & 3 & & $-11,-27,-99$, $(-27\to -3)$ \\ 
        316 & 5 & 6 & 2 & 4 & & $-7,-12,-48,-112$ \\ 
        364 & 5 & 4 & 2 & 2 & & $-12,-48$ \\ 
        376 & 5 & 2 & 2 & 0 & & ~ \\ 
        378 & 5 & 2 & 1 & 1 & & $-20$ \\ 
        396 & 5 & 6 & 4 & 2 & & $-32,-288$ \\ 
        414 & 5 & 4 & 2 & 2 & & $-20,-180$ \\ 
        440 & 5 & 3 & 2 & 1 & & $-160$ \\ 
        444 & 5 & 6 & 2 & 3 & & $-12,-48,-192$, $(-48\to -3)$ \\ 
        495 & 5 & 6 & 2 & 4 & & $-11,-35,-99,-315$ \\ 
        572 & 5 & 3 & 2 & 1 & & $-352$ \\ 
        630 & 5 & 4 & 2 & 2 & & $-20,-180$ \\ 
    \end{longtable}
\end{center}

\subsection{Rational points via rank zero quotients} \label{ssec:rat pts via RZQ}
Finally, we describe the computation of the rational points on $X_0(N)^*$ for $N \in \{261,294,308,376,378,572\}$, the last six remaining cases. In \Cref{table:rkzeroquots} we list various data for the levels $N$ and their rank zero quotients. For every level except $N = 261$ we find a rank zero elliptic curve quotient, which we list by the LMFDB label. For $N = 261$ the rank zero quotient is a dimension two abelian variety, which can be identified as the Jacobian of the genus 2 modular curve $\mathrm{Jac}(X_0(87)/\langle w_{29} \rangle)$, with LMFDB label \texttt{7569.a.68121.1}.

\begin{table}[h!]
\centering
\begin{tabular}{ccccc}
\toprule
\textbf{Level} & \textbf{Factorization} & \textbf{Genus} & \textbf{Rank zero quotient $A_f$} & \textbf{Order of $A_f(\mathbb{Q})$} \\
\midrule
261 & $3^2 \cdot 29$ & 4 & $\mathrm{Jac}(X_0(87)/\langle w_{29} \rangle)$ & 5 \\
294 & $2 \cdot 3 \cdot 7^2$ & 3 & $E_{14.a1}$ & 2 \\
308 & $2^2 \cdot 7 \cdot 11$ & 4 & $E_{154.c4}$ & 4 \\
376 & $2^3 \cdot 47$ & 5 & $E_{94.a2}$ & 2 \\
378 & $2 \cdot 3^3 \cdot 7$ & 5 & $E_{21.a5}$ & 8 \\
572 & $2^2 \cdot 11 \cdot 13$ & 5 & $E_{26.b2}$ & 7 \\
\bottomrule
\end{tabular}
\caption{Data for various levels and their rank zero quotients}
\label{table:rkzeroquots}
\end{table}

To compute the rational points, we need to construct the morphism $\phi: X_0(N)^* \to A_f$ described in \eqref{eqn:formalimmersion}. 
However, we want to avoid computing the morphism $X_0(N) \to A_f$, because the genus of $X_0(N)$ is much larger (and therefore working with the associated modular forms space and canonical ring is difficult). 
This is why we first go through modular parametrizations $X_0(M) \to A_f$ (in terms of $q$-expansions) which are computationally easy, and then proceed to algebraize the maps we obtain using the canonical ring of $X_0(N)^*$. 
We were inspired by the computational techniques in \cite{DerickxOrlic}.

\textbf{Elliptic curve quotient cases}

Assume first that $E = A_f$ is an elliptic curve of conductor $M$, and adopt the notation from Proposition \ref{propdefGNf}. 
The map $\pi_f \circ \iota_M: X_0(M) \to E$  is a modular parametrization of $E$ and this map can be computed in terms of $q$-expansions with arbitrary precision.
More precisely, fixing a Weierstrass model $F_E(x_E,y_E)=0$ of $E$ with $x_E \in \Q(E), y_E \in \Q(E)$ having poles of order 2 and 3 respectively at the zero of $E$ (and nowhere else) $(\pi_f \circ \iota)^* x_E, (\pi_f \circ \iota)^* y_E$ are modular functions of weight 0 on $X_0(M)$. This is implemented in Sage with \texttt{modular\_parametrization}.

The composition of $\pi_f \circ \iota_M$ with degeneracy maps $i_{N/M}^{(d)}$ is immediate, and we can also compute their signed sum using the group law of $E$ in its Weierstrass model. This provides two modular functions on $X_0(N)^* $ whose $q$-expansions $x_N,y_N \in \Q((q))$ satisfy the Weierstrass equation of $E$, such that $G_{N,f} : X_0(N)^* \rightarrow E$ is $(x_N : y_N : 1)$.

Now, by computing monomials in a  basis $(f_1, \cdots, f_g)$ of cusp forms of $S_2(\Gamma_0(N),\Q)^*$ in terms of $q$-expansions, we can solve for $x_N,y_N$ as rational fractions $x_N = F_X(f_1, \cdots, f_g)$, $y_N = F_Y(f_1, \cdots, f_g)$ for explicit $F_X,F_Y \in \Q(X_1, \cdots, X_g)$ homogeneous of degree 0.  

In all five cases, $X_0(N)^*$ is not hyperelliptic so we can compute the canonical model of $X_0(N)^*$ using the basis $(f_1, \cdots, f_g)$. Then $F_X$ and $F_Y$ are readily interpreted as defining rational maps $X_0(N)^* \dashrightarrow \P^1$, and in this model the map $G_{N,f}$ is explicitly defined as $(F_X: F_Y:1)$  outside the poles of $F_X$ and $F_Y$ on $X_0(N)^*$.

For every rational point $P$ of $X_0(N)^*$ either $P$ is outside the poles of $F_X$ and $F_Y$ and $(F_X(P),F_Y(P))$ is a rational point of $E$, or it is in the poles of $F_X$ and $F_Y$.
Since we can compute $E(\Q)$ explicitly, the rational points of the poles of $F_X$ and $F_Y$, and the fibers of $G_{N,f}$, we thus obtain the rational points of $X_0(N)^*$.

\textbf{Abelian surface quotient case}

The computation for the dimension two quotient is slightly more involved: one begins in a similar way. Let $H$ be the hyperelliptic curve $X_0(87)/\langle w_{29} \rangle $.
Following \cite[\S 6 and 7]{Hasegawa97}, let $f_1, f_2$ be a basis for $S_2(\Gamma_0(87))^{+_{29}}$ with rational coefficients such that $f_1 = q+ \dots $ and $f_2 = q^2 + \dots $.
Letting $x_H = f_1/f_2 \in \Q(H)$ and $y_H = q \mathrm{d} x/(f_2 \mathrm{d} q) \in \Q(H)$, $x_H$ and $y_H$ satisfy the equation 
\begin{equation}y^2 =  x^6 - 2x^4 - 6x^3 - 11x^2 - 6x - 3.\end{equation}
We have $N = 261$, $M = 87$, and $N/M = 3$. Then
applying the degeneracy maps $\iota^{(1)}$ and $\iota^{(3)}$ to $x_H, y_H$ is straightforward on $q$-expansions.
We then subtract $((\iota^{(1)})^* x_H, (\iota^{(3)})^* y_H) - ((\iota^{(3)})^* x_H, (\iota^{(3)})^* y_H)$ using Cantor's algorithms for arithmetic in Jacobians of hyperelliptic curves.
 
The end result of this is $q$-expansions of the Mumford representation of a point in $\mathrm{Jac}(H)$.
In other words, we obtain (in terms of $q$-expansions up to arbitrary precision) a monic quadratic polynomial $A(X) = X^2 + A_1 X + A_0 \in \Q(X_0(N)^*)[X]$ and a linear polynomial $B(X)  = B_1 X + B_0\in \Q(X_0(N)^*)[X]$.
We can solve for $A_1, A_0, B_1, B_0$ in a similar manner as above: by constructing a basis $(f_1, \cdots, f_4)$ for $S_2(\Gamma_0(N))^*$ with rational coefficients and solving for $A_i, B_i$ in terms of degree zero homogeneous rational fractions in $(f_1, \cdots, f_4)$.
We can also find the canonical model for $X_0(N)^*$ in this basis.

This yields a map $\phi: X_0(N)^* \dashrightarrow \Sym^2(H)$ given (in Mumford-like coordinates) by
\begin{equation}
(x_1 : \cdots : x_4) \mapsto (A(x_1, \cdots, x_4)(X), Y-B(x_1, \cdots, x_4)(X)).
\end{equation}
Note that $H$ has two rational points at infinity, $\infty_+$ and $\infty_-$. Write $D_\infty = \infty_+ + \infty_-$ and define $j :  \Sym^2(H) \to \Jac(H)$ sending $D\mapsto [D - D_\infty]$.
By composition we obtain a rational map
\begin{equation}  \phi' \coloneqq j \circ \phi :X_0(N)^* \dashrightarrow \Jac(H), \end{equation}
defined on the open subset $U = X_0(N)^* \setminus \{ {\text{poles of }} A_0,A_1,B_0,B_1\}$.

The map $D \mapsto [D]$ from $\Sym^2(H) \to \Pic^2(H)$ is injective away from divisors $D$ of the shape $P + \iota(P)$ (which are linearly equivalent to the canonical divisor of $H$), hence $j$ is injective outside $j^{-1}(0)$, and $j^{-1}(0)$ is the set of $P + \iota(P), P \in H$.

We can compute the rational points of $\Jac(H)(\Q) \simeq \Z/5 \Z$. Two of the (nonzero) points are given by 
\begin{equation} (X^2 + X + 1, Y \pm (X + 2))  \in \Sym^2(H)(\Q).\end{equation}
There are no rational points in the fiber of $\varphi$ above these points.

The two other nonzero points are given by the difference of the two points at infinity,
\begin{equation} \pm (\infty_- - \infty_+)\end{equation}
and are not in $\phi(U)$ by construction.
Therefore there are no rational points in the fiber above these points.

Next we consider $0 \in A_f$, and assume  for some $P \in X_0(N)^*(\Q)$, that $\varphi'(P)=0$. We then have $D \coloneqq \varphi(P) \in \varphi(U) \cap j^{-1}(0)$. We can write $D = W + \iota(W)$ for some point $W=(x_W,y_W) \in H$, but then by definition of $A$ and $B$, we have $A(P)(X) = (X - x_W)(X-x_{\iota(W)}) = (X- x_W)^2$ and $y_W = B(P)(x_W)$, but the same relation holds for $\iota(W)$ so $-y_W = b_1(P) x_W + b_0(P)$ also, hence $y_W = 0$ and $W$ is a Weierstrass point of $H$. In particular $A_1(P)=- 2 x_W$, so $x_W \in \Q$, but there are no rational Weierstrass points on $H$ hence $(\varphi')^{-1}(0) \cap X_0(N)^*(\Q) = \emptyset$.

Finally, we check the subset of $X_0(N)^*$ where $\phi$ is not defined, the poles of $A_0,A_1,B_0,B_1$. This is a dimension zero scheme that has two rational points.

\subsection{Algorithms and Code} \label{subsec:algorithms and code}
Many of the theorems contained in this paper rely on computational results and code.
Our Magma and Sage code and its output can be found in~\cite{CodeForOurArticle}.
Below is an overview of the main algorithms which are implemented in the code.

\begin{itemize}
    \item (Models of modular curves) We compute a canonical model of an Atkin--Lehner quotient of $X_0(N)$ using relations between $q$-expansions. Our code is based on that of~\cite{ACKP,AKMJNOV}.
    
    \item (Rational points) We have a script to automate the checking of rational points on $X_0(N)^*$ by first computing a suitable plane model (by successively projecting down from rational points on the curve)\footnote{Note that this generally produces models with smaller coefficients and is simpler than using products of two gonal maps that were used in~\cite{AABCCKW}.} for Balakrishnan--Tuitman's Chabauty--Coleman algorithm \cite{BalakrishnanTuitman},
    then finding a suitable Chabauty prime to prove that the known small $\Q$-points are complete. Then it checks the Chabauty condition. It iterates over primes until it (hopefully) finds a prime where it succeeds in determining the rational points.

    \item (Sums of roots of unity) We compute the rational primes that occur lie below prime divisors of the sums of roots of unity in~\eqref{eq:sums of roots of unity}  as described in the statements of~\cref{subsec:formal immesions}. For the output, see~\cref{tab:NM_values,tab:values}. 

    \item (Exceptional levels) We compute a list of minimal exceptional levels as described in~\cref{sec:exclevels}.
    
    \item (Heegner points) We implement an algorithm to compute the rational points on the star quotient which arise as the image of Heegner points on $X_0(N)$ as described in~\cref{sec:heegnerpts}.

    \item (Rational points via rank zero quotients) We implement the computation of rational points on $X_0(N)^*$ via pullback from a map $\phi:X_0(N)^* \to A_f $ where $A_f$ is rank zero and in the new part of $J_0(M)$ in certain cases when Chabauty--Coleman fails.

    \item (LMFDB search) We search the LMFDB using a Sage script to establish the existence of newforms with certain small levels and with certain Atkin--Lehner eigenvalues. See the proof of~\cref{proprankzeroquotient}.
\end{itemize}

\newcommand{\etalchar}[1]{$^{#1}$}
\providecommand{\bysame}{\leavevmode\hbox to3em{\hrulefill}\thinspace}
\providecommand{\MR}{\relax\ifhmode\unskip\space\fi MR }
\providecommand{\MRhref}[2]{%
  \href{http://www.ams.org/mathscinet-getitem?mr=#1}{#2}
}
\providecommand{\href}[2]{#2}

\end{document}